\theoremstyle{plain}
\newtheorem{thm}{Theorem}[section]
\newtheorem{pro}[thm]{Proposition}
\newtheorem{lem}[thm]{Lemma}
\newtheorem{cor}[thm]{Corollary}
\newtheorem{question}{Question}
\theoremstyle{definition}
\newtheorem*{defi}{Definition}
\newtheorem{eg}[thm]{Example}
\newtheorem{egs}[thm]{Examples}
\newtheorem{rem}[thm]{Remark}
\newtheorem{rems}[thm]{Remarks}
\def\og{\leavevmode\raise.3ex\hbox{$\scriptscriptstyle\langle\!\langle$~}}
\def\fg{\leavevmode\raise.3ex\hbox{~$\!\scriptscriptstyle\,\rangle\!\rangle$}}
\numberwithin{equation}{section}
\begin{document}
\selectlanguage{english}

\title[Birational maps preserving the contact structure on $\mathbb{P}^3_\mathbb{C}$]{Birational maps preserving \\the contact structure on $\mathbb{P}^3_\mathbb{C}$}

\author{Dominique Cerveau}

\address{IRMAR, UMR 6625 du CNRS, Universit\'e de Rennes $1$, $35042$ Rennes, France.}
\email{dominique.cerveau@univ-rennes1.fr}

\author{Julie D\'eserti}

\address{Universit\'e Paris Diderot, Sorbonne Paris Cit\'e, Institut de Math\'emtiques de
Jussieu-Paris Rive Gauche, UMR $7586$, CNRS, Sorbonne Universit\'es, UPMC Univ Pris $06$,
F-$75013$ Paris, France.}

\email{deserti@math.univ-paris-diderot.fr}

\begin{abstract}
We study the group of polynomial automorphisms of $\mathbb{C}^3$ (resp. birational 
self-maps of $\mathbb{P}^3_\mathbb{C}$) that preserve the contact structure.

\bigskip
\noindent\emph{$2010$ Mathematics Subject Classification. --- $14E05$, $14E07$.}
\end{abstract}

\maketitle

\smallskip

\section{Introduction}\label{Sec:intro}

In this article we work on the group of birational maps that preserve contact
structures on $\mathbb{P}^3_\mathbb{C}$. 
On $\mathbb{P}^3_\mathbb{C}$ there is, up to automorphisms, only one (non-singular) 
contact structure given in homogeneous coordinates by the $1$-form $\widetilde{\vartheta}
=z_0\mathrm{d}z_1-z_1\mathrm{d}z_0+z_2\mathrm{d}z_3-z_3\mathrm{d}z_2$.
In $\mathbb{C}^3$ there is the Darboux 
$1$-form $\omega=z_0\,\mathrm{d}z_1+\mathrm{d}z_2$ that is the standard local model of 
contact forms; it thus defines a holomorphic contact structure on $\mathbb{C}^3$ 
that extends to $\mathbb{P}^3_\mathbb{C}$ meromorphically.
Note that $\omega$ has poles of order~$3$ along the hyperplane $z_3=0$. We denote by 
$\mathrm{c}(\omega)$ the (meromorphic) contact structure induced on $\mathbb{P}^3_\mathbb{C}$ 
by~$\omega$. Let us remark that actually $\omega$ is birationally conjugate to 
$\widetilde{\vartheta}_{\vert z_3=1}$ (more precisely they are conjugate via a polynomial 
automorphism in the affine chart $z_3=1$). As a result the group of birational
maps that preserve these structures are conjugate; since it is more convenient
to work with $\omega$ than with $\widetilde{\vartheta}$ we will focus on $\omega$.

\smallskip

The contact geometry has a long story. The Darboux local model $z_0\mathrm{d}z_1
+\mathrm{d}z_2$ is related to the forma\-lization of $z_0=-\frac{\mathrm{d}z_2}{\mathrm{d}z_1}$. 
 For instance if $\mathcal{S}$ is a surface in $\mathbb{C}^3$ given by $F(z_0,z_1,z_2)=0$ 
then the restriction of~$\omega$ to $\mathcal{S}$ corresponds to the implicit differential 
equation $F\left(-\frac{\partial z_2}{\partial z_1},z_1,z_2\right)=0$. A birational self-map of 
$\mathbb{P}^3_\mathbb{C}$ which preserves the contact structure (\emph{i.e.}, which sends 
the $1$-form $z_0\mathrm{d}z_1+\mathrm{d}z_2$ viewed in the affine chart $z_3=1$ onto a 
multiple of $z_0\mathrm{d}z_1+\mathrm{d}z_2$ by a rational function) is said to be a 
contact map. The space $\mathbb{C}^3$ with the contact form $\omega$ 
can be seen as an affine chart of the projectivization of the cotangent bundle 
$\mathrm{T}^*\mathbb{C}^2$ (equipped with the standard Liouville contact form). 
As a consequence there is a natural extension of any birational self-map of the $(z_1, z_2)$ 
plane (\cite{Ince})
\[
\mathcal{K}\colon\mathrm{Bir}(\mathbb{P}^2_\mathbb{C})\hookrightarrow\mathrm{Bir}
(\mathbb{C}^3)_{\mathrm{c}(\omega)},\qquad (\phi_1,\phi_2)\mapsto\left(\frac{-\frac{
\partial\phi_2}{\partial z_1}+\frac{\partial\phi_2}{\partial z_2}\,z_0}{\frac{\partial
\phi_1}{\partial z_1}-\frac{\partial \phi_1}{\partial z_2}\,z_0},\phi_1(z_1,z_2),\phi_2
(z_1,z_2)\right)
\]
where $\mathrm{Bir}(\mathbb{C}^3)_{\mathrm{c}(\omega)}$ denotes the group of 
contact birational self-maps of $\mathbb{P}^3_\mathbb{C}$. The image of $\mathcal{K}$ is 
the Klein group $\mathscr{K}$.
Klein conjectured that the group of contact maps is generated by 
$\mathscr{K}$ and the Legendre involution
\[
(z_0,z_1,z_2)\mapsto (z_1,z_0,-z_2-z_0z_1).
\]
In $2008$ Gizatullin proved this "conjecture" in the case in which the contact 
transformations are polynomial automorphisms of the affine space (\cite{Gizatullin}).
The conjecture about generators of the contact group is still open in the birational
case.

\bigskip

Let $\mathrm{G}$ be a subgroup of the group 
$\mathrm{Bir}(\mathbb{P}^n_\mathbb{C})$ of birational self-maps of $\mathbb{P}^n_\mathbb{C}$, 
and let $\beta$ be a meromorphic $p$-form on $\mathbb{P}^n_\mathbb{C}$; denote by
\[
\mathrm{G}_\beta=\big\{\phi\in\mathrm{G}\,\vert\,\phi^*\beta=\beta\big\}
\]
the subgroup of elements of $\mathrm{G}$ that preserve the form $\beta$. In the same 
spirit for $1$-forms $\beta$ we set
\[
\mathrm{G}_{\mathrm{c}(\beta)}=\big\{\phi\in\mathrm{G}\,\vert\,\phi^*\beta\wedge\beta=0\big\}.
\]
We have the obvious inclusions 
$\mathrm{G}_\beta\subset\mathrm{G}_{\mathrm{c}(\beta)}\subset\mathrm{G}$.

\smallskip

We first describe the group $\mathrm{Aut}(\mathbb{C}^3)_{\mathrm{c}(\omega)}$ of polynomial 
automorphisms of $\mathbb{C}^3$ that preserve the contact structure:

\begin{thm}
If $\eta$ is the form $\mathrm{d}\omega=\mathrm{d}z_0\wedge\mathrm{d}z_1$, then 
\[
\mathrm{Aut}(\mathbb{C}^3)_\omega\simeq \mathrm{Aut}(\mathbb{C}^2)_\eta\ltimes\mathbb{C},
\qquad
\mathrm{Aut}(\mathbb{C}^3)_{\mathrm{c}(\omega)}\simeq \mathrm{Aut}(\mathbb{C}^3)_\omega\ltimes\mathbb{C}^*.
\]
\end{thm}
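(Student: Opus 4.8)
The plan is to first pin down the shape of an arbitrary $\phi=(\phi_0,\phi_1,\phi_2)\in\mathrm{Aut}(\mathbb{C}^3)_\omega$ and then read off the two extensions. The Reeb field of $\omega$ is the constant field $\partial_{z_2}$ (characterised by $\iota_{\partial_{z_2}}\omega=1$, $\iota_{\partial_{z_2}}\mathrm{d}\omega=0$); being canonically attached to $\omega$, it is preserved, so $\phi^*\omega=\omega$ forces $\phi_*\partial_{z_2}=\partial_{z_2}$, i.e. the last column of the Jacobian of $\phi$ equals $(0,0,1)$. Hence $\phi_0$ and $\phi_1$ do not involve $z_2$ and $\partial_{z_2}\phi_2=1$, so $\phi=(a(z_0,z_1),\,b(z_0,z_1),\,z_2+g(z_0,z_1))$ for some polynomials $a,b,g$. (One reaches the same conclusion by hand: differentiating $\phi^*\omega=\omega$ yields $\mathrm{d}\phi_0\wedge\mathrm{d}\phi_1=\mathrm{d}z_0\wedge\mathrm{d}z_1$, and a short rank argument on the coefficients gives $\partial_{z_2}\phi_0=\partial_{z_2}\phi_1=0$.) Substituting back, the equation $\phi^*\omega=\omega$ is equivalent to the two conditions that $\psi:=(a,b)$ preserve $\eta=\mathrm{d}z_0\wedge\mathrm{d}z_1$ — equivalently $\mathrm{Jac}(\psi)=1$, i.e. $\psi\in\mathrm{Aut}(\mathbb{C}^2)_\eta$ — and that $\mathrm{d}g=z_0\,\mathrm{d}z_1-a\,\mathrm{d}b=z_0\,\mathrm{d}z_1-\psi^*(z_0\,\mathrm{d}z_1)$.

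I would then introduce the structure maps. The assignment $\phi\mapsto\psi$ is a group homomorphism $\pi\colon\mathrm{Aut}(\mathbb{C}^3)_\omega\to\mathrm{Aut}(\mathbb{C}^2)_\eta$, and it is surjective: for $\psi\in\mathrm{Aut}(\mathbb{C}^2)_\eta$ the polynomial $1$-form $z_0\,\mathrm{d}z_1-\psi^*(z_0\,\mathrm{d}z_1)$ is closed — its exterior derivative is $\eta-\psi^*\eta=0$ — hence, $\mathbb{C}^2$ being simply connected, it equals $\mathrm{d}g$ for a polynomial $g$ unique up to an additive constant, and $(a,b,z_2+g)$ is then a polynomial automorphism of $\mathbb{C}^3$ (its inverse is manifestly polynomial) lying in $\mathrm{Aut}(\mathbb{C}^3)_\omega$. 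The kernel of $\pi$ is exactly the group of translations $(z_0,z_1,z_2)\mapsto(z_0,z_1,z_2+c)$, which is central in $\mathrm{Aut}(\mathbb{C}^3)_\omega$ and isomorphic to $(\mathbb{C},+)$. This exhibits $\mathrm{Aut}(\mathbb{C}^3)_\omega$ as an extension of $\mathrm{Aut}(\mathbb{C}^2)_\eta$ by $\mathbb{C}$; the semidirect-product decomposition $\mathrm{Aut}(\mathbb{C}^3)_\omega\simeq\mathrm{Aut}(\mathbb{C}^2)_\eta\ltimes\mathbb{C}$ is then read off from the multiplication rule on pairs, $(\psi_1,g_1)\cdot(\psi_2,g_2)=(\psi_1\psi_2,\,g_2+g_1\circ\psi_2)$, after pinning down the normalisation of the primitive $g$ attached to each $\psi$.

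For the second isomorphism, let $\phi\in\mathrm{Aut}(\mathbb{C}^3)_{\mathrm{c}(\omega)}$, so $\phi^*\omega=f\,\omega$ for a rational function $f$. Here $f$ is forced to be a nonzero constant: comparing $\phi^*(\omega\wedge\mathrm{d}\omega)=f^2\,\omega\wedge\mathrm{d}\omega$ with $\phi^*(\omega\wedge\mathrm{d}\omega)=\mathrm{Jac}(\phi)\,\omega\wedge\mathrm{d}\omega$ — legitimate because $\omega\wedge\mathrm{d}\omega=\mathrm{d}z_0\wedge\mathrm{d}z_1\wedge\mathrm{d}z_2$ is a volume form and $\mathrm{Jac}(\phi)\in\mathbb{C}^*$ for a polynomial automorphism — gives $f^2=\mathrm{Jac}(\phi)\in\mathbb{C}^*$. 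Then $\rho\colon\phi\mapsto f$ is a homomorphism $\mathrm{Aut}(\mathbb{C}^3)_{\mathrm{c}(\omega)}\to\mathbb{C}^*$ with kernel $\mathrm{Aut}(\mathbb{C}^3)_\omega$; it is onto since $\phi_\lambda:=(\lambda z_0,z_1,\lambda z_2)$ satisfies $\phi_\lambda^*\omega=\lambda\,\omega$, and $\lambda\mapsto\phi_\lambda$ is a genuine group-theoretic section. This gives $\mathrm{Aut}(\mathbb{C}^3)_{\mathrm{c}(\omega)}\simeq\mathrm{Aut}(\mathbb{C}^3)_\omega\ltimes\mathbb{C}^*$, the acting $\mathbb{C}^*$ being $\{\phi_\lambda\}$.

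The main obstacle is the rigidity of the first step: the whole argument rests on the fact that $\phi^*\omega=\omega$ already forces the normal form $\phi=(a(z_0,z_1),b(z_0,z_1),z_2+g(z_0,z_1))$, thereby reducing the problem to the well-understood group $\mathrm{Aut}(\mathbb{C}^2)_\eta$. Once that is secured the remaining ingredients are soft — surjectivity of $\pi$ is the Poincaré lemma on $\mathbb{C}^2$, and the constancy of the multiplier $f$ is the $\omega\wedge\mathrm{d}\omega$ computation above — and the only other point requiring care is the group-theoretic bookkeeping that isolates, inside $\mathrm{Aut}(\mathbb{C}^3)_\omega$, the central $\mathbb{C}$ and its complement, together with the action of the acting factor in each of the two semidirect products.
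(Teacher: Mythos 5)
Your proof is correct and follows essentially the same route as the paper's: the Reeb field $\partial/\partial z_2$ forces the normal form $(\phi_0(z_0,z_1),\phi_1(z_0,z_1),z_2+b(z_0,z_1))$, surjectivity of the projection onto $\mathrm{Aut}(\mathbb{C}^2)_\eta$ comes from exactness of the closed polynomial $1$-form $\phi_0\mathrm{d}\phi_1-z_0\mathrm{d}z_1$ with kernel the central translations in $z_2$, and for the contact case the multiplier is a nonvanishing constant (you get it via $V(\phi)^2=\det\mathrm{jac}\,\phi$, the paper via nonvanishing of the polynomial $V(\phi)$) with the same section $\lambda\mapsto(\lambda z_0,z_1,\lambda z_2)$. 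The level of detail you leave implicit (normalising the primitives to exhibit the first semidirect product) matches what the paper itself leaves implicit.
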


Hence, as Banyaga did in the context of contact diffeomorphisms of smooth real manifolds
(\cite{Banyaga1,Banyaga2,Banyaga3}), one gets that the commutator of 
$\mathrm{Aut}(\mathbb{C}^3)_\omega$ (resp. $\mathrm{Aut}(\mathbb{C}^3)_{\mathrm{c}(\omega)}$) 
is perfect. Any automorphism of $\mathrm{Aut}(\mathbb{C}^2)$ is the composition of an 
inner automorphism and an automorphism of the field $\mathbb{C}$ (\emph{see}  
\cite{Deserti:autaut}). Following this idea we describe the group 
$\mathrm{Aut}(\mathrm{Aut}(\mathbb{C}^3)_\omega)$.

Danilov and Gizatullin proved that any finite subgroup of $\mathrm{Aut}(\mathbb{C}^2)$ is 
linearizable (\cite{DanilovGizatullin}). We obtain a similar statement:

\begin{thm}
Any finite subgroup of $\mathrm{Aut}(\mathbb{C}^3)_{\mathrm{c}(\omega)}$ is linearizable via
an element of $\mathrm{Aut}(\mathbb{C}^3)_{\mathrm{c}(\omega)}$.
\end{thm}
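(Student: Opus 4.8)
The plan is to use the contact structure to push the problem down to $\mathbb{C}^2$, where the linearization theorem of Danilov and Gizatullin applies, and then to lift the resulting normal form back to $\mathbb{C}^3$; this detour is forced because there is no general linearization statement for finite subgroups of $\mathrm{Aut}(\mathbb{C}^3)$. The first step is to make explicit the projection underlying the structure of $\mathrm{Aut}(\mathbb{C}^3)_{\mathrm{c}(\omega)}$. If $\phi\in\mathrm{Aut}(\mathbb{C}^3)_{\mathrm{c}(\omega)}$ then $\phi^*\omega=c_\phi\,\omega$ with $c_\phi\in\mathbb{C}^*$ (pulling back the contact volume form $\omega\wedge\mathrm{d}\omega$ shows that the multiplier is a constant, with $c_\phi^2=\mathrm{Jac}(\phi)$), whence $\mathrm{d}\phi_0\wedge\mathrm{d}\phi_1=c_\phi\,\mathrm{d}z_0\wedge\mathrm{d}z_1$, so $\phi$ preserves the characteristic foliation $\ker\mathrm{d}\omega$ by lines directed by $\partial_{z_2}$. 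Hence $\phi_0,\phi_1$ depend only on $(z_0,z_1)$, the pair $\overline\phi:=(\phi_0,\phi_1)$ lies in $\mathrm{Aut}(\mathbb{C}^2)$, and $\phi\mapsto\overline\phi$ is a homomorphism
\[
\pi\colon\mathrm{Aut}(\mathbb{C}^3)_{\mathrm{c}(\omega)}\longrightarrow\mathrm{Aut}(\mathbb{C}^2),
\]
with kernel the group $\{\tau_t\colon(z_0,z_1,z_2)\mapsto(z_0,z_1,z_2+t)\mid t\in\mathbb{C}\}\simeq(\mathbb{C},+)$ of $z_2$-translations; $\pi$ is onto because, given $\overline\phi$ with constant Jacobian $c$, the $1$-form $c(z_0\mathrm{d}z_1+\mathrm{d}z_2)-\phi_0\mathrm{d}\phi_1$ is closed on the simply connected $\mathbb{C}^3$ and integrates to an admissible $\phi_2$. (This merely rephrases the description of $\mathrm{Aut}(\mathbb{C}^3)_{\mathrm{c}(\omega)}$ obtained above.)

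Let now $\mathrm{G}$ be a finite subgroup of $\mathrm{Aut}(\mathbb{C}^3)_{\mathrm{c}(\omega)}$. Then $\mathrm{G}\cap\ker\pi$ is a finite subgroup of $(\mathbb{C},+)$, hence trivial, so $\pi$ restricts to an isomorphism $\mathrm{G}\xrightarrow{\ \sim\ }\overline{\mathrm{G}}:=\pi(\mathrm{G})$ onto a finite subgroup of $\mathrm{Aut}(\mathbb{C}^2)$. By Danilov and Gizatullin's theorem there is $\overline\psi\in\mathrm{Aut}(\mathbb{C}^2)$ with $\mathrm{H}:=\overline\psi\,\overline{\mathrm{G}}\,\overline\psi^{-1}\subset\mathrm{GL}_2(\mathbb{C})$. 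Using the surjectivity of $\pi$, choose a lift $\psi\in\mathrm{Aut}(\mathbb{C}^3)_{\mathrm{c}(\omega)}$ of $\overline\psi$ and replace $\mathrm{G}$ by $\psi\,\mathrm{G}\,\psi^{-1}$; we may thus assume $\pi(\mathrm{G})=\mathrm{H}\subset\mathrm{GL}_2(\mathbb{C})$, so that $\mathrm{G}=\sigma(\mathrm{H})$ for the homomorphic section $\sigma:=(\pi|_{\mathrm{G}})^{-1}$ of $\pi$ over $\mathrm{H}$.

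It remains to normalize $\sigma$. For $h\in\mathrm{GL}_2(\mathbb{C})$ set $\mathrm{L}(h):=\big(h_0,h_1,(\det h)z_2+q_h\big)$, where $q_h$ is the homogeneous quadratic form with $\mathrm{d}q_h=(\det h)\,z_0\mathrm{d}z_1-h_0\mathrm{d}h_1$ (the right-hand side is closed, being exact on quadratic forms). One checks directly that $\mathrm{L}(h)^*\omega=(\det h)\,\omega$, that $\mathrm{L}(hh')=\mathrm{L}(h)\mathrm{L}(h')$ — equivalently $q_{hh'}=(\det h)q_{h'}+q_h\circ h'$ — and that $\pi\circ\mathrm{L}=\mathrm{id}$, so $\mathrm{L}\colon\mathrm{GL}_2(\mathbb{C})\hookrightarrow\mathrm{Aut}(\mathbb{C}^3)_{\mathrm{c}(\omega)}$ is the linear model into which we aim to conjugate $\mathrm{G}$. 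Both $\sigma$ and $\mathrm{L}|_{\mathrm{H}}$ are sections of $\pi$ over $\mathrm{H}$, hence $h\mapsto\sigma(h)\mathrm{L}(h)^{-1}\in\ker\pi\simeq\mathbb{C}$ is a $1$-cocycle of $\mathrm{H}$ with coefficients in $\mathbb{C}$, the action of $\mathrm{H}$ on $\ker\pi$ being through $\det$ (since $\mathrm{L}(h)\,\tau_t\,\mathrm{L}(h)^{-1}=\tau_{(\det h)t}$). As $\mathrm{H}$ is finite, $H^1(\mathrm{H},\mathbb{C})=0$ (average over $\mathrm{H}$), so this cocycle is a coboundary: there is $v\in\mathbb{C}$ with $\tau_v\,\sigma(h)\,\tau_v^{-1}=\mathrm{L}(h)$ for all $h\in\mathrm{H}$. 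Conjugating $\mathrm{G}$ by the contact map $\tau_v$ turns it into $\mathrm{L}(\mathrm{H})$, which is linear; composing the two conjugations yields the element of $\mathrm{Aut}(\mathbb{C}^3)_{\mathrm{c}(\omega)}$ claimed by the statement.

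The delicate point is that every conjugation must remain inside $\mathrm{Aut}(\mathbb{C}^3)_{\mathrm{c}(\omega)}$: once $\pi(\mathrm{G})$ has been linearized, the only remaining freedom to adjust the ``vertical'' part of $\mathrm{G}$ is conjugation by $z_2$-translations, a one-parameter family, so the passage from $\mathrm{G}$ to the linear model has to be essentially automatic — which is exactly what the vanishing of $H^1(\mathrm{H},\mathbb{C})$ supplies. The remaining points needing care are more routine: the description of $\pi$ and of $\ker\pi$ (i.e. the structure theorem above), the surjectivity of $\pi$ used to lift $\overline\psi$, and the verification that $h\mapsto\mathrm{L}(h)$ is a homomorphism (the cocycle identity for the quadratic forms $q_h$).
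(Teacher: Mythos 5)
Your strategy is the same as the paper's: project to $\mathrm{Aut}(\mathbb{C}^2)$ by $\varsigma$ (your $\pi$), observe that a finite subgroup meets the kernel $\big\{(z_0,z_1,z_2+t)\,\vert\,t\in\mathbb{C}\big\}\simeq\mathbb{C}$ trivially, linearize the image by Danilov--Gizatullin, lift the linearizing map and conjugate, then normalize the vertical component. On that last point you are in fact more careful than the paper: where the paper takes the lift of $h$ fixing the origin and asserts that it linearizes $\mathrm{G}$, you compare the section $\sigma$ with the canonical lift $\mathrm{L}$ and kill the resulting $\ker\pi$-valued cocycle using $H^1(\mathrm{H},\mathbb{C})=0$; this handles cleanly the additive constants that the paper leaves implicit.

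The one claim you should not make as stated is the final one: $\mathrm{L}(\mathrm{H})$ is not a group of linear transformations of $\mathbb{C}^3$. By your own definition $\mathrm{L}(h)=\big(h_0,h_1,(\det h)z_2+q_h\big)$ with $q_h$ a quadratic form, and $q_h=0$ only when $h_0\mathrm{d}h_1=(\det h)\,z_0\mathrm{d}z_1$, e.g.\ for diagonal $h$; for a triangular or a permutation matrix $q_h\neq 0$. In fact the only linear maps inside $\mathrm{Aut}(\mathbb{C}^3)_{\mathrm{c}(\omega)}$ are $(\varepsilon z_0,\beta z_1,\varepsilon\beta z_2)$, an abelian family, so a finite subgroup that is conjugated into $\mathrm{GL}_3(\mathbb{C})$ by a contact automorphism is forced to be abelian, whereas $\mathrm{L}$ applied to a nonabelian finite $\mathrm{H}\subset\mathrm{GL}_2(\mathbb{C})$ (say $S_3$, whose lift contains the Legendre involution $(z_1,z_0,-z_2-z_0z_1)$) is a nonabelian finite contact group. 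What your argument actually establishes --- and the paper's proof stops at exactly the same point, asserting that the lift of $h$ ``linearizes'' $\mathrm{G}$ --- is that $\mathrm{G}$ is conjugate inside $\mathrm{Aut}(\mathbb{C}^3)_{\mathrm{c}(\omega)}$ to the canonical lift $\mathrm{L}(\mathrm{H})$ of a finite linear group $\mathrm{H}\subset\mathrm{GL}_2(\mathbb{C})$; genuine linearity of the conjugated group is obtained only after further diagonalizing $\mathrm{H}$ inside $\mathrm{GL}_2(\mathbb{C})$ (so that all $q_h$ vanish), which is possible exactly when $\mathrm{H}$ is abelian. So either ``linearizable'' is to be read in this lifted sense --- in which case your proof matches the paper's and even improves on it --- or your last sentence (like the paper's conclusion) needs the corresponding qualification.
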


\medskip

We also deal with $\mathrm{Bir}(\mathbb{C}^3)_{\mathrm{c}(\omega)}$.
If $\phi$ belongs to $\mathrm{Bir}(\mathbb{C}^3)_{\mathrm{c}(\omega)}$, then 
$\phi^*\omega=V(\phi)\omega$ where $V(\phi)$ is some rational function. In 
particular one gets a map $V$ from 
$\mathrm{Bir}(\mathbb{C}^3)_{\mathrm{c}(\omega)}$ to the set of rational functions 
in $z_0$, $z_1$, $z_2$ satisfying cocycle conditions: 
$V(\phi\circ\psi)=\big(V(\phi)\circ\psi\big)\cdot V(\psi)$.

The equality $\phi^*\omega=V(\phi)\omega$ can be rewritten as the 
following system of P.D.E. 
\[
(\mathcal{S})\left\{
\begin{array}{lll}
\phi_0\frac{\partial\phi_1}{\partial z_0}+\frac{\partial\phi_2}{\partial z_0}=0 &\qquad
\qquad (\star_1)\\
\\
\phi_0\frac{\partial\phi_1}{\partial z_1}+\frac{\partial\phi_2}{\partial z_1}=V(\phi)z_0 &
\qquad\qquad (\star_2)\\
\\
\phi_0\frac{\partial\phi_1}{\partial z_2}+\frac{\partial\phi_2}{\partial z_2}=V(\phi) &\qquad
\qquad (\star_3)
\end{array}
\right.
\]
The first equation $(\star_1)$ has a special family of solutions: maps for which 
both $\phi_1$ and $\phi_2$ do not depend on~$z_0$; we can then compute $\phi_0$ from 
the two other equations. Taking $(\phi_1,\phi_2)$ in $\mathrm{Bir}(\mathbb{P}^2_\mathbb{C})$
we get in this way the group $\mathscr{K}$. 

Assume now that $\phi_1$ or $\phi_2$ depends on
$z_0$ then both depend on it and $(\mathcal{S})$ implies the following equality
\[
\frac{\frac{\partial\phi_2}{\partial z_1}-z_0\frac{\partial\phi_2}{\partial z_2}}{\frac{
\partial\phi_2}{\partial z_0}}=\frac{\frac{\partial\phi_1}{\partial z_1}-z_0\frac{
\partial\phi_1}{\partial z_2}}{\frac{\partial\phi_1}{\partial z_0}}.
\]
Let us defined $\alpha$ the map from $\mathrm{Bir}(\mathbb{C}^3)_{\mathrm{c}(\omega)}$
into the set of rational functions in $z_0$, $z_1$ and $z_2$ by: 
$\alpha(\phi)=\infty$ if~$\phi$ belongs to $\mathscr{K}$ and 
\[
\alpha(\phi)=\frac{\frac{\partial\phi_2}{\partial z_1}-z_0\frac{\partial\phi_2}{\partial z_2}}{\frac{
\partial\phi_2}{\partial z_0}}=\frac{\frac{\partial\phi_1}{\partial z_1}-z_0\frac{
\partial\phi_1}{\partial z_2}}{\frac{\partial\phi_1}{\partial z_0}}
\]
otherwise.

If $\phi_1$ and $\phi_2$ are some first integrals of the rational vector field
\[
Z_{\phi}=\alpha(\phi)\frac{\partial}{\partial z_0}-\frac{\partial}{\partial z_1}
+z_0\frac{\partial}{\partial z_2}, 
\]
one gets $\phi_0$ thanks to the first equation of $(\mathcal{S})$. Such $\phi$
is not necessary birational but only rational; never\-theless one gets a lot of 
contact birational self-maps in this way. Remark that since $\mathscr{K}$ (resp.
$\mathrm{Bir}(\mathbb{C}^3)_\omega$) is a subgroup of 
$\mathrm{Bir}(\mathbb{C}^3)_{\mathrm{c}(\omega)}$ there is a natural
left translation action of $\mathscr{K}$ (resp.
$\mathrm{Bir}(\mathbb{C}^3)_\omega$) on 
$\mathrm{Bir}(\mathbb{C}^3)_{\mathrm{c}(\omega)}$. These two actions 
admit a complete invariant:

\begin{thm}
The map $\alpha$ is a complete invariant of the left translation action of 
$\mathscr{K}$ on $\mathrm{Bir}(\mathbb{C}^3)_{\mathrm{c}(\omega)}$, that is 
for any $\phi$ and $\psi$ in $\mathrm{Bir}(\mathbb{C}^3)_{\mathrm{c}(\omega)}$ 
one has $\alpha(\phi)=\alpha(\psi)$ if and only if $\psi\phi^{-1}$ belongs to 
$\mathscr{K}$.

The map $V$ is a complete invariant of the left translation action of
$\mathrm{Bir}(\mathbb{C}^3)_\omega$ of
$\mathrm{Bir}(\mathbb{C}^3)_{\mathrm{c}(\omega)}$, \emph{i.e.} for any 
$\phi$, $\psi$ in $\mathrm{Bir}(\mathbb{C}^3)_{\mathrm{c}(\omega)}$ one
has $V(\phi)=V(\psi)$ if and only if $\psi\phi^{-1}$ belongs to 
$\mathrm{Bir}(\mathbb{C}^3)_{\omega}$.
\end{thm}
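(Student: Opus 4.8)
The plan is to treat the two assertions separately: the statement about $V$ is a formal consequence of the cocycle relation, whereas the one about $\alpha$ rests on identifying the field of rational first integrals of the vector field $Z_\phi$.

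I begin with $V$. The only ingredients are the cocycle identity $V(\phi\circ\psi)=(V(\phi)\circ\psi)\cdot V(\psi)$, the fact that $V(\phi)$ is a \emph{nonzero} rational function for every $\phi$ (because $\phi^*\omega\neq 0$), and that $\mathrm{Bir}(\mathbb{C}^3)_{\mathrm{c}(\omega)}$ is a group containing $\mathrm{Bir}(\mathbb{C}^3)_\omega=\{\phi\mid V(\phi)=1\}$ as a subgroup. Put $g:=\psi\circ\phi^{-1}$, so $\psi=g\circ\phi$. If $g\in\mathrm{Bir}(\mathbb{C}^3)_\omega$, i.e. $V(g)=1$, then $V(\psi)=(V(g)\circ\phi)\cdot V(\phi)=V(\phi)$. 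Conversely, if $V(\phi)=V(\psi)$ the same relation reads $V(\phi)=(V(g)\circ\phi)\cdot V(\phi)$; dividing by the nonzero function $V(\phi)$ and using that $\phi$ is dominant gives $V(g)=1$, i.e. $g\in\mathrm{Bir}(\mathbb{C}^3)_\omega$. This disposes of the $V$-statement.

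For $\alpha$ I would first record the coordinate description $\mathscr{K}=\{g\in\mathrm{Bir}(\mathbb{C}^3)_{\mathrm{c}(\omega)}\mid g_1,\,g_2\ \text{independent of}\ z_0\}$: the inclusion $\subseteq$ is the definition of $\mathscr{K}$ via $\mathcal{K}$, and for $\supseteq$ such a $g$ preserves the fibration $(z_0,z_1,z_2)\mapsto(z_1,z_2)$, hence so does $g^{-1}$, whence $(g_1,g_2)\in\mathrm{Bir}(\mathbb{P}^2_\mathbb{C})$ and $g=\mathcal{K}(g_1,g_2)$. When one of $\phi,\psi$ lies in $\mathscr{K}$ the equivalence is immediate (both sides then say that the other map lies in $\mathscr{K}$, since $\mathscr{K}$ is a group and $\alpha$ takes the value $\infty$ exactly on $\mathscr{K}$), so I may assume $\phi,\psi\notin\mathscr{K}$; then $\phi_1,\phi_2$ (resp. $\psi_1,\psi_2$) both depend on $z_0$ and are, by the very definition of $\alpha$, first integrals of the nonzero rational vector field $Z_\phi$ (resp. $Z_\psi$). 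The direction $\psi\circ\phi^{-1}\in\mathscr{K}\Rightarrow\alpha(\phi)=\alpha(\psi)$ is then easy: since also $\phi\circ\psi^{-1}\in\mathscr{K}$, writing $\phi=(\phi\circ\psi^{-1})\circ\psi$ with the first component of $\phi\circ\psi^{-1}$ not involving $w_0$ gives $\phi_1\in\mathbb{C}(\psi_1,\psi_2)$, so $Z_\psi(\phi_1)=0$; solving this linear relation for the coefficient of $\partial_{z_0}$ in $Z_\psi$ (legitimate because $\partial_{z_0}\phi_1\neq0$) yields $\alpha(\psi)=\alpha(\phi)$. For the converse direction the key input is the lemma that the field $\mathbb{C}(z_0,z_1,z_2)^{Z_\phi}$ of rational first integrals of $Z_\phi$ is exactly $\mathbb{C}(\phi_1,\phi_2)$: granting it, $\alpha(\phi)=\alpha(\psi)$ forces $Z_\phi=Z_\psi$, hence $\psi_1,\psi_2\in\mathbb{C}(z_0,z_1,z_2)^{Z_\phi}=\mathbb{C}(\phi_1,\phi_2)=\phi^*\mathbb{C}(w_1,w_2)$, and applying $(\phi^*)^{-1}$ shows the first two components of $\psi\circ\phi^{-1}$ do not involve $w_0$, i.e. $\psi\circ\phi^{-1}\in\mathscr{K}$.

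The main obstacle is the lemma, and here is how I would argue it. The inclusion $\mathbb{C}(\phi_1,\phi_2)\subseteq\mathbb{C}(z_0,z_1,z_2)^{Z_\phi}$ is clear from the definition of $\alpha(\phi)$. Since $\phi$ is birational, $\phi_0,\phi_1,\phi_2$ are algebraically independent, so $\mathbb{C}(\phi_1,\phi_2)$ has transcendence degree $2$; on the other hand $Z_\phi\neq 0$, so its field of first integrals has transcendence degree at most $2$, hence exactly $2$, and is therefore algebraic over $\mathbb{C}(\phi_1,\phi_2)$. Finally $\mathbb{C}(\phi_1,\phi_2)=\phi^*\mathbb{C}(w_1,w_2)$ is algebraically closed in $\mathbb{C}(z_0,z_1,z_2)=\phi^*\big(\mathbb{C}(w_1,w_2)(w_0)\big)$, because a field is algebraically closed in any purely transcendental extension of itself; combining this with the fact that the kernel of a derivation of a characteristic-zero field is algebraically closed in that field, together with the transcendence-degree count above, forces $\mathbb{C}(z_0,z_1,z_2)^{Z_\phi}=\mathbb{C}(\phi_1,\phi_2)$, which completes the argument.
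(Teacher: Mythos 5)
Your proof is correct, and its skeleton coincides with the paper's: the statement about $V$ is the elementary pullback/cocycle computation (the paper writes it as $(\phi^{-1})^*\psi^*\omega=\omega$ rather than via the cocycle identity, which is the same thing), and the statement about $\alpha$ is reduced to the fact that the field of rational first integrals of $Z_\phi$ is exactly $\mathbb{C}(\phi_1,\phi_2)$, after which both directions are handled as you do. The only genuine difference is how that key fact is justified. The paper gets it in one line (its Proposition on first integrals) by observing that $\phi$ conjugates the foliation of $Z_\phi$ to that of $\frac{\partial}{\partial z_0}$ — equivalently $Z_\phi(\phi_1)=Z_\phi(\phi_2)=0$ by the definition of $\alpha$ — so a rational first integral $f$ satisfies $\partial_{w_0}\big(f\circ\phi^{-1}\big)=0$, hence $f\circ\phi^{-1}\in\mathbb{C}(w_1,w_2)$ and $f\in\mathbb{C}(\phi_1,\phi_2)$; you replace this by a transcendence-degree count combined with the algebraic closedness of $\mathbb{C}(\phi_1,\phi_2)=\phi^*\mathbb{C}(w_1,w_2)$ in $\mathbb{C}(z_0,z_1,z_2)=\phi^*\big(\mathbb{C}(w_1,w_2)(w_0)\big)$, and once you have that, your derivation-kernel remark is actually redundant. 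Both arguments are valid; the paper's is shorter, while yours makes the algebra explicit and, usefully, spells out points the paper leaves implicit: the case where $\phi$ or $\psi$ already lies in $\mathscr{K}$, the fact that a contact birational map whose components $g_1,g_2$ do not involve $z_0$ induces a birational map in $(z_1,z_2)$ and therefore belongs to $\mathscr{K}$ (needed to conclude $\psi\phi^{-1}\in\mathscr{K}$, and also to know the paper's $\varphi=(\varphi_1,\varphi_2)$ is birational), and why $\partial_{z_0}\phi_1\not\equiv 0$ lets you solve for $\alpha$ in the easy direction. Only a cosmetic slip: the components of $\psi\phi^{-1}$ that become independent of the first variable are $g_1$ and $g_2$ in the paper's indexing (the second and third of the triple), as your own notation elsewhere indicates; this does not affect the argument.
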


We prove that $\alpha$ is not surjective: generic linear differential equations 
of second order give linear functions that are not in the image of $\alpha$. 
Painlev\'e equations give examples of polynomials of higher degree that 
do not belong to $\mathrm{im}\,\alpha$. The map $V$ is also not surjective.

Since $\omega$ has no integral surface in $\mathbb{C}^3$ a contact birational self-map 
$\phi$ either preserves the hyperplane $z_3=0$, or blowns down $z_3=0$. This 
naturally implies the following definition: 
$\phi\in\mathrm{Bir}(\mathbb{C}^3)_{\mathrm{c}(\omega)}$ is regular at infinity
if $z_3=0$ is preserved by $\phi$ and if $\phi_{\vert z_3=0}$ is birational. One shows
that 

\begin{pro}
The set of maps of $\mathrm{Bir}(\mathbb{C}^3)_{\omega}$ 
that are regular coincides with $\mathrm{Aut}(\mathbb{P}^3_\mathbb{C})_\omega$.
\end{pro}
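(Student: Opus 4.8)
\emph{Plan.} I would prove the two inclusions separately. One is immediate: if $\psi\in\mathrm{Aut}(\mathbb{P}^3_\mathbb{C})_\omega$ then $\psi$ preserves the polar divisor of $\omega$, namely $3\{z_3=0\}$, hence preserves $\{z_3=0\}$ (and its complement $\mathbb{C}^3$); moreover $\psi_{|z_3=0}$ is an automorphism of $\mathbb{P}^2_\mathbb{C}$, in particular a birational self-map, so $\psi$ is regular at infinity, and of course $\psi\in\mathrm{Bir}(\mathbb{C}^3)_\omega$. For the converse, start from $\phi\in\mathrm{Bir}(\mathbb{C}^3)_\omega$ regular at infinity and first normalise it via the contact form. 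The Reeb field of $\omega=z_0\,\mathrm{d}z_1+\mathrm{d}z_2$, defined by $\omega(R)=1$ and $\iota_R(\mathrm{d}z_0\wedge\mathrm{d}z_1)=0$, is $R=\partial/\partial z_2$; since $\phi^*\omega=\omega$ the map $\phi$ preserves $R$, so $\phi_0,\phi_1$ do not depend on $z_2$ and $\phi_2=z_2+g(z_0,z_1)$. Substituting into $\phi^*\omega=\omega$ gives $\mathrm{d}g=z_0\,\mathrm{d}z_1-\phi_0\,\mathrm{d}\phi_1$; taking $\mathrm{d}$ of this shows that $\Phi:=(\phi_0,\phi_1)\in\mathrm{Bir}(\mathbb{C}^2)$ preserves $\mathrm{d}z_0\wedge\mathrm{d}z_1$, while $g$ is a rational primitive of the closed form $z_0\,\mathrm{d}z_1-\phi_0\,\mathrm{d}\phi_1$.

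The heart of the argument is to show that $\phi$ is in fact a polynomial automorphism, i.e. $\phi\in\mathrm{Aut}(\mathbb{C}^3)_\omega$. Since $\phi$ commutes with the translations $(z_0,z_1,z_2)\mapsto(z_0,z_1,z_2+t)$, and a birational self-map of $\mathbb{P}^3_\mathbb{C}$ contracts only finitely many hypersurfaces, every hypersurface contracted by $\phi$ is invariant under these translations, hence is a cylinder $\pi^{-1}(\Gamma)$ over a curve $\Gamma\subset\mathbb{C}^2$, where $\pi(z_0,z_1,z_2)=(z_0,z_1)$. If $\Gamma$ were a pole curve of $\Phi$, then $\phi_0$ or $\phi_1$ would be infinite along $\pi^{-1}(\Gamma)$, so $\phi$ would send this cylinder into $\{z_3=0\}$, contradicting that $\phi$ preserves $\{z_3=0\}$. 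Hence $\Phi$ has no pole curve, i.e. $\phi_0,\phi_1\in\mathbb{C}[z_0,z_1]$; being birational with constant Jacobian, $\Phi$ is étale, so (Zariski's main theorem) an open immersion of $\mathbb{C}^2$ onto an affine open $U$ with $U\cong\mathbb{C}^2$, and since $\mathbb{C}^2\setminus U$ cannot be a hypersurface ($\mathcal{O}(U)$ has no nonconstant unit) we get $U=\mathbb{C}^2$, so $\Phi\in\mathrm{Aut}(\mathbb{C}^2)$. Then $z_0\,\mathrm{d}z_1-\phi_0\,\mathrm{d}\phi_1$ is a polynomial closed $1$-form on $\mathbb{C}^2$, so $g\in\mathbb{C}[z_0,z_1]$ and $\phi\in\mathrm{Aut}(\mathbb{C}^3)_\omega$.

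It remains to use that $\phi_{|z_3=0}$ is birational to bound the degree. With $D=\max(\deg\phi_0,\deg\phi_1,\deg g)$ one has $\deg\phi=\max(1,D)$, since $\phi_0,\phi_1$ do not involve $z_2$; if $D\ge 2$, homogenising $\phi$ in degree $D$ and setting $z_3=0$ shows $\phi_{|z_3=0}$ is $[Z_0:Z_1:Z_2]\mapsto[\phi_0^{(D)}(Z_0,Z_1):\phi_1^{(D)}(Z_0,Z_1):g^{(D)}(Z_0,Z_1)]$ (a homogeneous part of degree $<D$ being read as $0$), whose image does not depend on $Z_2$ and is therefore at most a curve, contradicting birationality. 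Hence $D\le 1$ and $\phi$ is affine. A direct computation of the affine maps of $\mathbb{C}^3$ preserving $\omega$ gives exactly the maps $(\lambda z_0+b_0,\ \lambda^{-1}z_1+b_1,\ z_2+b_2-\lambda^{-1}b_0z_1)$ with $\lambda\in\mathbb{C}^*$, $b_i\in\mathbb{C}$; each is linear in the homogeneous coordinates of $\mathbb{P}^3_\mathbb{C}$ and preserves $\omega$, hence lies in $\mathrm{Aut}(\mathbb{P}^3_\mathbb{C})_\omega$. As, conversely, every element of $\mathrm{Aut}(\mathbb{P}^3_\mathbb{C})_\omega$ preserves $\{z_3=0\}$ and is therefore affine in the chart $z_3=1$, this group is precisely $\mathrm{Aut}(\mathbb{P}^3_\mathbb{C})_\omega$, and we conclude $\phi\in\mathrm{Aut}(\mathbb{P}^3_\mathbb{C})_\omega$.

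I expect the polynomiality step to be the main obstacle. The delicate point is that ``$\phi$ preserves $z_3=0$'' must be used in the strong form $\phi^{-1}(\{z_3=0\})=\{z_3=0\}$ --- i.e. that $\phi$ sends no hypersurface of $\mathbb{C}^3$ to the plane at infinity --- since that is exactly what rules out pole curves of $\Phi$; the weaker requirement that $\phi_{|z_3=0}$ merely be dominant and birational is not enough, as $(z_0+z_1^{-2},\ z_1,\ z_2+z_1^{-1})$ illustrates: it preserves $\omega$ and restricts to the identity on $\{z_3=0\}$, yet it contracts $\{Z_1=0\}$ to a point at infinity and is not an automorphism of $\mathbb{P}^3_\mathbb{C}$.
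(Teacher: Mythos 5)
Your argument is correct (under the strong reading of ``regular'' that you make explicit), but it takes a genuinely different route from the paper's. The paper never passes through $\mathrm{Aut}(\mathbb{C}^3)_\omega$: it works with the homogenised form $\overline{\omega}=z_0z_3\mathrm{d}z_1+z_3^2\mathrm{d}z_2-(z_0z_1+z_2z_3)\mathrm{d}z_3$, writes $\overline{\phi}^*\overline{\omega}=\overline{V(\phi)}\,\overline{\omega}$, proves in Lemma~\ref{lem:reginf} (via the identity $\overline{\phi}_3^2\det\mathrm{jac}\,\overline{\phi}=\overline{V(\phi)}^2z_3^2$ in the chart $z_2=1$) that regularity at infinity amounts to $\overline{V(\phi)}$ not vanishing identically on $\mathcal{H}_\infty$, and then argues in Corollary~\ref{cor:descreg}: since $\phi^*\omega=\omega$ one has $\overline{\phi}^*\overline{\omega}=z_3^n\overline{\omega}$, the lemma forces $n=0$, and comparing degrees in $\overline{\phi}^*\overline{\omega}=\overline{\omega}$ gives $\deg\phi=1$ outright --- no Zariski's main theorem, no affine classification, no analysis of contracted cylinders. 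Your longer route (cylindrical contracted hypersurfaces, exclusion of pole curves of $\Phi$, étale plus birational giving $\Phi\in\mathrm{Aut}(\mathbb{C}^2)$, then the leading-term argument on $z_3=0$ to force degree $\le 1$ and the explicit affine computation) buys transparency about exactly where the hypothesis at infinity is consumed, and your closing caveat is not mere prudence: since $\overline{\omega}=z_3^3\,p^*\omega$ (with $p$ the projection to the chart $z_3=1$), the relation $\phi^*\omega=\omega$ gives $\overline{V(\phi)}=(\overline{\phi}_3/z_3)^3$, so the paper's step ``$\overline{\phi}^*\overline{\omega}=z_3^n\overline{\omega}$'' is precisely the assertion that $\overline{\phi}_3$ is a power of $z_3$, i.e. that no affine hypersurface is sent into $\mathcal{H}_\infty$ --- the strong form $\phi^{-1}(\mathcal{H}_\infty)=\mathcal{H}_\infty$ you insist on. For your map $(z_0+z_1^{-2},z_1,z_2+z_1^{-1})$ one computes $\overline{\phi}=(Z_0Z_1^2+Z_3^3:Z_1^3:Z_2Z_1^2+Z_3^2Z_1:Z_3Z_1^2)$ and $\overline{V(\phi)}=Z_1^6$, which does not vanish identically on $\mathcal{H}_\infty$; it preserves $\omega$, fixes $\mathcal{H}_\infty$ pointwise, yet has degree $3$, so under the literal definition (``$\mathcal{H}_\infty$ preserved and $\phi_{\vert\mathcal{H}_\infty}$ birational'') the statement would fail. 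In short: both proofs rest on the same strengthened hypothesis, the paper using it implicitly at the step $\overline{V(\phi)}=z_3^n$, you using it explicitly to kill pole curves; apart from this shared interpretive point, your proof is complete and sound, just more elementary and considerably longer than the paper's degree count.
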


Let 
$\varsigma\colon\mathrm{Bir}(\mathbb{C}^3)_\omega\to\mathrm{Bir}(\mathbb{C}^2)_\eta$
be the projection onto the two first components. We say that 
$\varphi\in\mathrm{Bir}(\mathbb{C}^2)_\eta$ is exact if $\varphi$ can be
lifted via $\varsigma$ to $\mathrm{Bir}(\mathbb{C}^3)_\omega$. One
establishes the following criterion: 

\begin{thm}
A map $\varphi=(\phi_0,\phi_1)\in\mathrm{Bir}(\mathbb{C}^2)_\eta$ is 
exact if and only if the closed form $\phi_0\mathrm{d}\phi_1-z_0\mathrm{d}z_1$ has 
trivial residues. In that case 
$\phi_0\mathrm{d}\phi_1-z_0\mathrm{d}z_1=-\mathrm{d}b$ with 
$b\in\mathbb{C}(z_0,z_1)$ and 
$\phi=\big(\varphi,z_2+b(z_0,z_1)\big)\in\mathrm{Bir}(\mathbb{C}^3)_\omega$.
\end{thm}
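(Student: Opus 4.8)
The plan is to convert the lifting problem into the explicit $1$-form equation that defines $\mathrm{Bir}(\mathbb{C}^3)_\omega$, and then to identify the obstruction to solving it with the classical fact that a closed rational $1$-form on $\mathbb{C}^2$ is the differential of a rational function precisely when all its residues vanish.

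\emph{From the lift to the form equation.} I would first unravel what it means for $\phi=(\phi_0,\phi_1,\phi_2)\in\mathrm{Bir}(\mathbb{C}^3)$ to satisfy $\varsigma(\phi)=\varphi$ and $\phi^{*}\omega=\omega$. Recall that the components $\phi_0,\phi_1$ of an element of $\mathrm{Bir}(\mathbb{C}^3)_\omega$ depend only on $z_0,z_1$ (otherwise one gets it by taking $\mathrm{d}$ of $\phi^{*}\omega=\omega$ and comparing the $\mathrm{d}z_0\wedge\mathrm{d}z_2$ and $\mathrm{d}z_1\wedge\mathrm{d}z_2$ components of $\mathrm{d}\phi_0\wedge\mathrm{d}\phi_1=\mathrm{d}z_0\wedge\mathrm{d}z_1$), so $\varsigma(\phi)=(\phi_0,\phi_1)$ indeed makes sense. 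Writing $\phi^{*}\omega=\omega$ as $\phi_0\,\mathrm{d}\phi_1+\mathrm{d}\phi_2=z_0\,\mathrm{d}z_1+\mathrm{d}z_2$ and comparing $\mathrm{d}z_2$-coefficients forces $\partial\phi_2/\partial z_2=1$, hence $\phi_2=z_2+b(z_0,z_1)$ for some rational $b$, and the equation then reads $\phi_0\,\mathrm{d}\phi_1-z_0\,\mathrm{d}z_1=-\mathrm{d}b$. This proves the implication "$\varphi$ exact $\Rightarrow$ the form $\phi_0\,\mathrm{d}\phi_1-z_0\,\mathrm{d}z_1$ is exact, hence closed with trivial residues" along with the displayed description of the lift; it also shows $b$ is unique up to an additive constant, so the fibre of $\varsigma$ above $\varphi$ is a $\mathbb{C}$-coset.

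\emph{The converse.} For the other direction, suppose $\varphi=(\phi_0,\phi_1)\in\mathrm{Bir}(\mathbb{C}^2)_\eta$ and that $\theta:=\phi_0\,\mathrm{d}\phi_1-z_0\,\mathrm{d}z_1$ has trivial residues. Note $\theta$ is automatically closed, since $\mathrm{d}\theta=\mathrm{d}\phi_0\wedge\mathrm{d}\phi_1-\mathrm{d}z_0\wedge\mathrm{d}z_1=\varphi^{*}\eta-\eta=0$. By the logarithmic decomposition of closed rational $1$-forms, $\theta=\sum_i\lambda_i\,\mathrm{d}f_i/f_i+\mathrm{d}h$ where the $f_i$ are the distinct irreducible factors of the denominator of $\theta$, the $\lambda_i\in\mathbb{C}$ are its residues, and $h\in\mathbb{C}(z_0,z_1)$; by hypothesis $\lambda_i=0$ for all $i$, so $\theta=\mathrm{d}h$. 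Setting $b:=-h$ and $\phi:=\big(\phi_0,\phi_1,z_2+b(z_0,z_1)\big)$, a direct computation gives $\phi^{*}\omega=\phi_0\,\mathrm{d}\phi_1+\mathrm{d}z_2+\mathrm{d}b=z_0\,\mathrm{d}z_1+\mathrm{d}z_2=\omega$, and $\phi$ is birational because it is triangular over $\varphi$: its inverse is $(z_0,z_1,z_2)\mapsto\big(\varphi^{-1}(z_0,z_1),\,z_2-b(\varphi^{-1}(z_0,z_1))\big)$, which is rational since $\varphi^{-1}\in\mathrm{Bir}(\mathbb{C}^2)$. Hence $\varphi=\varsigma(\phi)$ is exact, with precisely the asserted lift.

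\emph{The main obstacle.} Everything above is elementary bookkeeping once one grants the logarithmic decomposition $\theta=\sum_i\lambda_i\,\mathrm{d}f_i/f_i+\mathrm{d}h$ with $h$ rational; this is the only non-formal ingredient, and it amounts to the statement that a closed rational $1$-form on $\mathbb{C}^2$ whose residue along each component of its polar divisor vanishes admits a rational primitive. It rests on $H^1$ of the complement of $\{f_1\cdots f_k=0\}$ in $\mathbb{C}^2$ being spanned by the classes of the $\mathrm{d}f_i/f_i$, together with a growth estimate near the polar divisor showing that the single-valued primitive obtained after killing all periods is in fact rational and not merely holomorphic. I would also check that the notion of residue used in the statement is this one, and in particular that — as holds on $\mathbb{C}^2$ — no extra condition along the line at infinity is required, since $b$ is allowed arbitrary poles there.
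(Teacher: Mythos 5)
Your proof is correct and follows essentially the same route as the paper: reduce exactness of $\varphi$ to exactness of the closed rational $1$-form $\phi_0\mathrm{d}\phi_1-z_0\mathrm{d}z_1$, and invoke the logarithmic decomposition $\Theta=\sum_i\lambda_i\frac{\mathrm{d}f_i}{f_i}+\mathrm{d}g$ of closed rational $1$-forms (the paper cites Cerveau--Mattei for this), so that exactness is equivalent to the vanishing of all residues. Your additional bookkeeping (the triangular form of elements of $\mathrm{Bir}(\mathbb{C}^3)_\omega$ and the birationality of the lift) just makes explicit what the paper takes from Proposition \ref{Lem:chpinv2}.
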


We give a lot of examples, and even subgroups, of exact maps but also prove that
the map $\varsigma$ is not surjective:

\begin{thm}
A generic quadratic element of $\mathrm{Bir}(\mathbb{C}^2)_\eta$
is not exact.
\end{thm}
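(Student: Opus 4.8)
\smallskip

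The plan is to combine the exactness criterion just proved with a classification of the quadratic elements of $\mathrm{Bir}(\mathbb{C}^2)_\eta$ up to a convenient equivalence. Recall from the previous theorem that $\varphi=(\phi_0,\phi_1)\in\mathrm{Bir}(\mathbb{C}^2)_\eta$ is exact if and only if the closed rational $1$-form $\theta_\varphi:=\phi_0\,\mathrm{d}\phi_1-z_0\,\mathrm{d}z_1$ has trivial residues; since $\mathrm{H}^1(\mathbb{P}^2_\mathbb{C},\mathbb{C})=0$ this amounts to saying that $\theta_\varphi$ is exact. So the goal is to produce, for a generic quadratic $\varphi$, a component of the polar divisor of $\theta_\varphi$ along which the residue does not vanish.

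A preliminary reduction makes the problem essentially finite-dimensional. The assignment $\varphi\mapsto\theta_\varphi$ satisfies the cocycle relation $\theta_{\varphi\circ\psi}=\psi^*\theta_\varphi+\theta_\psi$, so being exact is invariant under composition, on either side, by an exact element of $\mathrm{Bir}(\mathbb{C}^2)_\eta$. Now every polynomial automorphism of $\mathbb{C}^2$ preserving $\eta$ is exact: for such a map $\theta_\varphi$ is a closed \emph{polynomial} $1$-form on $\mathbb{C}^2$, hence exact. In particular exactness depends only on the class of $\varphi$ modulo the two-sided action of the affine group $\mathrm{SAff}(\mathbb{C}^2)=\mathrm{SL}_2(\mathbb{C})\ltimes\mathbb{C}^2\subset\mathrm{Aut}(\mathbb{C}^2)_\eta$.

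Next I classify the quadratic elements modulo this action. The equality $\varphi^*\eta=\eta$ says that the affine Jacobian of $\varphi$ is identically $1$; writing $\varphi=(P_0/P_2,P_1/P_2)$ in affine coordinates with the $P_i$ of degree $\le 2$, this affine Jacobian equals $\mathrm{D}/P_2^{\,3}$, where $\mathrm{D}=\det\bigl(P_i,\partial_{z_0}P_i,\partial_{z_1}P_i\bigr)_{i=0,1,2}$ has degree at most $4$. Hence $\mathrm{D}=P_2^{\,3}$ forces $\deg P_2\le 1$. If $P_2$ is constant, $\varphi$ is a polynomial automorphism, hence exact; if $\deg P_2=1$ then, after an element of $\mathrm{SAff}(\mathbb{C}^2)$, we may take $P_2=z_0$, so $\varphi=(Q_0/z_0,Q_1/z_0)$ with $Q_0,Q_1$ of degree $\le 2$. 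Imposing $\mathrm{D}=z_0^{\,3}$ and reducing once more by $\mathrm{SAff}(\mathbb{C}^2)$ on both sides — distinguishing according to whether the line $\{z_0=0\}$ is the base line of the Jonquières pencil attached to the first or to the second coordinate, the second case being brought to the first by the swap $(z_0,z_1)\mapsto(z_1,-z_0)$ — one finds that $\varphi$ is equivalent to a de Jonquières map
\[
\delta_R\colon\ (z_0,z_1)\longmapsto\bigl(z_0,\ z_1+R(z_0)\bigr),\qquad R(z_0)=s_2z_0+s_1+\frac{s_0}{z_0},\quad s_0\neq 0,
\]
the coefficient $s_0$ being precisely what makes $\delta_R$ a genuine quadratic transformation rather than a polynomial automorphism. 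I expect this classification to be the heart of the proof; everything else is formal or a one-line computation.

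The conclusion is then immediate: for $\varphi=\delta_R$ one has
\[
\theta_{\delta_R}=z_0\,\mathrm{d}\bigl(z_1+R(z_0)\bigr)-z_0\,\mathrm{d}z_1=z_0R'(z_0)\,\mathrm{d}z_0=\Bigl(s_2z_0-\frac{s_0}{z_0}\Bigr)\mathrm{d}z_0,
\]
whose residue along the line $\{z_0=0\}$ equals $-s_0\neq 0$. Thus $\delta_R$, and therefore \emph{every} quadratic element of $\mathrm{Bir}(\mathbb{C}^2)_\eta$ that is not a polynomial automorphism, fails to be exact; the exact quadratic elements are precisely the polynomial ones, which form a proper closed subfamily of the quadratic elements, so a generic quadratic element is not exact. (One can also bypass the full classification: the single map $\delta_{1/z_0}=(z_0,z_1+1/z_0)$ has $\theta_{\delta_{1/z_0}}=-\mathrm{d}z_0/z_0$, which is visibly non-exact, and the vanishing of all residues of $\theta_\varphi$ is a closed condition on the irreducible family of quadratic elements of $\mathrm{Bir}(\mathbb{C}^2)_\eta$ that are not polynomial automorphisms, so the non-exact ones form a dense open subset.)
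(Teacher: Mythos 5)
Your overall strategy is sound and, if completed, would even give more than the statement asks (namely that the exact quadratic elements are exactly the polynomial ones); the preliminary steps are correct: the cocycle identity for $\theta_\varphi$, the exactness of every polynomial automorphism preserving $\eta$, the invariance of exactness under the two-sided action of the affine maps of determinant $1$, and the bound $\deg P_2\le 1$ obtained from $\mathrm{D}=P_2^{\,3}$ with $\deg\mathrm{D}\le 4$. The problem is that the pivotal step — that every quadratic element of $\mathrm{Bir}(\mathbb{C}^2)_\eta$ which is not a polynomial automorphism is, modulo this two-sided action, of the form $(z_0,z_1+s_2z_0+s_1+s_0/z_0)$ with $s_0\neq 0$ — is asserted rather than proved: ``one finds that \dots'', and you yourself flag it as ``the heart of the proof''. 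The allusion to the Jonqui\`eres pencil attached to one or the other coordinate is not an argument. As it happens the claim is true and the missing proof is short: once the denominator is normalised to $z_0$, write $\varphi=(Q_0/z_0,Q_1/z_0)$ with $Q_i=a_iz_0^2+b_iz_0z_1+c_iz_1^2+d_iz_0+e_iz_1+f_i$ and expand the Jacobian in powers of $1/z_0$; the constant term gives $a_0b_1-a_1b_0=1$, and the vanishing of the coefficients of $z_0^{-1}$, $z_0^{-2}$, $z_0^{-3}$ forces, because $(a_0,a_1)$ and $(b_0,b_1)$ are independent, $c_i=e_i=0$ and $(f_0,f_1)=\lambda(b_0,b_1)$, so that $\varphi=\mathfrak{a}\circ\big(z_0,z_1+\lambda/z_0\big)$ with $\mathfrak{a}$ affine of determinant $1$ and $\lambda\neq0$ precisely in the non-polynomial case. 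Without this (or an equivalent) computation your proof is incomplete at its key step.

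A second, related gap concerns the passage to genericity. In your main route, ``the exact quadratic elements are precisely the polynomial ones, which form a proper closed subfamily, so a generic quadratic element is not exact'' tacitly needs the polynomial ones not to fill an irreducible component of the family of quadratic elements of $\mathrm{Bir}(\mathbb{C}^2)_\eta$ (they do not: e.g. $\big(z_0+\frac{z_1^2}{1+tz_1},z_1\big)$ is a non-polynomial, hence non-exact, quadratic element of $\mathrm{Bir}(\mathbb{C}^2)_\eta$ degenerating onto $(z_0+z_1^2,z_1)$ as $t\to0$ — but this has to be said). Your parenthetical shortcut is essentially the paper's own proof: one explicit non-exact element plus genericity on an irreducible parameter space; but there you simply assert the irreducibility of the family and the closedness of the residue-vanishing condition, whereas in the paper precisely this structural input is supplied by Proposition \ref{Pro:quadpre} (every quadratic map preserving $\eta$ is $\mathfrak{g}\,\tau\,\mathfrak{h}$ with $(\mathfrak{g},\mathfrak{h})$ in the rational, hence irreducible, variety $\Upsilon$), after which Theorem \ref{Thm:quadexact} only needs one non-exact example. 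So: a correct and in fact stronger approach than the paper's, but with the central normal-form classification and the irreducibility statement both left unproven; supply the Jacobian computation above (or prove the analogue of Proposition \ref{Pro:quadpre}) and the argument closes.
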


\medskip

Furthermore we look at invariant curves and surfaces. Thanks to a local argument of 
contact geometry one gets that if $\phi$ belongs to 
$\mathrm{Bir}(\mathbb{C}^3)_\omega$, if $m$ is a periodic point of $\phi$, 
and if there exists a germ of irreducible curve $\mathcal{C}$ invariant by $\phi$ 
and passing through $m$, then either $\mathcal{C}$ is a curve of periodic points, 
or~$\mathcal{C}$ is a legendrian curve. We also give a precise description of 
elements of $\mathrm{Aut}(\mathbb{C}^3)_\omega$ (resp.  
$\mathrm{Bir}(\mathbb{C}^3)_\omega$) that preserve a surface.

\medskip

Besides we deal with some group properties. Danilov proved that 
$\mathrm{Aut}(\mathbb{C}^2)_\eta$ is not simple (\cite{Danilov}); Cantat and Lamy showed 
that $\mathrm{Bir}(\mathbb{P}^2_\mathbb{C})$ is not simple (\cite{CantatLamy}). In the 
same spirit we establish that 

\begin{thm}
The groups $\mathrm{Aut}(\mathbb{C}^3)_\omega$,
 $\mathrm{Bir}(\mathbb{C}^3)_\omega$, 
$\mathrm{Aut}(\mathbb{C}^3)_{\mathrm{c}(\omega)}$, the derived group of 
$\mathrm{Aut}(\mathbb{C}^3)_\omega$ and the derived group of 
$\mathrm{Aut}(\mathbb{C}^3)_{\mathrm{c}(\omega)}$ are not simple.
\end{thm}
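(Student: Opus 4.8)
The plan is, in each of the five cases, to exhibit a proper non-trivial normal subgroup. Write $\psi_c\colon(z_0,z_1,z_2)\mapsto(z_0,z_1,z_2+c)$ for the flow of the Reeb field $\frac{\partial}{\partial z_2}$ of $\omega$ (it satisfies $\omega\big(\frac{\partial}{\partial z_2}\big)=1$ and $\mathrm{d}\omega\big(\frac{\partial}{\partial z_2},\cdot\big)=0$), put $\mathbb{T}=\{\psi_c\mid c\in\mathbb{C}\}$, and put $h_\lambda\colon(z_0,z_1,z_2)\mapsto(\lambda z_0,z_1,\lambda z_2)$, so that $h_\lambda^*\omega=\lambda\omega$ and $h_\lambda\psi_ch_\lambda^{-1}=\psi_{\lambda c}$; all of $\mathbb{T}$ and the maps $h_\lambda$ belong to the relevant groups. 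For $\mathrm{Aut}(\mathbb{C}^3)_\omega$ and $\mathrm{Bir}(\mathbb{C}^3)_\omega$: a map preserving $\omega$ preserves its Reeb field, hence commutes with the flow $(\psi_c)_c$, so $\mathbb{T}$ is \emph{central}, in particular normal, in both groups; it is non-trivial and proper (both groups are strictly larger, containing e.g. $(z_0+z_1,z_1,z_2-\tfrac12 z_1^2)$), so neither group is simple. For $\mathrm{Aut}(\mathbb{C}^3)_{\mathrm{c}(\omega)}$: the multiplier $V(\phi)$ of a \emph{polynomial} contact automorphism is a non-zero constant (it and its reciprocal are both polynomials), so $V$ restricts to a group homomorphism onto $\mathbb{C}^*$ whose kernel is exactly $\mathrm{Aut}(\mathbb{C}^3)_\omega$; that kernel is normal, non-trivial and proper (the $h_\lambda$ realise all of $\mathbb{C}^*$), so $\mathrm{Aut}(\mathbb{C}^3)_{\mathrm{c}(\omega)}$ is not simple. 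These three facts are also immediate from Theorem~A.

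For the derived group $[\mathrm{Aut}(\mathbb{C}^3)_{\mathrm{c}(\omega)},\mathrm{Aut}(\mathbb{C}^3)_{\mathrm{c}(\omega)}]$: from $\psi_c=[h_2,\psi_c]$ we get $\mathbb{T}\subseteq[\mathrm{Aut}(\mathbb{C}^3)_{\mathrm{c}(\omega)},\mathrm{Aut}(\mathbb{C}^3)_{\mathrm{c}(\omega)}]$; and $\mathbb{T}$ is normal in $\mathrm{Aut}(\mathbb{C}^3)_{\mathrm{c}(\omega)}$, because writing an arbitrary element as $g\,h_\lambda$ with $g\in\mathrm{Aut}(\mathbb{C}^3)_\omega$ one has $(g h_\lambda)\psi_c(g h_\lambda)^{-1}=g\,\psi_{\lambda c}\,g^{-1}=\psi_{\lambda c}$, since $g$ centralises $\mathbb{T}$. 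Thus $\mathbb{T}$ is a non-trivial normal subgroup of this derived group; it is proper because the derived group contains $[\mathrm{Aut}(\mathbb{C}^3)_\omega,\mathrm{Aut}(\mathbb{C}^3)_\omega]$, which surjects via $\varsigma$ onto the non-abelian group $\mathrm{Aut}(\mathbb{C}^2)_\eta$ and so is not contained in $\ker\varsigma=\mathbb{T}$. Hence $[\mathrm{Aut}(\mathbb{C}^3)_{\mathrm{c}(\omega)},\mathrm{Aut}(\mathbb{C}^3)_{\mathrm{c}(\omega)}]$ is not simple.

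For the derived group $[\mathrm{Aut}(\mathbb{C}^3)_\omega,\mathrm{Aut}(\mathbb{C}^3)_\omega]$ the plan is to carry Danilov's theorem through $\varsigma$. By Theorem~D, together with the fact that a closed polynomial $1$-form on $\mathbb{C}^2$ has trivial residues (indeed is exact), the homomorphism $\varsigma\colon\mathrm{Aut}(\mathbb{C}^3)_\omega\to\mathrm{Aut}(\mathbb{C}^2)_\eta$ is surjective, with kernel $\mathbb{T}$; hence its restriction $\rho$ to $[\mathrm{Aut}(\mathbb{C}^3)_\omega,\mathrm{Aut}(\mathbb{C}^3)_\omega]$ has image $[\mathrm{Aut}(\mathbb{C}^2)_\eta,\mathrm{Aut}(\mathbb{C}^2)_\eta]$, which equals $\mathrm{Aut}(\mathbb{C}^2)_\eta$ since that group is perfect (it is generated by its subgroup of Jacobian-one affine maps together with elementary shears, each of which is a single commutator). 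Let $\mathrm{N}\triangleleft\mathrm{Aut}(\mathbb{C}^2)_\eta$ be Danilov's proper non-trivial normal subgroup \cite{Danilov}. Then $\rho^{-1}(\mathrm{N})$ is normal in $[\mathrm{Aut}(\mathbb{C}^3)_\omega,\mathrm{Aut}(\mathbb{C}^3)_\omega]$; it is non-trivial because it surjects onto $\mathrm{N}\ne\{\mathrm{id}\}$, and proper because $\rho$ maps the whole derived group onto $\mathrm{Aut}(\mathbb{C}^2)_\eta\not\subseteq\mathrm{N}$. Hence $[\mathrm{Aut}(\mathbb{C}^3)_\omega,\mathrm{Aut}(\mathbb{C}^3)_\omega]$ is not simple.

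The main obstacle is this last case. Inside $\mathrm{Aut}(\mathbb{C}^3)_\omega$ there is no dilation available to realise the central elements $\psi_c$ as commutators, so one cannot argue as self-containedly as for $\mathrm{Aut}(\mathbb{C}^3)_{\mathrm{c}(\omega)}$, and the behaviour of the central extension $1\to\mathbb{T}\to\mathrm{Aut}(\mathbb{C}^3)_\omega\to\mathrm{Aut}(\mathbb{C}^2)_\eta\to1$ under passage to derived subgroups is not transparent. The two points to be pinned down are: (i) that $\varsigma$ is genuinely onto at the level of polynomial automorphisms — this is where Theorem~D and the vanishing of the first de Rham cohomology of $\mathbb{C}^2$ enter — and (ii) that Danilov's $\mathrm{N}$ is not all of $\mathrm{Aut}(\mathbb{C}^2)_\eta$ and pulls back to a proper non-trivial normal subgroup, which is clean precisely because $\mathrm{Aut}(\mathbb{C}^2)_\eta$ is perfect. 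An alternative would be to bypass $\varsigma$ and decide directly whether $\mathbb{T}\cap[\mathrm{Aut}(\mathbb{C}^3)_\omega,\mathrm{Aut}(\mathbb{C}^3)_\omega]$ is a non-zero — hence central, hence normal and proper — subgroup, which amounts to computing the abelianisation of $\mathrm{Aut}(\mathbb{C}^3)_\omega$.
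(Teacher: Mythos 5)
Your proposal is correct. For the three full groups it is essentially the paper's own argument: the paper exhibits the morphism $\varsigma$ with kernel $\big\{(z_0,z_1,z_2+\beta)\big\}\simeq\mathbb{C}$ (Proposition \ref{Lem:descrrho} and the sequence (\ref{eq:2dsuiteex})), which is exactly your central subgroup $\mathbb{T}$; your use of the multiplier homomorphism $V\colon\mathrm{Aut}(\mathbb{C}^3)_{\mathrm{c}(\omega)}\to\mathbb{C}^*$ with kernel $\mathrm{Aut}(\mathbb{C}^3)_\omega$ is a harmless variant.

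Where you genuinely diverge is in the two derived-group cases, and your route is arguably the safer one. The paper (Proposition \ref{Pro:centre}) quotes its Theorems \ref{thm:perf11} and \ref{thm:perf13}, which assert that both derived groups are identified with $\mathrm{Aut}(\mathbb{C}^2)_\eta$ by $\varsigma$, and then invokes Danilov. You avoid this identification entirely. For $[\mathrm{Aut}(\mathbb{C}^3)_{\mathrm{c}(\omega)},\mathrm{Aut}(\mathbb{C}^3)_{\mathrm{c}(\omega)}]$ your identity $\psi_c=[h_2,\psi_c]$ puts $\mathbb{T}$ inside the derived group, and normality of $\mathbb{T}$ in the ambient contact group (via the factorization of Proposition \ref{pro:desccont}) plus non-abelianness of the derived group finishes the case with no appeal to Danilov at all; note that this same identity shows $\varsigma$ is \emph{not} injective on this derived group, so the exact sequence asserted in the proof of Theorem \ref{thm:perf13} cannot hold as stated — your argument is insensitive to this point, and the non-simplicity conclusion survives. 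For $[\mathrm{Aut}(\mathbb{C}^3)_\omega,\mathrm{Aut}(\mathbb{C}^3)_\omega]$ you pull Danilov's normal subgroup back along $\varsigma$ restricted to the derived group; this only needs surjectivity of $\varsigma$ (Proposition \ref{Lem:descrrho}, i.e. exactness of closed polynomial $1$-forms) and perfectness of $\mathrm{Aut}(\mathbb{C}^2)_\eta$ (\cite{FurterLamy}), not the injectivity on the derived group that the paper's isomorphism requires, and the preimage of a proper non-trivial normal subgroup under a surjection onto a group that is not contained in it is indeed proper, non-trivial and normal. Your closing "alternative" (deciding whether $\mathbb{T}$ meets this derived group) is not needed: the pullback argument as you wrote it is already complete.
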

 
Lamy proved that $\mathrm{Aut}(\mathbb{C}^2)$ satisfies the Tits alternative
(\cite{Lamy}), then Cantat showed that $\mathrm{Bir}(\mathbb{P}^2_\mathbb{C})$ also
(\cite{Cantat:tits}). In our context one gets that

\begin{thm}
The groups $\mathrm{Aut}(\mathbb{C}^3)_\omega$, 
$\mathrm{Aut}(\mathbb{C}^3)_{\mathrm{c}(\omega)}$ and 
$\mathrm{Bir}(\mathbb{C}^3)_\omega$ satisfy the Tits alternative.
\end{thm}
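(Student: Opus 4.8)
The plan is to deduce the statement formally from the two classical results quoted above --- Lamy's theorem that $\mathrm{Aut}(\mathbb{C}^2)$ satisfies the Tits alternative (\cite{Lamy}) and Cantat's theorem for $\mathrm{Bir}(\mathbb{P}^2_\mathbb{C})$ (\cite{Cantat:tits}) --- together with the structural descriptions obtained earlier, using only that the Tits alternative is preserved under two operations: passing to a subgroup (trivially) and taking group extensions. For the second I would argue as follows: given an exact sequence $1\to N\to G\xrightarrow{\pi}Q\to 1$ with $N$ and $Q$ satisfying the alternative, and a subgroup $H\le G$, either $\pi(H)$ contains a non-abelian free group $F$, in which case, $F$ being free, the epimorphism $H\twoheadrightarrow\pi(H)$ splits over $F$ and $H$ contains a copy of $F$; or $\pi(H)\le Q$ and $H\cap N\le N$ are both virtually solvable, and $H$, being an extension of the former by the latter, is virtually solvable. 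The last step uses that the class of virtually solvable groups is closed under extensions --- equivalently, that a virtually solvable group $A$ has a largest solvable normal subgroup (a characteristic ``solvable radical'' $R(A)$ with $A/R(A)$ finite), which holds because all solvable normal subgroups of $A$ have derived length bounded in terms of any fixed finite-index solvable subgroup; granting this, the usual derived-series computation closes the permanence lemma.

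With the lemma available I would record the three relevant extensions. The first theorem above yields, with $\mathbb{C}$ denoting the group of translations $z_2\mapsto z_2+c$,
\[
1\longrightarrow\mathbb{C}\longrightarrow\mathrm{Aut}(\mathbb{C}^3)_\omega\longrightarrow\mathrm{Aut}(\mathbb{C}^2)_\eta\longrightarrow 1
\]
and
\[
1\longrightarrow\mathrm{Aut}(\mathbb{C}^3)_\omega\longrightarrow\mathrm{Aut}(\mathbb{C}^3)_{\mathrm{c}(\omega)}\longrightarrow\mathbb{C}^*\longrightarrow 1,
\]
the last arrow being $\phi\mapsto V(\phi)$. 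For the birational group I would use that, by the exactness criterion recalled above, every element of $\mathrm{Bir}(\mathbb{C}^3)_\omega$ has the form $(\phi_0(z_0,z_1),\phi_1(z_0,z_1),z_2+b(z_0,z_1))$; hence the projection $\varsigma$ onto the first two coordinates is a group homomorphism, with kernel the group $\mathbb{C}$ of $z_2$-translations and image a subgroup of $\mathrm{Bir}(\mathbb{C}^2)_\eta\subset\mathrm{Bir}(\mathbb{P}^2_\mathbb{C})$, so that
\[
1\longrightarrow\mathbb{C}\longrightarrow\mathrm{Bir}(\mathbb{C}^3)_\omega\longrightarrow\mathrm{im}\,\varsigma\longrightarrow 1.
\]

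To conclude: $\mathbb{C}$ and $\mathbb{C}^*$ are abelian, hence satisfy the Tits alternative; $\mathrm{Aut}(\mathbb{C}^2)_\eta$ and $\mathrm{im}\,\varsigma$ satisfy it as subgroups of $\mathrm{Aut}(\mathbb{C}^2)$ and of $\mathrm{Bir}(\mathbb{P}^2_\mathbb{C})$. Applying the permanence lemma to the first and third extensions gives the alternative for $\mathrm{Aut}(\mathbb{C}^3)_\omega$ and for $\mathrm{Bir}(\mathbb{C}^3)_\omega$; feeding the former into the second extension gives it for $\mathrm{Aut}(\mathbb{C}^3)_{\mathrm{c}(\omega)}$. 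The proof is thus essentially formal once the earlier structure theorems are in place, and the only point that is not pure bookkeeping is the permanence of the Tits alternative under extensions --- concretely, the stability of ``virtually solvable'' under extensions. If one only wants the Tits alternative for finitely generated subgroups this becomes entirely routine, since a finitely generated virtually solvable group has finitely many subgroups of each finite index and hence a characteristic solvable subgroup of finite index.
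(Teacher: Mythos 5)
Your proposal is, in substance, the same proof as the paper's: project onto the two--dimensional groups, invoke \cite{Lamy} and \cite{Cantat:tits}, lift a non-abelian free subgroup of the image by choosing preimages of a free basis (freeness guarantees the lift is injective, which is exactly the paper's ``no nontrivial word dies'' argument), and treat the remaining case by a derived--series computation using the fact that $\ker\varsigma$ is abelian. The paper carries this out for $\mathrm{Bir}(\mathbb{C}^3)_\omega$ (hence for its subgroup $\mathrm{Aut}(\mathbb{C}^3)_\omega$), and for $\mathrm{Aut}(\mathbb{C}^3)_{\mathrm{c}(\omega)}$ the relevant sequence is (\ref{eq:2dsuiteex}), again with abelian kernel $\mathbb{C}$ and quotient $\mathrm{Aut}(\mathbb{C}^2)$. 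One packaging remark: since every kernel you really need is abelian, the solvable--radical machinery is dispensable; if the image of a finitely generated $H$ has a solvable finite--index subgroup of derived length $\ell$, its preimage in $H$ has finite index and trivial $(\ell+1)$-st derived group, which is the paper's one--line conclusion.

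There is, however, one genuine wrinkle in your chain of deductions. For $\mathrm{Aut}(\mathbb{C}^3)_{\mathrm{c}(\omega)}$ you apply the permanence lemma to $1\to\mathrm{Aut}(\mathbb{C}^3)_\omega\to\mathrm{Aut}(\mathbb{C}^3)_{\mathrm{c}(\omega)}\stackrel{V}{\to}\mathbb{C}^*\to 1$, whose kernel is \emph{not} abelian. In the second branch of your lemma you must know that $H\cap N$ is either virtually solvable or contains a free group, i.e.\ you need the Tits alternative of $N=\mathrm{Aut}(\mathbb{C}^3)_\omega$ applied to $H\cap N=\ker(V_{\vert H})$; but with the paper's definition (finitely generated subgroups only), which is all that your first step establishes, this is not available, since $\ker(V_{\vert H})$ need not be finitely generated even when $H$ is (kernels of surjections from finitely generated groups onto $\mathbb{Z}\subset\mathbb{C}^*$ can be infinitely generated). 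So ``feeding the former into the second extension'' is not justified as written. The repair is immediate and stays inside your framework: replace that extension by (\ref{eq:2dsuiteex}), $\mathbb{C}\to\mathrm{Aut}(\mathbb{C}^3)_{\mathrm{c}(\omega)}\stackrel{\varsigma}{\to}\mathrm{Aut}(\mathbb{C}^2)\to 1$, whose kernel is abelian and whose quotient is covered by \cite{Lamy}; alternatively, note that your first step actually yields the alternative for \emph{arbitrary} subgroups of $\mathrm{Aut}(\mathbb{C}^3)_\omega$ once one uses that Lamy's theorem holds without the finite generation hypothesis, after which your lemma applies as stated. Everything else --- the three exact sequences with their kernels, the free--lifting argument, and the closure of virtual solvability under extensions (including the bounded derived length of solvable normal subgroups) --- is correct.
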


\smallskip

\subsection*{Acknowledgments} We would like to thank Guy Casale for discussions 
about the non-integrability.

\section{Contact polynomial automorphisms}

\medskip

A \textbf{\textit{polynomial automorphism}} $\phi$ of $\mathbb{C}^n$ is a polynomial map of 
the type
\[
\phi\colon\mathbb{C}^n\to\mathbb{C}^n,\quad\quad\big(z_0,z_1,\ldots,z_{n-1})\mapsto(\phi_0(z_0,
z_1,\ldots,z_{n-1}),\phi_1(z_0,z_1,\ldots,z_{n-1}),\ldots,\phi_{n-1}(z_0,z_1,\ldots,z_{n-1})\big)
\]
that is bijective. The set of polynomial automorphisms of $\mathbb{C}^n$ form a group denoted 
$\mathrm{Aut}(\mathbb{C}^n)$.

The automorphisms of $\mathbb{C}^n$ of the form $(\phi_0,\phi_1,\ldots,\phi_{n-1})$ where 
$\phi_i$ depends only on $z_i$, $z_{i+1}$, $\ldots$, $z_{n-1}$ form the 
\textbf{\textit{Jonqui\`eres subgroup}} $\mathrm{J}_n\subset\mathrm{Aut}(\mathbb{C}^n)$. 
Moreover one has the inclusions
\[
\mathrm{GL}(\mathbb{C}^n)\subset\mathrm{Aff}_n\subset\mathrm{Aut}(\mathbb{C}^n)
\]
where $\mathrm{Aff}_n$ denotes the \textbf{\textit{group of affine maps}}
\[
\phi\colon(z_0,z_1,\ldots,z_{n-1})\mapsto\big(\phi_0(z_0,z_1,\ldots,z_{n-1}),\phi_1(z_0,z_1,
\ldots,z_{n-1}),\ldots,\phi_{n-1}(z_0,z_1,\ldots,z_{n-1})\big)
\]
with $\phi_i$ affine; $\mathrm{Aff}_n$ is the semi-direct product of $\mathrm{GL}(\mathbb{C}^n)$ 
with the commutative subgroups of translations. The subgroup 
$\mathrm{Tame}_n\subset\mathrm{Aut}(\mathbb{C}^n)$ generated by $\mathrm{J}_n$ and $
\mathrm{Aff}_n$ is called the \textbf{\textit{group of tame automorphisms}}.

\smallskip

\textbf{\textit{Convention:}} In all the article we denote $\mathbb{P}^n_\mathbb{C}$ by
$\mathbb{P}^n$, and we write "birational maps of $\mathbb{P}^n$" instead of 
"birational self-maps of $\mathbb{P}^n$".

\bigskip

\subsection{Contact forms and contact structures}

We recall in the context of $3$-manifolds the formalism of contact structure. Let $M$ be a 
complex $3$-manifold; we denote by $\Omega^i(M)$ the space of holomorphic $i$-forms on $M$. 
A \textbf{\textit{contact form}} on $M$ is an element $\Theta\in\Omega^1(M)$ such that the 
$3$-form $\Theta\wedge\mathrm{d}\Theta\in\Omega^3(M)$ has no zero: 
$\Theta\wedge\mathrm{d}\Theta(m)\not=0$ for any $m\in M$. For such a contact form there is 
a local model given by Darboux theorem: at each point $m$ there is a local 
biholomorphism $F\colon M,_m\to\mathbb{C}^3,_0$ such that 
$\Theta=F^*(z_0\mathrm{d}z_1+\mathrm{d}z_2)$. The $1$-form $z_0\mathrm{d}z_1+\mathrm{d}z_2$ 
is called the \textbf{\textit{standard contact form}} on $\mathbb{C}^3$; we denote it by 
$\omega$.

A \textbf{\textit{contact structure}} on the $3$-manifold $M$ is given by the following 
data:
\begin{itemize}
\smallskip
\item[$\mathfrak{i.}$] an open covering $M=\sqcup_k\mathcal{U}_k$,
\smallskip
\item[$\mathfrak{ii.}$] on each $\mathcal{U}_k$ a contact form 
$\Theta_k\in\Omega^1(\mathcal{U}_k)$,
\smallskip
\item[$\mathfrak{iii.}$] on each non-trivial intersection $\mathcal{U}_k\cap\mathcal{U}_\ell$ 
a holomorphic unit $g_{k\ell}\in\mathcal{O}^*(\mathcal{U}_k\cap\mathcal{U}_\ell)$ such that 
$\Theta_k=g_{k\ell}\Theta_\ell$.
\smallskip
\end{itemize}

A contact structure defines a holomorphic hyperplanes field 
$t\colon M\to \mathbb{P}(\mathrm{T}M)^\vee$ given for all $m\in\mathcal{U}_k$ by
\[
t(m)=\ker\Theta_k(m).
\]
As we recalled in \S \ref{Sec:intro} the compact K\"ahler manifolds having a 
contact structure are classified by Frantzen and Peternell theorem 
(\cite{FrantzenPeternell}). On $\mathbb{P}^3$ there is no contact form because 
there is no non-trivial global form. Nevertheless there are contact structures; one of 
them is given in homogeneous coordinates by the $1$-form
\[
\widetilde{\vartheta}=z_0\mathrm{d}z_1-z_1\mathrm{d}z_0+z_2\mathrm{d}z_3-z_3\mathrm{d}z_2.
\]
In that case we can take the standard covering by affine charts $\mathcal{U}_k=\{z_k=1\}$ 
and $\vartheta_k=\widetilde{\vartheta}_{\vert\mathcal{U}_k}$.

\begin{pro}\label{Pro:uniquecontactform}
Up to automorphisms of $\mathbb{P}^3$ there is only one contact structure 
on $\mathbb{P}^3$.
\end{pro}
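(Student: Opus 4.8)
The plan is to show that any contact structure on $\mathbb{P}^3$ is equivalent to the one defined by $\widetilde{\vartheta}$ by identifying the underlying line bundle and then normalizing the twisted $1$-form. First I would recall that a contact structure, being a field of hyperplanes $t\colon M\to\mathbb{P}(\mathrm{T}M)^\vee$, is the kernel of a global section of $\Omega^1_{\mathbb{P}^3}\otimes L$ for a suitable line bundle $L=\mathcal{O}_{\mathbb{P}^3}(d)$; the data $(\Theta_k,g_{k\ell})$ in the definition is exactly a cocycle description of such an $L$-twisted $1$-form. Wedging with its exterior derivative, the non-degeneracy condition $\Theta\wedge\mathrm{d}\Theta\neq 0$ says that the induced section of $\Omega^3_{\mathbb{P}^3}\otimes L^{\otimes 2}=\mathcal{O}_{\mathbb{P}^3}(-4)\otimes\mathcal{O}_{\mathbb{P}^3}(2d)$ is nowhere vanishing, hence this line bundle is trivial, forcing $2d=4$, i.e. $d=2$ and $L=\mathcal{O}_{\mathbb{P}^3}(2)$. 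So every contact structure on $\mathbb{P}^3$ is given by a global section $\Theta\in H^0\big(\mathbb{P}^3,\Omega^1_{\mathbb{P}^3}(2)\big)$ whose associated $3$-form is a nowhere-zero section of the trivial bundle.

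Next I would compute $H^0\big(\mathbb{P}^3,\Omega^1_{\mathbb{P}^3}(2)\big)$. From the Euler sequence $0\to\Omega^1_{\mathbb{P}^3}(2)\to\mathcal{O}_{\mathbb{P}^3}(1)^{\oplus 4}\to\mathcal{O}_{\mathbb{P}^3}(2)\to 0$, taking cohomology and using $H^1(\mathbb{P}^3,\Omega^1_{\mathbb{P}^3}(1))=0$ gives a short exact sequence of global sections; a dimension count shows $h^0(\Omega^1_{\mathbb{P}^3}(2))=4\cdot 4-10=6$. Concretely, such a section is represented in homogeneous coordinates by a $1$-form $\Theta=\sum_i a_i(z)\,\mathrm{d}z_i$ with $a_i$ linear and $\sum_i z_i a_i(z)=0$ (the Euler relation), i.e. $\Theta=\sum_{i<j}c_{ij}(z_i\mathrm{d}z_j-z_j\mathrm{d}z_i)$ for constants $c_{ij}$, which is precisely the space of constant $2$-forms $\beta$ on $\mathbb{C}^4$ contracted with the radial vector field, $\Theta=\iota_R\beta$. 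This identifies the space of candidate contact forms with the $6$-dimensional space $\Lambda^2(\mathbb{C}^4)^\vee$ of alternating bilinear forms, and $\widetilde{\vartheta}$ corresponds to the standard symplectic form $z_0\wedge z_1+z_2\wedge z_3$.

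Now I would compute the non-degeneracy condition in terms of $\beta$: for $\Theta=\iota_R\beta$ one has $\mathrm{d}\Theta=2\beta$ away from a constant, so $\Theta\wedge\mathrm{d}\Theta$ corresponds to $\iota_R(\beta\wedge\beta)$ up to a nonzero scalar, and $\beta\wedge\beta\in\Lambda^4(\mathbb{C}^4)^\vee$ is the Pfaffian of $\beta$ times the standard volume form. Hence the section $\Theta$ defines an honest contact structure (nowhere-vanishing $\Theta\wedge\mathrm{d}\Theta$) if and only if $\mathrm{Pf}(\beta)\neq 0$, i.e. $\beta$ is a non-degenerate alternating form. Finally I would invoke the classical fact that $\mathrm{GL}_4(\mathbb{C})$ acts transitively on non-degenerate alternating bilinear forms on $\mathbb{C}^4$ (all symplectic forms are equivalent), and that a linear change of homogeneous coordinates $A\in\mathrm{GL}_4(\mathbb{C})$ descends to an automorphism of $\mathbb{P}^3$ transforming $\iota_R\beta$ into a scalar multiple of $\iota_R(A^*\beta)$; since scalar multiples define the same structure, any such $\Theta$ is taken by an automorphism of $\mathbb{P}^3$ to $\widetilde{\vartheta}$, proving the proposition.

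The main obstacle I anticipate is the bookkeeping in the first and third paragraphs: correctly pinning down the twisting line bundle from the cocycle $g_{k\ell}$ (one must check that the $g_{k\ell}$ of a contact structure indeed glue to $\mathcal{O}_{\mathbb{P}^3}(2)$ rather than argue it only up to the constraint $2d=4$, and rule out torsion/other line bundles, which on $\mathbb{P}^3$ is automatic since $\mathrm{Pic}=\mathbb{Z}$), and keeping track of the exact constants relating $\mathrm{d}(\iota_R\beta)$ to $\beta$ and $\Theta\wedge\mathrm{d}\Theta$ to $\iota_R(\beta\wedge\beta)$ so that the non-degeneracy really is the Pfaffian condition. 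Everything else — the Euler-sequence cohomology computation and transitivity of $\mathrm{GL}_4$ on symplectic forms — is standard.
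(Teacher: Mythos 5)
Your proposal is correct and follows essentially the same route as the paper: both reduce a contact structure on $\mathbb{P}^3$ to a constant non-degenerate alternating $2$-form $\beta$ on $\mathbb{C}^4$ (with contact form $\iota_R\beta$, non-degeneracy being the Pfaffian condition) and conclude by the transitivity of $\mathrm{GL}(4;\mathbb{C})$ on such forms, which descends to $\mathrm{Aut}(\mathbb{P}^3)=\mathrm{PGL}(4;\mathbb{C})$. The only real difference is in how the reduction to constant coefficients is justified: you identify the twisting line bundle as $\mathcal{O}_{\mathbb{P}^3}(2)$ from the triviality of $\Omega^3\otimes L^{\otimes 2}$ and compute $H^0(\Omega^1_{\mathbb{P}^3}(2))$ via the Euler sequence, whereas the paper works directly with the associated homogeneous $1$-form $\beta$ satisfying $i_R\beta=0$ and cites Jouanolou for $\deg\mathrm{d}\beta=0$; your version is more self-contained but mathematically the same reduction.
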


\begin{proof}
Remark that to a contact structure on $\mathbb{P}^3$ is associated a homogeneous 
$1$-form $\beta$ on $\mathbb{C}^4$ such that $\mathcal{U}_k=\{z_k=1\}$ and 
$\Theta_k=\beta_{\vert\mathcal{U}_k}$ satisfies properties $\mathfrak{i.}$, $\mathfrak{ii.}$, 
$\mathfrak{iii.}$

\smallskip

Let $\beta$ be a contact structure on $\mathbb{P}^3$, and let 
$R=\displaystyle\sum_iz_i\frac{\partial}{\partial z_i}$ be the radial vector field. Since 
$i_R\beta=0$, to give~$\beta$ is equivalent to give $\mathrm{d}\beta$. According to 
\cite[Chapter 2, Proposition 2.1]{Jouanolou} one has $\deg \mathrm{d}\beta=0$; to give 
$\mathrm{d}\beta$ is thus equivalent to give an antisymmetric matrix of maximal rank. 
But up to conjugacy there is only one $4\times 4$ antisymmetric matrix of maximal rank.
\end{proof}

\begin{rem}
The group of linear automorphisms of $\mathbb{C}^4$ that preserve $\widetilde{\vartheta}$ 
coincides with the group of automorphisms of $\mathbb{P}^3$ that preserve 
$\mathrm{d}\widetilde{\vartheta}$; as a consequence the subgroup of 
$\mathrm{Aut}(\mathbb{P}^3)$ that preserves the contact structure associated to 
$\mathrm{d}\widetilde{\vartheta}$ is the projectivization of the symplectic group
$\mathrm{Sp}(4;\mathbb{C})$.
\end{rem}

Remark that the data of a global meromorphic $1$-form $\Theta$ on $M$ such that 
$\Theta\wedge\mathrm{d}\Theta\not\equiv 0$ induces a contact form (and a contact 
structure) on the complement of the poles and zeros of $\Theta$ and 
$\Theta\wedge\mathrm{d}\Theta$. In that case we say that $\Theta$ induces a 
\textbf{\textit{meromorphic contact structure}} on $M$.

For instance the Darboux form $\omega=z_0\mathrm{d}z_1+\mathrm{d}z_2$ induces a 
meromorphic contact structure on $\mathbb{P}^3$. In fact the forms $\omega$ and 
$\widetilde{\vartheta}_{\vert z_3=1}$ are conjugate on $\mathbb{C}^3$ via 
$\left(\frac{z_0}{2},z_1,-z_2+\frac{z_0z_1}{2}\right)$. The corresponding (meromorphic) 
contact structure are birationally conjugate on $\mathbb{P}^3$.

\medskip

\subsection{Description of contact automorphisms}

\smallskip

Let us describe $\mathrm{Aut}(\mathbb{C}^3)_\omega$. Set 
$\eta=\mathrm{d}\omega=\mathrm{d}z_0\wedge\mathrm{d}z_1$. 
Remark that the invariance of $\omega$ implies the invariance of $\eta$ and as a 
consequence the equality $(\phi_0,\phi_1)^*\eta=\eta$. 

\begin{pro}\label{Lem:chpinv}
If $\phi$ belongs to $\mathrm{Aut}(\mathbb{C}^3)_\omega$, then 
$\phi_*\frac{\partial}{\partial z_2}=\frac{\partial}{\partial z_2}$.

\smallskip

\noindent In particular if $\phi$ belongs to 
$\mathrm{Aut}(\mathbb{C}^3)_\omega$, then
\[
\phi=\big(\phi_0(z_0,z_1),\phi_1(z_0,z_1),z_2+b(z_0,z_1)\big)
\]
and the map
\[
\varsigma\colon \mathrm{Aut}(\mathbb{C}^3)_\omega\longrightarrow\mathrm{Aut}
(\mathbb{C}^2)_\eta,\qquad \big(\phi_0(z_0,z_1),\phi_1(z_0,z_1),z_2+b(z_0,z_1)
\big)\mapsto\big(\phi_0(z_0,z_1),\phi_1(z_0,z_1)\big)
\]
is a morphism.
\end{pro}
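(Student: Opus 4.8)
The plan is to read off $\phi_*\frac{\partial}{\partial z_2}$ from the two facts already available: the invariance $\mathrm{d}\phi_0\wedge\mathrm{d}\phi_1=\mathrm{d}z_0\wedge\mathrm{d}z_1$ of $\eta$, and the identity $\phi^*\omega=\omega$ itself. The statement is the concrete shadow of a general principle: $\frac{\partial}{\partial z_2}$ is the Reeb field of the contact form $\omega$ — the unique $R$ with $i_R\omega=1$ and $i_R\mathrm{d}\omega=0$ — and a strict contactomorphism carries the Reeb field of $\omega$ to itself. I will verify it by a direct computation.

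First I would expand $\mathrm{d}\phi_0\wedge\mathrm{d}\phi_1=\mathrm{d}z_0\wedge\mathrm{d}z_1$ in the basis $\{\mathrm{d}z_i\wedge\mathrm{d}z_j\}$. Writing $v_i=\big(\partial_{z_i}\phi_0,\partial_{z_i}\phi_1\big)$ for $i=0,1,2$, this amounts to $\det(v_0,v_1)=1$ and $\det(v_0,v_2)=\det(v_1,v_2)=0$. The first equality makes $v_0,v_1$ a basis of $\mathbb{C}^2$, so $v_2=\lambda v_0+\mu v_1$ for some functions $\lambda,\mu$; substituting into the other two equalities gives $\mu=\det(v_0,v_2)=0$ and $-\lambda=\det(v_1,v_2)=0$. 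Hence $v_2=0$, i.e. $\partial_{z_2}\phi_0=\partial_{z_2}\phi_1=0$, and then the $\mathrm{d}z_2$-coefficient of $\phi^*\omega=\omega$ — which is equation $(\star_3)$ with $V(\phi)=1$ — reduces to $\partial_{z_2}\phi_2=1$. Since $\phi_*\frac{\partial}{\partial z_2}$ is the third column of the Jacobian of $\phi$, this says $\phi_*\frac{\partial}{\partial z_2}=\frac{\partial}{\partial z_2}$; integrating in $z_2$ gives the normal form $\phi=\big(\phi_0(z_0,z_1),\phi_1(z_0,z_1),z_2+b(z_0,z_1)\big)$.

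For the final assertion I would observe that $\phi^{-1}\in\mathrm{Aut}(\mathbb{C}^3)_\omega$, hence has the same normal form; composing $\phi$ with $\phi^{-1}$ then shows $(\phi_0,\phi_1)$ lies in $\mathrm{Aut}(\mathbb{C}^2)$, and it preserves $\eta$ by the remark preceding the statement, so $\varsigma$ is valued in $\mathrm{Aut}(\mathbb{C}^2)_\eta$. Given $\phi,\psi\in\mathrm{Aut}(\mathbb{C}^3)_\omega$, the first two components of $\phi\circ\psi$ are $\phi_0\big(\psi_0(z_0,z_1),\psi_1(z_0,z_1)\big)$ and $\phi_1\big(\psi_0(z_0,z_1),\psi_1(z_0,z_1)\big)$, because $\phi_0,\phi_1$ ignore the third coordinate and $\psi_0,\psi_1$ do not involve $z_2$; therefore $\varsigma(\phi\circ\psi)=\varsigma(\phi)\circ\varsigma(\psi)$ and $\varsigma$ is a group morphism.

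The only step calling for a little attention is the linear-algebra argument that a single nonvanishing minor of $\mathrm{d}\phi_0\wedge\mathrm{d}\phi_1$ forces the other two to vanish identically; beyond that the proof is routine, so I expect no real obstacle.
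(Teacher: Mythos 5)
Your proof is correct. It reaches the same key fact as the paper --- $\phi_*\frac{\partial}{\partial z_2}=\frac{\partial}{\partial z_2}$ --- but by a slightly different route: the paper simply invokes the uniqueness of the Reeb vector field of $\omega$ (the unique $\chi$ with $\omega(\chi)=1$, $i_\chi\mathrm{d}\omega=0$, here $\chi=\frac{\partial}{\partial z_2}$) and the fact that any $\phi$ with $\phi^*\omega=\omega$ pushes this field to itself, whereas you verify the same identity by hand from the invariance of $\eta=\mathrm{d}\omega$ (the three minor conditions $\det(v_0,v_1)=1$, $\det(v_0,v_2)=\det(v_1,v_2)=0$ forcing $\partial_{z_2}\phi_0=\partial_{z_2}\phi_1=0$) together with the $\mathrm{d}z_2$-component of $\phi^*\omega=\omega$, which gives $\partial_{z_2}\phi_2=1$. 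Your computation is essentially an unpacking of the Reeb argument; it is more elementary and self-contained (and you also spell out the points the paper leaves implicit: that $(\phi_0,\phi_1)$ is invertible because $\phi^{-1}$ has the same normal form, that it preserves $\eta$, and that $\varsigma$ respects composition), while the paper's version is shorter and emphasizes the contact-geometric principle that strict contactomorphisms preserve the Reeb field. No gaps.
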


\begin{proof}
As we already mentioned, for a contact form there exists a unique vector field $\chi$, 
called Reeb vector field, such that $\omega(\chi)=~1$ and 
$i_\chi\mathrm{d}\omega=0$; here $\chi=\frac{\partial}{\partial z_2}$. If $\phi$ 
belongs to $\mathrm{Aut}(\mathbb{C}^3)_\omega$, then $\phi_*\chi=\chi$. As 
a result~$\phi$ has the following form
\[
\phi=\big(\phi_0(z_0,z_1),\phi_1(z_0,z_1),z_2+b(z_0,z_1)\big)
\]
with $(\phi_0,\phi_1)$ in $\mathrm{Aut}(\mathbb{C}^2)$ and $b$ in 
$\mathbb{C}[z_0,z_1]$.
\end{proof}

\begin{rem}
Any element of $\mathrm{Aut}(\mathbb{C}^3)_{\mathrm{c}(\omega)}$ can be written
\[
\big(\varphi_0,\varphi_1,\det\mathrm{jac}\,\varphi\, z_2 +b(z_0,z_1) \big)
\]
where $\varphi=(\varphi_0,\varphi_1)\in\mathrm{Aut}(\mathbb{C}^2)$ and
$\mathrm{d}b=(\det\mathrm{jac}\,\varphi)z_0\mathrm{d}z_1-\varphi_0\mathrm{d}\varphi_1.$ 
Let us still denote by $\varsigma$ the natural projection
\[
\varsigma\colon \mathrm{Aut}(\mathbb{C}^3)_{\mathrm{c}(\omega)}\to 
\mathrm{Aut}(\mathbb{C}^2).
\]

An element $\phi$ of $\mathrm{Bir}(\mathbb{C}^2)_\eta$ is 
\textbf{\textit{exact}} if it can be lifted via $\varsigma$ to 
$\mathrm{Bir}(\mathbb{C}^3)_\omega$, or equivalently if it belongs to 
$\mathrm{im}\,\varsigma$.

Contrary to the birational case (Theorem \ref{Thm:critere}) any element of 
$\mathrm{Aut}(\mathbb{C}^2)$ can be lifted via $\varsigma$ to 
$\mathrm{Aut}(\mathbb{C}^3)_{\mathrm{c}(\omega)}$. Since $b$ is defined up to a 
constant we do not speak about the 
$\varsigma$-lift but a $\varsigma$-lift.
\end{rem}

The following obvious statement describes the group $\mathrm{Aut}(\mathbb{C}^3)_\omega$:

\begin{pro}\label{Lem:descrrho}
Let us consider the morphism
\[
\varsigma\colon \mathrm{Aut}(\mathbb{C}^3)_\omega\longrightarrow\mathrm{Aut}(
\mathbb{C}^2)_\eta, \qquad \big(\phi_0(z_0,z_1),\phi_1(z_0,z_1),z_2+b(z_0,z_1)\big)\mapsto 
\big(\phi_0(z_0,z_1),\phi_1(z_0,z_1)\big).
\]
One has the following exact sequence
\begin{equation}\label{eq:exactsequence}
0\longrightarrow\mathbb{C}\longrightarrow\mathrm{Aut}(\mathbb{C}^3)_\omega
\stackrel{\varsigma}{\longrightarrow}\mathrm{Aut}(\mathbb{C}^2)_\eta\longrightarrow 1;
\end{equation}
more precisely 
$\ker\varsigma=\big\{(z_0,z_1,z_2+\beta)\,\vert\, \beta\in\mathbb{C}\big\}$. 
In particular
\[
\mathrm{Aut}(\mathbb{C}^3)_\omega\simeq\mathrm{Aut}(\mathbb{C}^2)_\eta\ltimes\mathbb{C}.
\]
\end{pro}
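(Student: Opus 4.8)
The plan is to verify the three assertions in Proposition~\ref{Lem:descrrho} in turn, using the structural description already obtained in Proposition~\ref{Lem:chpinv}. First I would check that $\varsigma$ is a well-defined group morphism: by Proposition~\ref{Lem:chpinv} every $\phi\in\mathrm{Aut}(\mathbb{C}^3)_\omega$ has the form $\phi=(\phi_0(z_0,z_1),\phi_1(z_0,z_1),z_2+b(z_0,z_1))$ with $(\phi_0,\phi_1)\in\mathrm{Aut}(\mathbb{C}^2)$, and the invariance of $\omega$ forces $(\phi_0,\phi_1)^*\eta=\eta$, i.e. $(\phi_0,\phi_1)\in\mathrm{Aut}(\mathbb{C}^2)_\eta$; composing two such maps, the first two coordinates of the composite are just the composite of the first two coordinates, so $\varsigma$ is a morphism.

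Next I would identify the kernel. If $\varsigma(\phi)=\mathrm{id}$ then $\phi_0=z_0$, $\phi_1=z_1$, so $\phi=(z_0,z_1,z_2+b(z_0,z_1))$; imposing $\phi^*\omega=\omega$ gives $z_0\,\mathrm{d}z_1+\mathrm{d}z_2+\mathrm{d}b=z_0\,\mathrm{d}z_1+\mathrm{d}z_2$, hence $\mathrm{d}b=0$, so $b\in\mathbb{C}$ is constant. This shows $\ker\varsigma=\{(z_0,z_1,z_2+\beta)\mid\beta\in\mathbb{C}\}\cong\mathbb{C}$, and the exactness of \eqref{eq:exactsequence} on the left and in the middle follows.

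The one genuine point is surjectivity of $\varsigma$: given $\varphi=(\varphi_0,\varphi_1)\in\mathrm{Aut}(\mathbb{C}^2)_\eta$, I must produce $b\in\mathbb{C}[z_0,z_1]$ with $(\varphi_0,\varphi_1,z_2+b)\in\mathrm{Aut}(\mathbb{C}^3)_\omega$. The condition $\phi^*\omega=\omega$ reduces, using $\phi^*(\mathrm{d}z_2)=\mathrm{d}z_2+\mathrm{d}b$, to $\varphi_0\,\mathrm{d}\varphi_1+\mathrm{d}b=z_0\,\mathrm{d}z_1$, i.e. $\mathrm{d}b=z_0\,\mathrm{d}z_1-\varphi_0\,\mathrm{d}\varphi_1$. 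So $b$ exists as a polynomial precisely when the $1$-form $\theta:=z_0\,\mathrm{d}z_1-\varphi_0\,\mathrm{d}\varphi_1$ on $\mathbb{C}^2$ is exact. Now $\mathrm{d}\theta=\mathrm{d}z_0\wedge\mathrm{d}z_1-\mathrm{d}\varphi_0\wedge\mathrm{d}\varphi_1=\eta-\varphi^*\eta=0$ since $\varphi\in\mathrm{Aut}(\mathbb{C}^2)_\eta$; a closed polynomial $1$-form on $\mathbb{C}^2$ is exact (the affine plane has trivial de Rham cohomology, or one integrates explicitly by the Poincaré lemma and checks the primitive is polynomial), so $\theta=\mathrm{d}b$ for some $b\in\mathbb{C}[z_0,z_1]$, well defined up to an additive constant. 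This gives the desired lift and hence $\varsigma$ is surjective; the ambiguity in $b$ is exactly $\ker\varsigma$.

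Finally, for the semidirect product statement, I would exhibit a splitting of \eqref{eq:exactsequence}: the assignment $\varphi=(\varphi_0,\varphi_1)\mapsto(\varphi_0,\varphi_1,z_2+b_\varphi)$, where $b_\varphi$ is the unique primitive of $z_0\,\mathrm{d}z_1-\varphi_0\,\mathrm{d}\varphi_1$ normalised by $b_\varphi(0,0)=0$, is a section of $\varsigma$; one checks it is a group homomorphism by verifying the cocycle identity $b_{\varphi\circ\psi}=b_\varphi\circ\psi+b_\psi$ (up to the normalising constant), which follows from $(\varphi\circ\psi)^*\omega=\psi^*\varphi^*\omega$ applied to the respective lifts. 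Granting this section, $\mathrm{Aut}(\mathbb{C}^3)_\omega\simeq\mathrm{Aut}(\mathbb{C}^2)_\eta\ltimes\mathbb{C}$, with $\mathrm{Aut}(\mathbb{C}^2)_\eta$ acting on $\mathbb{C}=\ker\varsigma$ trivially (since conjugating $(z_0,z_1,z_2+\beta)$ by any lift leaves the translation part $\beta$ unchanged), so the product is in fact direct as far as the action goes, though the extension need not split canonically — the content is entirely in the existence of the polynomial primitive $b_\varphi$, which is the one mildly substantive step and the one I would present with care.
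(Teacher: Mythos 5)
Your treatment of the morphism property, of the kernel, and above all of surjectivity is exactly the paper's argument: the paper's proof consists precisely of the observation that $z_0\,\mathrm{d}z_1-\varphi_0\,\mathrm{d}\varphi_1$ is a closed polynomial $1$-form (closed because $\varphi^*\eta=\eta$), hence exact, so every $\varphi\in\mathrm{Aut}(\mathbb{C}^2)_\eta$ lifts; the identification of $\ker\varsigma$ and the last isomorphism are treated there as immediate consequences. So on the substantive point you and the paper coincide.

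The one step of yours that does not work as written is the splitting. The section $\varphi\mapsto(\varphi_0,\varphi_1,z_2+b_\varphi)$ normalised by $b_\varphi(0,0)=0$ is not a group homomorphism: composing the normalised lifts gives third component $z_2+b_\psi+b_\varphi\circ\psi$, and $b_\psi+b_\varphi\circ\psi$ differs from $b_{\varphi\circ\psi}$ by the constant $b_\varphi(\psi(0,0))$, which has no reason to vanish. For instance $\varphi=(z_0,z_1+z_0^2)$ has $b_\varphi=-\tfrac{2}{3}z_0^3$, and for the translation $\psi=(z_0+1,z_1)$ the offending constant is $b_\varphi(1,0)=-\tfrac{2}{3}\neq 0$. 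The phrase ``up to the normalising constant'' is exactly where the homomorphism property fails, and the failure matters: as you yourself note, $\ker\varsigma$ is central (conjugation acts trivially on it), so a homomorphic section would upgrade \eqref{eq:exactsequence} to a direct product decomposition --- a genuine claim that cannot be obtained by such a soft normalisation of the primitive. To be fair, the paper's own proof does not address this either; if you want the final isomorphism as more than a restatement of the central extension \eqref{eq:exactsequence}, you must produce an actual complement to $\ker\varsigma$ (for example the derived subgroup, which Theorem~\ref{thm:perf11} identifies with a subgroup mapped isomorphically onto $\mathrm{Aut}(\mathbb{C}^2)_\eta$ by $\varsigma$), or else drop the claimed section and state the result at the level of the exact sequence, which is all your (and the paper's) argument establishes.
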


\begin{proof}
The $1$-form $\phi_0\mathrm{d}\phi_1-z_0\mathrm{d}z_1$ is a closed and polynomial one, 
so it is exact. Therefore $\varsigma$ is surjective.
\end{proof}

Let $\mathrm{G}$ be a group. The \textbf{\textit{derived group}} of $\mathrm{G}$ is 
the subgroup of $\mathrm{G}$ gene\-rated by all the commutators of $\mathrm{G}$:
\[
[\mathrm{G},\mathrm{G}]=\langle ghg^{-1}h^{-1}\,\vert\, g,\, h\in\mathrm{G}\rangle
\]
The group $\mathrm{G}$ is said to be \textbf{\textit{perfect}} if it coincides with its 
derived group, or equivalently, if the group has no nontrivial abelian quotients.

Such a property was established in the context of real smooth manifolds: Banyaga 
proved that the derived group of the group of contact diffeomorphisms is a perfect one 
(\cite{Banyaga1,Banyaga2,Banyaga3}).

\begin{thm}\label{thm:perf11}
The group $[\mathrm{Aut}(\mathbb{C}^3)_\omega,\mathrm{Aut}(\mathbb{C}^3)_\omega]$ is 
perfect. 
\end{thm}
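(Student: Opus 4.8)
The structure of $\mathrm{Aut}(\mathbb{C}^3)_\omega$ is pinned down by the exact sequence \eqref{eq:exactsequence}: it is $\mathrm{Aut}(\mathbb{C}^2)_\eta\ltimes\mathbb{C}$, where the central $\mathbb{C}$ consists of the translations $(z_0,z_1,z_2+\beta)$. I would first reduce the problem to understanding commutators, using the short exact sequence to transfer the known perfectness of $[\mathrm{Aut}(\mathbb{C}^2)_\eta,\mathrm{Aut}(\mathbb{C}^2)_\eta]$ (this is in the Banyaga/Danilov circle of ideas invoked in the text, and the derived group of the area-preserving polynomial automorphism group of $\mathbb{C}^2$ is known to be perfect/simple modulo its center) to information about $[\mathrm{Aut}(\mathbb{C}^3)_\omega,\mathrm{Aut}(\mathbb{C}^3)_\omega]$. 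Concretely, applying $\varsigma$ to a commutator shows $\varsigma\big([\mathrm{Aut}(\mathbb{C}^3)_\omega,\mathrm{Aut}(\mathbb{C}^3)_\omega]\big)=[\mathrm{Aut}(\mathbb{C}^2)_\eta,\mathrm{Aut}(\mathbb{C}^2)_\eta]$, so the derived group of $\mathrm{Aut}(\mathbb{C}^3)_\omega$ sits in an exact sequence $0\to N\to[\mathrm{Aut}(\mathbb{C}^3)_\omega,\mathrm{Aut}(\mathbb{C}^3)_\omega]\to[\mathrm{Aut}(\mathbb{C}^2)_\eta,\mathrm{Aut}(\mathbb{C}^2)_\eta]\to 1$ with $N\subseteq\mathbb{C}$ the central translations lying in the derived group.

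\textbf{Key steps.} Step one: show $N=\mathbb{C}$, i.e. every translation $(z_0,z_1,z_2+\beta)$ is a product of commutators in $\mathrm{Aut}(\mathbb{C}^3)_\omega$. The trick is that the homothety-type elements of $\mathrm{Aut}(\mathbb{C}^3)_{\mathrm{c}(\omega)}$ rescale $z_2$ (and hence rescale $\beta$), so a commutator of a translation with a suitable contact automorphism produces $(z_0,z_1,z_2+(\lambda-1)\beta)$ for $\lambda\neq 1$; but one must stay inside $\mathrm{Aut}(\mathbb{C}^3)_\omega$ rather than $\mathrm{Aut}(\mathbb{C}^3)_{\mathrm{c}(\omega)}$. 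So instead I would realize the translation as a commutator $[\phi,\psi]$ with $\phi,\psi\in\mathrm{Aut}(\mathbb{C}^3)_\omega$ by choosing $\phi=(\varphi_0,\varphi_1,z_2+b)$ and $\psi$ so that the "$b$-part" of the commutator is a nonzero constant — this amounts to finding $\varphi\in\mathrm{Aut}(\mathbb{C}^2)_\eta$ and a primitive $b$ with $\psi^*b-b$ (transported appropriately) equal to a nonzero constant, which is an exactness/residue computation on $\mathbb{C}^2$. A clean choice: take $\varphi$ a linear area-preserving map and $b$ a suitable polynomial so that the commutator collapses to pure translation. Step two: now lift. Given any $h\in[\mathrm{Aut}(\mathbb{C}^3)_\omega,\mathrm{Aut}(\mathbb{C}^3)_\omega]$, write $\varsigma(h)$ as a product of commutators of commutators in $\mathrm{Aut}(\mathbb{C}^2)_\eta$ (using perfectness of that derived group), lift each factor arbitrarily to $\mathrm{Aut}(\mathbb{C}^3)_\omega$ (possible by Proposition \ref{Lem:descrrho}), form the corresponding product of iterated commutators in $\mathrm{Aut}(\mathbb{C}^3)_\omega$; it differs from $h$ by an element of $\ker\varsigma\cap[\mathrm{Aut}(\mathbb{C}^3)_\omega,\mathrm{Aut}(\mathbb{C}^3)_\omega]=N=\mathbb{C}$, and Step one expresses that remaining translation as a product of commutators of elements that themselves lie in the derived group — here one uses that translations, being central, are commutators of commutators rather than just commutators, or else that the "building block" commutators from Step one can themselves be taken inside $[\mathrm{Aut}(\mathbb{C}^3)_\omega,\mathrm{Aut}(\mathbb{C}^3)_\omega]$.

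\textbf{Main obstacle.} The delicate point is the second part of Step one: showing the central translations are not merely commutators in $\mathrm{Aut}(\mathbb{C}^3)_\omega$ but commutators of elements of the derived group $[\mathrm{Aut}(\mathbb{C}^3)_\omega,\mathrm{Aut}(\mathbb{C}^3)_\omega]$, which is exactly what perfectness of the derived group requires. I expect to handle this by exhibiting explicit tame area-preserving maps $\varphi,\psi\in\mathrm{Aut}(\mathbb{C}^2)_\eta$ which are themselves products of commutators (so their chosen lifts to $\mathrm{Aut}(\mathbb{C}^3)_\omega$ can be arranged to lie in the derived group) and whose lifted commutator has nontrivial, indeed arbitrary, translation part in $z_2$ — the freedom in the choice of primitive $b$ at each lift stage is what feeds the nonzero constant. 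Once both translations and a set of generators of $[\mathrm{Aut}(\mathbb{C}^2)_\eta,\mathrm{Aut}(\mathbb{C}^2)_\eta]$ are realized as products of commutators of derived-group elements, perfectness of $[\mathrm{Aut}(\mathbb{C}^3)_\omega,\mathrm{Aut}(\mathbb{C}^3)_\omega]$ follows.
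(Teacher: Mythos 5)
Your plan departs from the paper's proof at exactly the decisive point, and it is your version of that point which is sustainable. The paper restricts $\varsigma$ to the derived group, asserts that an element of $\ker\varsigma$ which is a product of commutators must have $\beta=0$, and concludes $[\mathrm{Aut}(\mathbb{C}^3)_\omega,\mathrm{Aut}(\mathbb{C}^3)_\omega]\simeq\mathrm{Aut}(\mathbb{C}^2)_\eta$, perfect by Furter--Lamy. You claim the opposite, namely $N=\mathbb{C}$: every translation $(z_0,z_1,z_2+\beta)$ is a product of commutators in $\mathrm{Aut}(\mathbb{C}^3)_\omega$. Your claim is correct, and by a one-line Heisenberg computation: the maps $\phi=(z_0,z_1+t,z_2)$ and $\psi=(z_0+1,z_1,z_2-z_1)$ both preserve $\omega$, and $[\phi,\psi]=(z_0,z_1,z_2+t)$; more generally the lifts of the plane translations by $(s,t)$ and $(u,v)$ have commutator $(z_0,z_1,z_2+ut-sv)$. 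So every central translation is even a single commutator, which is incompatible with the paper's kernel claim (and shows the central extension $0\to\mathbb{C}\to\mathrm{Aut}(\mathbb{C}^3)_\omega\to\mathrm{Aut}(\mathbb{C}^2)_\eta\to 1$ does not split off the center). Moreover, once your Step one is in place the conclusion comes faster than you fear: writing $\mathrm{G}=\mathrm{Aut}(\mathbb{C}^3)_\omega$, the derived group $[\mathrm{G},\mathrm{G}]$ contains $\ker\varsigma$ by Step one and satisfies $\varsigma([\mathrm{G},\mathrm{G}])=[\mathrm{Aut}(\mathbb{C}^2)_\eta,\mathrm{Aut}(\mathbb{C}^2)_\eta]=\mathrm{Aut}(\mathbb{C}^2)_\eta$, hence $[\mathrm{G},\mathrm{G}]=\mathrm{G}$; then $[[\mathrm{G},\mathrm{G}],[\mathrm{G},\mathrm{G}]]=[\mathrm{G},\mathrm{G}]$ and the theorem follows. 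In particular all of your manoeuvring in Step two and in the ``main obstacle'' paragraph (translations as commutators of derived-group elements, commutators of commutators) is unnecessary: since $[\mathrm{G},\mathrm{G}]=\mathrm{G}$, commutators of arbitrary elements already are commutators of derived-group elements. What your route buys is in fact a stronger statement than the paper's, namely that $\mathrm{Aut}(\mathbb{C}^3)_\omega$ is itself perfect.

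Two inaccuracies in the mechanism you propose for Step one should be corrected, although they do not affect the viability of the plan. First, your ``clean choice'' of a \emph{linear} area-preserving $\varphi$ cannot produce a nonzero constant: for commuting linear $f,g$ the primitives $b,c$ may be taken homogeneous quadratic, the additive constants cancel in any commutator, and the resulting translation constant, being the value at the origin of a homogeneous function, is $0$. The nonzero constant really comes from the affine (translation) parts of the plane maps, as in the explicit pair above. Second, ``the freedom in the choice of primitive $b$ at each lift stage is what feeds the nonzero constant'' is not right: changing the primitive changes the lift by a central element, and central factors never alter a commutator; there is no freedom in $b$ to exploit, only in the choice of the maps downstairs.
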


\begin{proof}
Since $\varsigma$ is surjective (Proposition \ref{Lem:descrrho}) and 
$\mathrm{Aut}(\mathbb{C}^2)_\eta$ is perfect (\cite[Proposition 10]{FurterLamy}) the restriction 
of $\varsigma$
\[
\widetilde{\varsigma}=\varsigma_{\vert[\mathrm{Aut}(\mathbb{C}^3)_\omega,\mathrm{Aut}(\mathbb{C}^3)_\omega]}\colon
[\mathrm{Aut}(\mathbb{C}^3)_\omega,\mathrm{Aut}(\mathbb{C}^3)_\omega]\longrightarrow \mathrm{Aut}
(\mathbb{C}^2)_\eta
\]
is surjective. Let $\phi$ be in $\ker\widetilde{\varsigma}$; on the one hand 
$\phi=(z_0,z_1,z_2+\beta)$ for some $\beta$ (Proposition \ref{Lem:descrrho}), and on the 
other hand $\phi$ is a product of commutators hence $\beta=0$. We thus have the following 
exact sequence
\[
0\longrightarrow[\mathrm{Aut}(\mathbb{C}^3)_\omega,\mathrm{Aut}(\mathbb{C}^3)_\omega]
\longrightarrow\mathrm{Aut}(\mathbb{C}^2)_\eta\longrightarrow 1
\]
and 
$[\mathrm{Aut}(\mathbb{C}^3)_\omega,\mathrm{Aut}(\mathbb{C}^3)_\omega] \simeq\mathrm{Aut}(\mathbb{C}^2)_\eta$ 
which is perfect (\cite[Proposition 10]{FurterLamy}).
\end{proof}

\smallskip

We will now describe $\mathrm{Aut}(\mathbb{C}^3)_{\mathrm{c}(\omega)}$.
Let us recall that $\mathrm{Aut}(\mathbb{C}^2)$ is generated by $\mathrm{J}_2$ and 
$\mathrm{Aff}_2$ (\emph{see} \cite{Jung}). This implies that $\mathrm{Aff}_2$ and 
\[
[\mathrm{J}_2,\mathrm{J}_2]=\big\{(z_0+\beta,z_1+P (z_0))\,\vert\,\beta\in\mathbb{C},\,P\in\mathbb{C}[z_0]\big\}.
\]
generate $\mathrm{Aut}(\mathbb{C}^2)$.

\begin{pro}\label{pro:genautC3}
The group $\mathrm{Aut}(\mathbb{C}^3)_{\mathrm{c}(\omega)}$ is generated by $\mathcal{A}$ 
and $\mathcal{E}$ where
\[
\mathcal{E}=\big\{\varsigma\text{-lifts of }\mathfrak{e}\,\vert \,\mathfrak{e}\in[\mathrm{J}_2,\mathrm{J}_2]\big\}
\qquad
\text{and}
\qquad
\mathcal{A}=\big\{ \varsigma\text{-lifts of }\mathfrak{a}\,\vert \,\mathfrak{a}\in\mathrm{Aff}_2\big\}.
\]
\end{pro}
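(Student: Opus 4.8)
The plan is to reduce the statement to the known fact that $\mathrm{Aff}_2$ and $[\mathrm{J}_2,\mathrm{J}_2]$ generate $\mathrm{Aut}(\mathbb{C}^2)$, by exploiting the projection $\varsigma\colon\mathrm{Aut}(\mathbb{C}^3)_{\mathrm{c}(\omega)}\to\mathrm{Aut}(\mathbb{C}^2)$ described in the remark above. First I would observe that $\varsigma$ is surjective: indeed, for every $\varphi\in\mathrm{Aut}(\mathbb{C}^2)$ the $1$-form $(\det\mathrm{jac}\,\varphi)z_0\mathrm{d}z_1-\varphi_0\mathrm{d}\varphi_1$ is closed (this is exactly the statement that $\varphi^*(\mathrm{d}z_0\wedge\mathrm{d}z_1)=(\det\mathrm{jac}\,\varphi)\,\mathrm{d}z_0\wedge\mathrm{d}z_1$, so the exterior derivative of the displayed $1$-form vanishes) and polynomial, hence exact, so a $\varsigma$-lift exists. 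The kernel of $\varsigma$ consists of the maps $(z_0,z_1,z_2+\beta)$ with $\beta\in\mathbb{C}$, which lie in $\mathcal{A}$ (they are $\varsigma$-lifts of $\mathrm{id}\in\mathrm{Aff}_2$, since $b$ is defined only up to an additive constant).

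Next, let $\mathrm{H}\subset\mathrm{Aut}(\mathbb{C}^3)_{\mathrm{c}(\omega)}$ be the subgroup generated by $\mathcal{A}\cup\mathcal{E}$. I would show $\varsigma(\mathrm{H})=\mathrm{Aut}(\mathbb{C}^2)$: by construction $\varsigma(\mathcal{A})=\mathrm{Aff}_2$ and $\varsigma(\mathcal{E})=[\mathrm{J}_2,\mathrm{J}_2]$, and since $\mathrm{Aff}_2$ together with $[\mathrm{J}_2,\mathrm{J}_2]$ generate $\mathrm{Aut}(\mathbb{C}^2)$ (the fact recalled just before the proposition, itself a consequence of Jung's theorem), the image $\varsigma(\mathrm{H})$ is all of $\mathrm{Aut}(\mathbb{C}^2)$. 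Now take an arbitrary $\phi\in\mathrm{Aut}(\mathbb{C}^3)_{\mathrm{c}(\omega)}$. Since $\varsigma(\phi)\in\mathrm{Aut}(\mathbb{C}^2)=\varsigma(\mathrm{H})$, there is $h\in\mathrm{H}$ with $\varsigma(h)=\varsigma(\phi)$, so $h^{-1}\phi\in\ker\varsigma$; writing $h^{-1}\phi=(z_0,z_1,z_2+\beta)$ and noting this lies in $\mathcal{A}\subset\mathrm{H}$, we conclude $\phi\in\mathrm{H}$. Hence $\mathrm{H}=\mathrm{Aut}(\mathbb{C}^3)_{\mathrm{c}(\omega)}$.

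The only point requiring a little care — and the main (mild) obstacle — is the bookkeeping around $\varsigma$-lifts: a $\varsigma$-lift of a given $\varphi\in\mathrm{Aut}(\mathbb{C}^2)$ is not unique, but any two differ by an element of $\ker\varsigma$, i.e. by $(z_0,z_1,z_2+\beta)$. Thus $\mathcal{A}$ and $\mathcal{E}$ as defined (all $\varsigma$-lifts, not a chosen one) are stable under this ambiguity precisely because $\ker\varsigma\subset\mathcal{A}$; one should check that $\mathcal{A}$ really contains $\ker\varsigma$, which holds since the identity of $\mathrm{Aff}_2$ has $b\equiv 0$ as one $\varsigma$-lift and hence $b\equiv\beta$ for every $\beta$ as its other $\varsigma$-lifts. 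With that remark in place the argument above goes through verbatim, and no genuinely new computation is needed beyond the exactness of the closed polynomial $1$-form, which we already used for surjectivity.
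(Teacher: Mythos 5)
Your proposal is correct and follows essentially the same route as the paper: both reduce the statement to Jung's generation of $\mathrm{Aut}(\mathbb{C}^2)$ by $\mathrm{Aff}_2$ and $[\mathrm{J}_2,\mathrm{J}_2]$ via the projection $\varsigma$, using the existence of $\varsigma$-lifts (exactness of the closed polynomial $1$-form) and the fact that two lifts differ by an element $(z_0,z_1,z_2+\beta)\in\ker\varsigma\subset\mathcal{A}$. The only cosmetic difference is that the paper lifts each factor of an explicit decomposition $\varphi=\mathfrak{a}_1\mathfrak{e}_1\cdots\mathfrak{a}_s\mathfrak{e}_s$ and corrects by a kernel element, while you phrase the same argument through the subgroup generated by $\mathcal{A}\cup\mathcal{E}$ and surjectivity of $\varsigma$ restricted to it.
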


\begin{proof}
Let $\varphi$ be a polynomial automorphism of $\mathbb{C}^2$ and let $\phi$ be a $\varsigma$-lift 
of $\varphi$ to $\mathrm{Aut}(\mathbb{C}^3)_{\mathrm{c}(\omega)}$
\[
\phi=\big(\varphi,\det\mathrm{jac}\,\varphi z_2+b(z_0,z_1)\big)
\]
with $b$ in $\mathbb{C}[z_0,z_1]$. One can write $\varphi$ as 
$\mathfrak{a}_1\mathfrak{e}_1\mathfrak{a}_2\mathfrak{e}_2\ldots \mathfrak{a}_s\mathfrak{e}_s$ 
where $\mathfrak{a}_i$ belongs to $\mathrm{Aff}_2$ and $\mathfrak{e}_i$ to $[\mathrm{J}_2,\mathrm{J}_2]$. 
Let us now consider $A_i$ a $\varsigma$-lift of $\mathfrak{a}_i$,  $E_i=(\mathfrak{e}_i,z_2+d_i)$ a 
$\varsigma$-lift of $\mathfrak{e}_i$. Then $A_1E_1A_2E_2\ldots A_sE_s$ belongs to 
$\mathrm{Aut}(\mathbb{C}^3)_{\mathrm{c}(\omega)}$, and up to composition by an element 
$(z_0,z_1,z_2+\beta)\in\mathcal{A}$ one has
\[
\phi=A_1E_1A_2E_2\ldots  A_sE_s.
\]
\end{proof}

\begin{pro}\label{pro:desccont}
One has
\[
\mathrm{Aut}(\mathbb{C}^3)_{\mathrm{c}(\omega)}\simeq\mathrm{Aut}(\mathbb{C}^3)_\omega\ltimes\mathbb{C}^*.
\]
\end{pro}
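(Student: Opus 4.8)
The plan is to realize $\mathrm{Aut}(\mathbb{C}^3)_{\mathrm{c}(\omega)}$ as a split extension, via the multiplier $V$ defined by $\phi^*\omega=V(\phi)\omega$. The first point is that on $\mathrm{Aut}(\mathbb{C}^3)_{\mathrm{c}(\omega)}$ the function $V(\phi)$ is a nonzero constant. Indeed, by the Remark following Proposition~\ref{Lem:chpinv} one has $\phi=\big(\varphi_0,\varphi_1,\det\mathrm{jac}\,\varphi\,z_2+b(z_0,z_1)\big)$ with $\varphi=(\varphi_0,\varphi_1)\in\mathrm{Aut}(\mathbb{C}^2)$; comparing the coefficients of $\mathrm{d}z_2$ in $\phi^*\omega=\phi_0\,\mathrm{d}\phi_1+\mathrm{d}\phi_2=V(\phi)\omega$ and using that $\varphi_1$ does not involve $z_2$ gives $V(\phi)=\det\mathrm{jac}\,\varphi\in\mathbb{C}^*$. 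Since each $V(\phi)$ is a constant, the cocycle relation $V(\phi\circ\psi)=(V(\phi)\circ\psi)\cdot V(\psi)$ collapses to $V(\phi\circ\psi)=V(\phi)V(\psi)$, so $V\colon\mathrm{Aut}(\mathbb{C}^3)_{\mathrm{c}(\omega)}\to\mathbb{C}^*$ is a group homomorphism with $\ker V=\mathrm{Aut}(\mathbb{C}^3)_\omega$.

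Next I would split $V$. For $t\in\mathbb{C}^*$ set $h_t=(tz_0,z_1,tz_2)$; then $h_t^*\omega=t\,\omega$, so $h_t\in\mathrm{Aut}(\mathbb{C}^3)_{\mathrm{c}(\omega)}$ with $V(h_t)=t$, and $h_t\circ h_s=h_{ts}$. Hence $t\mapsto h_t$ is an injective homomorphism $\mathbb{C}^*\hookrightarrow\mathrm{Aut}(\mathbb{C}^3)_{\mathrm{c}(\omega)}$; in particular $V$ is onto. Writing $H=\{h_t\,\vert\,t\in\mathbb{C}^*\}$ one has $\ker V\triangleleft\mathrm{Aut}(\mathbb{C}^3)_{\mathrm{c}(\omega)}$, $\ker V\cap H=\{\mathrm{id}\}$, and every $\phi$ factors as $\phi=\big(\phi\circ h_{V(\phi)}^{-1}\big)\circ h_{V(\phi)}$ with the first factor in $\ker V$. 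Therefore $\mathrm{Aut}(\mathbb{C}^3)_{\mathrm{c}(\omega)}$ is the semidirect product of its normal subgroup $\mathrm{Aut}(\mathbb{C}^3)_\omega$ by $H\simeq\mathbb{C}^*$, that is $\mathrm{Aut}(\mathbb{C}^3)_{\mathrm{c}(\omega)}\simeq\mathrm{Aut}(\mathbb{C}^3)_\omega\ltimes\mathbb{C}^*$.

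There is no serious obstacle here once Proposition~\ref{Lem:chpinv} and its Remark are available; the only step deserving attention is the constancy of $V$ on $\mathrm{Aut}(\mathbb{C}^3)_{\mathrm{c}(\omega)}$ — equivalently the classical fact that $\det\mathrm{jac}$ takes nonzero constant values on $\mathrm{Aut}(\mathbb{C}^2)$ — since this is exactly what upgrades the $1$-cocycle $V$ to a genuine homomorphism and pins the quotient down to $\mathbb{C}^*$. If one preferred to bypass the Remark, one could note directly that $V(\phi)=\phi_0\frac{\partial\phi_1}{\partial z_2}+\frac{\partial\phi_2}{\partial z_2}$ is a polynomial and that $\big(V(\phi)\circ\phi^{-1}\big)\cdot V(\phi^{-1})=1$ forces it to be a unit of $\mathbb{C}[z_0,z_1,z_2]$, then argue as above.
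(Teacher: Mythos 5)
Your proof is correct and takes essentially the same route as the paper: one first shows that $V(\phi)$ is a nonvanishing polynomial, hence a constant $\lambda\in\mathbb{C}^*$, and then splits off the $\mathbb{C}^*$ factor via the scalings $h_t=(tz_0,z_1,tz_2)$, which is exactly the paper's decomposition $\phi=(\lambda z_0,z_1,\lambda z_2)\circ\widetilde{\phi}$ with $\widetilde{\phi}^*\omega=\omega$. Your cocycle/unit argument (or the $\det\mathrm{jac}\,\varphi$ computation) is just a harmless variant of the paper's observation that $V(\phi)$ is a nonvanishing polynomial, and the explicit semidirect-product bookkeeping is the same structure made more formal.
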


\begin{proof}
Let us consider an element $\phi$ of $\mathrm{Aut}(\mathbb{C}^3)_{\mathrm{c}(\omega)}$, then 
$\phi^*\omega=V(\phi)\omega$ for some polynomial $V(\phi)$. As $\omega$ and~$\phi^*\omega$ do 
not vanish, $V(\phi)$ does not vanish; therefore $V(\phi)=\lambda\in\mathbb{C}^*$. Let us write 
$\phi$ as follows:
\[
\phi=(\lambda z_0,z_1,\lambda z_2)\circ\widetilde{\phi};
\]
of course $\widetilde{\phi}^*\omega=\omega$.
\end{proof}

\begin{thm}\label{thm:perf13}
The derived group 
$[\mathrm{Aut}(\mathbb{C}^3)_{\mathrm{c}(\omega)},\mathrm{Aut}(\mathbb{C}^3)_{\mathrm{c}(\omega)}]$ 
of $\mathrm{Aut}(\mathbb{C}^3)_{\mathrm{c}(\omega)}$ is perfect.
\end{thm}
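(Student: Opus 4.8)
The plan is to mimic the structure of the proof of Theorem~\ref{thm:perf11}, where the key point was that $\mathrm{Aut}(\mathbb{C}^3)_\omega$ sits in an extension whose kernel $\mathbb{C}$ is killed inside any derived subgroup. Here I would first use Proposition~\ref{pro:desccont}, which gives $\mathrm{Aut}(\mathbb{C}^3)_{\mathrm{c}(\omega)}\simeq\mathrm{Aut}(\mathbb{C}^3)_\omega\ltimes\mathbb{C}^*$, together with the multiplicativity of the cocycle $V$ (for polynomial automorphisms $V$ takes values in $\mathbb{C}^*$, as noted in the proof of Proposition~\ref{pro:desccont}). Since $\mathbb{C}^*$ is abelian, the map $V\colon\mathrm{Aut}(\mathbb{C}^3)_{\mathrm{c}(\omega)}\to\mathbb{C}^*$ is a group homomorphism, so its restriction to the derived group is trivial; hence $[\mathrm{Aut}(\mathbb{C}^3)_{\mathrm{c}(\omega)},\mathrm{Aut}(\mathbb{C}^3)_{\mathrm{c}(\omega)}]\subseteq\ker V=\mathrm{Aut}(\mathbb{C}^3)_\omega$. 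Combined with the reverse inclusion coming from Theorem~\ref{thm:perf11}, namely $[\mathrm{Aut}(\mathbb{C}^3)_\omega,\mathrm{Aut}(\mathbb{C}^3)_\omega]\subseteq[\mathrm{Aut}(\mathbb{C}^3)_{\mathrm{c}(\omega)},\mathrm{Aut}(\mathbb{C}^3)_{\mathrm{c}(\omega)}]$, it suffices to compute the derived group precisely.

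The main step is therefore to identify $[\mathrm{Aut}(\mathbb{C}^3)_{\mathrm{c}(\omega)},\mathrm{Aut}(\mathbb{C}^3)_{\mathrm{c}(\omega)}]$ exactly. I expect it to equal $[\mathrm{Aut}(\mathbb{C}^3)_\omega,\mathrm{Aut}(\mathbb{C}^3)_\omega]$, which by Theorem~\ref{thm:perf11} is isomorphic to $\mathrm{Aut}(\mathbb{C}^2)_\eta$ and is perfect. To prove this equality I would take a product of commutators $\prod_i[g_i,h_i]$ with $g_i,h_i\in\mathrm{Aut}(\mathbb{C}^3)_{\mathrm{c}(\omega)}$; using the semidirect decomposition write each factor as $(\lambda_i z_0,z_1,\lambda_i z_2)\circ\widetilde{g_i}$ with $\widetilde{g_i}\in\mathrm{Aut}(\mathbb{C}^3)_\omega$, and similarly for $h_i$. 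Since the central-looking dilations $(\lambda z_0,z_1,\lambda z_2)$ map to $\mathbb{C}^*$ under $V$ and this quotient is abelian, the scalar contributions cancel in each commutator, so each $[g_i,h_i]$ already lies in $\mathrm{Aut}(\mathbb{C}^3)_\omega$; then I would check that conjugating an element of $\mathrm{Aut}(\mathbb{C}^3)_\omega$ by a dilation stays in $\mathrm{Aut}(\mathbb{C}^3)_\omega$ and argue that the resulting element of $\mathrm{Aut}(\mathbb{C}^3)_\omega$ is in fact a product of commutators \emph{within} $\mathrm{Aut}(\mathbb{C}^3)_\omega$ (the $\mathbb{C}$-translation ambiguity being killed exactly as in the proof of Theorem~\ref{thm:perf11}). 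This gives $[\mathrm{Aut}(\mathbb{C}^3)_{\mathrm{c}(\omega)},\mathrm{Aut}(\mathbb{C}^3)_{\mathrm{c}(\omega)}]=[\mathrm{Aut}(\mathbb{C}^3)_\omega,\mathrm{Aut}(\mathbb{C}^3)_\omega]$.

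Once this identification is in hand, the conclusion is immediate: by Theorem~\ref{thm:perf11} the group $[\mathrm{Aut}(\mathbb{C}^3)_\omega,\mathrm{Aut}(\mathbb{C}^3)_\omega]$ is perfect, hence so is $[\mathrm{Aut}(\mathbb{C}^3)_{\mathrm{c}(\omega)},\mathrm{Aut}(\mathbb{C}^3)_{\mathrm{c}(\omega)}]$, which is what we want. The one delicate point — the main obstacle — is the claim that a product of $\mathrm{c}(\omega)$-commutators lands not merely in $\mathrm{Aut}(\mathbb{C}^3)_\omega$ but actually in its \emph{derived} subgroup; one must make sure no stray translation $(z_0,z_1,z_2+\beta)$ survives. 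This is handled by the same observation used in Theorem~\ref{thm:perf11}: under $\varsigma$ such an element maps to the identity of $\mathrm{Aut}(\mathbb{C}^2)_\eta$, and being a product of commutators forces $\beta=0$; here one checks additionally that the extra $\mathbb{C}^*$-factor does not reintroduce a $z_2$-translation, which follows from the explicit form of the dilations and of the elements of $\mathrm{Aut}(\mathbb{C}^3)_\omega$ given by Proposition~\ref{Lem:chpinv}.
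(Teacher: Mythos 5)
Your reduction via $V$ is fine: on polynomial automorphisms $V$ takes constant values in $\mathbb{C}^*$, so it is a genuine homomorphism and $[\mathrm{Aut}(\mathbb{C}^3)_{\mathrm{c}(\omega)},\mathrm{Aut}(\mathbb{C}^3)_{\mathrm{c}(\omega)}]\subseteq\ker V=\mathrm{Aut}(\mathbb{C}^3)_\omega$. The genuine gap is the pivotal claim that this derived group equals $[\mathrm{Aut}(\mathbb{C}^3)_\omega,\mathrm{Aut}(\mathbb{C}^3)_\omega]$, i.e.\ that ``no stray translation survives''. It does survive: the dilations $d_\lambda=(\lambda z_0,z_1,\lambda z_2)$ belong to $\mathrm{Aut}(\mathbb{C}^3)_{\mathrm{c}(\omega)}$ (they are exactly the $\mathbb{C}^*$-factor of Proposition~\ref{pro:desccont}) and conjugate the translations $t_\beta=(z_0,z_1,z_2+\beta)$ by scaling, $d_\lambda t_\beta d_\lambda^{-1}=t_{\lambda\beta}$, whence $[d_\lambda,t_\beta]=t_{(\lambda-1)\beta}$; taking $\lambda=2$ shows every translation $t_\beta$ is a single commutator in $\mathrm{Aut}(\mathbb{C}^3)_{\mathrm{c}(\omega)}$. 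So the derived group of $\mathrm{Aut}(\mathbb{C}^3)_{\mathrm{c}(\omega)}$ contains the whole kernel $\mathbb{C}=\ker\varsigma$ of Proposition~\ref{Lem:descrrho}; since $[\mathrm{Aut}(\mathbb{C}^3)_\omega,\mathrm{Aut}(\mathbb{C}^3)_\omega]$ already surjects onto $\mathrm{Aut}(\mathbb{C}^2)_\eta$ under $\varsigma$, one gets that $[\mathrm{Aut}(\mathbb{C}^3)_{\mathrm{c}(\omega)},\mathrm{Aut}(\mathbb{C}^3)_{\mathrm{c}(\omega)}]$ is all of $\mathrm{Aut}(\mathbb{C}^3)_\omega$. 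If one accepts Theorem~\ref{thm:perf11}, under which $\varsigma$ is injective on $[\mathrm{Aut}(\mathbb{C}^3)_\omega,\mathrm{Aut}(\mathbb{C}^3)_\omega]$, the two derived groups are therefore distinct, your claimed equality fails, and perfectness cannot be transported from Theorem~\ref{thm:perf11} as you propose: the statement to be proved becomes ``$\mathrm{Aut}(\mathbb{C}^3)_\omega$ is perfect'', i.e.\ the question whether the kernel of the central extension $0\to\mathbb{C}\to\mathrm{Aut}(\mathbb{C}^3)_\omega\to\mathrm{Aut}(\mathbb{C}^2)_\eta\to1$ lies in the commutator subgroup, which your argument never addresses.

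For comparison, the paper takes a different route: it extends $\varsigma$ to $\mathrm{Aut}(\mathbb{C}^3)_{\mathrm{c}(\omega)}\to\mathrm{Aut}(\mathbb{C}^2)$, observes that the image of the derived group is $\mathrm{Aut}(\mathbb{C}^2)_\eta$, asserts that the restriction of $\varsigma$ to the derived group is injective, and concludes $[\mathrm{Aut}(\mathbb{C}^3)_{\mathrm{c}(\omega)},\mathrm{Aut}(\mathbb{C}^3)_{\mathrm{c}(\omega)}]\simeq\mathrm{Aut}(\mathbb{C}^2)_\eta$, which is perfect by \cite{FurterLamy}. Note, however, that the same commutator $[d_\lambda,t_\beta]$ lies in the kernel of that restriction, so the injectivity assertion is exactly the delicate point you flagged as ``the main obstacle''. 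Your instinct about where the difficulty sits is correct, but neither the dilation-cancellation argument you sketch nor a direct appeal to the kernel computation in Theorem~\ref{thm:perf11} disposes of it, because the $\mathbb{C}^*$-factor acts nontrivially (by scaling) on the centre $\{(z_0,z_1,z_2+\beta)\}$ of $\mathrm{Aut}(\mathbb{C}^3)_\omega$, and this action is precisely what reintroduces the translations into the derived group.
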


\begin{proof}
According to Proposition \ref{pro:desccont} an element $\phi$ of 
$\mathrm{Aut}(\mathbb{C}^3)_{\mathrm{c}(\omega)}$ can be written
\[
\big(\lambda\phi_0,\phi_1,\lambda z_2+\lambda b\big)
\]
with $\lambda\in\mathbb{C}^*$ and $(\phi_0,\phi_1,z_2+b)\in\mathrm{Aut}(\mathbb{C}^3)_\omega$. 
Denote by $\varphi$ the element of $\mathrm{Aut}(\mathbb{C}^2)$ given by $(\phi_0,\phi_1)$.
If $\phi$ belongs to $\ker\varsigma$, then $\lambda=1$, 
$\varphi=\mathrm{id}$ and $b\in\mathbb{C}$, that is $\ker\varsigma\simeq\mathbb{C}$ and
\begin{equation}\label{eq:2dsuiteex}
\mathbb{C}\longrightarrow \mathrm{Aut}(\mathbb{C}^3)_{\mathrm{c}(\omega)}
\stackrel{\varsigma}{\longrightarrow}\mathrm{Aut}(\mathbb{C}^2)\longrightarrow 1.
\end{equation}
Since $\mathrm{Aut}(\mathbb{C}^2)_\eta$ is perfect the restriction of $\varsigma$ to 
$[\mathrm{Aut}(\mathbb{C}^3)_{\mathrm{c}(\omega)},\mathrm{Aut}(\mathbb{C}^3)_{\mathrm{c}(\omega)}]$ 
induces the following exact sequence
\[
0\longrightarrow[\mathrm{Aut}(\mathbb{C}^3)_{\mathrm{c}(\omega)},\mathrm{Aut}
(\mathbb{C}^3)_{\mathrm{c}(\omega)}]\longrightarrow\mathrm{Aut}(\mathbb{C}^2)_\eta\longrightarrow 1
\]
and $[\mathrm{Aut}(\mathbb{C}^3)_{\mathrm{c}(\omega)},\mathrm{Aut}(\mathbb{C}^3)_{\mathrm{c}(\omega)}]
\simeq \mathrm{Aut}(\mathbb{C}^2)_\eta$. One concludes as previously with 
\cite[Proposition 10]{FurterLamy}.
\end{proof}

\smallskip

Let us now deal with the finite subgroups of $\mathrm{Aut}(\mathbb{C}^3_{\mathrm{c}(\omega)}$.

\begin{pro}\label{pro:periodic}
Any element of $\mathrm{Aut}(\mathbb{C}^2)_\eta$ of period $\ell$ 
lifts via $\varsigma$ to a unique element of 
$\mathrm{Aut}(\mathbb{C}^3)_\omega$ of period~$\ell$. 
\end{pro}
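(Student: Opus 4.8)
The plan is to exploit the exact sequence \eqref{eq:exactsequence} of Proposition \ref{Lem:descrrho} together with the fact that its kernel $\mathbb{C}=\{(z_0,z_1,z_2+\beta)\,\vert\,\beta\in\mathbb{C}\}$ sits centrally inside $\mathrm{Aut}(\mathbb{C}^3)_\omega$. Concretely, write $t_\beta=(z_0,z_1,z_2+\beta)$ for $\beta\in\mathbb{C}$. For any $\phi=\big(\phi_0(z_0,z_1),\phi_1(z_0,z_1),z_2+b(z_0,z_1)\big)\in\mathrm{Aut}(\mathbb{C}^3)_\omega$ one checks at once, using that $\phi_0,\phi_1$ do not involve $z_2$, that $t_\beta\circ\phi=\phi\circ t_\beta=\big(\phi_0,\phi_1,z_2+b+\beta\big)$. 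Hence $\beta\mapsto t_\beta$ is an injective homomorphism $\mathbb{C}\to\mathrm{Aut}(\mathbb{C}^3)_\omega$ onto $\ker\varsigma$, it is central, and $t_\beta^{\,k}=t_{k\beta}$.

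Now fix $\varphi\in\mathrm{Aut}(\mathbb{C}^2)_\eta$ of period $\ell$ and choose, by Proposition \ref{Lem:descrrho}, some lift $\psi\in\mathrm{Aut}(\mathbb{C}^3)_\omega$ with $\varsigma(\psi)=\varphi$. Since $\varsigma(\psi^\ell)=\varphi^\ell=\mathrm{id}$, we have $\psi^\ell\in\ker\varsigma$, say $\psi^\ell=t_\beta$ for a unique $\beta\in\mathbb{C}$. Every lift of $\varphi$ is of the form $\phi=t_c\circ\psi$ with $c\in\mathbb{C}$, and by centrality
\[
\phi^\ell=(t_c\circ\psi)^\ell=t_c^{\,\ell}\circ\psi^\ell=t_{\ell c}\circ t_\beta=t_{\ell c+\beta}.
\]
Therefore $\phi^\ell=\mathrm{id}$ if and only if $\ell c+\beta=0$, i.e. $c=-\beta/\ell$; here one simply uses that $\ell\ge 1$ and $\mathrm{char}\,\mathbb{C}=0$, so this equation has exactly one solution. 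This produces one and only one lift $\phi$ with $\phi^\ell=\mathrm{id}$, and it has period exactly $\ell$: if $\phi^k=\mathrm{id}$ for some $1\le k<\ell$ then $\varphi^k=\varsigma(\phi)^k=\mathrm{id}$, contradicting that $\varphi$ has period $\ell$. Any other lift $t_{c'}\circ\psi$ with $c'\ne -\beta/\ell$ satisfies $(t_{c'}\circ\psi)^\ell=t_{\ell c'+\beta}\ne\mathrm{id}$, so it is not of period $\ell$; this gives uniqueness.

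There is essentially no obstacle in this argument: the only points needing a word are the centrality of $\ker\varsigma$, which is immediate from the explicit shape of the elements of $\mathrm{Aut}(\mathbb{C}^3)_\omega$, and the division by $\ell$, which is harmless in characteristic zero. One could alternatively phrase everything cohomologically — liftings of the cyclic group $\mathbb{Z}/\ell\mathbb{Z}$ through the central extension \eqref{eq:exactsequence} by the uniquely divisible group $\mathbb{C}$ exist because $H^2(\mathbb{Z}/\ell\mathbb{Z},\mathbb{C})=0$ and are unique because they form a torsor under $H^1(\mathbb{Z}/\ell\mathbb{Z},\mathbb{C})=0$ — but the direct computation above is shorter and entirely self-contained.
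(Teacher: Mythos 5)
Your proof is correct and is in substance the same as the paper's: both arguments show that the $\ell$-th power of an arbitrary lift of $\varphi$ is a translation $(z_0,z_1,z_2+\beta)$ lying in $\ker\varsigma$, and then adjust the lift by the unique constant $-\beta/\ell$. The paper gets there by computing directly with the closed form $z_0\mathrm{d}z_1-\phi_0\mathrm{d}\phi_1=\mathrm{d}b$ (writing out only $\ell=2$), while you phrase the same mechanism via the centrality of $\ker\varsigma$ in the exact sequence and treat all $\ell$ uniformly, which is a cleaner packaging of the identical idea.
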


\begin{proof}
Let us consider an element $\varphi=\big(\phi_0(z_0,z_1),\phi_1(z_0,z_1)\big)$ 
of $\mathrm{Aut}(\mathbb{C}^2)_\eta$. According to Proposition \ref{Lem:descrrho} 
there exists $b\in\mathbb{C}[z_0,z_1]$ such that 
$\big(\phi_0(z_0,z_1),\phi_1(z_0,z_1),z_2+b(z_0,z_1)+\mu\big)$ belongs to 
$\mathrm{Bir}(\mathbb{C}^3)_\omega$ for any $\mu\in\mathbb{C}$. 
Assume that $\varphi$ is of prime order $\ell$; let us prove that there 
exists a unique $\gamma\in\mathbb{C}$ such that
\[
\big(\phi_0,\phi_1,z_2+b(z_0,z_1)+\gamma\big)
\]
is of order $\ell$.

Assume for simplicity that $\ell=2$ (but a similar argument works for any 
$\ell$). Let us recall that the following equality holds
\begin{equation}\label{ordrefini1}
z_0\mathrm{d}z_1-\phi_0\mathrm{d}\phi_1=\mathrm{d}b
\end{equation}
Applying $\phi$ to this equality one gets
\begin{equation}\label{ordrefini2}
\phi_0\mathrm{d}\phi_1-z_0\mathrm{d}z_1=\mathrm{d}(b\circ\varphi)
\end{equation}
We add (\ref{ordrefini1}) and (\ref{ordrefini2}) and obtain that 
$b+b\circ\phi$ is a constant $\beta$. Furthermore
\[
\big(\phi_0(z_0,z_1),\phi_1(z_0,z_1),z_2+b(z_0,z_1)+\mu\big)^2=\big(z_0,z_1,
z_2+2\gamma+b+b\circ\varphi\big)=(z_0,z_1,z_2+2\gamma+\beta)
\]
so as soon as $\gamma=-\beta/2$ one has 
$\big(\phi_0(z_0,z_1),\phi_1(z_0,z_1),z_2+b(z_0,z_1)+\mu\big)^2=\mathrm{id}$.
\end{proof}

\begin{pro}\label{pro:finitesubgroupaut}
A finite subgroup of $\mathrm{Aut}(\mathbb{C}^2)$ can be lifted to a finite subgroup 
of $\mathrm{Aut}(\mathbb{C}^3)_{\mathrm{c}(\omega)}$.
\end{pro}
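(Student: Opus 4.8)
The plan is to reduce the statement to the structure of finite subgroups of $\mathrm{Aut}(\mathbb{C}^2)$ and then invoke the linearization theorem of Danilov and Gizatullin together with the (almost canonical) lifting procedure already developed for individual periodic elements in Proposition~\ref{pro:periodic}. First I would recall that by Danilov--Gizatullin (\cite{DanilovGizatullin}) any finite subgroup $\mathrm{G}\subset\mathrm{Aut}(\mathbb{C}^2)$ is linearizable, hence after conjugation we may assume $\mathrm{G}\subset\mathrm{GL}(2;\mathbb{C})$. Since $\mathrm{GL}(2;\mathbb{C})$ acts on $\eta=\mathrm{d}z_0\wedge\mathrm{d}z_1$ by multiplication by the determinant, a finite linear group preserves $\eta$ up to a root of unity, and one checks that the restriction to the special linear part $\mathrm{G}\cap\mathrm{SL}(2;\mathbb{C})$ lies in $\mathrm{Aut}(\mathbb{C}^2)_\eta$.

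The core of the argument is the construction of the lift. For a single element $\varphi\in\mathrm{G}$ one has the $\varsigma$-lift $\phi=(\varphi,\det\mathrm{jac}\,\varphi\,z_2+b)$ with $\mathrm{d}b=(\det\mathrm{jac}\,\varphi)z_0\mathrm{d}z_1-\varphi_0\mathrm{d}\varphi_1$, and $b$ is determined up to an additive constant. The plan is to fix these constants coherently over all of $\mathrm{G}$ by an averaging (cohomological) argument: the obstruction to choosing the lifts $\phi(g)$ so that $g\mapsto\phi(g)$ is a homomorphism is a $2$-cocycle on the finite group $\mathrm{G}$ with values in the divisible abelian group $\ker\varsigma\simeq\mathbb{C}$ from the exact sequence~(\ref{eq:exactsequence}); since $\mathrm{G}$ is finite and $\mathbb{C}$ is uniquely divisible, $H^2(\mathrm{G},\mathbb{C})=0$, so the cocycle is a coboundary and the extension splits. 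Concretely one would pick any set-theoretic lift $g\mapsto\widehat{g}$, measure the defect $c(g,h)=\widehat{g}\widehat{h}\,\widehat{gh}^{\,-1}\in\mathbb{C}$, and correct by $\phi(g)=\widehat{g}\cdot\tau(g)$ where $\tau\colon\mathrm{G}\to\mathbb{C}$ trivializes $c$; the averaged choice $\tau(g)=-\tfrac1{|\mathrm{G}|}\sum_{h\in\mathrm{G}}c(g,h)$ does the job by the standard computation. This yields an injective homomorphism $\mathrm{G}\hookrightarrow\mathrm{Aut}(\mathbb{C}^3)_{\mathrm{c}(\omega)}$ lifting the inclusion, i.e.\ a finite subgroup of $\mathrm{Aut}(\mathbb{C}^3)_{\mathrm{c}(\omega)}$ isomorphic to $\mathrm{G}$.

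The main obstacle I anticipate is checking that $\ker\varsigma$ really is the right coefficient module and that the defect cocycle takes values in it --- that is, verifying that once the linear parts are fixed, two $\varsigma$-lifts of the same $\varphi$ differ precisely by an element $(z_0,z_1,z_2+\beta)$ with $\beta\in\mathbb{C}$, and that the determinant factors $\det\mathrm{jac}$ appearing in the third coordinate multiply correctly so that the product of two lifts is again a lift of the product with third coordinate of the expected shape. This is exactly the content of the exact sequences~(\ref{eq:exactsequence}) and~(\ref{eq:2dsuiteex}) together with the cocycle relation $V(\phi\circ\psi)=(V(\phi)\circ\psi)\cdot V(\psi)$, so it is bookkeeping rather than a genuine difficulty, but it must be done with care because $\ker\varsigma$ is not central in $\mathrm{Aut}(\mathbb{C}^3)_{\mathrm{c}(\omega)}$ in general --- one must check that the conjugation action of $\mathrm{G}$ on the $\mathbb{C}$-factor is trivial (it is, since translations in $z_2$ commute with maps of the form $(\varphi,\mu z_2+b)$), which is what makes $H^2(\mathrm{G},\mathbb{C})=0$ applicable. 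A cleaner alternative, which I would mention, is to note that the subgroup $\mathrm{Bir}(\mathbb{C}^3)_\omega$ already splits as $\mathrm{Aut}(\mathbb{C}^2)_\eta\ltimes\mathbb{C}$ with $\mathbb{C}$ central modulo the $\mathrm{Aut}(\mathbb{C}^2)_\eta$-action being trivial on it, so for subgroups landing in the $\eta$-preserving part one may even use Proposition~\ref{pro:periodic} elementwise on a generating set and then simply take the subgroup generated by the resulting periodic lifts, checking finiteness via the averaged splitting; but the cohomological argument is the conceptually shortest route.
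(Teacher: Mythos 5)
Your strategy --- pull $\mathrm{G}$ back under the exact sequence $1\to\mathbb{C}\to\mathrm{Aut}(\mathbb{C}^3)_{\mathrm{c}(\omega)}\stackrel{\varsigma}{\longrightarrow}\mathrm{Aut}(\mathbb{C}^2)\to1$ (this is (\ref{eq:2dsuiteex}); (\ref{eq:exactsequence}) concerns the $\omega$-preserving group) and split the resulting extension of the finite group $\mathrm{G}$ by $\mathbb{C}$ via vanishing of $H^2$ --- is viable and genuinely different from the paper's. But one assertion is false: the conjugation action of $\mathrm{G}$ on $\ker\varsigma\simeq\mathbb{C}$ is \emph{not} trivial. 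Writing $\phi=(\varphi,\mu z_2+b)$ with $\mu=\det\mathrm{jac}\,\varphi$ and $t_\beta=(z_0,z_1,z_2+\beta)$, one has $\phi\, t_\beta\,\phi^{-1}=t_{\mu\beta}$; translations in $z_2$ commute with $\phi$ only when $\mu=1$, and a finite subgroup of $\mathrm{Aut}(\mathbb{C}^2)$ need not sit in $\mathrm{Aut}(\mathbb{C}^2)_\eta$ (take $\varphi=(\zeta z_0,z_1)$ with $\zeta$ a root of unity). The error is repairable and does not sink the argument: with this action $\mathbb{C}$ is still uniquely divisible as a $\mathrm{G}$-module, so $H^2(\mathrm{G},\mathbb{C})=0$ for \emph{any} such action, and your averaged $\tau(g)=-\tfrac1{|\mathrm{G}|}\sum_h c(g,h)$ still trivializes the cocycle, provided the verification uses the twisted coboundary $g\cdot\tau(h)-\tau(gh)+\tau(g)$ rather than the untwisted one you implicitly invoke. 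With that correction the proof closes; note that on this route Danilov--Gizatullin (and the aside about $\mathrm{G}\cap\mathrm{SL}(2;\mathbb{C})$ preserving $\eta$) is not needed at all.

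The paper argues more economically and does use the linearization theorem in an essential way: by Danilov--Gizatullin the finite group $\mathrm{H}\subset\mathrm{Aut}(\mathbb{C}^2)$ has a fixed point, which one may translate to $(0,0)$; each $h\in\mathrm{H}$ then has a unique lift in $\big\{\phi\in\mathrm{Aut}(\mathbb{C}^3)_{\mathrm{c}(\omega)}\,\vert\,\phi(0)=0\big\}$ (the additive constant in $b$ is pinned down by $b(0,0)=0$), and since the composition of two origin-fixing lifts is again an origin-fixing lift of the composed map, these lifts automatically form a subgroup isomorphic to $\mathrm{H}$, hence finite. So the common fixed point plays the role your cohomological splitting plays; your approach buys independence from \cite{DanilovGizatullin}, the paper's buys a one-line verification with no cocycle bookkeeping. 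Finally, your closing ``cleaner alternative'' (lifting a generating set via Proposition~\ref{pro:periodic}) would not suffice as stated: choosing lifts of generators gives no control over the relations, which is precisely what the cocycle, respectively the fixed point, is there to handle.
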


\begin{proof}
Let $\mathrm{H}$ be a finite subgroup of $\mathrm{Aut}(\mathbb{C}^2)$. The group $\mathrm{H}$ 
is linearizable (\cite{DanilovGizatullin}) hence has a fixed point~$p$. Since the translations 
belong to $\mathrm{Aut}(\mathbb{C}^2)$ one can assume that $p=(0,0)$. Let us consider the lifts 
of all elements of $\mathrm{H}$ in 
$\big\{\phi\in\mathrm{Aut}(\mathbb{C}^3)_{\mathrm{c}(\omega)}\,\vert\,\phi(0)=0\big\}$; they 
form a group isomorphic to $\mathrm{H}$ so is in particular finite.
\end{proof}

\begin{rem}
Any subgroup $\mathrm{G}$ of $\mathrm{Aut}(\mathbb{C}^2)$ that preserves $(0,0)$ can be lifted 
to a subgroup of  $\mathrm{Aut}(\mathbb{C}^3)_{\mathrm{c}(\omega)}$ isomorphic to $\mathrm{G}$.
\end{rem}

\begin{thm}\label{thm:finitegp}
Any finite subgroup of $\mathrm{Aut}(\mathbb{C}^3)_{\mathrm{c}(\omega)}$ is linearizable 
via an element of $\mathrm{Aut}(\mathbb{C}^3)_{\mathrm{c}(\omega)}$.
\end{thm}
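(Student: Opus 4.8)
The plan is to reduce the statement to the known linearizability of finite subgroups of $\mathrm{Aut}(\mathbb{C}^2)$ (Danilov--Gizatullin) via the projection $\varsigma$ and the preceding structure results. Let $\mathrm{H}$ be a finite subgroup of $\mathrm{Aut}(\mathbb{C}^3)_{\mathrm{c}(\omega)}$. First I would push $\mathrm{H}$ forward under $\varsigma\colon\mathrm{Aut}(\mathbb{C}^3)_{\mathrm{c}(\omega)}\to\mathrm{Aut}(\mathbb{C}^2)$; the image $\varsigma(\mathrm{H})$ is a finite subgroup of $\mathrm{Aut}(\mathbb{C}^2)$, hence by \cite{DanilovGizatullin} it is conjugate, by some $\varphi\in\mathrm{Aut}(\mathbb{C}^2)$, to a finite subgroup of $\mathrm{GL}(2;\mathbb{C})$. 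Choosing a $\varsigma$-lift $\Phi\in\mathrm{Aut}(\mathbb{C}^3)_{\mathrm{c}(\omega)}$ of $\varphi$ and replacing $\mathrm{H}$ by $\Phi\mathrm{H}\Phi^{-1}$, we may assume $\varsigma(\mathrm{H})\subset\mathrm{GL}(2;\mathbb{C})$; in particular every element of $\varsigma(\mathrm{H})$ fixes the origin $(0,0)\in\mathbb{C}^2$.

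Next I would handle the kernel direction. By Proposition~\ref{pro:desccont} and Proposition~\ref{Lem:chpinv}, an element $\phi\in\mathrm{Aut}(\mathbb{C}^3)_{\mathrm{c}(\omega)}$ lying in $\ker\varsigma$ has the form $(z_0,z_1,z_2+\beta)$ with $\beta\in\mathbb{C}$, so $\ker\varsigma\cong\mathbb{C}$ is torsion-free; since $\mathrm{H}$ is finite this forces $\mathrm{H}\cap\ker\varsigma=\{\mathrm{id}\}$, and therefore $\varsigma$ restricts to an \emph{isomorphism} $\mathrm{H}\xrightarrow{\ \sim\ }\varsigma(\mathrm{H})$. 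Now I would invoke the remark following Proposition~\ref{pro:finitesubgroupaut}: since $\varsigma(\mathrm{H})$ is a subgroup of $\mathrm{Aut}(\mathbb{C}^2)$ fixing $(0,0)$, it lifts to a subgroup $\widetilde{\mathrm{H}}$ of $\mathrm{Aut}(\mathbb{C}^3)_{\mathrm{c}(\omega)}$, isomorphic to $\varsigma(\mathrm{H})$, consisting of maps fixing $0\in\mathbb{C}^3$; concretely each element of $\widetilde{\mathrm{H}}$ covering $\psi\in\varsigma(\mathrm{H})$ is the unique $\varsigma$-lift of $\psi$ vanishing at $0$. Both $\mathrm{H}$ and $\widetilde{\mathrm{H}}$ are sections of $\varsigma$ over $\varsigma(\mathrm{H})$, so I must compare them.

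The core of the argument is thus: two sections of $\varsigma$ over the same finite subgroup $\mathrm{G}:=\varsigma(\mathrm{H})\subset\mathrm{GL}(2;\mathbb{C})$ differ by a cocycle valued in $\ker\varsigma\cong\mathbb{C}$, and any such cocycle is a coboundary because $H^1(\mathrm{G},\mathbb{C})=0$ for $\mathrm{G}$ finite over a field of characteristic zero — equivalently, by the standard averaging trick. For $h\in\mathrm{H}$ write $h=(\psi, z_2+b_\psi)$ and the corresponding element of $\widetilde{\mathrm{H}}$ as $(\psi, z_2+b_\psi^0)$, both lifting $\psi$; then $c(\psi):=b_\psi-b_\psi^0$ is a constant (the two agree modulo the closed-form relation as in \eqref{ordrefini1}--\eqref{ordrefini2}) satisfying the $1$-cocycle identity on $\mathrm{G}$, so setting $\gamma=\tfrac{1}{|\mathrm{G}|}\sum_{\psi\in\mathrm{G}}c(\psi)$ the translation $T=(z_0,z_1,z_2+\gamma)\in\mathrm{Aut}(\mathbb{C}^3)_{\mathrm{c}(\omega)}$ conjugates $\mathrm{H}$ onto $\widetilde{\mathrm{H}}$. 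Since $\widetilde{\mathrm{H}}\cong\mathrm{G}\subset\mathrm{GL}(2;\mathbb{C})$ fixes $0$ and each of its elements is polynomial, one further linearizes $\widetilde{\mathrm{H}}$ by averaging the $3\times 3$ Jacobians at $0$: the map $L=\tfrac{1}{|\mathrm{G}|}\sum D\widetilde{h}(0)$ is a linear automorphism conjugating $\widetilde{\mathrm{H}}$ into $\mathrm{GL}(3;\mathbb{C})$, and because each $\widetilde{h}$ preserves $\mathrm{c}(\omega)$ so does $L$, i.e. $L\in\mathrm{Aut}(\mathbb{C}^3)_{\mathrm{c}(\omega)}$. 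Composing the three conjugators gives the desired element of $\mathrm{Aut}(\mathbb{C}^3)_{\mathrm{c}(\omega)}$ linearizing $\mathrm{H}$. The main obstacle to watch is verifying that $c(\psi)$ is genuinely a constant obeying the cocycle law — this is where the relations $\phi^*(z_0\mathrm{d}z_1)=V(\phi)z_0\mathrm{d}z_1$ and the computation in the proof of Proposition~\ref{pro:periodic} must be used carefully, keeping track of the multipliers $\lambda=V(\cdot)$; and checking that the averaged linear map $L$ still preserves the contact structure rather than merely being invertible.
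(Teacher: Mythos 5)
Your first four steps are essentially the paper's proof, written out with more care. The paper also projects by $\varsigma$, invokes Danilov--Gizatullin, and conjugates by the lift, fixing the origin, of the linearizing planar map; your remark that $\ker\varsigma\simeq\mathbb{C}$ is torsion-free (so $\varsigma$ is injective on a finite subgroup) and your cocycle/averaging argument make explicit why the constants in the third components can be normalized simultaneously, i.e.\ why the group can be moved so as to fix a point of the axis $z_0=z_1=0$ --- a point the paper's proof glosses over. Only note that the action of $\varsigma(\mathrm{H})$ on $\ker\varsigma$ is twisted by $\psi\mapsto\mu_\psi=\det\mathrm{jac}\,\psi$: conjugating $(\psi,\mu_\psi z_2+b_\psi)$ by $T_\gamma=(z_0,z_1,z_2+\gamma)$ changes the constant by $(1-\mu_\psi)\gamma$, so one needs $c(\psi)=(\mu_\psi-1)\gamma$, which the average $\gamma=-\tfrac{1}{|\mathrm{G}|}\sum_\psi c(\psi)$ does provide; when $\mu\equiv 1$ the cocycle is a homomorphism from a finite group into $(\mathbb{C},+)$, hence zero, and $\mathrm{H}=\widetilde{\mathrm{H}}$ already. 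With this adjustment you land exactly where the paper's proof stops.

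Your final step, however, is where the genuine gap lies, and the stronger conclusion it aims at is false. First, $\tfrac{1}{|\mathrm{G}|}\sum D\widetilde h(0)$ is just an averaged matrix: it conjugates nothing and is in general not invertible (for $\{\mathrm{id},(-z_0,-z_1,z_2)\}$ it equals $\mathrm{diag}(0,0,1)$); the Bochner--Cartan average is $\tfrac{1}{|\mathrm{G}|}\sum (D\widetilde h(0))^{-1}\circ\widetilde h$, and even that is a priori only a local biholomorphism tangent to the identity, not visibly a polynomial automorphism of $\mathbb{C}^3$, let alone a contact one. Second, and decisively, the linear maps of $\mathbb{C}^3$ preserving $\mathrm{c}(\omega)$ are exactly $(\varepsilon z_0,\beta z_1,\varepsilon\beta z_2)$ with $\varepsilon,\beta\in\mathbb{C}^*$ (the linear part of Example~\ref{eg:autcont}), an abelian torus; so no element of $\mathrm{Aut}(\mathbb{C}^3)_{\mathrm{c}(\omega)}$ can conjugate a non-abelian finite subgroup --- for instance the group of lifts fixing $0$ of a non-abelian finite subgroup of $\mathrm{GL}(2;\mathbb{C})$, which exists by Proposition~\ref{pro:finitesubgroupaut} and the remark following it --- into $\mathrm{GL}(3;\mathbb{C})$. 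Indeed $\widetilde{\mathrm{H}}$ is genuinely non-linear in general: the lift fixing $0$ of $(z_1,z_0)$ is the Legendre involution $(z_1,z_0,-z_2-z_0z_1)$. So ``linearizable'' in Theorem~\ref{thm:finitegp} has to be understood as the paper's proof uses it, namely conjugate, inside $\mathrm{Aut}(\mathbb{C}^3)_{\mathrm{c}(\omega)}$, to the group of $\varsigma$-lifts fixing the origin of a finite subgroup of $\mathrm{GL}(2;\mathbb{C})$; that is exactly what your steps before the last one establish, and the final averaging step should be dropped rather than repaired.
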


\begin{proof}
Let $\mathrm{G}$ be a finite subgroup of $\mathrm{Aut}(\mathbb{C}^3)_{\mathrm{c}(\omega)}$. The 
group $\mathrm{G}$ is isomorphic to $\mathrm{H}=\varsigma(\mathrm{G})$ which is thus a 
finite subgroup of~$\mathrm{Aut}(\mathbb{C}^2)$. There exists a map $h\in\mathrm{Aut}(\mathbb{C}^2)$ 
that linearizes $\mathrm{H}$ (\emph{see} \cite{DanilovGizatullin}); as a result $\mathrm{H}$ 
has a fixed point $p$ and up to translations one can suppose that $p=(0,0)$. Note that $h(0)=0$. 
The lift of $h$ in $\big\{\phi\in\mathrm{Aut}(\mathbb{C}^3)_{\mathrm{c}(\omega)}\,\vert\,\phi(0)=0\big\}$ 
linearizes $\mathrm{G}$.
\end{proof}

\medskip

\subsection{Automorphisms group}\label{subsec:autaut}

Let us first introduce some notations. The group of the field automorphisms of $\mathbb{C}$ 
acts on $\mathrm{Aut}(\mathbb{C}^n)$ (resp. $\mathrm{Bir}(\mathbb{P}^n)$): if $f$ is 
an element of~$\mathrm{Aut}(\mathbb{C}^n)$ and if $\xi$ is a field automorphism we denote 
by~${}^{\xi}\!\, f$ the element obtained by letting~$\xi$ acting on $f$. Using the 
structure of amalgamated product of $\mathrm{Aut}(\mathbb{C}^2)$, the automorphisms 
of this group have been described (\cite{Deserti:autaut}): let $\varphi$ be an 
automorphism of~$\mathrm{Aut}(\mathbb{C}^2)$; there exist a polynomial 
automorphism~$\psi$ of $\mathbb{C}^2$ and a field automorphism $\xi$ such that
\[
\forall\,f\in\mathrm{Aut}(\mathbb{C}^2)\qquad \varphi(f)={}^{\xi}\!\,(\psi f\psi^{-1}).
\]
Even if $\mathrm{Bir}(\mathbb{P}^2)$ has not the same structure as 
$\mathrm{Aut}(\mathbb{C}^2)$ (\emph{see} Appendix of \cite{CantatLamy}) the 
automorphisms group of $\mathrm{Bir}(\mathbb{P}^2)$ can be described and a 
similar result is obtained (\cite{Deserti:autbir}). 

We now would like to describe the group $\mathrm{Aut}\big(\mathrm{Aut}(\mathbb{C}^3)_\omega\big)$.
Let us recall that the \textbf{\textit{center}} of a group $\mathrm{G}$, denoted 
$Z(\mathrm{G})$, is the set of elements that commute with every element of $\mathrm{G}$.

\begin{pro}\label{Pro:centrea}
The center of $\mathrm{Aut}(\mathbb{C}^3)_\omega$ is isomorphic to $\mathbb{C}$:
\[
Z(\mathrm{Aut}(\mathbb{C}^3)_\omega)=
\big\{(z_0,z_1,z_2+\beta)\,\vert\,\beta\in\mathbb{C}\big\}
\]
and the center of $\mathrm{Aut}(\mathbb{C}^3)_{\mathrm{c}(\omega)}$ is trivial.
\end{pro}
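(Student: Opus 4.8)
The plan is to prove the two statements separately, using the structural description \(\mathrm{Aut}(\mathbb{C}^3)_\omega\simeq\mathrm{Aut}(\mathbb{C}^2)_\eta\ltimes\mathbb{C}\) from Proposition \ref{Lem:descrrho}. First I would handle the center of \(\mathrm{Aut}(\mathbb{C}^3)_\omega\). The subgroup \(\{(z_0,z_1,z_2+\beta)\mid\beta\in\mathbb{C}\}=\ker\varsigma\) is visibly contained in the center: such a map commutes with any \(\phi=(\phi_0,\phi_1,z_2+b)\) because the two first coordinates are untouched and the last one is shifted by a constant, which commutes with adding \(b(z_0,z_1)\). Conversely, if \(\phi=(\phi_0,\phi_1,z_2+b)\) is central, then projecting via the morphism \(\varsigma\) shows that \(\varsigma(\phi)=(\phi_0,\phi_1)\) lies in the center of \(\mathrm{Aut}(\mathbb{C}^2)_\eta\); so I would invoke (or quickly prove) that \(Z(\mathrm{Aut}(\mathbb{C}^2)_\eta)\) is trivial. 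This last fact follows because \(\mathrm{Aut}(\mathbb{C}^2)_\eta\) contains all translations and all elements of \(\mathrm{SL}(2;\mathbb{C})\): a map commuting with all translations must itself be a translation, and a translation commuting with all of \(\mathrm{SL}(2;\mathbb{C})\) must be trivial. Hence \(\phi_0=z_0\), \(\phi_1=z_1\), so \(\phi\in\ker\varsigma\), giving the first equality.

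For the center of \(\mathrm{Aut}(\mathbb{C}^3)_{\mathrm{c}(\omega)}\), I would use Proposition \ref{pro:desccont}, writing a general element as \(\phi=(\lambda z_0,z_1,\lambda z_2)\circ\widetilde\phi\) with \(\widetilde\phi\in\mathrm{Aut}(\mathbb{C}^3)_\omega\) and \(\lambda=V(\phi)\in\mathbb{C}^*\). Suppose \(\phi\) is central. Composing with the homothety \(h_\mu=(\mu z_0,z_1,\mu z_2)\in\mathrm{Aut}(\mathbb{C}^3)_{\mathrm{c}(\omega)}\) and using \(V(h_\mu)=\mu\), the cocycle relation \(V(\phi\psi)=(V(\phi)\circ\psi)V(\psi)\) forces \(\phi\) to commute with \(h_\mu\) for all \(\mu\); since \(V\) is constant on \(\mathrm{Aut}(\mathbb{C}^3)_{\mathrm{c}(\omega)}\) this is consistent, but commutation of the actual maps \(\phi\circ h_\mu=h_\mu\circ\phi\) already pins down the first and third coordinates of \(\phi\) up to the scalar, and conjugating by \(h_\mu\) acts nontrivially on the "non-homogeneous" part of \(b\), forcing \(b\) to be (at most) a multiple of \(z_0z_1\) — here I would argue more carefully using that conjugation by \(h_\mu\) scales \(\widetilde\phi\) in a controlled way and a central element must be a fixed point of all these conjugations.

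Next, still assuming \(\phi\) central, I would restrict to \(\mathrm{Aut}(\mathbb{C}^3)_\omega\subset\mathrm{Aut}(\mathbb{C}^3)_{\mathrm{c}(\omega)}\): a central element of the big group is in particular central in the subgroup, so by the first part \(\phi\) is of the form \((\lambda z_0,z_1,\lambda z_2)\circ(z_0,z_1,z_2+\beta)=(\lambda z_0,z_1,\lambda z_2+\lambda\beta)\). Finally I would test against one more map that does not commute with any nontrivial such element: for instance the lift of a generic element of \(\mathrm{SL}(2;\mathbb{C})\) (which acts on \((z_0,z_1)\) and hence does not commute with the homothety unless \(\lambda=1\)), and then the lift of the map \((z_0+z_1,z_1,z_2+\tfrac12 z_1^2)\in\mathrm{Aut}(\mathbb{C}^3)_\omega\) or a translation lift, which pushes \(\beta\) around, forcing \(\beta=0\). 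Thus \(\phi=\mathrm{id}\) and the center is trivial.

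The main obstacle I anticipate is the bookkeeping in the \(\mathrm{c}(\omega)\)-case: one must simultaneously control the scalar \(\lambda\), the \(\mathrm{Aut}(\mathbb{C}^2)\)-part, and the additive part \(b\) (which is now only determined up to the constraint \(\mathrm{d}b=(\det\mathrm{jac}\,\varphi)z_0\mathrm{d}z_1-\varphi_0\mathrm{d}\varphi_1\)), and show that no nontrivial combination survives commutation with both the homotheties and a generic unipotent or affine lift. The cleanest route is probably to first kill \(\lambda\) (using non-commutation of homotheties with \(\mathrm{SL}_2\)-lifts), then reduce to the already-treated \(\mathrm{Aut}(\mathbb{C}^3)_\omega\) and conclude there that even \(\beta\) must vanish because \(Z(\mathrm{Aut}(\mathbb{C}^3)_\omega)=\mathbb{C}\) is \emph{not} contained in the center of the larger group — concretely, \((z_0,z_1,z_2+\beta)\) fails to commute with a homothety \(h_\mu\) unless \(\beta=0\), since \(h_\mu\circ(z_0,z_1,z_2+\beta)=(\mu z_0,z_1,\mu z_2+\mu\beta)\) while \((z_0,z_1,z_2+\beta)\circ h_\mu=(\mu z_0,z_1,\mu z_2+\beta)\).
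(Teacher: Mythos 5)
The paper actually states this proposition without proof (it is treated as immediate from Propositions \ref{Lem:descrrho} and \ref{pro:desccont}), so the only question is whether your argument is sound. Your first half is: the inclusion $\ker\varsigma\subset Z(\mathrm{Aut}(\mathbb{C}^3)_\omega)$ is a one-line check, and the converse via surjectivity of $\varsigma$ together with the triviality of $Z(\mathrm{Aut}(\mathbb{C}^2)_\eta)$ (translations plus $\mathrm{SL}(2;\mathbb{C})$) is correct and is surely what the authors had in mind.

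In the contact case there is one step whose justification, as written, does not work: ``a central element of the big group is in particular central in the subgroup, so by the first part $\phi=(\lambda z_0,z_1,\lambda z_2+\lambda\beta)$''. A central element of $\mathrm{Aut}(\mathbb{C}^3)_{\mathrm{c}(\omega)}$ need not lie in $\mathrm{Aut}(\mathbb{C}^3)_\omega$, so you cannot quote $Z(\mathrm{Aut}(\mathbb{C}^3)_\omega)$; what you need is the \emph{centralizer} of $\mathrm{Aut}(\mathbb{C}^3)_\omega$ inside the contact group. The repair is exactly your own projection argument from part one: $\varsigma$ maps $\mathrm{Aut}(\mathbb{C}^3)_{\mathrm{c}(\omega)}$ onto $\mathrm{Aut}(\mathbb{C}^2)$ and $\mathrm{Aut}(\mathbb{C}^3)_\omega$ onto $\mathrm{Aut}(\mathbb{C}^2)_\eta$, and the centralizer of $\mathrm{Aut}(\mathbb{C}^2)_\eta$ in $\mathrm{Aut}(\mathbb{C}^2)$ is trivial (again translations plus $\mathrm{SL}(2;\mathbb{C})$); hence $\varsigma(\phi)=\mathrm{id}$, and the normal form of the Remark after Proposition \ref{Lem:chpinv} forces $\lambda=\det\mathrm{jac}\,\varsigma(\phi)=1$ and $b$ constant, so $\phi=(z_0,z_1,z_2+\beta)$ without any separate $\mathrm{SL}_2$-lift step to kill $\lambda$ (though your generic $\mathrm{SL}_2$-lift computation is also correct and would do it). Note also that the parenthetical suggestion that a translation lift ``pushes $\beta$ around'' is wrong: lifts $(z_0+a,z_1+c,z_2-az_1+\mathrm{cst})$ commute with $(z_0,z_1,z_2+\beta)$, as does every element of $\mathrm{Aut}(\mathbb{C}^3)_\omega$; it is precisely the homothety $h_\mu=(\mu z_0,z_1,\mu z_2)$, lying outside $\mathrm{Aut}(\mathbb{C}^3)_\omega$, that kills $\beta$ — and your final explicit computation with $h_\mu$ is correct and closes the argument. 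So the proposal is right in substance; only the ``central in the subgroup'' shortcut needs to be replaced by the centralizer computation sketched above.
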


As $\mathrm{Aut}(\mathbb{C}^3)_\omega\simeq\mathrm{Aut}(\mathbb{C}^2)_\eta\ltimes\mathbb{C}$ 
Proposition \ref{Pro:centrea} implies the following statement:

\begin{cor}\label{cor:centrea}
The quotient of $\mathrm{Aut}(\mathbb{C}^3)_\omega$ by its center is isomorphic to 
$\mathrm{Aut}(\mathbb{C}^2)_\eta$.
\end{cor}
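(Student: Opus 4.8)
The plan is to derive Corollary~\ref{cor:centrea} directly from Proposition~\ref{Lem:descrrho} and Proposition~\ref{Pro:centrea}. By Proposition~\ref{Lem:descrrho} we have the exact sequence
\[
0\longrightarrow\mathbb{C}\longrightarrow\mathrm{Aut}(\mathbb{C}^3)_\omega
\stackrel{\varsigma}{\longrightarrow}\mathrm{Aut}(\mathbb{C}^2)_\eta\longrightarrow 1,
\]
with $\ker\varsigma=\big\{(z_0,z_1,z_2+\beta)\,\vert\,\beta\in\mathbb{C}\big\}$, and by Proposition~\ref{Pro:centrea} this kernel is exactly $Z\big(\mathrm{Aut}(\mathbb{C}^3)_\omega\big)$. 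Hence $\varsigma$ factors through an isomorphism from the quotient $\mathrm{Aut}(\mathbb{C}^3)_\omega/Z\big(\mathrm{Aut}(\mathbb{C}^3)_\omega\big)$ onto $\mathrm{Aut}(\mathbb{C}^2)_\eta$: it is surjective by Proposition~\ref{Lem:descrrho}, and its kernel is precisely $Z\big(\mathrm{Aut}(\mathbb{C}^3)_\omega\big)$, so the first isomorphism theorem gives the claim.

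First I would recall that $\varsigma$ is a well-defined group morphism (this is part of Proposition~\ref{Lem:chpinv} and restated in Proposition~\ref{Lem:descrrho}), so that the quotient map is legitimate. Then I would invoke Proposition~\ref{Lem:descrrho} for surjectivity of $\varsigma$ and for the explicit description $\ker\varsigma=\big\{(z_0,z_1,z_2+\beta)\,\vert\,\beta\in\mathbb{C}\big\}$. Next I would invoke Proposition~\ref{Pro:centrea} to identify this kernel with the center $Z\big(\mathrm{Aut}(\mathbb{C}^3)_\omega\big)$. Applying the first isomorphism theorem to $\varsigma$ then yields
\[
\mathrm{Aut}(\mathbb{C}^3)_\omega/Z\big(\mathrm{Aut}(\mathbb{C}^3)_\omega\big)\simeq\mathrm{Aut}(\mathbb{C}^2)_\eta.
\]

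There is essentially no obstacle here: the corollary is a purely formal consequence of the two propositions, and the only thing to be careful about is that the kernel appearing in the exact sequence coincides with the center rather than merely being contained in it — but that is exactly the content of Proposition~\ref{Pro:centrea}, which is stated (and presumably proved) just before. So the proof will be two lines: cite Proposition~\ref{Lem:descrrho} for the exact sequence and surjectivity, cite Proposition~\ref{Pro:centrea} to recognize the kernel as the center, and conclude by the first isomorphism theorem.
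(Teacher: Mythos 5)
Your proposal is correct and is essentially the paper's own argument: the paper deduces the corollary from the decomposition $\mathrm{Aut}(\mathbb{C}^3)_\omega\simeq\mathrm{Aut}(\mathbb{C}^2)_\eta\ltimes\mathbb{C}$ of Proposition~\ref{Lem:descrrho} together with the identification of the center in Proposition~\ref{Pro:centrea}, which is exactly your application of the first isomorphism theorem to $\varsigma$ after recognizing $\ker\varsigma$ as the center.
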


\begin{lem}
One has the following isomorphism 
\[
\mathrm{Hom}(\mathrm{Aut}(\mathbb{C}^3)_\omega,\mathbb{C})\simeq \mathrm{Hom}
(\mathbb{C},\mathbb{C})
\]
where $\mathrm{Hom}(\mathbb{C},\mathbb{C})$ denotes the homomorphisms of the
additive group $\mathbb{C}$.
\end{lem}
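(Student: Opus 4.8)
The plan is to use the structure theorem $\mathrm{Aut}(\mathbb{C}^3)_\omega\simeq\mathrm{Aut}(\mathbb{C}^2)_\eta\ltimes\mathbb{C}$ together with the fact that, for any group $\mathrm{G}$, a homomorphism $\mathrm{G}\to\mathbb{C}$ (with $\mathbb{C}$ abelian) factors through the abelianization $\mathrm{G}/[\mathrm{G},\mathrm{G}]$. So the first step is to identify the abelianization of $\mathrm{Aut}(\mathbb{C}^3)_\omega$. From Proposition \ref{Lem:descrrho} we have the exact sequence $0\to\mathbb{C}\to\mathrm{Aut}(\mathbb{C}^3)_\omega\stackrel{\varsigma}{\to}\mathrm{Aut}(\mathbb{C}^2)_\eta\to 1$, and $\mathrm{Aut}(\mathbb{C}^2)_\eta$ is perfect by \cite[Proposition 10]{FurterLamy}. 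Hence any homomorphism $\mathrm{Aut}(\mathbb{C}^3)_\omega\to\mathbb{C}$ kills every element of $[\mathrm{Aut}(\mathbb{C}^3)_\omega,\mathrm{Aut}(\mathbb{C}^3)_\omega]$, and since $\varsigma$ maps the derived group onto $\mathrm{Aut}(\mathbb{C}^2)_\eta$ (as in the proof of Theorem \ref{thm:perf11}), it follows that any such homomorphism factors through $\varsigma$, i.e. vanishes on all of $\varsigma^{-1}$ of the derived part; concretely it factors through the quotient $\mathrm{Aut}(\mathbb{C}^3)_\omega/[\mathrm{Aut}(\mathbb{C}^2)_\eta\text{-lift}]$.

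More precisely, I would argue as follows. Let $\iota\colon\mathbb{C}\hookrightarrow\mathrm{Aut}(\mathbb{C}^3)_\omega$, $\beta\mapsto(z_0,z_1,z_2+\beta)$, be the kernel inclusion from Proposition \ref{Lem:descrrho}. Restriction along $\iota$ gives a map $r\colon\mathrm{Hom}(\mathrm{Aut}(\mathbb{C}^3)_\omega,\mathbb{C})\to\mathrm{Hom}(\mathbb{C},\mathbb{C})$, $\rho\mapsto\rho\circ\iota$. I claim $r$ is an isomorphism. For injectivity: if $\rho\circ\iota=0$ then $\rho$ vanishes on $\ker\varsigma$, so $\rho$ descends to a homomorphism $\bar\rho\colon\mathrm{Aut}(\mathbb{C}^2)_\eta\to\mathbb{C}$; but $\mathrm{Aut}(\mathbb{C}^2)_\eta$ is perfect and $\mathbb{C}$ abelian, so $\bar\rho=0$, hence $\rho=0$. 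For surjectivity: the semidirect product splitting $\mathrm{Aut}(\mathbb{C}^3)_\omega\simeq\mathrm{Aut}(\mathbb{C}^2)_\eta\ltimes\mathbb{C}$ gives a retraction $\pi\colon\mathrm{Aut}(\mathbb{C}^3)_\omega\to\mathbb{C}$ onto the $\mathbb{C}$-factor — one must check this $\pi$ is a group homomorphism, which is where a little care is needed: in a semidirect product $N\rtimes Q$ the projection to $N$ is generally not a homomorphism unless $Q$ acts trivially on $N/[N,N]$, but here $N=\mathbb{C}$ is central (Proposition \ref{Pro:centrea}), so the action is trivial and $\pi$ is indeed a homomorphism. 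Then for any $u\in\mathrm{Hom}(\mathbb{C},\mathbb{C})$ the composite $u\circ\pi$ lies in $\mathrm{Hom}(\mathrm{Aut}(\mathbb{C}^3)_\omega,\mathbb{C})$ and restricts to $u$ on $\mathbb{C}$. This exhibits $r$ as bijective, hence the desired isomorphism.

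Alternatively, and perhaps more cleanly, I would phrase the whole thing through abelianizations: since $\ker\varsigma$ is central and $\mathrm{Aut}(\mathbb{C}^2)_\eta$ is perfect, the exact sequence yields $[\mathrm{Aut}(\mathbb{C}^3)_\omega,\mathrm{Aut}(\mathbb{C}^3)_\omega]\cap\ker\varsigma=0$ (this is exactly the computation in the proof of Theorem \ref{thm:perf11}: a product of commutators that lies in $\ker\varsigma$ must be trivial), while $\varsigma$ sends the derived group onto $\mathrm{Aut}(\mathbb{C}^2)_\eta$; hence $[\mathrm{Aut}(\mathbb{C}^3)_\omega,\mathrm{Aut}(\mathbb{C}^3)_\omega]\simeq\mathrm{Aut}(\mathbb{C}^2)_\eta$ and the abelianization of $\mathrm{Aut}(\mathbb{C}^3)_\omega$ is precisely $\mathbb{C}=\ker\varsigma$, with the quotient map being $\pi$. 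Then $\mathrm{Hom}(\mathrm{Aut}(\mathbb{C}^3)_\omega,\mathbb{C})=\mathrm{Hom}(\mathrm{Aut}(\mathbb{C}^3)_\omega^{\mathrm{ab}},\mathbb{C})=\mathrm{Hom}(\mathbb{C},\mathbb{C})$ is immediate.

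The main obstacle is not deep: it is the verification that the projection $\pi$ onto the $\mathbb{C}$-factor is genuinely a group homomorphism, which hinges on $\ker\varsigma$ being central — already recorded in Proposition \ref{Pro:centrea} — so once that is invoked the argument is essentially formal. The only other point requiring attention is making sure one is using the additive (not topological or field-automorphic) structure on $\mathbb{C}$ consistently, as the statement explicitly specifies $\mathrm{Hom}(\mathbb{C},\mathbb{C})$ to mean homomorphisms of the additive group.
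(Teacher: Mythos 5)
Your proof is correct and takes essentially the paper's route: both arguments identify the abelianization of $\mathrm{Aut}(\mathbb{C}^3)_\omega$ with the central copy of $\mathbb{C}=\ker\varsigma$, using the split exact sequence of Proposition \ref{Lem:descrrho} together with the perfectness of $\mathrm{Aut}(\mathbb{C}^2)_\eta$, and then use that any homomorphism into the abelian group $\mathbb{C}$ factors through this abelianization. One small caveat on your ``alternative'' phrasing: centrality of $\ker\varsigma$ plus perfectness of the quotient does not by itself force $[\mathrm{Aut}(\mathbb{C}^3)_\omega,\mathrm{Aut}(\mathbb{C}^3)_\omega]\cap\ker\varsigma=\{\mathrm{id}\}$ (perfect central extensions exist); what gives it is the splitting, i.e. the homomorphic retraction $\pi$ onto $\mathbb{C}$ that your main argument already uses.
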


\begin{proof}
Note that if $\phi$ belongs to 
$[\mathrm{Aut}(\mathbb{C}^3)_\omega,\mathrm{Aut}(\mathbb{C}^3)_\omega]$, then the 
last component of $\phi$ is well defined (that is not defined modulo 
a constant). Besides 
$\mathrm{Aut}(\mathbb{C}^3)_\omega\simeq\mathrm{Aut}(\mathbb{C}^2)_\eta\ltimes\mathbb{C}$ 
and $\mathrm{Aut}(\mathbb{C}^2)_\eta$ is perfect thus 
\[
\quad ^{\textstyle \mathrm{Aut}(\mathbb{C}^3)_\omega}\Big/_{\textstyle 
[\mathrm{Aut}(\mathbb{C}^3)_\omega,\mathrm{Aut}(\mathbb{C}^3)_\omega]}\simeq\mathbb{C}
\]
and
\[
\xymatrix{
\mathrm{Aut}(\mathbb{C}^3)_\omega\ar[dd]\simeq\mathrm{Aut}(\mathbb{C}^2)_\eta
\ltimes\mathbb{C}\ar[ddrr] & & \\
 & & \\
\quad ^{\textstyle \mathrm{Aut}(\mathbb{C}^3)_\omega}\Big/_{\textstyle 
[\mathrm{Aut}(\mathbb{C}^3)_\omega,\mathrm{Aut}(\mathbb{C}^3)_\omega]}\ar[rr]^-{\sim} & & \mathbb{C}
}
\]
We conclude by noting that any element of 
$\mathrm{Hom}(\mathrm{Aut}(\mathbb{C}^3)_\omega,\mathbb{C})$ acts trivially on $\phi$. 
\end{proof}

\begin{rem}
An element $c$ of $\mathrm{Hom}(\mathrm{Aut}(\mathbb{C}^3)_\omega,\mathbb{C})$ 
acts on $\mathrm{Aut}(\mathbb{C}^3)_\omega$ as follows
\[
\big(\phi_0,\phi_1,z_z+b(z_0,z_1)\big)\to \big(\phi_0,\phi_1,z_2+b(z_0,z_1)+c(\phi)\big)
\]
\end{rem}

\begin{defi}
Let $\mathrm{H}$ be a normal subgroup of a group $\mathrm{G}$. We say that 
an automorphism of $\mathrm{H}$ of the form $\phi\mapsto \varphi\phi\varphi^{-1}$, 
with $\varphi$ in $\mathrm{G}$, is \textbf{\textit{$\mathrm{G}$-inner}}.
\end{defi}

\begin{thm}\label{thm:autautC3omega}
The group $\mathrm{Aut}\big(\mathrm{Aut}(\mathbb{C}^3)_\omega\big)$ is 
generated by the automorphisms group of the field~$\mathbb{C}$, the group 
of $\mathrm{Aut}(\mathbb{C}^3)_{\mathrm{c}(\omega)}$-inner automorphisms and the
action of $\mathrm{Hom}(\mathbb{C},\mathbb{C})$.
\end{thm}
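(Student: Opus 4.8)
The plan is to describe an arbitrary $\Phi \in \mathrm{Aut}\big(\mathrm{Aut}(\mathbb{C}^3)_\omega\big)$ by exploiting the exact sequence $0 \to \mathbb{C} \to \mathrm{Aut}(\mathbb{C}^3)_\omega \xrightarrow{\varsigma} \mathrm{Aut}(\mathbb{C}^2)_\eta \to 1$ of Proposition \ref{Lem:descrrho}. The first step is to observe that the kernel of $\varsigma$ is exactly the center $Z(\mathrm{Aut}(\mathbb{C}^3)_\omega)$ (Proposition \ref{Pro:centrea}), so it is characteristic: any automorphism $\Phi$ preserves it and therefore descends to an automorphism $\overline{\Phi}$ of the quotient $\mathrm{Aut}(\mathbb{C}^3)_\omega / Z \simeq \mathrm{Aut}(\mathbb{C}^2)_\eta$ (Corollary \ref{cor:centrea}), together with an automorphism $\Phi_0$ of $Z \simeq (\mathbb{C},+)$.

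The second step is to pin down $\overline{\Phi}$. Here I would invoke the description of $\mathrm{Aut}\big(\mathrm{Aut}(\mathbb{C}^2)\big)$ from \cite{Deserti:autaut}: every automorphism is of the form $f \mapsto {}^{\xi}\!(\psi f \psi^{-1})$ for some $\psi \in \mathrm{Aut}(\mathbb{C}^2)$ and some field automorphism $\xi$. One must first check that this restricts to a description of $\mathrm{Aut}\big(\mathrm{Aut}(\mathbb{C}^2)_\eta\big)$ — i.e.\ that the conjugating $\psi$ and the field twist $\xi$ can be taken so that the resulting automorphism preserves $\mathrm{Aut}(\mathbb{C}^2)_\eta$, which forces $\psi$ to normalize $\mathrm{Aut}(\mathbb{C}^2)_\eta$ (this is where one uses that $\mathrm{Aut}(\mathbb{C}^2)_\eta$, being defined by an intrinsic Jacobian-type condition, has controllable normalizer, e.g.\ conjugation by a $\psi$ scaling $\eta$). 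After composing $\Phi$ with a $\mathrm{Aut}(\mathbb{C}^3)_{\mathrm{c}(\omega)}$-inner automorphism lifting $\psi$ and with a field automorphism $\xi$ acting on $\mathrm{Aut}(\mathbb{C}^3)_\omega$, we may assume $\overline{\Phi} = \mathrm{id}$ on $\mathrm{Aut}(\mathbb{C}^2)_\eta$; the reason $\psi$ is available as a $\mathrm{Aut}(\mathbb{C}^3)_{\mathrm{c}(\omega)}$-conjugation (rather than merely $\mathrm{Aut}(\mathbb{C}^3)_\omega$) is precisely the $\mathbb{C}^*$-factor in Proposition \ref{pro:desccont}, which supplies the Jacobian scaling needed when $\psi$ does not itself preserve $\eta$.

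The third step treats the residual automorphism $\Phi$ with $\overline{\Phi} = \mathrm{id}$. Such a $\Phi$ fixes each coset of $Z$ setwise, so for every $\phi$ we have $\Phi(\phi) = \phi \cdot z(\phi)$ for a unique $z(\phi) \in Z \simeq \mathbb{C}$, and the homomorphism property of $\Phi$ together with the fact that $Z$ is central forces $z \colon \mathrm{Aut}(\mathbb{C}^3)_\omega \to \mathbb{C}$ to be a group homomorphism, i.e.\ $z \in \mathrm{Hom}(\mathrm{Aut}(\mathbb{C}^3)_\omega, \mathbb{C})$. By the Lemma this group is isomorphic to $\mathrm{Hom}(\mathbb{C},\mathbb{C})$, acting exactly as in the Remark, so $\Phi$ is the action of an element of $\mathrm{Hom}(\mathbb{C},\mathbb{C})$. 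Finally one must also absorb the automorphism $\Phi_0$ of the center $\mathbb{C}$: since a field automorphism $\xi$ already acts $\mathbb{C}$-semilinearly on the last coordinate, after normalizing $\overline{\Phi}$ by $\xi$ as above the induced action on $Z \simeq \mathbb{C}$ is $\mathbb{C}$-linear, hence scaling by some $\lambda \in \mathbb{C}^*$; but conjugation by $(\lambda z_0, z_1, \lambda z_2) \in \mathrm{Aut}(\mathbb{C}^3)_{\mathrm{c}(\omega)}$ scales $z_2$ by $\lambda$ and fixes $\mathrm{Aut}(\mathbb{C}^2)_\eta$ modulo center, so this too is $\mathrm{Aut}(\mathbb{C}^3)_{\mathrm{c}(\omega)}$-inner. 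Combining, $\Phi$ is a product of a field automorphism, a $\mathrm{Aut}(\mathbb{C}^3)_{\mathrm{c}(\omega)}$-inner automorphism, and the $\mathrm{Hom}(\mathbb{C},\mathbb{C})$-action, as claimed.

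The main obstacle I anticipate is the second step: carefully transferring the \cite{Deserti:autaut} description past the subgroup $\mathrm{Aut}(\mathbb{C}^2)_\eta$ and checking that the conjugating element, which a priori lives in $\mathrm{Aut}(\mathbb{C}^2)$ and need not preserve $\eta$, can always be realized as a $\mathrm{Aut}(\mathbb{C}^3)_{\mathrm{c}(\omega)}$-inner automorphism of $\mathrm{Aut}(\mathbb{C}^3)_\omega$ — this is exactly where the passage from $\mathrm{Aut}(\mathbb{C}^3)_\omega$ to $\mathrm{Aut}(\mathbb{C}^3)_{\mathrm{c}(\omega)}$ in the statement is forced, and getting the bookkeeping of the $\mathbb{C}^*$- and $\mathrm{Hom}(\mathbb{C},\mathbb{C})$-contributions consistent requires care.
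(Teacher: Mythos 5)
Your skeleton is the paper's: the kernel of $\varsigma$ is the center (Proposition \ref{Pro:centrea}), hence characteristic, so any $\Phi$ descends to an automorphism of $\mathrm{Aut}(\mathbb{C}^2)_\eta$ (Corollary \ref{cor:centrea}); after normalizing that descended automorphism, the residual $\Phi$ satisfies $\Phi(\phi)=\phi\cdot z(\phi)$ with $z\in\mathrm{Hom}(\mathrm{Aut}(\mathbb{C}^3)_\omega,\mathbb{C})\simeq\mathrm{Hom}(\mathbb{C},\mathbb{C})$ — your step 3 is a cleaner phrasing of the paper's computation $\Delta_\phi=T_\phi+c_\phi$. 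The genuine gap is your step 2. The input the paper actually uses is Theorem \ref{thm:autauteta}: every automorphism of $\mathrm{Aut}(\mathbb{C}^2)_\eta$ \emph{itself} is a field twist composed with an $\mathrm{Aut}(\mathbb{C}^2)$-inner automorphism, and this is proved separately in the appendix by rerunning the method of \cite{Deserti:autaut} inside $\mathrm{Aut}(\mathbb{C}^2)_\eta$, using its own amalgamated structure $(\mathrm{J}_2)_\eta\ast_{(\mathrm{J}_2)_\eta\cap(\mathrm{Aff}_2)_\eta}(\mathrm{Aff}_2)_\eta$ from \cite{FurterLamy}. You instead propose to quote the classification of $\mathrm{Aut}\big(\mathrm{Aut}(\mathbb{C}^2)\big)$ and ``check that it restricts''; that deduction fails, because the automorphism $\overline{\Phi}$ you must describe is an automorphism of the \emph{subgroup} $\mathrm{Aut}(\mathbb{C}^2)_\eta$ and there is no reason it extends to $\mathrm{Aut}(\mathbb{C}^2)$, so the theorem of \cite{Deserti:autaut} does not apply to it. Your normalizer remark attacks a non-issue — every $\psi\in\mathrm{Aut}(\mathbb{C}^2)$ normalizes $\mathrm{Aut}(\mathbb{C}^2)_\eta$, since polynomial automorphisms have constant Jacobian — while the real difficulty, showing that an abstract automorphism of $\mathrm{Aut}(\mathbb{C}^2)_\eta$ has the form ${}^{\xi}\!\,(\psi\,\cdot\,\psi^{-1})$ at all, is left unproved. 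Without Theorem \ref{thm:autauteta} (or an equivalent argument) the proof is incomplete at its central input.

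Two further remarks. Your mechanism for transporting the normalization to dimension $3$ is correct and is the paper's: lift $\psi\in\mathrm{Aut}(\mathbb{C}^2)$ through $\varsigma$ to $\mathrm{Aut}(\mathbb{C}^3)_{\mathrm{c}(\omega)}$ (always possible), and conjugation by this lift preserves $\mathrm{Aut}(\mathbb{C}^3)_\omega$ because the multiplier is a nonzero constant — this is exactly why $\mathrm{Aut}(\mathbb{C}^3)_{\mathrm{c}(\omega)}$-inner automorphisms appear in the statement. On the other hand, your final manoeuvre ``absorbing $\Phi_0$'' is unnecessary and partly wrong: once $\overline{\Phi}=\mathrm{id}$, your own step 3 shows the action on the center is $\beta\mapsto\beta+z(\beta)$, which is already of the $\mathrm{Hom}(\mathbb{C},\mathbb{C})$ type; moreover the claim that the induced action on $Z$ must be $\mathbb{C}$-linear is unjustified (additive automorphisms of $\mathbb{C}$ need not be linear, and wild ones genuinely occur here), and the claim that conjugation by $(\lambda z_0,z_1,\lambda z_2)$ acts trivially on the quotient $\mathrm{Aut}(\mathbb{C}^2)_\eta$ is false, since it induces conjugation by $(\lambda z_0,z_1)$ there.
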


\begin{proof}
Consider an element $\psi$ of 
$\mathrm{Aut}\big(\mathrm{Aut}(\mathbb{C}^3)_\omega\big)$. For any 
$\phi=(\varphi_\phi,z_2+T_\phi(z_0,z_1))$ one has  
\[
\psi(\phi)=\big(\widetilde{\varphi_\phi},z_2+\Delta_\phi(z_0,z_1)\big). 
\]
In particular $\psi$ induces an automorphism~$\psi_0$ of 
$\mathrm{Aut}(\mathbb{C}^2)_\eta$; indeed since $\psi$ is an automorphism of 
$\mathrm{Aut}(\mathbb{C}^3)_\omega$, it preserves 
$Z(\mathrm{Aut}(\mathbb{C}^3)_\omega)$ and so, from Corollary \ref{cor:centrea} 
induces an automorphism of $\mathrm{Aut}(\mathbb{C}^2)_\eta$.

According to Theorem \ref{thm:autauteta} one can assume that 
$\psi_0=\mathrm{id}$ up to the action of an automorphism of the 
field~$\mathbb{C}$ and up to conjugacy by an $\mathrm{Aut}(\mathbb{C}^2)$-inner 
automorphism, {\it i.e.}
\[
\psi(\phi)=\big(\varphi_\phi,z_2+\Delta_\phi(z_0,z_1)\big)
\]
Set $\phi^{-1}=\big(\varphi_\phi^{-1},z_2+T_{\phi^{-1}}(z_0,z_1)\big)$. On the 
one hand
$\phi^{-1}\circ\phi=\big(\mathrm{id},z_2+T_\phi(z_0,z_1)+T_{\phi^{-1}}(\varphi_\phi)\big)$
so
\begin{equation}\label{eq:ast}
T_\phi+T_{\phi^{-1}}(\varphi_\phi)=0
\end{equation}
and on the other hand
\[
\psi(\phi\circ\phi^{-1})=\big(\mathrm{id},z_2+T_{\phi^{-1}}(z_0,z_1)+
\Delta_\phi\,\varphi_\phi^{-1}\big)
\]
belongs to 
$\mathrm{Aut}(\mathbb{C}^3)_\omega$ hence $T_{\phi^{-1}}+\Delta_\phi\varphi_\phi^{-1}$ 
is a constant. This, combined with $(\ref{eq:ast})$, implies that 
$\Delta_\phi=T_\phi+c_\phi$, where $c_\phi$ is a constant, and yields to a morphism 
from $\mathrm{Aut}(\mathbb{C}^3)_\omega$ to $\mathbb{C}$:
\[
\mathrm{Aut}(\mathbb{C}^3)_\omega\to\mathbb{C},\qquad \phi\mapsto c_\phi.
\]

\smallskip

Consider an homomorphism
\[
\rho\colon\mathrm{Aut}(\mathbb{C}^3)_\omega\to \mathbb{C},\qquad \phi\mapsto\rho_\phi.
\]
Let us define 
$\psi\colon\mathrm{Aut}(\mathbb{C}^3)_\omega\to\mathrm{Aut}(\mathbb{C}^3)_\omega$ 
by: 
\[
\psi(\phi)=\big(\phi_0(z_0,z_1),\phi_1(z_0,z_1),z_2+b(z_0,z_1)+\rho_\phi\big)
\]
where 
$\phi=\big(\phi_0(z_0,z_1),\phi_1(z_0,z_1),z_2+b(z_0,z_1)\big)\in\mathrm{Aut}(\mathbb{C}^3)_\omega$.
One can check that $\psi$ belongs to $\mathrm{Aut}(\mathrm{Aut}(\mathbb{C}^3)_\omega)$.
\end{proof}

\section{Contact birational maps}

\medskip

A \textbf{\textit{rational map}} of $\mathbb{P}^n$ can be written
\[
\phi\colon\mathbb{P}^n\dashrightarrow\mathbb{P}^n\quad\quad \big(z_0:z_1:
\ldots:z_n)\dashrightarrow(\phi_0(z_0,z_1,\ldots,z_n):\phi_1(z_0,z_1,\ldots,z_n):\ldots:
\phi_n(z_0,z_1,\ldots,z_n)\big)
\]
where the $\phi_i$'s are homogeneous polynomials of the same degree $\geq 1$ and without 
common factor of positive degree. The \textbf{\textit{degree}} of $\phi$ is by definition 
the degree of the $\phi_i$. A \textbf{\textit{birational map}} of $\mathbb{P}^n$ 
is a rational map that admits a rational inverse. Of course 
$\mathrm{Aut}(\mathbb{C}^n)$ is a subgroup of $\mathrm{Bir}(\mathbb{P}^n)$. An 
other natural subgroup of $\mathrm{Bir}(\mathbb{P}^n)$ is the group 
$\mathrm{Aut}(\mathbb{P}^n)\simeq\mathrm{PGL}(n+1;\mathbb{C})$ of automorphisms 
of $\mathbb{P}^n$.

The \textbf{\textit{indeterminacy set}} $\mathrm{Ind}\,\phi$ of $\phi$ is the set of the 
common zeros of the $\phi_i$'s. The \textbf{\textit{exceptional set}} $\mathrm{Exc}\,\phi$ 
of~$\phi$ is the (finite) union of subvarieties $M_i$ of $\mathbb{P}^n$ such that 
$\phi$ is not injective on any open subset of $M_i$.

\smallskip

Let us extend the definition of Jonqui\`eres group we gave in the case of polynomial 
automorphisms of $\mathbb{C}^n$ to the case of birational 
maps of~$\mathbb{P}^2$: the \textbf{\textit{Jonqui\`eres group}}, denoted~$\mathcal{J}$, 
is the group of birational maps of $\mathbb{P}^2$ that preserve a 
pencil of rational curves. Since two pencils of rational curves are birationally conjugate, 
$\mathcal{J}$ does not depend, up to conjugacy, of the choice of the pencil. In other words 
one can decide, up to birational conjugacy, that $\mathcal{J}$ is in the affine chart $z_2=1$ 
the maximal group of birational maps that preserve the fibration $z_1=$ cst. An element 
$\varphi$ of $\mathcal{J}$ permutes the fibers of the fibration thus induces an automorphism 
of the base $\mathbb{P}^1$; note that if the fibration is fiberwise invariant, 
$\varphi$ acts as an homography in the generic fibers. Hence $\mathcal{J}$ can be identified 
with the semi-direct product $\mathrm{PGL}(2;\mathbb{C}(z_1))\rtimes\mathrm{PGL}(2;\mathbb{C})$.

\smallskip

We study the birational maps $\phi=(\phi_0,\phi_1,\phi_2)$ defined on 
$\mathbb{C}^3=(z_3=1)\subset\mathbb{P}^3$ that preserve either the contact 
standard form $\omega$, or the contact structure $\mathrm{c}(\omega)$ associated to 
$\omega$. In other words we would like to describe the groups 
$\mathrm{Bir}(\mathbb{C}^3)_\omega$ and 
$\mathrm{Bir}(\mathbb{C}^3)_{\mathrm{c}(\omega)}$ and also their elements.

Let us now illustrate a fundamental difference between 
$\mathrm{Bir}(\mathbb{C}^3)_\omega$ and 
$\mathrm{Bir}(\mathbb{C}^3)_{\mathrm{c}(\omega)}$: the first group preserves 
the fibration associated to $\frac{\partial}{\partial z_2}$ whereas the second 
doesn't.

\begin{pro}\label{Lem:chpinv2}
If $\phi$ belongs to $\mathrm{Bir}(\mathbb{C}^3)_\omega$, then 
$\phi_*\frac{\partial}{\partial z_2}=\frac{\partial}{\partial z_2}$.

\smallskip

\noindent In particular if $\phi$ belongs to 
$\mathrm{Bir}(\mathbb{C}^3)_\omega$, then
\[
\phi=\big(\phi_0(z_0,z_1),\phi_1(z_0,z_1),z_2+b(z_0,z_1)\big)
\]
and the map
\[
\varsigma\colon \mathrm{Bir}(\mathbb{C}^3)_\omega\longrightarrow\mathrm{Bir}
(\mathbb{C}^2)_\eta,\qquad \big(\phi_0(z_0,z_1),\phi_1(z_0,z_1),z_2+b(z_0,z_1)
\big)\mapsto\big(\phi_0(z_0,z_1),\phi_1(z_0,z_1)\big)
\]
is a morphism.
\end{pro}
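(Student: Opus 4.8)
The plan is to mimic the proof of Proposition \ref{Lem:chpinv} (the polynomial case), checking that the same contact-geometric argument goes through for birational maps. The key point is that the vector field $\chi=\frac{\partial}{\partial z_2}$ is intrinsically attached to the contact form $\omega$: it is the unique (rational, in fact constant) vector field satisfying $\omega(\chi)=1$ and $i_\chi\,\mathrm{d}\omega=0$. Since these two conditions are preserved under pullback by any $\phi$ with $\phi^*\omega=\omega$ (hence also $\phi^*\mathrm{d}\omega=\mathrm{d}\omega$), the pushed-forward field $\phi_*\chi$ must again satisfy $\omega(\phi_*\chi)=1$ and $i_{\phi_*\chi}\,\mathrm{d}\omega=0$; uniqueness then forces $\phi_*\chi=\chi$, i.e. $\phi_*\frac{\partial}{\partial z_2}=\frac{\partial}{\partial z_2}$. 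The only thing to verify is that uniqueness of the Reeb field still holds at the level of rational vector fields, which is immediate since $\mathrm{d}\omega=\mathrm{d}z_0\wedge\mathrm{d}z_1$ has constant rank $2$ with kernel spanned by $\frac{\partial}{\partial z_2}$, so the two equations pin down $\chi$ on the dense open set where everything is defined, and a rational vector field agreeing with $\frac{\partial}{\partial z_2}$ on a dense open set equals it everywhere.

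From $\phi_*\frac{\partial}{\partial z_2}=\frac{\partial}{\partial z_2}$ one reads off the shape of $\phi$. Writing $\phi=(\phi_0,\phi_1,\phi_2)$ with the $\phi_i\in\mathbb{C}(z_0,z_1,z_2)$, the relation says that conjugating the translation flow $(z_0,z_1,z_2)\mapsto(z_0,z_1,z_2+t)$ by $\phi$ gives back the same flow; differentiating, $\frac{\partial\phi_i}{\partial z_2}$ equals the $i$-th component of $\frac{\partial}{\partial z_2}$ read in the target coordinates, i.e. $\frac{\partial\phi_0}{\partial z_2}=\frac{\partial\phi_1}{\partial z_2}=0$ and $\frac{\partial\phi_2}{\partial z_2}=1$. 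Hence $\phi_0$ and $\phi_1$ depend only on $z_0,z_1$ and $\phi_2=z_2+b(z_0,z_1)$ for some $b\in\mathbb{C}(z_0,z_1)$. Because $\phi$ is birational, $(\phi_0,\phi_1)$ is then necessarily a birational self-map of the $(z_0,z_1)$-plane, and the invariance of $\omega$ forces $(\phi_0,\phi_1)^*\eta=\eta$ (as already noted in the text, invariance of $\omega$ implies invariance of $\eta=\mathrm{d}\omega$, and $\eta$ is the pullback of $\mathrm{d}z_0\wedge\mathrm{d}z_1$ on the $(z_0,z_1)$-plane), so $(\phi_0,\phi_1)\in\mathrm{Bir}(\mathbb{C}^2)_\eta$.

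It remains to check that $\varsigma\colon(\phi_0,\phi_1,z_2+b)\mapsto(\phi_0,\phi_1)$ is a group morphism. This is a direct computation: if $\phi=(\phi_0,\phi_1,z_2+b)$ and $\psi=(\psi_0,\psi_1,z_2+c)$ are both of the above form, then $\psi\circ\phi=(\psi_0\circ(\phi_0,\phi_1),\,\psi_1\circ(\phi_0,\phi_1),\,z_2+b+c\circ(\phi_0,\phi_1))$, whose first two components are exactly the composition $(\psi_0,\psi_1)\circ(\phi_0,\phi_1)$; hence $\varsigma(\psi\circ\phi)=\varsigma(\psi)\circ\varsigma(\phi)$. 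The fact that the last component has the predicted form $z_2+(\text{function of }z_0,z_1)$ is automatic from the product formula, so nothing further is needed.

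The only genuine subtlety — and the part I would write most carefully — is the passage from "$\phi^*\omega=\omega$" to "the Reeb field is preserved" in the birational (rather than biholomorphic) setting: one must work on the dense open set $U$ where $\phi$ is a local isomorphism and where $\phi$, $\phi^{-1}$ and all relevant forms and fields are defined, run the uniqueness argument there, and then invoke that a rational vector field is determined by its restriction to a dense open set. Everything else is a formal consequence, exactly parallel to the polynomial case treated in Proposition \ref{Lem:chpinv}.
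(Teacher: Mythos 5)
Your proof is correct and follows essentially the same route as the paper: the paper simply remarks that the proof is the same as in the polynomial case (Proposition \ref{Lem:chpinv}), namely uniqueness of the Reeb vector field of $\omega$ forces $\phi_*\frac{\partial}{\partial z_2}=\frac{\partial}{\partial z_2}$, from which the normal form of $\phi$ and the morphism property of $\varsigma$ follow. Your extra care about running the argument on the dense open set where the birational map is a local isomorphism, and about $(\phi_0,\phi_1)$ being birational and preserving $\eta$, fills in exactly the details the paper leaves implicit.
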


\begin{rem}
The proof is similar to the proof of Proposition \ref{Lem:chpinv}.
\end{rem}

\begin{rem}\label{egsansfib}
The first assertion of Proposition \ref{Lem:chpinv2} is not true for the group 
$\mathrm{Bir}(\mathbb{C}^3)_{\mathrm{c}(\omega)}$; indeed let us consider the 
map $\psi$ defined by
\[
\psi=\left(\frac{z_0}{(1+z_2)^2},z_1,\frac{z_2}{1+z_2}\right);
\]
it belongs to $\mathrm{Bir}(\mathbb{C}^3)_{\mathrm{c}(\omega)}$ and does not 
preserve the fibration associated to the vector field $\frac{\partial}{\partial z_2}$.
\end{rem}

\medskip

\subsection{A P.D.E. approach}

Let $\phi=(\phi_0,\phi_1,\phi_2)$ be in $\mathrm{Bir}(\mathbb{C}^3)_{\mathrm{c}(\omega)}$; 
then $\phi^*\omega=V(\phi)\omega$ for some rational function $V(\phi)$. One
inherits a map $V$ from $\mathrm{Bir}(\mathbb{C}^3)_{\mathrm{c}(\omega)}$ into the set of 
rational functions in $z_0$, $z_1$ and $z_2$. The equality $\phi^*\omega=V(\phi)\omega$ 
gives the following system $(\star)$ of P. D. E.:
\[
\left\{
\begin{array}{lll}
\phi_0\frac{\partial\phi_1}{\partial z_0}+\frac{\partial\phi_2}{\partial z_0}=0 &\qquad
\qquad (\star_1)\\
\\
\phi_0\frac{\partial\phi_1}{\partial z_1}+\frac{\partial\phi_2}{\partial z_1}=V(\phi) z_0 &
\qquad\qquad (\star_2)\\
\\
\phi_0\frac{\partial\phi_1}{\partial z_2}+\frac{\partial\phi_2}{\partial z_2}=V(\phi) &\qquad
\qquad (\star_3)
\end{array}
\right.
\]
Thanks to $(\star_2)$ and $(\star_3)$ one gets
\[
\phi_0\left(\frac{\partial\phi_1}{\partial z_1}-z_0\frac{\partial\phi_1}{\partial z_2}
\right)+\left(\frac{\partial\phi_2}{\partial z_1}-z_0\frac{\partial\phi_2}{\partial z_2}
\right)=0 \qquad\qquad (\star_4)
\]

Equation $(\star_1)$ has a special family of solutions: maps for which both $\phi_1$ or 
$\phi_2$ do not depend on $z_0$ (note that if $\phi_1$ (resp. $\phi_2$) does not depend
on $z_0$ then $(\star_1)$ implies that $\phi_2$ (resp. $\phi_1$) also); in that case we 
can then compute $\phi_0$ thanks to $(\star_4)$. Taking $(\phi_1,\phi_2)$ in 
$\mathrm{Bir}(\mathbb{P}^2)$ we get elements in $\mathrm{im}\,\mathcal{K}$; 
we will called this family of solutions \textbf{\textit{Klein family}}. Note 
that this family is a group denoted $\mathscr{K}$, 
the \textbf{\textit{Klein group}}.

\begin{pro}
The elements of $\mathscr{K}$ are of the following type
\[
\left(\frac{-\frac{\partial\phi_2}{\partial z_1}+z_0\frac{\partial\phi_2}{\partial z_2}
}{\frac{\partial\phi_1}{\partial z_1}-z_0\frac{\partial\phi_1}{\partial z_2}},\phi_1(z_1,z_2),
\phi_2(z_1,z_2)\right)
\]
with $(\phi_1,\phi_2)$ in $\mathrm{Bir}(\mathbb{P}^2)$.
\end{pro}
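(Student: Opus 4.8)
The plan is to extract $\phi_0$ directly from the contact system $(\star)$ and then to upgrade the plane map $(\phi_1,\phi_2)$ from merely dominant rational to birational. I would start with an arbitrary $\phi=(\phi_0,\phi_1,\phi_2)\in\mathscr{K}$. By the definition of the Klein family $\phi_1$ and $\phi_2$ depend only on $z_1,z_2$, so $\frac{\partial\phi_1}{\partial z_0}=\frac{\partial\phi_2}{\partial z_0}=0$ and $(\star_1)$ is automatic; since $\phi$ preserves $\mathrm{c}(\omega)$, the whole system $(\star)$ holds, hence so does $(\star_4)=(\star_2)-z_0(\star_3)$:
\[
\phi_0\left(\frac{\partial\phi_1}{\partial z_1}-z_0\frac{\partial\phi_1}{\partial z_2}\right)=-\left(\frac{\partial\phi_2}{\partial z_1}-z_0\frac{\partial\phi_2}{\partial z_2}\right).
\]
Viewed as a polynomial in $z_0$ over $\mathbb{C}(z_1,z_2)$, the coefficient $\frac{\partial\phi_1}{\partial z_1}-z_0\frac{\partial\phi_1}{\partial z_2}$ is not identically zero (else $\phi_1$ would be constant, contradicting the dominance of $\phi$), so dividing gives exactly the announced formula for $\phi_0$.

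The next step is to show $(\phi_1,\phi_2)$ is birational. The formula for $\phi_0$ presents $z_0\mapsto\phi_0$ as a Möbius transformation over $\mathbb{C}(z_1,z_2)$ with determinant $\det\mathrm{jac}(\phi_1,\phi_2)\neq 0$, so $\mathbb{C}(z_1,z_2)(\phi_0)=\mathbb{C}(z_1,z_2)(z_0)=\mathbb{C}(z_0,z_1,z_2)$; since also $\mathbb{C}(\phi_0,\phi_1,\phi_2)=\mathbb{C}(\phi_1,\phi_2)(\phi_0)$ and $\phi_0$ is transcendental over $\mathbb{C}(z_1,z_2)$, the tower of fields gives
\[
[\mathbb{C}(z_0,z_1,z_2):\mathbb{C}(\phi_0,\phi_1,\phi_2)]=[\mathbb{C}(z_1,z_2)(\phi_0):\mathbb{C}(\phi_1,\phi_2)(\phi_0)]=[\mathbb{C}(z_1,z_2):\mathbb{C}(\phi_1,\phi_2)].
\]
The left degree is $1$ because $\phi$ is birational, hence $(\phi_1,\phi_2)$ has topological degree $1$, i.e. belongs to $\mathrm{Bir}(\mathbb{P}^2)$. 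Equivalently and more geometrically: $\phi$ is birational and maps the pencil of vertical lines $\{z_1=c_1,\,z_2=c_2\}$ to itself via a homography in $z_0$, hence descends to a birational self-map of the base plane, which is $(\phi_1,\phi_2)$.

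For the converse I would, starting from any $(\phi_1,\phi_2)\in\mathrm{Bir}(\mathbb{P}^2)$, define $\phi_0$ by the displayed formula and check that $(\star_1)$ and $(\star_4)$ hold by construction and that, with $V(\phi):=\phi_0\frac{\partial\phi_1}{\partial z_2}+\frac{\partial\phi_2}{\partial z_2}$, the equations $(\star_3)$ and then $(\star_2)$ become identities, so that $\phi^*\omega=V(\phi)\omega$; the degree computation above, read backwards, shows $\phi$ is birational, so $\phi\in\mathscr{K}$ and in fact $\phi=\mathcal{K}(\phi_1,\phi_2)$ in the notation of the Introduction. The only genuinely non-formal point in all of this is the identification of the topological degree of $\phi$ with that of $(\phi_1,\phi_2)$ — that the vertical lines cannot be collapsed — everything else being a direct manipulation of the system $(\star)$.
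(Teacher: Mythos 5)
Your proposal is correct and follows essentially the route the paper takes: the paper obtains the displayed form of $\phi_0$ exactly by solving $(\star_4)$ once $\phi_1,\phi_2$ are independent of $z_0$, and it states the proposition without further proof since it identifies $\mathscr{K}$ with $\mathrm{im}\,\mathcal{K}$. Your additional field-theoretic step showing that $(\phi_1,\phi_2)$ must itself be birational (and the converse verification that the formula does land in $\mathrm{Bir}(\mathbb{C}^3)_{\mathrm{c}(\omega)}$) is a correct filling-in of a detail the paper leaves implicit, not a different method.
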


Assume now that $\phi_1$ or $\phi_2$ really depends on $z_0$ (\emph{i.e.} that $\phi$ does
not belong to the Klein family). Then $(\star_1)$ and $(\star_4)$ imply
\[
\left(\frac{\partial\phi_2}{\partial z_1}-z_0\frac{\partial\phi_2}{\partial z_2}\right)
\frac{\partial\phi_1}{\partial z_0}=\left(\frac{\partial\phi_1}{\partial z_1}-z_0\frac{
\partial\phi_1}{\partial z_2}\right)\frac{\partial\phi_2}{\partial z_0}\qquad\qquad (
\star_5)
\]
One can rewrite $(\star_5)$ as
\[
\frac{\frac{\partial\phi_2}{\partial z_1}-z_0\frac{\partial\phi_2}{\partial z_2}}{\frac{
\partial\phi_2}{\partial z_0}}=\frac{\frac{\partial\phi_1}{\partial z_1}-z_0\frac{
\partial\phi_1}{\partial z_2}}{\frac{\partial\phi_1}{\partial z_0}}.
\]
Denote by $\alpha$ the map from $\mathrm{Bir}(\mathbb{C}^3)_{\mathrm{c}(\omega)}$ to
the set of rational functions in $z_0$, $z_1$ and $z_2$ defined by $\alpha(\phi)=\infty$
if $\phi$ belongs to $\mathscr{K}$ and 
\[
\alpha(\phi)=\frac{\frac{\partial\phi_2}{\partial z_1}-z_0\frac{\partial\phi_2}{\partial 
z_2}}{\frac{\partial\phi_2}{\partial z_0}}=\frac{\frac{\partial\phi_1}{\partial z_1}-z_0
\frac{\partial\phi_1}{\partial z_2}}{\frac{\partial\phi_1}{\partial z_0}}
\]
otherwise.

If $\phi_1$ and $\phi_2$ are some first integrals of 
\[
Z_{\phi}=\alpha(\phi)\frac{\partial}{\partial z_0}-\frac{\partial}{\partial z_1}
+z_0\frac{\partial}{\partial z_2}, 
\]
then $(\star_5)$ is satisfied. One thus gets $\phi_0$ from $(\star_1)$. Note
that such a $\phi$ is not always birational. But one can get a lot of birational 
examples in this way.

\smallskip

For instance when $\alpha(\phi)\equiv 0$ one obtains a family of 
rational maps solutions of $(\star)$ and Legendre involution is one of them. 
The set of birational maps of that family 
is called \textbf{\textit{Legendre family}}, \emph{i.e.} it is the set of 
birational maps of the following form
\[
\left(-\frac{\frac{\partial}{\partial z_0}\left(\phi_2\big(z_0,-(z_2+z_0z_1)\big)\right)}
{\frac{\partial}{\partial z_0}\left(\phi_1\big(z_0,-(z_2+z_0z_1)\big)\right)},\phi_1
\big(z_0,-(z_2+z_0z_1)\big),\phi_2\big(z_0,-(z_2+z_0z_1)\big)\right).
\]

\begin{rem}
The Legendre family composed with the Legendre involution (right 
composition) yields to the Klein family.
\end{rem}

\begin{defi}
Let $\gamma$ be an irreducible curve; $\gamma$ is a \textbf{\textit{legendrian curve}} 
if $s_\gamma^*\omega=0$ where $s_\gamma$ denotes a local parametrization of $\gamma$.
\end{defi}

\begin{rem}
Elements of the Klein family preserve the fibration 
$\big\{z_1 =\text{ cst},\,z_2 =\text{ cst}\big\}$; note that its fibers are legendrian 
curves. The Legendre involution sends the fibration 
$\big\{z_0 =\text{ cst},\,z_2+z_0z_1 =\text{ cst}\big\}$ onto 
$\big\{z_1 =\text{ cst},\,z_2 =\text{ cst}\big\}$. Then of course if one conjugates 
the Klein family by the Legendre involution one gets a family that 
preserves the fibration by legendrian curves 
$\big\{z_0 =\text{ cst},\,z_2+z_0z_1 =\text{ cst}\big\}$.
\end{rem}

A direct computation implies:

\begin{pro}\label{pro:firstip}
Let $\phi=(\phi_0,\phi_1,\phi_2)$ be a contact birational map of $\mathbb{P}^3$. 

The map $\phi$ conjugates the foliation induced by $Z_{\phi}$ to the foliation 
induced by $\frac{\partial}{\partial z_0}$.

As a consequence the field of the rational first integrals of $Z_{\phi}$ is generated
by $\phi_1$ and $\phi_2$.
\end{pro}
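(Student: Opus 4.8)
The plan is to deduce everything from the identity defining $\alpha(\phi)$ together with the fact that a birational map is dominant. I start with the \emph{first integral} computation, assuming first $\phi\notin\mathscr{K}$. Then $Z_\phi=\alpha(\phi)\frac{\partial}{\partial z_0}-\frac{\partial}{\partial z_1}+z_0\frac{\partial}{\partial z_2}$, and by the very definition of $\alpha(\phi)$ one has, for $i=1,2$, the equality $\alpha(\phi)\,\frac{\partial\phi_i}{\partial z_0}=\frac{\partial\phi_i}{\partial z_1}-z_0\,\frac{\partial\phi_i}{\partial z_2}$; substituting,
\[
Z_\phi(\phi_i)=\alpha(\phi)\,\frac{\partial\phi_i}{\partial z_0}-\frac{\partial\phi_i}{\partial z_1}+z_0\,\frac{\partial\phi_i}{\partial z_2}=0.
\]
Hence $\phi_1$ and $\phi_2$ are rational first integrals of $Z_\phi$, i.e. the $1$-forms $\mathrm{d}\phi_1$ and $\mathrm{d}\phi_2$ vanish on $Z_\phi$. (If $\phi\in\mathscr{K}$ one takes $Z_\phi=\frac{\partial}{\partial z_0}$; since then $\phi_1$ and $\phi_2$ do not depend on $z_0$, the conclusion is immediate.)

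Next I would turn the statement into a geometric one. As $\phi$ is birational it is dominant, so $\mathrm{d}\phi_0\wedge\mathrm{d}\phi_1\wedge\mathrm{d}\phi_2\not\equiv 0$; in particular $\mathrm{d}\phi_1\wedge\mathrm{d}\phi_2\not\equiv 0$. The foliation of $\mathbb{P}^3$ induced by $\frac{\partial}{\partial z_0}$ is the one whose leaves are the common level sets of $z_1$ and $z_2$, that is the foliation defined by the pair $(\mathrm{d}z_1,\mathrm{d}z_2)$; its pull-back by $\phi$ is the foliation defined by $(\phi^*\mathrm{d}z_1,\phi^*\mathrm{d}z_2)=(\mathrm{d}\phi_1,\mathrm{d}\phi_2)$. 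At a generic point the common kernel of $\mathrm{d}\phi_1$ and $\mathrm{d}\phi_2$ is a line, which by the first step contains $Z_\phi$, hence coincides with $\mathbb{C}\,Z_\phi$. Thus the foliation defined by $(\mathrm{d}\phi_1,\mathrm{d}\phi_2)$ is the foliation induced by $Z_\phi$, which is exactly the assertion that $\phi$ conjugates the foliation induced by $Z_\phi$ to the one induced by $\frac{\partial}{\partial z_0}$.

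For the consequence I would use the birationality of $\phi$ a second time, now to view $(w_0,w_1,w_2):=(\phi_0,\phi_1,\phi_2)$ as a system of rational coordinates on $\mathbb{P}^3$. In these coordinates the previous paragraph reads $\phi_*Z_\phi=h\,\frac{\partial}{\partial w_0}$ for some nonzero rational function $h$; a rational function is a first integral of $h\,\frac{\partial}{\partial w_0}$ if and only if it belongs to $\mathbb{C}(w_1,w_2)$, so composing with $\phi$ shows that the field of rational first integrals of $Z_\phi$ is $\mathbb{C}(\phi_1,\phi_2)$. There is no genuine obstacle here: the equality $Z_\phi(\phi_i)=0$ is merely a rewriting of the definition of $\alpha(\phi)$, and the only two points deserving a sentence are the non-degeneracy $\mathrm{d}\phi_1\wedge\mathrm{d}\phi_2\not\equiv 0$, immediate from dominance, and the passage from \emph{two independent first integrals} to \emph{generators of the field of first integrals}, which becomes transparent once one changes to the coordinates $(\phi_0,\phi_1,\phi_2)$.
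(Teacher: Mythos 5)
Your proposal is correct and follows the route the paper intends: the identity $Z_\phi(\phi_i)=0$ is indeed just a rewriting of the definition of $\alpha(\phi)$ (the paper dismisses this as ``a direct computation''), and your use of dominance to get $\mathrm{d}\phi_1\wedge\mathrm{d}\phi_2\not\equiv 0$ and of the coordinates $(\phi_0,\phi_1,\phi_2)$ to identify the field of first integrals with $\mathbb{C}(\phi_1,\phi_2)$ supplies exactly the details the paper leaves implicit. No gaps.
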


\smallskip

The left translation action of $\mathscr{K}$ on 
$\mathrm{Bir}(\mathbb{C}^3)_{\mathrm{c}(\omega)}$ is given by 
\[
(\psi,\phi)\in\mathscr{K}\times\mathrm{Bir}(\mathbb{C}^3)_{\mathrm{c}(\omega)}
\longrightarrow \psi\phi\in\mathrm{Bir}(\mathbb{C}^3)_{\mathrm{c}(\omega)}.
\]
Take $\phi$ and $\psi$ in $\mathrm{Bir}(\mathbb{C}^3)_{\mathrm{c}(\omega)}$
such that $\alpha(\phi)=\alpha(\psi)$, then $\psi_1$ and $\psi_2$ are first
integrals of $Z_\phi$ and by Proposition \ref{pro:firstip}
\[
\psi_1=\varphi_1(\phi_1,\phi_2),\qquad\psi_2=\varphi_2(\phi_1,\phi_2)
\]
where $\varphi=(\varphi_1,\varphi_2)$ is birational. Hence 
\[
\psi\phi^{-1}=\big(\psi_0\circ\phi^{-1},\varphi_1(z_1,z_2),
\varphi_2(z_1,z_2)\big)
\]
belongs to $\mathscr{K}$; in other words $\phi$ and $\psi$ are in the same 
$\mathscr{K}$-orbit.

Assume now that $\psi=\kappa\phi$ where $\kappa$ denotes an element of $\mathscr{K}$.
Then the foliations defined by $Z_{\phi}$ and $Z_{\psi}$ coincide because they 
have the same set of first integrals. As a consequence $\alpha(\phi)=\alpha(\psi)$. 

Hence one can state:

\begin{thm}
The map $\alpha$ is a complete invariant of the left translation 
action of $\mathscr{K}$ on $\mathrm{Bir}(\mathbb{C}^3)_{\mathrm{c}(\omega)}$, that is 
for any $\phi$ and $\psi$ in $\mathrm{Bir}(\mathbb{C}^3)_{\mathrm{c}(\omega)}$ one has
$\alpha(\phi)=\alpha(\psi)$ if and only if $\psi\phi^{-1}$ belongs to 
$\mathscr{K}$.
\end{thm}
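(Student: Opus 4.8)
The plan is to prove the two implications separately, with Proposition~\ref{pro:firstip} as the essential tool, after first disposing of the degenerate value $\alpha=\infty$. If $\phi\in\mathscr{K}$, then $\alpha(\phi)=\infty$, and the hypothesis $\alpha(\psi)=\alpha(\phi)$ forces $\psi\in\mathscr{K}$ as well, so $\psi\phi^{-1}\in\mathscr{K}$ since $\mathscr{K}$ is a group; conversely, if $\psi\phi^{-1}\in\mathscr{K}$ and $\phi\in\mathscr{K}$, then $\psi=(\psi\phi^{-1})\phi\in\mathscr{K}$ and both invariants equal $\infty$. Hence from now on we may assume that neither $\phi$ nor $\psi$ lies in $\mathscr{K}$, so that $\phi_1,\phi_2$ (and $\psi_1,\psi_2$) genuinely depend on $z_0$ and the vector field $Z_\phi$ is defined.

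\emph{Forward direction.} Suppose $\alpha(\phi)=\alpha(\psi)\ne\infty$. Since $Z_\phi=\alpha(\phi)\frac{\partial}{\partial z_0}-\frac{\partial}{\partial z_1}+z_0\frac{\partial}{\partial z_2}$ depends on $\phi$ only through $\alpha(\phi)$, we get $Z_\phi=Z_\psi$. By Proposition~\ref{pro:firstip} the field of rational first integrals of this common vector field is generated both by $\phi_1,\phi_2$ and by $\psi_1,\psi_2$, so $\mathbb{C}(\phi_1,\phi_2)=\mathbb{C}(\psi_1,\psi_2)$; thus there exist rational functions $\varphi_1,\varphi_2$ with $\psi_i=\varphi_i(\phi_1,\phi_2)$ for $i=1,2$, and $\varphi=(\varphi_1,\varphi_2)$ is birational (its inverse expresses $\phi_1,\phi_2$ in terms of $z_1,z_2$). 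Composing on the right with $\phi^{-1}$ and using $\phi_1\circ\phi^{-1}=z_1$, $\phi_2\circ\phi^{-1}=z_2$ gives
\[
\psi\phi^{-1}=\big(\psi_0\circ\phi^{-1},\;\varphi_1(z_1,z_2),\;\varphi_2(z_1,z_2)\big).
\]
This map is birational and preserves $\mathrm{c}(\omega)$ (being a composition of such maps), and its last two components are independent of $z_0$ with $(\varphi_1,\varphi_2)\in\mathrm{Bir}(\mathbb{P}^2)$; by the very definition of the Klein family it therefore belongs to $\mathscr{K}$.

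\emph{Converse direction.} Suppose $\kappa:=\psi\phi^{-1}\in\mathscr{K}$, so $\psi=\kappa\phi$. Writing $\kappa=\big(\kappa_0,\kappa_1(z_1,z_2),\kappa_2(z_1,z_2)\big)$ with $(\kappa_1,\kappa_2)\in\mathrm{Bir}(\mathbb{P}^2)$, we get $\psi_i=\kappa_i(\phi_1,\phi_2)$ for $i=1,2$, hence $\mathbb{C}(\psi_1,\psi_2)=\mathbb{C}(\phi_1,\phi_2)$; by Proposition~\ref{pro:firstip} the foliations induced by $Z_\psi$ and $Z_\phi$ coincide. Therefore $Z_\psi=g\,Z_\phi$ for some rational function $g$, and comparing the $\frac{\partial}{\partial z_1}$-coefficients (both equal to $-1$) forces $g=1$, so $Z_\psi=Z_\phi$; comparing the $\frac{\partial}{\partial z_0}$-coefficients then yields $\alpha(\psi)=\alpha(\phi)$.

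\emph{Main obstacle.} Granting Proposition~\ref{pro:firstip}, the only genuinely delicate point is this normalization step: two vector fields of the prescribed shape $\alpha\frac{\partial}{\partial z_0}-\frac{\partial}{\partial z_1}+z_0\frac{\partial}{\partial z_2}$ that induce the same foliation must be \emph{equal}, hence share the same $\alpha$. Everything else is the bookkeeping of composing Klein-type maps and checking that the induced change of first integrals is birational; one must also remember to treat the stratum $\alpha=\infty$ (membership in $\mathscr{K}$) before invoking $Z_\phi$ at all.
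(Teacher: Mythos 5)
Your proposal is correct and follows essentially the same route as the paper: both directions rest on Proposition \ref{pro:firstip}, with $\alpha(\phi)=\alpha(\psi)$ giving $Z_\phi=Z_\psi$ and hence $\psi_i=\varphi_i(\phi_1,\phi_2)$ with $\varphi$ birational, so $\psi\phi^{-1}\in\mathscr{K}$, and conversely $\psi=\kappa\phi$ giving equal fields of first integrals, hence equal foliations and equal $\alpha$. Your explicit treatment of the stratum $\alpha=\infty$ and of the normalization $Z_\psi=g\,Z_\phi\Rightarrow g=1$ merely spells out details the paper leaves implicit.
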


\begin{question}
Is the map $\alpha$ surjective ?
\end{question}

Let us consider the following differential equation
\begin{equation}\label{eq:eqdifford2}
y''=F(x,y,y')
\end{equation}
where $F$ denotes a rational function. Set $y'=u$, then 
\[
(\ref{eq:eqdifford2})\Leftrightarrow\left\{
\begin{array}{lll}
\frac{du}{dt}=F(x,y,u)\\
\frac{dy}{dt}=u\\
\frac{dx}{dt}=1
\end{array}
\right.
\]
So one can associate to (\ref{eq:eqdifford2}) the following vector field
\[
Z=F\frac{\partial}{\partial u}+u\frac{\partial}{\partial y}+\frac{\partial}{\partial x}.
\]

We say that (\ref{eq:eqdifford2}) is \textbf{\textit{rationally integrable}} if the 
vector field $Z$ has two first integrals $r_1$ and $r_2$ rationally independent: 
$\mathrm{d}r_1\wedge\mathrm{d}r_2\not\equiv 0$. 

For generic $\gamma$ and $\beta$
in $\mathbb{C}$ the differential equation $y''+\gamma y'+\beta y=0$ is not 
rationally integrable; as a consequence $-\gamma z_0-\beta z_2$ is not in the 
image of $\alpha$. The first Painlev\'e equation gives examples of polynomial 
of degree $2$ that does not belong to $\mathrm{im}\,\alpha$:

\begin{thm}[\cite{Casale}]
The equation $\mathcal{P}_1$ 
\[
y''=6y^2+x
\]
is not rationally integrable.
\end{thm}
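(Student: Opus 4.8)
The plan is to show that the first Painlev\'e equation $y''=6y^2+x$ admits no two rationally independent rational first integrals for the associated vector field
\[
Z=(6y^2+x)\frac{\partial}{\partial u}+u\frac{\partial}{\partial y}+\frac{\partial}{\partial x}
\]
on $\mathbb{C}^3$. The natural strategy is the classical one, going back to Painlev\'e himself and made rigorous by differential Galois / Malgrange groupoid techniques: suppose for contradiction that $Z$ has two independent rational first integrals $r_1,r_2$. Then every solution curve of $Z$ lies on the algebraic surface $r_1=c_1$, $r_2=c_2$, hence on an algebraic curve depending algebraically on the constants $(c_1,c_2)$. In particular the general solution of $\mathcal{P}_1$ would be an algebraic function of $x$ and of the two integration constants. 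This contradicts the irreducibility of the first Painlev\'e transcendent.

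First I would set up the reduction precisely: rational integrability of $Z$ forces the existence of a rational map $(u,y,x)\mapsto(r_1,r_2)$ whose generic fibers are the orbit curves of $Z$; after a possible finite base change one gets an algebraic family of affine curves $\mathcal{C}_{c_1,c_2}$ foliated by $Z$, and the Painlev\'e flow restricted to each curve is the restriction of the vector field, so $y(x)$ is obtained by integrating a rational (or algebraic) $1$-form on a fixed-genus curve. This is exactly the kind of ``closed-form'' solvability that Painlev\'e's irreducibility theorem rules out: one invokes the theorem of Nishioka and Umemura (building on Painlev\'e and refined by Watanabe) that $\mathcal{P}_1$ has no one-parameter family of algebraic solutions and, more strongly, that its general solution is not ``classical'' in the sense of Umemura — it cannot be obtained from algebraic functions, solutions of linear ODEs, and Painlev\'e's own operations. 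A cleaner modern route, which is presumably what the cited reference \cite{Casale} does, is to compute the Malgrange pseudogroup (or the Galois groupoid) of $\mathcal{P}_1$ and show it is the whole group of transformations preserving the area form $du\wedge dy$ on the fibers $x=\text{cst}$; since a rational first integral would cut this pseudogroup down to a proper subgroup, no such integral exists, and two independent ones are a fortiori impossible.

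The key steps, in order, are: (1) translate ``rationally integrable'' into ``the foliation by $Z$-orbits has a rational first integral of rank $2$,'' i.e. the orbits are algebraic curves in an algebraic pencil; (2) observe that the vector field $Z$ preserves the meromorphic volume form and the fibration $x=\text{cst}$, so its Malgrange groupoid is contained in the groupoid of fiberwise symplectic (area-preserving) transformations over the $x$-line; (3) invoke Casale's computation that this containment is an equality — the Galois groupoid of $\mathcal{P}_1$ is as large as possible; (4) note that each independent rational first integral of $Z$ would force the groupoid to lie in the stabilizer of a nonconstant rational function, hence be proper, a contradiction; conclude that $Z$ has at most one rational first integral, in particular not two independent ones. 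The main obstacle is step (3): the actual computation of the Malgrange groupoid of $\mathcal{P}_1$ is a substantial theorem (this is the content of \cite{Casale}), relying on an analysis of the variational equations and of the non-reducibility of $\mathcal{P}_1$ in Umemura's sense; everything else is bookkeeping. Since the statement is quoted directly from \cite{Casale}, the proof here is simply the citation together with the translation in steps (1), (2), (4) showing that rational integrability of the auxiliary vector field is the same notion as the one governed by Casale's theorem.
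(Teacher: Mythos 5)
Your proposal is essentially what the paper does: the statement is imported verbatim from \cite{Casale} with no proof given in the text, and your reduction of ``rationally integrable'' to the non-existence of rational first integrals for $Z$, settled by Casale's computation of the Malgrange--Galois groupoid of $\mathcal{P}_1$, is exactly the intended reading of the citation. Your sketch of that reduction (a maximal groupoid cannot stabilize a nonconstant rational function) is correct, so there is nothing to add.
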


If we come back with our notations it means that $6z_2^2-z_1$ is not in the
image of $\alpha$.

\begin{rem}
Indeed all generic Painlev\'e equations give rise to rational functions that 
do not belong to~$\mathrm{im}\,\alpha$.
\end{rem}

Nevertheless one can easily obtain examples of elements in the image of 
$\alpha$:

\begin{egs}
\begin{itemize}
\item If
$\phi=\left(\frac{z_0}{\beta},z_0+\beta z_1,z_2-\frac{z_0^2}{2\beta}\right)$
with $\beta\in\mathbb{C}^*$, then $\alpha(\phi)=\beta$.

\item If
\[
\phi=\big(z_0,z_1+P(z_0),z_2+Q(z_0)\big) 
\]
with $P$, $Q$ in $\mathbb{C}[z_0]$ such that $Q'(z_0)=-z_0P'(z_0)$, 
then $\alpha(\phi)=\frac{1}{P'(z_0)}$.

\item If 
\[
\phi=\big(-z_1,z_0+P(z_1),z_2+z_0z_1+Q(z_1)\big)
\]
with $P$, $Q$ in $\mathbb{C}[z_1]$ such that $Q'(z_1)=z_1P'(z_1)$ then
$\alpha(\phi)=P'(z_1)$.
\end{itemize}
\end{egs}

\medskip

Consider the left translation action of $\mathrm{Bir}(\mathbb{C}^3)_{\omega}$ on 
$\mathrm{Bir}(\mathbb{C}^3)_{\mathrm{c}(\omega)}$ defined by
\[
(\psi,\phi)\in \mathrm{Bir}(\mathbb{C}^3)_{\omega}\times 
\mathrm{Bir}(\mathbb{C}^3)_{\mathrm{c}(\omega)}\longrightarrow \psi\phi\in
\mathrm{Bir}(\mathbb{C}^3)_{\mathrm{c}(\omega)}.
\]

\begin{thm}
The map $V$ is a complete invariant of the left 
translation action of $\mathrm{Bir}(\mathbb{C}^3)_{\omega}$ on 
$\mathrm{Bir}(\mathbb{C}^3)_{\mathrm{c}(\omega)}$: for any 
$\phi$, $\psi$ in $\mathrm{Bir}(\mathbb{C}^3)_{\mathrm{c}(\omega)}$ one
has $V(\phi)=V(\psi)$ if and only if $\psi\phi^{-1}$ belongs to 
$\mathrm{Bir}(\mathbb{C}^3)_{\omega}$.
\end{thm}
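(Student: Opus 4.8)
The plan is to mimic the argument just given for the $\mathscr{K}$-action, replacing the invariant $\alpha$ by $V$ and the group $\mathscr{K}$ by $\mathrm{Bir}(\mathbb{C}^3)_\omega$. First I would establish the easy direction: if $\psi\phi^{-1}=\theta\in\mathrm{Bir}(\mathbb{C}^3)_\omega$, then $\psi=\theta\phi$ and, using the cocycle relation $V(\theta\phi)=\bigl(V(\theta)\circ\phi\bigr)\cdot V(\phi)$ together with $V(\theta)\equiv 1$ (which holds precisely because $\theta^*\omega=\omega$), one gets $V(\psi)=V(\phi)$ immediately.

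For the converse, suppose $V(\phi)=V(\psi)$ and set $\theta=\psi\phi^{-1}$. The cocycle relation, now read as $V(\psi)=V(\theta\circ\phi)=\bigl(V(\theta)\circ\phi\bigr)\cdot V(\phi)$, forces $V(\theta)\circ\phi\equiv 1$, hence $V(\theta)\equiv 1$ since $\phi$ is birational. Therefore $\theta^*\omega=V(\theta)\,\omega=\omega$, i.e. $\theta\in\mathrm{Bir}(\mathbb{C}^3)_{\mathrm{c}(\omega)}$ actually lies in the subgroup $\mathrm{Bir}(\mathbb{C}^3)_\omega$ of maps preserving $\omega$ on the nose. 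This is exactly the assertion $\psi\phi^{-1}\in\mathrm{Bir}(\mathbb{C}^3)_\omega$.

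The one point that needs a word of care — and I expect it to be the only real subtlety — is that $V$ is defined a priori only on $\mathrm{Bir}(\mathbb{C}^3)_{\mathrm{c}(\omega)}$ via $\phi^*\omega=V(\phi)\,\omega$, so I must check that $\theta=\psi\phi^{-1}$, built from two elements of $\mathrm{Bir}(\mathbb{C}^3)_{\mathrm{c}(\omega)}$, again belongs to this group; but this is clear since $\mathrm{Bir}(\mathbb{C}^3)_{\mathrm{c}(\omega)}$ is a group, and the identity $V(\phi^{-1})=1/\bigl(V(\phi)\circ\phi^{-1}\bigr)$ (obtained by applying the cocycle relation to $\phi^{-1}\circ\phi=\mathrm{id}$, for which $V(\mathrm{id})\equiv 1$) gives the consistency of the computation. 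With that in hand the proof reduces to the two short cocycle manipulations above; no geometry beyond the definition of $V$ and the fact that $\omega$ is nowhere zero on $\mathbb{C}^3$ (so that $V$ never vanishes, and $V(\theta)\circ\phi\equiv 1$ really does imply $V(\theta)\equiv 1$) is required.
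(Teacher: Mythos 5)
Your proof is correct and is essentially the paper's argument: the paper's direct computation of $(\phi^{-1})^*\psi^*\omega=\omega$ is exactly your cocycle manipulation written out as pullbacks, and the easy direction is the same observation that $(f\phi)^*\omega=V(\phi)\omega$ for $f\in\mathrm{Bir}(\mathbb{C}^3)_\omega$. Your remarks on $V$ never vanishing (so that division is legitimate) and on $\psi\phi^{-1}$ staying in $\mathrm{Bir}(\mathbb{C}^3)_{\mathrm{c}(\omega)}$ are sound and only make explicit what the paper leaves implicit.
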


\begin{proof}
Let $\phi$ be a contact birational map of $\mathbb{P}^3$. Obviously
$(f\phi)^*\omega=V(\phi)\omega$ for any $f\in\mathrm{Bir}(\mathbb{C}^3)_\omega$.

Let us now consider two contact birational maps $\phi$ and $\psi$ of
$\mathbb{P}^3$ such that $V=V(\phi)=V(\psi)$. On the one hand
\[
(\phi^{-1})^*\psi^*\omega=(\phi^{-1})^*V(\phi)\omega=V\circ\phi^{-1}\,(\phi^{-1})^*\omega
\]
and on the other hand composing $\phi^*\omega=V\omega$ by $(\phi^{-1})^*$ one gets
\[
\phi^*\omega=V\omega\Rightarrow (\phi^{-1})^*(\phi^*\omega)=(\phi^{-1})^*(V\omega)
\Rightarrow \omega=V\circ\phi^{-1}\,(\phi^{-1})^*\omega.
\]
As a consequence $(\phi^{-1})^*\psi^*\omega=\omega$, that is $\psi\phi^{-1}$ 
belongs to $\mathrm{Bir}(\mathbb{C}^3)_{\omega}$.
\end{proof}

\begin{pro}
If $\phi$ and $\psi$ are two contact birational maps of 
$\mathbb{P}^3$ such that $\alpha(\phi)=\alpha(\psi)$ and 
$V(\phi)=V(\psi)$, then $\psi\phi^{-1}$ belongs to 
\[
\left\{\left(\frac{z_0-b'(z_1)}{\nu'(z_1)},\nu(z_1),z_2+b(z_1)\right)\,\vert\, 
b\in\mathbb{C}(z_1),\,\nu\in\mathrm{PGL}(2;\mathbb{C})\right\}=\mathscr{K}
\cap\mathrm{Bir}(\mathbb{C}^3)_\omega.
\]
\end{pro}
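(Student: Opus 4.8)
The plan is to combine the two preceding theorems: if $\alpha(\phi)=\alpha(\psi)$ then $\psi\phi^{-1}\in\mathscr{K}$, and if $V(\phi)=V(\psi)$ then $\psi\phi^{-1}\in\mathrm{Bir}(\mathbb{C}^3)_\omega$. Hence under both hypotheses $\psi\phi^{-1}$ lies in the intersection $\mathscr{K}\cap\mathrm{Bir}(\mathbb{C}^3)_\omega$, and the whole task reduces to identifying that intersection explicitly. So the real content is: describe the elements of $\mathscr{K}$ which moreover preserve $\omega$ (not merely $\mathrm{c}(\omega)$).

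First I would take a generic element $\kappa$ of $\mathscr{K}$; by the description of the Klein family given above it has the form $\kappa=\big(\kappa_0,\nu_1(z_1,z_2),\nu_2(z_1,z_2)\big)$ with $(\nu_1,\nu_2)\in\mathrm{Bir}(\mathbb{P}^2)$ and $\kappa_0$ determined by $(\star_1)$–$(\star_4)$. Now impose $\kappa\in\mathrm{Bir}(\mathbb{C}^3)_\omega$. By Proposition~\ref{Lem:chpinv2}, such a map must satisfy $\kappa_*\frac{\partial}{\partial z_2}=\frac{\partial}{\partial z_2}$, which forces $\kappa$ to have the shape $\big(\kappa_0(z_0,z_1),\kappa_1(z_0,z_1),z_2+b(z_0,z_1)\big)$ with $(\kappa_0,\kappa_1)\in\mathrm{Bir}(\mathbb{C}^2)_\eta$. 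Comparing the two shapes: the second components agree, so $\nu_1(z_1,z_2)=\kappa_1(z_0,z_1)$ must be independent of both $z_0$ and $z_2$, i.e. $\nu_1=\nu(z_1)$ for some $\nu$; and the third component $\nu_2(z_1,z_2)=z_2+b(z_0,z_1)$ forces $\nu_2=z_2+b(z_1)$ with $b$ a function of $z_1$ only. That $(\nu_1,\nu_2)$ is birational on $\mathbb{P}^2$ then forces $\nu\in\mathrm{PGL}(2;\mathbb{C})$. Finally $(\star_4)$ (or equivalently the first equation of the system $(\mathcal{S})$ together with $(\star_2)$) gives $\kappa_0\big(\nu'(z_1)\big)=z_0-b'(z_1)$, hence $\kappa_0=\dfrac{z_0-b'(z_1)}{\nu'(z_1)}$, which is exactly the claimed normal form. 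Conversely one checks directly that every map of this form preserves $\omega$ and lies in $\mathscr{K}$, so the inclusion is an equality.

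The step I expect to require the most care is the bookkeeping of which variables the components of $\kappa$ may depend on, and checking that the resulting constraints are not only necessary but sufficient — in particular verifying $\phi^*\omega=\omega$ (not just $\phi^*\omega\wedge\omega=0$) for the normal-form maps, which pins down the coefficient of $z_2$ in the last component to be $1$ and not an arbitrary constant. This is a short computation with $\omega=z_0\,\mathrm{d}z_1+\mathrm{d}z_2$: pulling back gives $\kappa_0\,\mathrm{d}\nu(z_1)+\mathrm{d}(z_2+b(z_1))=\kappa_0\nu'(z_1)\,\mathrm{d}z_1+\mathrm{d}z_2+b'(z_1)\,\mathrm{d}z_1$, and this equals $z_0\,\mathrm{d}z_1+\mathrm{d}z_2$ precisely when $\kappa_0\nu'(z_1)+b'(z_1)=z_0$, recovering the formula for $\kappa_0$. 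Everything else is a direct consequence of the two cited theorems and of Proposition~\ref{Lem:chpinv2}.
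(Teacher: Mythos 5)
Your argument is correct and follows the paper's own route: invoke the two complete-invariant theorems to place $\psi\phi^{-1}$ in $\mathscr{K}\cap\mathrm{Bir}(\mathbb{C}^3)_\omega$, then identify that intersection from the shape of Klein elements and the normal form $\big(\phi_0(z_0,z_1),\phi_1(z_0,z_1),z_2+b(z_0,z_1)\big)$ of Proposition~\ref{Lem:chpinv2}. The only difference is that you write out explicitly the short pullback computation yielding $\kappa_0=\frac{z_0-b'(z_1)}{\nu'(z_1)}$, which the paper leaves implicit.
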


\begin{proof}
Since both $\alpha(\phi)=\alpha(\psi)$ and $V(\phi)=V(\psi)$ 
the map $\psi\phi^{-1}$ is an element of 
$\mathrm{Bir}(\mathbb{C}^3)_{\omega}\cap\mathscr{K}$. One gets
the result from the descriptions of the Klein family and of 
$\mathrm{Bir}(\mathbb{C}^3)_{\omega}$ (Proposition \ref{Lem:chpinv}).
\end{proof}

Let us now give some examples of $V(\phi)$. 

\begin{egs}
\begin{itemize}
\item If $\phi$ belongs to $\mathscr{K}$, then
\[
V(\phi)=\frac{\frac{\partial \phi_1}{\partial z_1}\frac{\partial \phi_2}
{\partial z_2}-\frac{\partial\phi_1}{\partial z_2}\frac{\partial\phi_2}
{\partial z_1}}{\frac{\partial\phi_1}{\partial z_1}-z_0\frac{\partial \phi_1}
{\partial z_2}}.
\]

\item If 
\[
\phi=\left(\frac{1}{nz_0^{n-1}z_2+(n+1)z_0^n(z_1+1)},z_0^n\left(z_0+z_2+
z_0z_1\right),-z_0\right)
\]
with $n\in\mathbb{Z}$, then $V(\phi)=\frac{z_0}{(n+1)z_0z_1+nz_2+(n+1)z_0}$.

\item If
\[
\phi=\left(\frac{(z_1-z_0)^2}{2z_0z_1+2z_2-z_0^2},\frac{2z_2+z_0^2}{z_1-z_0},z_1-z_0\right),
\]
then $V(\phi)=\frac{2(z_0-z_1)}{z_0^2-2z_0z_1-2z_2}$.
\end{itemize}
\end{egs}

\begin{rem}
If $\phi$ belongs to $\mathrm{Bir}(\mathbb{C}^3)_{\mathrm{c}(\omega)}$, 
then $\phi^*\omega=V(\phi)\omega$ and 
$\phi^*(\omega\wedge\mathrm{d}\omega)=V(\phi)^2\omega\wedge\mathrm{d}\omega$
and $\det\mathrm{jac}\,\phi$ is a square. This gives some constraint on $V(\phi)$.
\end{rem}

As previously we can ask: is $V$ surjective ? The answer is no. Indeed let
us assume that there exists $\phi\in\mathrm{Bir}(\mathbb{C}^3)_{\mathrm{c}(\omega)}$
such that $V(\phi)=z_2$. Then 
$\phi_0\mathrm{d}\phi_0+\mathrm{d}\phi_2=z_0z_2\mathrm{d}z_1+\mathrm{d}\left(\frac{z_2^2}{2}\right)$
and 
$\mathrm{d}\phi_0\wedge\mathrm{d}\phi_1=\mathrm{d}(z_0z_2)\wedge\mathrm{d}z_1$.
Since the fibers of $(z_0z_2,z_1)$ are connected one can write $\phi_0$ as
$\varphi_0(z_0z_2,z_1)$ and $\phi_1$ as $\varphi_1(z_0z_2,z_1)$. Then 
$\phi^*\omega=V(\phi)\omega$ implies that 
$\phi_2-\frac{z_2^2}{2}=\varphi_2(z_0z_2,z_1)$. In other words
\[
\phi=\left(\varphi_0(z_0z_2,z_1),\varphi_1(z_0z_2,z_1),\varphi_2(z_0z_2,z_1)+
\frac{z_2^2}{2}\right).
\]
But $\phi\circ\left(\frac{z_0}{z_2},z_1,z_2\right)$ is clearly not birational 
so does $\phi$: contradiction.

\medskip

\subsection{Invariant forms and vector fields}

The next statement deals with flows in $\mathrm{Bir}(\mathbb{C}^3)_\omega$ 
(\emph{see} \cite{CerveauDeserti} for a definition).

\begin{pro}
Let $\phi_t$ be a flow in $\mathrm{Bir}(\mathbb{C}^3)_\omega$. Then 
$\phi_t$ has a first integral depending only on $(z_0,z_1)$ and with 
rational fibers.

In other words
\[
\phi_t=\big(\varphi_t(z_0,z_1),z_2+b_t(z_0,z_1)\big)
\]
where $\varphi_t$ belongs, up to conjugacy, to $\mathcal{J}$ and 
$b_t$ to $\mathbb{C}(z_0,z_1)$.
\end{pro}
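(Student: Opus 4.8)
The statement has two parts that should be proved in sequence.  First I would establish the structural form $\phi_t=\big(\varphi_t(z_0,z_1),z_2+b_t(z_0,z_1)\big)$, and then the existence of a rational first integral on the $(z_0,z_1)$-plane whose fibres are rational.  The first part is immediate from Proposition~\ref{Lem:chpinv2}: every $\phi_t$ lies in $\mathrm{Bir}(\mathbb{C}^3)_\omega$, so $\phi_{t*}\frac{\partial}{\partial z_2}=\frac{\partial}{\partial z_2}$, which forces exactly the displayed shape, with $(\varphi_t)=\varsigma(\phi_t)$ a flow in $\mathrm{Bir}(\mathbb{C}^2)_\eta$ since $\varsigma$ is a morphism.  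So everything reduces to a statement about the flow $\varphi_t$ in $\mathrm{Bir}(\mathbb{C}^2)_\eta\subset\mathrm{Bir}(\mathbb{P}^2)$: I must show $\varphi_t$ preserves a pencil of rational curves, i.e.\ is conjugate into the Jonquières group $\mathcal{J}$.

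The heart of the argument is therefore a classification-of-flows input in $\mathrm{Bir}(\mathbb{P}^2)$.  The reference \cite{CerveauDeserti} is cited precisely for flows, and the plan is to invoke the structure theorem there: a non-trivial algebraic (or one-parameter) flow in $\mathrm{Bir}(\mathbb{P}^2)$ is, up to birational conjugacy, either a flow of automorphisms of a rational surface, or an ``elementary'' flow of de~Jonquières type preserving a rational fibration, or (for the flow to be genuinely birational and not regularizable) it preserves a pencil — and in all cases a $\varphi_t$-invariant pencil exists, whose generic member is rational because the flow is by birational maps of finite-dimensional nature.  Concretely: a flow generates an abelian subgroup of unbounded or bounded type; Cantat's classification of abelian subgroups of $\mathrm{Bir}(\mathbb{P}^2)$ (or the dynamical-degree dichotomy, using that elements of a flow have dynamical degree $1$ since they are connected to the identity) shows a flow of non-elliptic or parabolic elements preserves a fibration, and if every element is elliptic (conjugate to an automorphism of finite order or a linear map) the flow is conjugate to a flow of automorphisms which, by Blanchard/Enriques, still preserves a fibration unless it sits inside $\mathrm{Aut}(\mathbb{P}^2)$ or $\mathrm{Aut}(\mathbb{P}^1\times\mathbb{P}^1)$ — and in those two exceptional cases the flow is linearizable hence trivially preserves a rational pencil too.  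In every case one gets a $\varphi_t$-invariant pencil of rational curves, which up to conjugacy is the pencil $z_1=\mathrm{cst}$, placing $\varphi_t$ in $\mathcal{J}$.

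Once $\varphi_t\in\mathcal{J}$ is secured, the rational function $z_1$ (in the normalized coordinates) is a common first integral of all $\varphi_t$, depending only on $(z_0,z_1)$ and with rational fibres; pulling the normalization back gives a first integral $h(z_0,z_1)\in\mathbb{C}(z_0,z_1)$ for the original flow, and the second component of $\phi_t$ contributes nothing since it only shifts $z_2$.  This yields the ``in other words'' conclusion verbatim, with $b_t\in\mathbb{C}(z_0,z_1)$ from Proposition~\ref{Lem:chpinv2}.

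\textbf{Main obstacle.}  The delicate point is not the contact-geometry bookkeeping — that is Proposition~\ref{Lem:chpinv2} applied twice — but pinning down exactly which classification statement about flows (equivalently, connected one-parameter subgroups, equivalently algebraic subgroups of dimension one) in $\mathrm{Bir}(\mathbb{P}^2)$ is needed and citing it cleanly from \cite{CerveauDeserti}; in particular one must rule out (or absorb) the cases where the flow is conjugate to a flow of automorphisms of $\mathbb{P}^2$ or of a Hirzebruch/del~Pezzo surface without an obvious invariant pencil, and check that there the flow is still linearizable so that a rational invariant pencil persists.  I expect \cite{CerveauDeserti} already packages this, so the ``hard part'' is really just invoking it at the right level of generality rather than reproving it.
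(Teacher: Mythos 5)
Your reduction to the planar flow $\varphi_t=\varsigma(\phi_t)$ is fine, but the final step of your plan contains a genuine gap: from \og $\varphi_t$ is conjugate into $\mathcal{J}$\fg{} you conclude that (in normalized coordinates) $z_1$ is a first integral. Membership in $\mathcal{J}$ only means the pencil $z_1=\mathrm{cst}$ is preserved as a pencil; the flow may act nontrivially on the base, in which case $z_1$ is \emph{not} invariant. A concrete example inside the hypotheses of the proposition is $\phi_t=(e^{-t}z_0,e^tz_1,z_2)\in\mathrm{Bir}(\mathbb{C}^3)_\omega$: here $\varphi_t=(e^{-t}z_0,e^tz_1)$ lies in $\mathcal{J}$ but moves every fiber $z_1=\mathrm{cst}$, and the actual first integral is $z_0z_1$ — a function your argument never produces. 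So your route establishes (modulo correctly citing a classification of flows in $\mathrm{Bir}(\mathbb{P}^2)$, which you leave vague) only an invariant fibration, which is strictly weaker than the stated conclusion that a rational first integral in $(z_0,z_1)$ exists; and nothing in your argument uses the invariance of the form $\omega$ itself beyond the shape of $\phi_t$, whereas that invariance is exactly what makes the first integral exist.

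The paper's proof is much more elementary and goes through the infinitesimal generator $\chi=\frac{\partial\phi_t}{\partial t}\big\vert_{t=0}$: differentiating $\phi_t^*\omega=\omega$ gives $L_\chi\omega=0$, and the explicit computation of $L_\chi\omega$ forces $\gamma:=\omega(\chi)=z_0\chi_1+\chi_2$ to depend only on $(z_0,z_1)$, with $\chi_0=-\partial\gamma/\partial z_1$, $\chi_1=\partial\gamma/\partial z_0$; thus $\gamma$ is a rational first integral of $\chi$, hence of the flow (the case $\gamma$ constant giving the trivial flow $(z_0,z_1,z_2+\beta t)$). Rationality of the fibers is then obtained because a generic fiber carries the uncountable group $\langle\varphi_t\rangle$ of automorphisms, which excludes genus $\geq 2$ and (by the cited argument from Cantat's thesis) the elliptic case, and this is what places $\varphi_t$, up to conjugacy, in $\mathcal{J}$. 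If you want to keep your classification-of-flows strategy you would still have to supply a separate argument producing an invariant rational function — essentially redoing the contact-Hamiltonian computation above — so the heavy machinery (dynamical degrees, regularization, abelian subgroups) does not buy you the statement actually being proved.
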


\begin{proof}
Let $\chi$ be the infinitesimal generator of $\phi_t$, \emph{i.e.}
\[
\chi=\frac{\partial\phi_t}{\partial t}\Big\vert_{t=0}.
\]
By derivating $\phi_t^*\omega=\omega$ with respect to $t$ one gets that the 
Lie derivative $L_\chi\omega$ is zero. Set $\chi=\displaystyle
\sum_{i=0}^2\chi_i\frac{\partial}{\partial z_i}$, hence
\[
L_\chi\omega=\iota_\chi\mathrm{d}\omega+\mathrm{d}\iota_\chi\omega=\chi_0
\mathrm{d}z_1+z_0\mathrm{d}\chi_1+\mathrm{d}\chi_2
\]
and so
\[
L_\chi\omega=\left(z_0\frac{\partial\chi_1}{\partial z_0}+\frac{\partial\chi_2}
{\partial z_0}\right)\mathrm{d}z_0+\left(\chi_0+z_0\frac{\partial\chi_1}
{\partial z_1}+\frac{\partial\chi_2}{\partial z_1}\right)\mathrm{d}z_1+
\left(z_0\frac{\partial\chi_1}{\partial z_2}+\frac{\partial\chi_2}{\partial z_2}
\right)\mathrm{d}z_2.
\]
In particular $z_0\chi_1+\chi_2=\gamma(z_0,z_1)$, then $\chi_0+\frac{\partial}
{\partial z_1}(z_0\chi_1+\chi_2)=0$ so $\chi_0=-\frac{\partial\gamma}{\partial 
z_1}$ and finally $\chi_1=\frac{\partial\gamma}{\partial z_0}$.

If $\gamma$ is constant, then $\chi=\frac{\partial}{\partial z_2}$, that is 
$\phi_t=(z_0,z_1,z_2+\beta t)$ with $\beta\in\mathbb{C}$.

Let us now assume that $\gamma$ is non-constant; one has
\[
\chi=-\frac{\partial\gamma}{\partial z_1}\frac{\partial}{\partial z_0}+
\frac{\partial\gamma}{\partial z_0}\frac{\partial}{\partial z_1}+\left(\gamma
(z_0,z_1)-z_0\frac{\partial\gamma}{\partial z_0}\right)\frac{\partial}{\partial 
z_2}
\]
and $\gamma$ is a first integral of $\chi$. For all $t$
\[
\phi_t=\big(\phi_{0,t}(z_0,z_1),\phi_{1,t}(z_0,z_1),z_2+b_t(z_0,z_1)\big)
\]
and the function $\gamma$ is invariant by $\phi_t$ and as a consequence 
by the flow $\varphi_t$. The fibers of $\gamma$ in $\mathbb{C}^2$
(up to compactification/normalization) are rational or elliptic since they own 
a flow. As $\langle\varphi_t\rangle$ is uncountable they have to be rational 
(\cite{Cantat:these}) and up to conjugacy $\varphi_t$ belongs 
to~$\mathcal{J}$.
\end{proof}

The following examples contain many flows.

\begin{eg}\label{eg:autcont}
The elements of $\mathrm{Aut}(\mathbb{P}^3)_{\mathrm{c}(\omega)}$ can be written
\[
\big(\varepsilon z_0+\lambda,\beta z_1+\gamma,-\beta\lambda z_1+\varepsilon\beta 
z_2+\delta\big)
\]
with $\varepsilon$, $\beta$ in $\mathbb{C}^*$ and $\lambda$, $\gamma$, $\delta$ in 
$\mathbb{C}$. The group $\mathrm{Aut}(\mathbb{P}^3)_{\mathrm{c}(\omega)}$ acts 
transitively on $\mathbb{C}^3=\{z_3=1\}$.
\end{eg}

\begin{egs}\label{egs:trivialcomp}
\begin{enumerate}
\item[a)] For any $\varepsilon$, $\beta$, $\gamma$ and $\delta$ in $\mathbb{C}$ such that 
$\varepsilon\delta-\beta\gamma\not=0$, the map
\[
\left(\frac{(\gamma z_1+\delta)^2}{\varepsilon\delta-\beta\gamma}\,z_0,\frac{\varepsilon 
z_1+\beta}{\gamma z_1+\delta},z_2\right)
\]
belongs to $\mathrm{Bir}(\mathbb{C}^3)_\omega$. These maps form a group 
contained in $\mathrm{im}\,\mathcal{K}$ and isomorphic to $\mathrm{PGL}(2;\mathbb{C})$.
\smallskip

\item[b)] The birational maps given by
\begin{itemize}
\smallskip
\item $\big(z_0,z_1+\varphi(z_0),z_2+\psi(z_0)\big)$ with $z_0\varphi'(z_0)+\psi'(z_0)=0$,
\smallskip
\item $\big(z_0-\psi'(z_1),z_1,z_2+\psi(z_1)\big)$
\smallskip
\end{itemize}
belong to $\mathrm{Bir}(\mathbb{C}^3)_\omega$. Any of these families forms 
an abelian group.
\end{enumerate}
\end{egs}

The fact that an element of $\mathrm{Bir}(\mathbb{C}^3)_{\mathrm{c}(\omega)}$
preserves a vector field and the fact that it preserves a contact form are 
related:

\begin{pro}\label{pro:coli}
Let $\phi$ be a contact birational map of $\mathbb{P}^3$. 
There exist a contact form $\Theta$ colinear to $\omega$ such that 
$\phi^*\Theta=\Theta$ if and only if $V(\phi)$ can be written 
$\frac{U}{U\circ\phi}$ for some rational function $U$. In that case $\phi$ 
preserves the Reeb flow associated to $\Theta$, so a foliation by 
curves.
\end{pro}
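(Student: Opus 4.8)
The statement is an equivalence, so I would prove each direction separately. Write $\Theta = U\,\omega$ for a nonzero rational function $U$; then $\Theta$ is colinear to $\omega$ and, since $\omega\wedge\mathrm d\omega\not\equiv 0$, one checks $\Theta\wedge\mathrm d\Theta = U^2\,\omega\wedge\mathrm d\omega\not\equiv 0$, so every such $\Theta$ is automatically a (meromorphic) contact form. Now compute $\phi^*\Theta$: using $\phi^*\omega = V(\phi)\,\omega$ and the fact that pullback commutes with multiplication by functions, $\phi^*\Theta = \phi^*(U\,\omega) = (U\circ\phi)\,\phi^*\omega = (U\circ\phi)\,V(\phi)\,\omega$. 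Hence $\phi^*\Theta = \Theta$ if and only if $(U\circ\phi)\,V(\phi)\,\omega = U\,\omega$, i.e. $V(\phi) = \dfrac{U}{U\circ\phi}$. This is exactly the claimed characterization, so the equivalence is essentially a one-line computation once the setup is in place.

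For the "in that case" part, suppose $\phi^*\Theta = \Theta$ with $\Theta = U\,\omega$. As recalled in the proof of Proposition \ref{Lem:chpinv}, a contact form admits a unique Reeb vector field $\chi_\Theta$ characterized by $\Theta(\chi_\Theta) = 1$ and $i_{\chi_\Theta}\mathrm d\Theta = 0$; here a direct computation (using $\mathrm d\omega = \eta$ and the Reeb field $\partial/\partial z_2$ of $\omega$ itself) expresses $\chi_\Theta$ in terms of $U$ and its derivatives. Since $\phi^*\Theta = \Theta$, the map $\phi$ conjugates this Reeb data to itself: from $\phi^*\Theta = \Theta$ one gets $\phi^*\mathrm d\Theta = \mathrm d\Theta$, and then $\phi_*\chi_\Theta$ satisfies the two defining equations of the Reeb field of $\Theta$, so by uniqueness $\phi_*\chi_\Theta = \chi_\Theta$. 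Equivalently $\phi$ preserves the (local) flow of $\chi_\Theta$, hence preserves the foliation by curves given by the integral curves of $\chi_\Theta$ — this is the Reeb foliation associated to $\Theta$.

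**Main obstacle.** The algebra of both directions is genuinely routine, so the only real subtlety is taking care with the meromorphic/rational setting: one must be slightly careful that $U$ (and $U\circ\phi$, $V(\phi)$) are nonzero rational functions so that the equalities of $1$-forms can legitimately be divided through, and that the Reeb vector field of $\Theta = U\,\omega$ is a well-defined rational vector field away from the zeros and poles of $U$ and from $\mathrm{Ind}\,\phi$. I would state these genericity/non-vanishing caveats once at the start and then proceed with the computation; no deep input beyond the earlier results on $V$ and the uniqueness of the Reeb field is needed.
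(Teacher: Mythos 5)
Your proposal is correct and follows essentially the same route as the paper: the equivalence is exactly the one-line computation $\phi^*\Theta=(U\circ\phi)\,V(\phi)\,\omega$ compared with $\Theta=U\omega$, read in both directions. Your additional argument for the ``in that case'' part (invariance of the Reeb field of $\Theta$ by uniqueness, hence of its foliation by curves) is sound and merely spells out what the paper asserts without proof.
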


\begin{proof}
Assume that such a $\Theta$ exists. On the one hand $\phi^*\omega=V(\phi)\omega$ 
and on the other hand $\Theta=U\omega$. Hence
\[
\phi^*\Theta=U\circ\phi \,\cdot\,\phi^*\omega=U\circ \phi\,\cdot\, V(\phi)\omega=
\frac{U\circ \phi}{U}\,\cdot\,V(\phi)\Theta
\]
and so if such $U$ exists, one has $V(\phi)=\frac{U}{U\circ\phi}$.

\smallskip

Reciprocally if $\phi\in\mathrm{Bir}(\mathbb{C}^3)_{\mathrm{c}(\omega)}
\smallsetminus\mathrm{Bir}(\mathbb{C}^3)_\omega$ satisfies 
$\phi^*\omega=\frac{U}{U\circ\phi}\omega$ for some rational function $U$, 
then $\phi^*\Theta=~\Theta$ where $\Theta=U\omega$.
\end{proof}

\begin{egs}
\begin{itemize}
\item First consider the Legendre involution 
$\mathcal{L}=(z_1,z_0,-z_2-z_0z_1)$. As we have seen $V(\mathcal{L})=-1$. One can check 
that $U~=~z_2+~\frac{z_0z_1}{2}$ suits.
\smallskip
\item For an element $\phi$ in $\mathrm{Aut}(\mathbb{P}^3)_{\mathrm{c}(\omega)}$
\[
\phi=\left(\varepsilon z_0+\lambda,\beta z_1+\gamma,-\beta\lambda z_1+
\varepsilon\beta z_2+\delta\right)
\]
with $\varepsilon$, $\beta$ in $\mathbb{C}^*$ and $\lambda$, $\gamma$, $\delta$ in 
$\mathbb{C}$ (Example \ref{eg:autcont}) we have $V(\phi)=\varepsilon\beta$. If
\[
U=\frac{\varepsilon\beta}{\varepsilon\beta z_0z_1+\varepsilon\gamma z_0+\beta\lambda 
z_1+\lambda\gamma}
\]
then $V(\phi)=\frac{U}{U\circ\phi}$.
\end{itemize}
\end{egs}

\begin{pro}\label{pro:cuicui}
Let $\phi$ be an element of $\mathrm{Bir}(\mathbb{C}^3)_{\mathrm{c}(\omega)}
\smallsetminus\mathrm{Bir}(\mathbb{C}^3)_\omega$. Assume that $\phi$ preserves 
a vector field $\chi$ non-tangent to $\omega$. Then $\phi$ preserves a contact form 
$\omega'$ colinear to $\omega$.
\end{pro}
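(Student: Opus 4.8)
The plan is to reduce everything to the criterion of Proposition~\ref{pro:coli}: it suffices to exhibit a rational function $U$ with $V(\phi)=\frac{U}{U\circ\phi}$, and the natural candidate is $U=\frac{1}{\omega(\chi)}$. The non-tangency hypothesis is exactly what makes this legitimate, since it says precisely that the rational function $\omega(\chi)=\iota_\chi\omega$ does not vanish identically.

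First I would record the meaning of the hypothesis ``$\phi$ preserves $\chi$'', namely $\phi_*\chi=\chi$, and use the naturality of the interior product under pullback by a (bi)rational map: since $\phi_*\chi=\chi$, for any rational $1$-form $\beta$ one has $\phi^*(\iota_\chi\beta)=\iota_\chi(\phi^*\beta)$. Applying this to $\beta=\omega$ together with $\phi^*\omega=V(\phi)\omega$ gives
\[
\phi^*\big(\iota_\chi\omega\big)=\iota_\chi\big(\phi^*\omega\big)=\iota_\chi\big(V(\phi)\omega\big)=V(\phi)\,\iota_\chi\omega.
\]
Writing $g=\iota_\chi\omega=\omega(\chi)\in\mathbb{C}(z_0,z_1,z_2)$, which is nonzero by the non-tangency assumption, this identity reads $g\circ\phi=V(\phi)\,g$, that is $V(\phi)=\frac{g\circ\phi}{g}=\frac{U}{U\circ\phi}$ with $U=\frac{1}{g}=\frac{1}{\omega(\chi)}$, a well-defined rational function.

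Finally I would invoke Proposition~\ref{pro:coli}: since $\phi\in\mathrm{Bir}(\mathbb{C}^3)_{\mathrm{c}(\omega)}\smallsetminus\mathrm{Bir}(\mathbb{C}^3)_\omega$ and $V(\phi)=\frac{U}{U\circ\phi}$ for this rational $U$, the $1$-form $\omega'=U\omega$ is colinear to $\omega$ and satisfies $\phi^*\omega'=\omega'$; being a nonzero rational multiple of $\omega$ it is again a (meromorphic) contact form, so the conclusion follows. There is no serious obstacle here: the argument is essentially one line once Proposition~\ref{pro:coli} is available. The only points that need a little care are the pushforward/pullback bookkeeping in the displayed identity (which rests on $\phi_*\chi=\chi$), and the observation that the non-tangency of $\chi$ is exactly what guarantees $U=1/\omega(\chi)$ is a genuine, non-degenerate rational function, so that $\omega'$ is a contact form rather than identically zero.
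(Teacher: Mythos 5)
Your proof is correct and is essentially the paper's own argument: the authors also use $\phi_*\chi=\chi$ (written componentwise as $\mathrm{d}\phi_i(\chi)=\chi_i\circ\phi$) to evaluate $\phi^*\omega=V(\phi)\omega$ on $\chi$, obtain $V(\phi)=\frac{(z_0\chi_1+\chi_2)\circ\phi}{z_0\chi_1+\chi_2}$ with $\omega(\chi)=z_0\chi_1+\chi_2\not\equiv 0$ by non-tangency, and conclude via Proposition~\ref{pro:coli} with $U=1/\omega(\chi)$. Your intrinsic phrasing through the naturality of the interior product is just a coordinate-free rewriting of the same computation.
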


\begin{rem}
Under these assumptions $\phi$ preserves the vector field $\chi$ and the Reeb 
vector field $Z$ associated to $\omega'$. With the previous notations if 
$f=z_0\chi_1+\chi_2$ and $g=z_0Z_1+Z_2$ one has 
$V(\phi)=\frac{f\circ\phi}{f}=\frac{g\circ\phi}{g}$. In particular if $H=f/g$ is non-constant, 
then $H$ is non-constant and invariant: $H\circ\phi=H$.
\end{rem}

\begin{proof}[Proof of Proposition \ref{pro:cuicui}]
Write $\chi$ as $\chi_0\frac{\partial}{\partial z_0}+\chi_1\frac{\partial}{\partial z_1}+
\chi_2\frac{\partial}{\partial z_2}$ and $\phi$ as $(\phi_0,\phi_1,\phi_2)$. Then 
$\phi_*\chi=\chi$ if and only if $\mathrm{d}\phi_i(\chi)=\chi_i\circ\phi$ for $i=0$, $1$ 
and $2$. Therefore $\phi^*\omega(\chi)=V(\phi)\omega(\chi)$ can be 
rewritten
\[
\phi_0\mathrm{d}\phi_1(\chi)+\mathrm{d}\phi_2(\chi)=\phi_0\chi_1\circ\phi+\chi_2\circ
\phi=V(\phi)(z_0\,\chi_1+\chi_2).
\]
The vector field $\chi$ is not tangent to $\omega$, {\it i.e.} $\omega(\chi)\not\equiv 0$ 
or in other words $z_0\chi_1+\chi_2\not\equiv 0$ and so
\[
V(\phi)=\frac{(z_0\chi_1+\chi_2)\circ\phi}{z_0\chi_1+\chi_2}.
\]
As a consequence $\phi$ preserves a contact form $\omega'$ colinear to $\omega$ (Proposition 
\ref{pro:coli}).
\end{proof}

\begin{rem}
Let $\phi\in\mathrm{Bir}(\mathbb{C}^3)_{\mathrm{c}(\omega)}\smallsetminus\mathrm{Bir}
(\mathbb{C}^3)_\omega$. Assume that there exists a vector field $\chi$ such that 
$\phi_*\chi=W\chi$. If $W$ can be written $\frac{U\circ\phi}{U}$, then $\phi$ preserves 
the vector field $Y=U\chi$. 
According to Proposition \ref{pro:cuicui} the map $\phi$ belongs to 
$\mathrm{Bir}(\mathbb{C}^3)_{\omega'}$ where $\omega'$ denotes a contact form 
colinear to $\omega$.
\end{rem}

\medskip

\subsection{Regular birational maps}

Let $\mathbf{e}_i$ be the point of $\mathbb{P}^3_\mathbb{C}$ whose all 
components are zero except the $i$-th.

Let us denote by $\mathcal{H}_\infty$ the hyperplane $z_3=0$. 
As~$\mathcal{H}_\infty$ is the unique invariant surface of~$\mathrm{c}(\omega)$ 
one has the following statement:

\begin{pro}\label{pro:regular}
The hyperplane $\mathcal{H}_\infty$ is either preserved, or blown down by any 
element of~$\mathrm{Bir}(\mathbb{C}^3)_{\mathrm{c}(\omega)}$.
\end{pro}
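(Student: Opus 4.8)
The plan is to argue by contradiction using the key fact, recalled just before the statement, that $\mathcal{H}_\infty = \{z_3 = 0\}$ is the \emph{only} algebraic surface invariant by the contact structure $\mathrm{c}(\omega)$ (equivalently, $\omega$ has no integral surface in $\mathbb{C}^3$, so the only candidate for an invariant surface of the extended meromorphic structure is the polar locus $\mathcal{H}_\infty$). Let $\phi \in \mathrm{Bir}(\mathbb{C}^3)_{\mathrm{c}(\omega)}$. The dichotomy I want is: either $\phi(\mathcal{H}_\infty) = \mathcal{H}_\infty$ as a surface, or $\phi$ contracts $\mathcal{H}_\infty$ onto a subvariety of dimension $\le 1$. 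So suppose neither holds: then $\phi$ restricted to $\mathcal{H}_\infty$ is dominant onto its image, which is a surface $\mathcal{S} \ne \mathcal{H}_\infty$, i.e. $\mathcal{S} = \overline{\phi(\mathcal{H}_\infty)}$ is an irreducible surface distinct from $\mathcal{H}_\infty$.

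The core of the argument is then to show that such an $\mathcal{S}$ would be invariant (or at least tangent) for $\mathrm{c}(\omega)$, contradicting uniqueness of $\mathcal{H}_\infty$. First I would work on the complement $\mathbb{C}^3 = \{z_3 = 1\}$ and away from the indeterminacy locus, where $\phi$ is a biholomorphism between Zariski-open sets and $\phi^*\omega = V(\phi)\,\omega$ with $V(\phi)$ a nonzero rational function. The form $\omega$ is holomorphic and non-vanishing on $\mathbb{C}^3$ with $\omega \wedge \mathrm{d}\omega \ne 0$ everywhere, so $\mathrm{c}(\omega)$ has no invariant algebraic surface inside the affine chart at all (an invariant surface would be an integral surface of a genuine contact distribution, impossible). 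Hence $\mathcal{S}$, if it is invariant by $\mathrm{c}(\omega)$, must be contained in the hyperplane at infinity of this chart — but the only hyperplane there is $\mathcal{H}_\infty$ itself. To see that $\mathcal{S}$ \emph{is} invariant: the strict transform of $\mathcal{H}_\infty$ under $\phi^{-1}$ meets the affine chart in a surface on which $\phi$ pushes forward the kernel field $\ker\omega$; since $\phi$ intertwines $\mathrm{c}(\omega)$ with itself (on the affine chart) and $\phi$ is dominant from $\mathcal{H}_\infty$ to $\mathcal{S}$, the surface $\mathcal{S}$ inherits from the tangency behaviour of $\mathcal{H}_\infty$ the property of being $\mathrm{c}(\omega)$-invariant wherever $\mathcal{S}$ enters the region where $\mathrm{c}(\omega)$ is a bona fide contact structure; combined with the previous sentence this forces $\mathcal{S} \subset \mathcal{H}_\infty$, hence $\mathcal{S} = \mathcal{H}_\infty$, the desired contradiction. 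Therefore $\phi$ either maps $\mathcal{H}_\infty$ onto $\mathcal{H}_\infty$ or collapses it.

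I expect the main obstacle to be making precise the sentence ``$\mathcal{S}$ inherits invariance from $\mathcal{H}_\infty$'': one must be careful that $\mathcal{H}_\infty$ is the \emph{polar} locus of $\omega$, not an integral surface, so the correct invariant to transport is not ``$\omega|_{\mathcal{S}} = 0$'' but rather the statement that $\mathcal{H}_\infty$ is the divisor of poles of $\omega$ and $\phi^*$ of this pole divisor is governed by $V(\phi)$. The clean way to phrase it: $\omega$, viewed as a meromorphic contact structure on $\mathbb{P}^3$, has $\mathcal{H}_\infty$ as its unique invariant surface (a fact I am allowed to assume, as it is asserted in the paragraph preceding the proposition and follows from Proposition \ref{Pro:uniquecontactform} together with the classification picture); since $\phi$ preserves $\mathrm{c}(\omega)$, it permutes the invariant surfaces of $\mathrm{c}(\omega)$, of which there is exactly one, so either $\phi$ sends $\mathcal{H}_\infty$ to $\mathcal{H}_\infty$, or $\mathcal{H}_\infty$ is not sent to a surface at all, i.e. it is blown down. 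This is really the whole content, and the ``hard part'' is just confirming that a birational self-map of $\mathbb{P}^3$ preserving $\mathrm{c}(\omega)$ must send an invariant surface either to an invariant surface or to a lower-dimensional set — which is a general fact about birational maps: the image of a codimension-one subvariety is either codimension one or is contracted.
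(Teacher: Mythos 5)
Your argument is essentially the paper's: the paper gives no separate proof, deducing the statement in one line from the fact that $\mathcal{H}_\infty$ is the unique invariant surface of $\mathrm{c}(\omega)$, which is exactly the reasoning of your final paragraph (image of a divisor is either a divisor or contracted, and a non-contracted image must again be an invariant surface). The one step you leave informal is made precise with the homogeneous form $\overline{\omega}=z_0z_3\mathrm{d}z_1+z_3^2\mathrm{d}z_2-(z_0z_1+z_2z_3)\mathrm{d}z_3$, whose restriction to $z_3=0$ vanishes: at a generic point of $\mathcal{H}_\infty$ the map $\phi$ is a local biholomorphism conjugating the kernel distribution of $\overline{\omega}$ to itself, so the image surface would be tangent to that distribution at its generic points, which is impossible for any surface other than $\mathcal{H}_\infty$ because the distribution is a genuine contact structure on the affine chart.
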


\begin{eg}
Let $\varphi$ be a birational map of the complex projective plane; 
$\mathcal{K}(\varphi)$ is polynomial if and only if 
$\varphi=\big(\beta z_1+\gamma,\delta z_2+P(z_1)\big)$ with $P\in\mathbb{C}[z_1]$; 
remark that such a $\varphi$ is a Jonqui\`eres polynomial automorphism. In that case
\[
\mathcal{K}(\varphi)=\left(\frac{1}{\beta}\left(\delta z_0-\frac{\partial P(z_1)}
{\partial z_1}\right),\beta z_1+\gamma,\delta z_2+P(z_1)\right).
\]
Note that $\deg P=1$ if and only if $\mathcal{K}(\varphi)$ is an automorphism of 
$\mathbb{P}^3$. If $\deg P>1$, then 
$\mathrm{Ind}\,\mathcal{K}(\varphi)=\big\{z_1=z_3=0\big\}$ and $\mathcal{H}_\infty$ 
is blown down onto $\mathbf{e}_3$.
\end{eg}

Proposition \ref{pro:regular} naturally implies the following definition. We 
say that $\phi\in\mathrm{Bir}(\mathbb{C}^3)_{\mathrm{c}(\omega)}$ is 
\textbf{\textit{regular at infinity}} if $\mathcal{H}_\infty$ is preserved by $\phi$ 
and if $\phi_{\vert \mathcal{H}_\infty}$ is birational. We denote by 
$\mathrm{Bir}(\mathbb{C}^3)^{\mathrm{reg}}_{\mathrm{c}(\omega)}$ $\big($resp. 
$\mathrm{Bir}(\mathbb{C}^3)^{\mathrm{reg}}_\omega$$\big)$ the set of regular 
maps at infinity that belong to $\mathrm{Bir}(\mathbb{C}^3)_{\mathrm{c}(\omega)}$ 
$\big($resp. $\mathrm{Bir}(\mathbb{C}^3)_\omega$$\big)$.

\begin{eg}\label{eg:aut}
Of course the elements of $\mathrm{Aut}(\mathbb{P}^3)_{\mathrm{c}(\omega)}$ 
$\big($Example \ref{eg:autcont}$\big)$ are regular at infinity.
\end{eg}

The contact structure is also given in homogeneous coordinates by the $1$-form
\[
\overline{\omega}=z_0z_3\mathrm{d}z_1+z_3^2\mathrm{d}z_2-(z_0z_1+z_2z_3)\mathrm{d}z_3.
\]
Let $\phi$ be an element of $\mathrm{Bir}(\mathbb{C}^3)^{\text{reg}}_{\mathrm{c}(\omega)}$; 
denote by $\overline{\phi}$ its homogeneization. Since $\phi^*\omega=V(\phi)\omega$ one 
has $\overline{\phi}\,\overline{\omega}=\overline{V(\phi)}\,\overline{\omega}$ where 
$\overline{V(\phi)}$ is a homogeneous polynomial. With these notations one can state:

\begin{lem}\label{lem:reginf}
Let $\phi$ be a contact birational map of $\mathbb{P}^3$. Assume 
that $\phi$ either preserves $\mathcal{H}_\infty$, or blows down~$\mathcal{H}_\infty$ 
onto a subset contained in $\mathcal{H}_\infty$.

The map $\phi$ is regular if and only if $\overline{V(\phi)}$ does not vanish identically 
on $\mathcal{H}_\infty$.
\end{lem}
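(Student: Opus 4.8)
The plan is to work with the homogeneous $1$-form $\overline{\omega}=z_0z_3\mathrm{d}z_1+z_3^2\mathrm{d}z_2-(z_0z_1+z_2z_3)\mathrm{d}z_3$ and the homogenized map $\overline{\phi}$, for which $\overline{\phi}^*\overline{\omega}=\overline{V(\phi)}\,\overline{\omega}$ with $\overline{V(\phi)}$ a homogeneous polynomial. First I would observe that regularity at infinity, i.e.\ the property that $\mathcal{H}_\infty=\{z_3=0\}$ is preserved and $\phi_{\vert\mathcal{H}_\infty}$ is birational, is equivalent to saying that $z_3$ does not divide $\overline{\phi}_0$, $\overline{\phi}_1$, $\overline{\phi}_2$ all simultaneously while $z_3$ divides $\overline{\phi}_3$ exactly once; more precisely $\overline{\phi}$ being a \emph{birational} self-map of $\mathbb{P}^3$ whose restriction to $\mathcal{H}_\infty$ is again birational forces $\overline{\phi}_3=z_3\,Q$ with $Q_{\vert z_3=0}\not\equiv 0$ and the restriction $(\overline{\phi}_0,\overline{\phi}_1,\overline{\phi}_2)_{\vert z_3=0}$ not identically zero. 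The hypothesis rules out the other possibility in Proposition~\ref{pro:regular} (blow down onto a point/curve outside $\mathcal{H}_\infty$) by assuming the image of $\mathcal{H}_\infty$ is already inside $\mathcal{H}_\infty$; so the only thing left to detect is whether $\phi_{\vert\mathcal{H}_\infty}$ degenerates to something of lower dimension (a blow-down of the hyperplane onto a proper subset) or stays birational.

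Next I would extract, by restricting the identity $\overline{\phi}^*\overline{\omega}=\overline{V(\phi)}\,\overline{\omega}$ to $\mathcal{H}_\infty=\{z_3=0\}$, a relation that ties $\overline{V(\phi)}_{\vert z_3=0}$ to the Jacobian behaviour of $\phi_{\vert\mathcal{H}_\infty}$. Setting $z_3=0$ in $\overline{\omega}$ gives $\overline{\omega}_{\vert z_3=0}=-z_0z_1\,\mathrm{d}z_3$, which is the "vertical" part; differentiating $\overline{\phi}$ and keeping only the terms surviving on $z_3=0$, the pullback $\overline{\phi}^*\overline{\omega}$ restricted to $\mathcal{H}_\infty$ is governed by $\overline{\phi}_0\overline{\phi}_1\,\mathrm{d}(\text{the }z_3\text{-coefficient of }\overline{\phi}_3)$. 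The key computation is that $\overline{V(\phi)}_{\vert z_3=0}$ equals, up to a nonzero constant and up to the factor $\tfrac{\overline{\phi}_0\overline{\phi}_1}{z_0z_1}_{\vert z_3=0}$, the restriction $Q_{\vert z_3=0}$ where $\overline{\phi}_3=z_3 Q$ — in other words $\overline{V(\phi)}$ vanishes on $\mathcal{H}_\infty$ exactly when the linear term of $\overline{\phi}_3$ in $z_3$ vanishes there, which is precisely the failure of $\overline{\phi}$ to send $\mathcal{H}_\infty$ onto a hypersurface rather than collapsing it. I would then conclude: if $\overline{V(\phi)}$ does not vanish identically on $\mathcal{H}_\infty$, then $Q_{\vert z_3=0}\not\equiv 0$, the differential of $\phi_{\vert\mathcal{H}_\infty}$ is generically of full rank, hence $\phi_{\vert\mathcal{H}_\infty}$ is dominant onto $\mathcal{H}_\infty$ and (being the restriction of a birational map with birational inverse sharing the same property) is birational; conversely if $\phi$ is regular then by the same formula $\overline{V(\phi)}_{\vert z_3=0}$ is a nonzero multiple of $Q_{\vert z_3=0}\not\equiv 0$, so it does not vanish identically.

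The main obstacle I expect is bookkeeping the homogenization carefully: one has to track the degrees, confirm that $\overline{V(\phi)}$ really is a polynomial (not merely rational) of the expected degree, and show that the "$z_3$-linear coefficient of $\overline{\phi}_3$" is the right geometric object, namely that its non-vanishing on $\mathcal{H}_\infty$ is equivalent to $\phi_{\vert\mathcal{H}_\infty}$ being non-degenerate. A subtle point is eliminating the common-factor issue: after restricting to $z_3=0$ the tuple $(\overline{\phi}_0,\overline{\phi}_1,\overline{\phi}_2,\overline{\phi}_3)_{\vert z_3=0}=(\overline{\phi}_0,\overline{\phi}_1,\overline{\phi}_2,0)_{\vert z_3=0}$ may acquire a common factor, so the actual map $\phi_{\vert\mathcal{H}_\infty}$ is obtained by dividing out that factor, and one must check the identity on $\overline{V(\phi)}$ is compatible with this division. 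Once these normalizations are pinned down, the equivalence drops out of the single restricted PDE, so the argument is short modulo this careful setup; I would present the degree/factor analysis first and then the one-line restricted computation.
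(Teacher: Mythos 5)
Your reduction to the homogeneous identity $\overline{\phi}^*\overline{\omega}=\overline{V(\phi)}\,\overline{\omega}$ and its restriction to $z_3=0$ is correct as far as it goes: writing $\overline{\phi}_3=z_3Q$ (legitimate under the hypothesis), evaluation at $z_3=0$ gives $(\overline{\phi}_0\overline{\phi}_1Q)_{\vert z_3=0}=\overline{V(\phi)}_{\vert z_3=0}\,z_0z_1$, and the forward implication (regular $\Rightarrow\overline{V(\phi)}_{\vert z_3=0}\not\equiv0$) can be extracted from it, since birationality of $\phi_{\vert\mathcal{H}_\infty}$ forces each of $\overline{\phi}_0$, $\overline{\phi}_1$, $Q$ to be $\not\equiv0$ on $z_3=0$ (the last because $\overline{\phi}$ is a local diffeomorphism at a generic point of the invariant hyperplane). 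But the converse has a genuine gap. From $\overline{V(\phi)}_{\vert z_3=0}\not\equiv 0$ you only get that $\overline{\phi}_0$, $\overline{\phi}_1$ and $Q$ are generically nonzero along $\mathcal{H}_\infty$; your next step, \emph{the differential of $\phi_{\vert\mathcal{H}_\infty}$ is generically of full rank}, does not follow. The restricted identity involves only the values of $\overline{\phi}_0,\overline{\phi}_1,Q$ on $z_3=0$, not their derivatives along $\mathcal{H}_\infty$, so it cannot distinguish a birational restriction from a map collapsing $\mathcal{H}_\infty$ onto a curve inside $\mathcal{H}_\infty$ while keeping $\overline{\phi}_0\overline{\phi}_1Q\not\equiv0$ there (a restriction behaving like $(z_0:z_1:z_2)\mapsto(z_0:z_1:0)$ is not excluded by your equation); ruling this out is precisely the content of the lemma, so the argument is circular at this point. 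Note also that your claim that $\overline{V(\phi)}$ vanishes on $\mathcal{H}_\infty$ exactly when $Q_{\vert z_3=0}$ does is inaccurate: it vanishes iff the product $\overline{\phi}_0\overline{\phi}_1Q$ does, and examples such as $\mathcal{K}(z_1z_2,z_1z_2^2)$, which sends $\mathcal{H}_\infty$ to $\mathbf{e}_2$, show the extra factors can be the ones that vanish.

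The missing ingredient is rank information, and the paper obtains it by pulling back the $3$-form rather than the $1$-form: from $\phi^*(\overline{\omega}\wedge\mathrm{d}\overline{\omega})=\overline{V(\phi)}^2\,\overline{\omega}\wedge\mathrm{d}\overline{\omega}$ one gets, in the chart $z_2=1$, the identity $\overline{\phi}_3^2\,\det\mathrm{jac}\,\overline{\phi}=\overline{V(\phi)}^2z_3^2$ of (\ref{eq:reg}); with $\overline{\phi}_3=z_3P$ this becomes $P^2\det\mathrm{jac}\,\overline{\phi}=\overline{V(\phi)}^2$, so non-vanishing of $\overline{V(\phi)}$ on $\mathcal{H}_\infty$ forces $\det\mathrm{jac}\,\overline{\phi}$ to be generically nonzero along $\mathcal{H}_\infty$, hence $\overline{\phi}$ is a local diffeomorphism at a generic point of the hyperplane and its restriction cannot collapse. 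This Jacobian identity (or some other way of injecting $\det\mathrm{jac}\,\overline{\phi}$ into the converse direction) is what your one-form computation must be replaced by, or supplemented with, for the proof to close.
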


\begin{proof}
Let us work in the affine chart $z_2=1$. On the one hand
\[
\overline{\omega}\wedge\mathrm{d}\overline{\omega}=-z_3^2\mathrm{d}z_0\wedge
\mathrm{d}z_1\wedge\mathrm{d}z_3
\]
and on the other hand
\[
\phi^*(\overline{\omega}\wedge\mathrm{d}\overline{\omega})=\overline{V(\phi)}^2\overline{\omega}\wedge
\mathrm{d}\overline\omega.
\]
Hence
\begin{equation}\label{eq:reg}
\overline{\phi}_3^2\,\,\det\mathrm{jac}\,\overline{\phi}=\overline{V(\phi)}^2z_3^2
\end{equation}
where $\overline{\phi}_3$ is the third component of $\overline{\phi}$ expressed 
in the affine chart $z_2=1$.

\medskip

Suppose that $\phi$ is regular. Let $p$ be a generic point of $\mathcal{H}_\infty$. 
As $\phi$ is regular, $\overline{\phi}_{\vert \mathcal{H}_\infty}$ is a local diffeomorphism 
at $p$. Since $\overline{\phi}$ is birational and $p$ is generic, $\overline{\phi}_{,p}$ 
is a local diffeomorphism. As a consequence $\det\mathrm{jac}\,\overline{\phi}$ is an unit 
at $p$; moreover the invariance of $\mathcal{H}_\infty$ by $\overline{\phi}$ implies 
that $\overline{\phi}_3=z_3u$ where $u$ is a unit. Therefore $\overline{V(\phi)}$ does 
not vanish at $p$.

\smallskip

Conversely assume that $\overline{V(\phi)}$ does not vanish identically on $\mathcal{H}_\infty$. 
As $\phi$ either preserves $\mathcal{H}_\infty$, or contracts~$\mathcal{H}_\infty$ onto a 
subset in $\mathcal{H}_\infty$, one can write $\overline{\phi}_3$ as $z_3P$. As a result
\[
(\ref{eq:reg})\quad\Leftrightarrow\quad P^2\,\,\det\mathrm{jac}\,\overline{\phi}=
\overline{V(\phi)}^2
\]
Since $\overline{V(\phi)}$ does not vanish the map $\phi$ is then regular at infinity.
\end{proof}

\begin{cor}\label{cor:descreg}
One has 
$\mathrm{Bir}(\mathbb{C}^3)^{\text{reg}}_\omega=\mathrm{Aut}(\mathbb{P}^3)_\omega$.
\end{cor}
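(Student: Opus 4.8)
The plan is to prove the two inclusions separately, using Lemma~\ref{lem:reginf} together with the description of $\mathrm{Bir}(\mathbb{C}^3)_\omega$ from Proposition~\ref{Lem:chpinv2} and the fact that for $\phi\in\mathrm{Bir}(\mathbb{C}^3)_\omega$ one has $\phi^*\omega=\omega$, i.e. $V(\phi)\equiv 1$.

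For the inclusion $\mathrm{Aut}(\mathbb{P}^3)_\omega\subset\mathrm{Bir}(\mathbb{C}^3)^{\mathrm{reg}}_\omega$: an automorphism of $\mathbb{P}^3$ preserving the contact structure $\mathrm{c}(\omega)$ is one of the maps of Example~\ref{eg:autcont}, and these are affine in the chart $z_3=1$, hence preserve $\mathcal{H}_\infty$ and restrict to birational (indeed automorphism) maps on it; so they are regular at infinity by definition. It remains to note that such a map preserving $\omega$ (not merely $\mathrm{c}(\omega)$) lies in $\mathrm{Bir}(\mathbb{C}^3)_\omega$, which is immediate. This direction is routine.

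The substance is the reverse inclusion $\mathrm{Bir}(\mathbb{C}^3)^{\mathrm{reg}}_\omega\subset\mathrm{Aut}(\mathbb{P}^3)_\omega$. Let $\phi\in\mathrm{Bir}(\mathbb{C}^3)^{\mathrm{reg}}_\omega$. By Proposition~\ref{Lem:chpinv2} it has the form $\phi=(\phi_0(z_0,z_1),\phi_1(z_0,z_1),z_2+b(z_0,z_1))$ with $(\phi_0,\phi_1)\in\mathrm{Bir}(\mathbb{C}^2)_\eta$ and $b\in\mathbb{C}(z_0,z_1)$. Since $\phi^*\omega=\omega$ we have $\overline{V(\phi)}=z_3^{\deg\overline\phi-1}$ up to a nonzero scalar; in particular, in the homogenization, equation~(\ref{eq:reg}) from the proof of Lemma~\ref{lem:reginf} reads $\overline\phi_3^{\,2}\det\mathrm{jac}\,\overline\phi = \overline{V(\phi)}^{\,2}z_3^2$, and regularity of $\phi$ forces (via Lemma~\ref{lem:reginf}) that $\overline{V(\phi)}$ does not vanish on $\mathcal{H}_\infty$, i.e. $\deg\overline\phi=1$. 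Hence $\phi$ extends to an automorphism of $\mathbb{P}^3$, and being in $\mathrm{Bir}(\mathbb{C}^3)_\omega$ it lies in $\mathrm{Aut}(\mathbb{P}^3)_\omega$.

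The main obstacle is the bookkeeping around the homogenization: one must be careful that $V(\phi)\equiv 1$ as a rational function on $\mathbb{C}^3$ translates, after clearing denominators, into $\overline{V(\phi)}$ being a pure power of $z_3$ (this uses that $\omega$ has a pole of order $3$ along $\mathcal{H}_\infty$, as recorded in the introduction, so that $\overline\omega$ carries the $z_3$-factors exhibited above), and then that this power being nonvanishing on $\mathcal{H}_\infty$ is exactly the statement that the power is zero, forcing $\deg\overline\phi=1$. Once this degree bound is in hand, the conclusion is formal. An alternative, perhaps cleaner, route avoiding the homogenization: since $\phi$ is regular at infinity, $\phi_{\vert\mathcal{H}_\infty}$ is birational and preserves the induced (meromorphic) structure on $\mathcal{H}_\infty$; combined with $\phi_*\frac{\partial}{\partial z_2}=\frac{\partial}{\partial z_2}$ and $\phi^*\omega=\omega$, one shows directly that $\phi_0,\phi_1$ are affine in $(z_0,z_1)$ and $b$ is a polynomial of degree $\le 2$ of the prescribed shape, recovering Example~\ref{eg:autcont}. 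I would present the homogenization argument as the primary one since Lemma~\ref{lem:reginf} has already done the hard work.
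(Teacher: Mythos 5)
Your proposal is essentially the paper's own proof: the paper likewise homogenizes, asserts that $\phi^*\omega=\omega$ gives $\overline{\phi}^*\overline{\omega}=z_3^{\,n}\,\overline{\omega}$, invokes Lemma~\ref{lem:reginf} to force $n=0$, and then compares degrees in $\overline{\phi}^*\overline{\omega}=\overline{\omega}$ to conclude $\deg\phi=1$, so your ``alternative route'' is unnecessary. The only slip is the exponent: since the coefficients of $\overline{\omega}$ have degree $2$ while those of $\overline{\phi}^*\overline{\omega}$ have degree $3\deg\overline{\phi}-1$, the factor would be $z_3^{\,3(\deg\overline{\phi}-1)}$ rather than $z_3^{\,\deg\overline{\phi}-1}$ (harmless, as vanishing of the exponent still forces $\deg\overline{\phi}=1$), and be aware that the point you dismiss as bookkeeping --- that $V(\phi)\equiv 1$ makes $\overline{V(\phi)}$ a pure power of $z_3$ --- is precisely the step the paper also states without further justification.
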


\begin{proof}
Let $\phi$ be an element of $\mathrm{Bir}(\mathbb{C}^3)^{\text{reg}}_\omega$. 
From $\phi^*\omega=\omega$, one gets with the previous notations 
$\overline{\phi}^*\,\overline{\omega}=z_3^n\,\overline{\omega}$ for some integer $n$. 
Lemma \ref{lem:reginf} implies that $n=0$, that is 
$\overline{\phi}^*\overline{\omega}=\overline{\omega}$; then looking at the degree 
of the members of this equality one gets $\deg\phi=1$.
\end{proof}

\begin{eg}\label{eg:ecl}
The group $\mathrm{Bir}(\mathbb{C}^3)^{\mathrm{reg}}_{\mathrm{c}(\omega)}$ contains blow-ups 
in restriction to $\mathcal{H}_\infty$. Indeed let us look at $\omega$ in the affine 
chart $z_2=1$ and consider the birational map $\phi$ given in $z_2=1$ by
\[
\phi=\big(z_0,z_0z_1-z_3,z_0z_3\big).
\]
Since $(\phi^n)^*\omega=z_0^{-n}\omega$,  
$\phi^n\in\mathrm{Bir}(\mathbb{C}^3)_{\mathrm{c}(\omega)}^{\mathrm{reg}}\smallsetminus\mathrm{Bir}(\mathbb{C}^3)_\omega$ 
for any $n\not=0$; in restriction to $\mathcal{H}_\infty$ the map $\phi^n$ 
coincides with $(z_0,z_1z_0^n)$.

Let us note that $\mathrm{Ind}\,\phi^n=\{\mathbf{e}_1\}\cup(z_0=z_2=0)$, that $z_0=0$ 
is contracted by $\phi$ onto $(z_0=z_2=0)$ and $z_2=0$ onto $(z_0=z_3=0)$. Besides
 $\mathrm{Ind}\,\phi^{-n}=\{z_0=z_2=0\}\cup\{z_0=z_3=0\}$, $(z_0=0)$ is 
blown down by $\phi^{-1}$ onto~$\mathbf{e}_2$ and $(z_2=0)$ onto $\mathbf{e}_1$.
\end{eg}

\begin{rem}
The group generated by Examples \ref{eg:aut} and \ref{eg:ecl} is in restriction to 
$\mathcal{H}_\infty$ and in the affine chart $z_2=1$
\[
\langle \left(\frac{\gamma z_0}{\beta z_1+\lambda},\frac{\lambda z_1}{\gamma(
\beta z_1+\lambda)}\right),\,(z_0,z_0z_1)\,\vert\,\gamma,\,\beta\in\mathbb{C}^*,
\,\lambda\in\mathbb{C}\rangle;
\]
it is of course a subgroup of 
$\mathrm{Bir}(\mathbb{C}^3)^{\mathrm{reg}}_{\mathrm{c}(\omega)}$.
\end{rem}

\begin{question}
Does this group coincide with 
$\mathrm{Bir}(\mathbb{C}^3)^{\mathrm{reg}}_{\mathrm{c}(\omega)}$ ?
\end{question}

\begin{egs}
\begin{itemize}
\item[a)] If $\phi$ is either a monomial map (\emph{i.e.} a map of the form
$(z_1^pz_2^q,z_1^rz_2^s)$ with $\left[
\begin{array}{cc}
p & q \\
r & s\\
\end{array}
\right]$ in $\mathrm{GL}(2;\mathbb{Z})$), or a 
non-linear polynomial automorphism, 
or a Jonqui\`eres map, then $\mathcal{K}(\phi)$ is not regular at infinity.
\smallskip

\item[b)] The map of order $5$ given by 
$\left(-\frac{z_2+1+z_0z_1}{z_0z_1^2},z_2,\frac{z_2+1}{z_1}\right)$, 
the map $\left(\frac{z_0}{(z_2+1)^2},z_1,\frac{z_2}{z_2+1}\right)$ and Examples 
\ref{egs:trivialcomp} a) are non-regular at infinity.
\smallskip

\item[c)] Any map of the form
\[
\left(\frac{1}{z_0}-f'(z_2),z_2,z_1+f(z_2)\right)
\]
is in 
$\mathrm{Bir}(\mathbb{C}^3)_{\mathrm{c}(\omega)}\smallsetminus\mathrm{Bir}(\mathbb{C}^3)_\omega$ 
and is not regular at infinity.
\smallskip
\item[d)] Elements of the Legendre family are not regular at infinity.
\end{itemize}
\end{egs}

\medskip

\subsection{Exact birational maps}

\smallskip

Recall that an element $\phi$ of $\mathrm{Bir}(\mathbb{C}^2)_\eta$ is 
\textbf{\textit{exact}} if it can be lifted via $\varsigma$ to 
$\mathrm{Bir}(\mathbb{C}^3)_\omega$, or equivalently if it belongs to 
$\mathrm{im}\,\varsigma$. The following statement allows to determine such maps.

\begin{thm}\label{Thm:critere}
A map $\big(\phi_0(z_0,z_1),\phi_1(z_0,z_1)\big)\in\mathrm{Bir}(\mathbb{C}^2)_\eta$ 
is exact if and only if the closed form $\phi_0\mathrm{d}\phi_1-z_0\mathrm{d}z_1$ 
has trivial residues. In that case 
$\phi_0\mathrm{d}\phi_1-z_0\mathrm{d}z_1=-\mathrm{d}b$ with $b\in\mathbb{C}(z_0,z_1)$
and
\[
\phi=\big(\phi_0(z_0,z_1),\phi_1(z_0,z_1),z_2+b(z_0,z_1)\big)
\]
belongs to $\mathrm{Bir}(\mathbb{C}^3)_\omega$.
\end{thm}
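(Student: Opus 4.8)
The plan is to analyze the condition for $\varphi = (\phi_0, \phi_1) \in \mathrm{Bir}(\mathbb{C}^2)_\eta$ to lift, directly from the definition of $\varsigma$ and the shape of elements of $\mathrm{Bir}(\mathbb{C}^3)_\omega$ obtained in Proposition \ref{Lem:chpinv2}. By that proposition every element of $\mathrm{Bir}(\mathbb{C}^3)_\omega$ has the form $\big(\phi_0(z_0,z_1), \phi_1(z_0,z_1), z_2 + b(z_0,z_1)\big)$, so $\varphi$ is exact precisely when there exists a rational function $b \in \mathbb{C}(z_0,z_1)$ making such a map preserve $\omega$. First I would compute $\psi^*\omega$ for $\psi = \big(\phi_0, \phi_1, z_2 + b\big)$: one gets $\psi^*\omega = \phi_0\,\mathrm{d}\phi_1 + \mathrm{d}(z_2 + b) = \mathrm{d}z_2 + \big(\phi_0\,\mathrm{d}\phi_1 + \mathrm{d}b\big)$, so $\psi^*\omega = \omega = z_0\,\mathrm{d}z_1 + \mathrm{d}z_2$ holds if and only if $\phi_0\,\mathrm{d}\phi_1 + \mathrm{d}b = z_0\,\mathrm{d}z_1$, that is, $\phi_0\,\mathrm{d}\phi_1 - z_0\,\mathrm{d}z_1 = -\mathrm{d}b$.

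Next I would argue that the $1$-form $\beta := \phi_0\,\mathrm{d}\phi_1 - z_0\,\mathrm{d}z_1$ is always closed. Indeed $\mathrm{d}\beta = \mathrm{d}\phi_0 \wedge \mathrm{d}\phi_1 - \mathrm{d}z_0 \wedge \mathrm{d}z_1 = (\phi_0,\phi_1)^*\eta - \eta$, which vanishes because $\varphi \in \mathrm{Bir}(\mathbb{C}^2)_\eta$ means exactly $(\phi_0,\phi_1)^*\eta = \eta$. So $\beta$ is a closed rational $1$-form on $\mathbb{C}^2$ (more precisely on a Zariski-open subset). Now the classical fact is that a closed rational $1$-form on a rational surface (or just on an open subset of $\mathbb{C}^2$) is the differential of a rational function if and only if all its residues along its polar divisor vanish — equivalently, the periods around small loops encircling the pole components are all zero. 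Granting this, $\beta$ has trivial residues if and only if $\beta = -\mathrm{d}b$ for some $b \in \mathbb{C}(z_0,z_1)$, which by the computation above is exactly the condition for $\varphi$ to be exact, with the lift being $\big(\phi_0, \phi_1, z_2 + b\big)$. This establishes both implications and identifies the lift, proving the theorem.

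The main obstacle is the residue statement: justifying cleanly that a closed rational $1$-form with vanishing residues on an open subset of $\mathbb{C}^2$ is exact in the sense of being $\mathrm{d}$ of a rational function. I would handle this by passing to a smooth projective model: blow up to make the polar divisor of $\beta$ a simple normal crossings divisor, write $\beta$ in local coordinates as a sum of a logarithmic part $\sum \lambda_i\,\mathrm{d}u_i/u_i$ plus a part with at worst higher-order poles; the residues $\lambda_i$ are precisely the obstruction to local (hence, after checking global single-valuedness, global) primitivity. When all residues vanish, $\beta$ has a rational primitive globally because $H^1$ of the complement computed by such forms is governed exactly by the residues (de Rham-type argument, or directly: integrate along paths, the vanishing of residues makes the primitive single-valued and meromorphic). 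One should also note the mild subtlety that the relevant open set is $\mathbb{C}^2$ minus a curve, not all of $\mathbb{P}^2$, but this only enlarges the class of allowed primitives and does not affect the residue criterion. I would keep this part brief, citing the standard theory of closed rational $1$-forms, and devote the explicit part of the write-up to the elementary pull-back computation, which is where the precise formula for the lift comes from.
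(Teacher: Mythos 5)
Your proposal is correct and follows essentially the same route as the paper: the pull-back computation reduces exactness of $\varphi$ to the closed rational $1$-form $\phi_0\mathrm{d}\phi_1-z_0\mathrm{d}z_1$ being the differential of a rational function, and the residue criterion settles that. Where you sketch a blow-up/monodromy justification, the paper simply invokes the structure theorem for closed rational $1$-forms of Cerveau--Mattei, $\Theta=\sum_i\lambda_i\frac{\mathrm{d}f_i}{f_i}+\mathrm{d}g$, which is exactly the ``standard theory'' you defer to.
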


\begin{proof}
Remark that $\phi=\big(\phi_0(z_0,z_1),\phi_1(z_0,z_1),z_2+b(z_0,z_1)\big)$ belongs 
to $\mathrm{Bir}(\mathbb{C}^3)_\omega$ if and only if
\[
\phi_0\mathrm{d}\phi_1-z_0\mathrm{d}z_1=-\mathrm{d}b;
\]
in other words $\phi_0\mathrm{d}\phi_1-z_0\mathrm{d}z_1$ is not only 
a closed rational $1$-form but also an exact one. Recall that a closed 
rational $1$-form $\Theta$ can be written (\cite{CerveauMattei})
\[
\Theta=\sum_i\lambda_i\frac{\mathrm{d}f_i}{f_i}+\mathrm{d}g
\]
where the $\lambda_i$ are complex numbers and the $f_i$'s and $g$ are rational. 
The $1$-form $\Theta$ is exact ({\it i.e.} the differential of a rational function) 
if $\lambda_i=0$ for all $i$, that is if the residues of $\Theta$ are trivial.
\end{proof}

\begin{eg}
The set
\[
\left\{\left(A(z_0),\frac{z_1}{A'(z_0)}\right)\,\vert\, A\in\mathrm{PGL}(2;
\mathbb{C})\right\}
\]
is a subgroup of exact maps isomorphic to $\mathrm{PGL}(2;\mathbb{C})$; it is 
a direct consequence of Theorem \ref{Thm:critere}.
\end{eg}

An other direct consequence of Theorem \ref{Thm:critere} is the following 
statement:

\begin{cor}
The maps $\phi=(\phi_0,\phi_1)$ of $\mathrm{Bir}(\mathbb{C}^2)_\eta$ 
such that $\phi_0\mathrm{d}\phi_1-z_0\mathrm{d}z_1$ has trivial residues form 
a group. 
\end{cor}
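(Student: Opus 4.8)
The plan is to combine Theorem~\ref{Thm:critere} with the characterization of exact maps as liftable ones, and then simply verify that the set in question is closed under composition and inversion because it is exactly $\mathrm{im}\,\varsigma$ (or, if one prefers not to invoke exactness, by a direct cocycle computation). First I would observe that by Theorem~\ref{Thm:critere} the set
\[
\mathcal{G}=\big\{(\phi_0,\phi_1)\in\mathrm{Bir}(\mathbb{C}^2)_\eta\,\vert\, \phi_0\mathrm{d}\phi_1-z_0\mathrm{d}z_1\ \text{has trivial residues}\big\}
\]
coincides with the set of exact maps, that is with $\mathrm{im}\,\varsigma$ where $\varsigma\colon\mathrm{Bir}(\mathbb{C}^3)_\omega\to\mathrm{Bir}(\mathbb{C}^2)_\eta$ is the morphism of Proposition~\ref{Lem:chpinv2}. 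The image of a group homomorphism is a subgroup, so $\mathcal{G}$ is a subgroup of $\mathrm{Bir}(\mathbb{C}^2)_\eta$, hence of $\mathrm{Bir}(\mathbb{C}^2)$; this is the whole proof.

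Alternatively, to make the argument self-contained I would check directly that the condition ``$\phi_0\mathrm{d}\phi_1-z_0\mathrm{d}z_1$ is exact'' defines a subgroup, using the cocycle-type identity satisfied by the $1$-form $\theta(\varphi):=\varphi_0\mathrm{d}\varphi_1-z_0\mathrm{d}z_1$. For $\varphi,\psi\in\mathrm{Bir}(\mathbb{C}^2)_\eta$ one has $\theta(\psi\circ\varphi)=\varphi^*\theta(\psi)+\theta(\varphi)$, since $\varphi^*(z_0\mathrm{d}z_1)=z_0\mathrm{d}z_1$ by invariance of $\eta$ (pulling back, integrating the relation $\mathrm{d}(\varphi^*\theta)=\varphi^*\eta-\eta=0$). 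From this identity: if $\theta(\varphi)=-\mathrm{d}b$ and $\theta(\psi)=-\mathrm{d}c$ then $\theta(\psi\varphi)=-\mathrm{d}(c\circ\varphi+b)$ is exact; and setting $\psi=\varphi^{-1}$ in the product formula with $\psi\varphi=\mathrm{id}$ gives $0=(\varphi^{-1})^*\theta(\varphi)\ \text{\emph{wait}}$—more carefully, $\theta(\mathrm{id})=0$, so $(\varphi^{-1})^*\theta(\varphi)=-\theta(\varphi^{-1})$, whence $\theta(\varphi^{-1})$ is exact as soon as $\theta(\varphi)$ is (the pullback of an exact form is exact). Closure under composition and inversion, together with $\mathrm{id}\in\mathcal{G}$, gives the subgroup property.

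The only mildly delicate point is to make sure that ``trivial residues'' is literally equivalent to ``exact'' for these particular closed rational $1$-forms; but this is precisely the content of the decomposition $\Theta=\sum_i\lambda_i\frac{\mathrm{d}f_i}{f_i}+\mathrm{d}g$ recalled in the proof of Theorem~\ref{Thm:critere} (following \cite{CerveauMattei}), so it requires no new work. I would therefore write the proof in two lines via the image of $\varsigma$, and optionally add the cocycle computation as a remark for readers who prefer the direct verification. No genuine obstacle is expected; the statement is a formal consequence of the theorem it follows.
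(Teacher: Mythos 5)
Your first argument is exactly the route the paper intends: the corollary is stated as a direct consequence of Theorem~\ref{Thm:critere}, which identifies the maps with trivial residues as the exact maps, i.e.\ as $\mathrm{im}\,\varsigma$, and the image of the morphism $\varsigma\colon\mathrm{Bir}(\mathbb{C}^3)_\omega\to\mathrm{Bir}(\mathbb{C}^2)_\eta$ is a subgroup; so your two-line proof is correct and matches the paper. One small caveat about your optional direct verification: the justification ``$\varphi^*(z_0\mathrm{d}z_1)=z_0\mathrm{d}z_1$ by invariance of $\eta$'' is false (it would force $\theta(\varphi)=0$); but the cocycle identity $\theta(\psi\circ\varphi)=\varphi^*\theta(\psi)+\theta(\varphi)$ is a purely formal consequence of $\varphi^*(z_0\mathrm{d}z_1)=\varphi_0\mathrm{d}\varphi_1$, and invariance of $\eta$ is only needed to know that $\theta(\varphi)$ is closed, so that alternative argument survives with the corrected justification.
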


\smallskip

Let us deal with exact birational involutions.

Bertini gives a classification of birational involutions (\cite{Bertini}): 
a non-trivial birational involution is conjugate to either a Jonqui\`eres 
involution of degree $\geq 2$, or a Bertini involution, or a Geiser 
involution. More recently Bayle and Beauville precise it 
(\cite{BayleBeauville}); the map which associates to a birational involution 
of $\mathbb{P}^2$ its normalized fixed curve establishes a one-to-one 
correspondence between:
\smallskip
\begin{itemize}
\item conjugacy classes of Jonqui\`eres involutions of degree $d$ and 
isomorphism classes of hyperelliptic curves of genus $d-2$ ($d\geq 3$);
\smallskip
\item conjugacy classes of Geiser involutions and isomorphism classes 
of non-hyperelliptic curves of genus~$3$;
\smallskip
\item conjugacy classes of Bertini involutions and isomorphism classes of 
non-hyperelliptic curves of genus $4$ whose canonical model lies on a singular 
quadric.
\smallskip
\end{itemize}

Besides the Jonqui\`eres involutions of degree $2$ form one conjugacy class.

\begin{pro}\label{Pro:inv}
Let $\mathcal{I}\in\mathrm{Bir}(\mathbb{P}^2)$ be a birational 
involution. If $\mathcal{I}$ is conjugate to either a Geiser involution, 
or a Bertini involution, or a Jonqui\`eres involution of degree 
$\geq 3$, then $\mathcal{I}$ does not belong to 
$\mathrm{Bir}(\mathbb{C}^2)_\eta$.

Hence the only involutions in $\mathrm{Bir}(\mathbb{C}^2)_\eta$ are 
birationally conjugate to $(-z_0,-z_1)$. Some of them can not be lifted.
\end{pro}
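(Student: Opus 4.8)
The plan is to split into two independent tasks: first, rule out the three non-linearizable types of Bertini's classification; second, deal with the surviving case of linear involutions and locate an unliftable one. For the first task, the key tool is the constraint imposed by preserving $\eta=\mathrm{d}z_0\wedge\mathrm{d}z_1$, namely $\det\mathrm{jac}\,\mathcal{I}\equiv 1$. Recall that a birational map $f$ with $f^*\eta=\eta$ restricts to an isomorphism between the open sets where $f$ and $f^{-1}$ are local diffeomorphisms; in particular it contracts no curve, so $f\in\mathrm{Aut}(\mathbb{C}^2)$ up to the implicit identification — more precisely, $\mathrm{Bir}(\mathbb{P}^2)_\eta$ consists of maps regular away from the line at infinity and preserving the volume there. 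But a Geiser involution has degree $8$, a Bertini involution degree $17$, and a Jonqui\`eres involution of degree $d\ge 3$ contracts the $d-1$ lines through its unique point of multiplicity $d-2$ together with the $2d-2$ exceptional curves at the simple base points; in each case the involution genuinely contracts curves of $\mathbb{P}^2$. An involution preserving a meromorphic volume form with poles supported on a single line cannot contract a curve not contained in that line, since contraction forces the pullback of the volume form to acquire a zero along the contracted curve, contradicting $\mathcal{I}^*\eta=\eta$. One then checks the base loci of these three types are not all concentrated on one line (the fixed curve has positive genus, so the base points cannot be collinear), giving the desired contradiction. Hence no such $\mathcal{I}$ lies in $\mathrm{Bir}(\mathbb{C}^2)_\eta$.

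For the second task, by Bertini's theorem together with the Bayle–Beauville refinement, what remains is the Jonqui\`eres involutions of degree $2$, which form a single conjugacy class in $\mathrm{Bir}(\mathbb{P}^2)$ whose representative can be taken to be the linear involution $(-z_0,-z_1)$; since conjugation inside $\mathrm{Bir}(\mathbb{C}^2)_\eta$ is by volume-preserving maps, every involution of $\mathrm{Bir}(\mathbb{C}^2)_\eta$ is $\mathrm{Bir}(\mathbb{C}^2)_\eta$-conjugate to $(-z_0,-z_1)$ (one must check the conjugating map can be chosen volume-preserving, which follows because the centralizer issue only changes $b$, not $\varphi$). Note $(-z_0,-z_1)$ itself is exact: $\phi_0\mathrm{d}\phi_1-z_0\mathrm{d}z_1 = (-z_0)\mathrm{d}(-z_1)-z_0\mathrm{d}z_1 = z_0\mathrm{d}z_1 - z_0\mathrm{d}z_1 = 0 = -\mathrm{d}b$ with $b$ constant, so it lifts to $(-z_0,-z_1,z_2+\beta)$, which is an involution iff $\beta=0$.

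It remains to exhibit an involution in $\mathrm{Bir}(\mathbb{C}^2)_\eta$ that is \emph{not} exact, i.e. for which the closed rational $1$-form $\phi_0\mathrm{d}\phi_1-z_0\mathrm{d}z_1$ has a nontrivial residue (Theorem \ref{Thm:critere}). The plan is to conjugate $(-z_0,-z_1)$ by a volume-preserving birational map $g$ that introduces a logarithmic pole: take for instance $g=\big(z_0 z_1, \tfrac{1}{z_1}\big)$ or a similar monomial-type map normalized to have Jacobian determinant $1$, and set $\mathcal{I}=g^{-1}\circ(-z_0,-z_1)\circ g$. Then $\mathcal{I}\in\mathrm{Bir}(\mathbb{C}^2)_\eta$ is an involution, and computing $\mathcal{I}_0\,\mathrm{d}\mathcal{I}_1-z_0\mathrm{d}z_1$ one picks up a term of the form $\lambda\,\mathrm{d}z_1/z_1$ with $\lambda\ne 0$ coming from the monomial part of $g$; by Theorem \ref{Thm:critere} this map cannot be lifted to $\mathrm{Bir}(\mathbb{C}^3)_\omega$.

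The main obstacle I expect is the first task: one must argue cleanly that an element of $\mathrm{Bir}(\mathbb{C}^2)_\eta$ contracts no curve except possibly the line at infinity, and then verify that each of the three excluded involution types is forced to contract a curve meeting $\mathbb{C}^2$. The degree/base-locus bookkeeping for Bertini and Geiser involutions is standard but needs care; the cleanest route is probably the volume-form argument (a volume-preserving birational map of a surface is an isomorphism in codimension one on the locus where the volume form is holomorphic and nonvanishing), combined with the observation that the fixed curve of each such involution has positive genus and hence cannot lie in a line, so its base points are not collinear.
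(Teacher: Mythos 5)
There are two genuine gaps. First, the key lemma on which you base the exclusion of Geiser, Bertini and Jonqui\`eres involutions of degree $\geq 3$ is false as stated: an element of $\mathrm{Bir}(\mathbb{C}^2)_\eta$ need not be regular away from the line at infinity and may perfectly well contract affine curves; preservation of $\eta$ only forbids contracting a curve onto a point where $\eta$ is holomorphic, so contracted affine curves must land on the line at infinity, nothing more. The paper's own example $\left(-z_0+\frac{1}{z_1^2-1},-z_1\right)$ preserves $\eta$ and contracts the affine lines $z_1=\pm1$. Hence ``it contracts no curve, so $f\in\mathrm{Aut}(\mathbb{C}^2)$'' collapses, and the patched statement you would then need (that the images of the contracted curves, i.e.\ the relevant base points, cannot all lie on one line) is precisely the part you leave unverified --- for Jonqui\`eres involutions this requires a genuine argument, and your multiplicity bookkeeping is off (a degree-$d$ Jonqui\`eres involution has a base point of multiplicity $d-1$, not $d-2$). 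The paper's proof is completely different and local: by Bayle--Beauville the fixed locus of such an involution contains a curve $\Gamma$ of positive genus, so $\Gamma$ is not contained in the line at infinity, and at a generic fixed point of $\Gamma$ in the affine chart the differential of the involution has eigenvalues $1$ (along $\Gamma$, which is fixed pointwise) and $-1$, whence Jacobian determinant $-1$, contradicting $\mathcal{I}^*\eta=\eta$.

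Second, your construction of a non-liftable involution fails. Every monomial map of $\mathrm{Bir}(\mathbb{C}^2)_\eta$ (types (\ref{mon1}) and (\ref{mon2})) has both exponent sums odd, hence commutes with $(-z_0,-z_1)$; more generally, conjugating a diagonal linear involution by any monomial map yields again a diagonal linear map, whose form $\mathcal{I}_0\mathrm{d}\mathcal{I}_1-z_0\mathrm{d}z_1$ is polynomial and residue-free, so Theorem \ref{Thm:critere} gives exactness rather than the non-exactness you want: no term $\lambda\,\mathrm{d}z_1/z_1$ ever appears. The idea of conjugating $(-z_0,-z_1)$ is salvageable with a shear instead of a monomial map: conjugating by $\left(z_0-\frac{1}{2(z_1^2-1)},z_1\right)\in\mathrm{Bir}(\mathbb{C}^2)_\eta$ produces the paper's example $\left(-z_0+\frac{1}{z_1^2-1},-z_1\right)$, for which $\phi_0\mathrm{d}\phi_1-z_0\mathrm{d}z_1=-\frac{\mathrm{d}z_1}{z_1^2-1}$ has residues $\mp\frac{1}{2}$, hence is not liftable. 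Finally, your claim that the conjugacy to $(-z_0,-z_1)$ can be realized inside $\mathrm{Bir}(\mathbb{C}^2)_\eta$ is not justified by your ``$b$ versus $\varphi$'' remark; fortunately it is not needed, since the statement only asserts conjugacy in $\mathrm{Bir}(\mathbb{P}^2)$, which follows from Bertini's classification once the other types are excluded.
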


\begin{proof}
Let us consider such an involution, then the set of fixed points contains a curve 
$\Gamma$ of genus $>0$ and thus it is not contained in the line at infinity. The 
jacobian determinant of $\mathcal{I}$ at a fixed point of $\Gamma$ is~$-1$ 
hence~$\mathcal{I}$ does not preserve $\eta$.

Contrary to the polynomial case (Proposition \ref{pro:periodic})
$\mathrm{Bir}(\mathbb{C}^2)_\eta$ 
contains periodic elements that are non-exact. Consider the map 
$(\phi_0(z_0,z_1),\phi_1(z_0,z_1))$ where
\[
\phi_0(z_0,z_1)=-z_0+\frac{1}{z_1^2-1},\qquad \phi_1(z_0,z_1)=-z_1;
\]
it is a birational involution that preserves $\eta$. Furthermore the 
$1$-form $\phi_0\mathrm{d}\phi_1-z_0\mathrm{d}z_1$ has non-trivial residues 
and so is not exact (Theorem \ref{Thm:critere}).
\end{proof}

\smallskip

We will now focus on quadratic exact birational maps.

Any birational map of $\mathbb{P}^2$ can be written as a composition of 
birational maps of degree $\leq 2$ (\emph{see} for instance \cite{AlberichCarraminana}). 
The three following maps are birational and of degree $2$
\begin{align*}
&\sigma\colon\mathbb{P}^2\dashrightarrow\mathbb{P}^2 && (z_0:z_1:z_2)
\dashrightarrow (z_1z_2:z_0z_2:z_0z_1)\\
&\rho\colon\mathbb{P}^2\dashrightarrow\mathbb{P}^2 && (z_0:z_1:z_2)
\dashrightarrow (z_0z_2:z_0z_1:z_2^2)\\
&\tau\colon\mathbb{P}^2\dashrightarrow\mathbb{P}^2 && (z_0:z_1:z_2)
\dashrightarrow (z_0z_2+z_1^2:z_1z_2:z_2^2)
\end{align*}
Denote by~$\mathrm{\mathring{B}ir}_2(\mathbb{P}^2)$ the set of birational maps 
of $\mathbb{P}^2$ of degree $2$ exactly; for 
any~$\phi\in\mathrm{Bir}(\mathbb{P}^2)$ set
\[
\mathcal{O}(\phi)=\big\{\mathfrak{g}\,\phi\,\mathfrak{h}^{-1}\,\vert\, \mathfrak{g},\,
\mathfrak{h}\in\mathrm{Aut}(\mathbb{P}^2)\big\}
\]
one has (\cite{CerveauDeserti})
\[
\mathrm{\mathring{B}ir}_2(\mathbb{P}^2)=\mathcal{O}(\sigma)\cup\mathcal{O}(\rho)
\cup\mathcal{O}(\tau).
\]

\smallskip

Let us now describe the quadratic birational maps that preserve $\eta$; note that 
$\tau$ preserves $\eta$. Consider $\Upsilon$ the set of pairs 
$(\mathfrak{g}(\gamma),\mathfrak{g}(\beta))$ where
\[
\mathfrak{g}(\beta)=\left(\frac{\beta_0z_0+\beta_1z_1+\beta_2}{\beta_6z_0+\beta_7z_1+
\beta_8},\frac{\beta_3z_0+\beta_4z_1+\beta_5}{\beta_6z_0+\beta_7z_1+\beta_8}\right)
\]
in $\mathrm{Aut}(\mathbb{P}^2)\times\mathrm{Aut}(\mathbb{P}^2)$ 
such that
\[
\gamma_6=0,\quad\gamma_7\beta_3=0,\quad\gamma_7\beta_4=0,
\quad\det\mathfrak{g}\,\det\mathfrak{h}=\big(\gamma_7\beta_5+\gamma_8\big)^3.
\]

\smallskip

\begin{pro}\label{Pro:quadpre}
A quadratic birational map that preserves $\eta$ belongs to 
$\mathcal{O}(\tau)$.

More precisely a birational map belongs to 
$\mathrm{\mathring{B}ir}_2(\mathbb{P}^2)\cap\mathrm{Bir}(\mathbb{C}^2)_\eta$ 
if and only if it can be written $\mathfrak{g}\,(z_0+z_1^2,z_1)\,\mathfrak{h}$ 
with $(\mathfrak{g},\mathfrak{h})$ in $\Upsilon$.
\end{pro}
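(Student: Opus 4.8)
The plan is to start from the classification $\mathrm{\mathring{B}ir}_2(\mathbb{P}^2)=\mathcal{O}(\sigma)\cup\mathcal{O}(\rho)\cup\mathcal{O}(\tau)$ recalled just above and determine, for each orbit, which elements preserve $\eta=\mathrm{d}z_0\wedge\mathrm{d}z_1$. First I would reduce to a normal form inside each orbit: a quadratic map $\phi\in\mathcal{O}(\mu)$ ($\mu\in\{\sigma,\rho,\tau\}$) can be written $\mathfrak{g}\,\mu\,\mathfrak{h}$ with $\mathfrak{g},\mathfrak{h}\in\mathrm{Aut}(\mathbb{P}^2)$, and since $\mathrm{PGL}(3;\mathbb{C})$ acts on the plane, I may normalize $\mathfrak{h}$ (or $\mathfrak{g}$) using the isotropy of $\mu$ so that the pair $(\mathfrak{g},\mathfrak{h})$ ranges over a manageable family of representatives. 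The condition $\phi^*\eta=\eta$ then becomes $\det\mathrm{jac}\,\phi\equiv 1$ on the affine chart, which I would expand as $(\det\mathrm{jac}\,\mathfrak{g}\circ\mu\circ\mathfrak{h})\cdot(\det\mathrm{jac}\,\mu\circ\mathfrak{h})\cdot(\det\mathrm{jac}\,\mathfrak{h})\equiv 1$ — a rational-function identity in $z_0,z_1$ that I can analyze by looking at its zero/pole divisor.

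The key structural input is that the three standard quadratic maps have genuinely different "decomposition" data: $\sigma$ has three proper base points and contracts three lines, $\rho$ has a base point of multiplicity two (an infinitely near point) and a simple one, while $\tau$ has a single base point with a full tangent condition. Correspondingly $\det\mathrm{jac}$ of each has a characteristic divisor of contracted/exceptional lines. For $\phi$ to preserve $\eta$, the Jacobian must be a nonzero constant, hence have no zeros and no poles in $\mathbb{C}^2$; equivalently all the exceptional/contracted curves of $\phi$ must be "pushed to infinity", i.e. they must coincide with the line at infinity (with appropriate multiplicities). I would check that this forces the base-point configuration of $\phi$ to be of the $\tau$-type: concretely, the contracted-curve pattern of $\sigma$ (three distinct lines) and of $\rho$ (a double line plus a line) cannot all be carried to the single line $z_2=0$ by a linear map while keeping the source base points at infinity in a compatible way, whereas $\tau$'s pattern (the double line $z_2^2$) can. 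This pins down $\phi\in\mathcal{O}(\tau)$.

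Once we are inside $\mathcal{O}(\tau)$, I would pick the convenient representative $\tau_0=(z_0+z_1^2,z_1)$ of $\tau$ in the chart (note $\tau_0^*\eta=\eta$ already, since $\det\mathrm{jac}\,\tau_0=1$) and write a general element of $\mathcal{O}(\tau)\cap\mathrm{Bir}(\mathbb{C}^2)_\eta$ as $\mathfrak{g}\,\tau_0\,\mathfrak{h}$ with $\mathfrak{g},\mathfrak{h}$ Möbius-type maps as in the statement. The equation $\det\mathrm{jac}(\mathfrak{g}\,\tau_0\,\mathfrak{h})\equiv 1$ then splits into the requirement that $\mathfrak{g}$ and $\mathfrak{h}$ each preserve (up to reciprocal scalars) the line at infinity and the fibration contracted by $\tau_0$; spelling this out coordinate-wise yields exactly the vanishing conditions $\gamma_6=0$, $\gamma_7\beta_3=0$, $\gamma_7\beta_4=0$ and the determinant relation $\det\mathfrak{g}\,\det\mathfrak{h}=(\gamma_7\beta_5+\gamma_8)^3$ defining $\Upsilon$. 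Conversely, for $(\mathfrak{g},\mathfrak{h})\in\Upsilon$ a direct check shows $\mathfrak{g}\,\tau_0\,\mathfrak{h}$ has constant Jacobian equal to $1$, hence preserves $\eta$.

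The main obstacle I anticipate is the bookkeeping in the elimination step: translating "all contracted lines go to infinity" into the precise algebraic constraints on the entries of $\mathfrak{g}$ and $\mathfrak{h}$, keeping track of which base points are proper and which are infinitely near, and making sure the exclusion of $\mathcal{O}(\sigma)$ and $\mathcal{O}(\rho)$ is airtight rather than merely plausible from the divisor picture. A clean way to organize this is to compute $\det\mathrm{jac}$ explicitly for a normalized representative in each orbit and simply read off that it cannot be made a unit except in the $\tau$ case — a finite, if tedious, computation that I would not carry out in full here.
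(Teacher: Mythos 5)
Your outline is the same as the paper's: use $\mathrm{\mathring{B}ir}_2(\mathbb{P}^2)=\mathcal{O}(\sigma)\cup\mathcal{O}(\rho)\cup\mathcal{O}(\tau)$, rule out $\mathcal{O}(\sigma)$ and $\mathcal{O}(\rho)$ by comparing the zero/pole divisors in the identity $\psi^*\eta=(\det\mathrm{jac}\,\psi)\,\eta=\eta$, then compute the constraint on $(\mathfrak{g},\mathfrak{h})$ for $\psi=\mathfrak{g}\,\tau\,\mathfrak{h}$. The difficulty is that the geometric principle you rely on to organise these computations is false, and the computations that would reveal this are exactly the ones you defer. It is not true that a quadratic map preserving $\eta$ must have all its contracted curves equal to the line at infinity, nor that $\mathfrak{g}$ and $\mathfrak{h}$ must each preserve the line at infinity: a contracted \emph{affine} line is allowed, provided it is blown down onto a point of the line at infinity, because the corresponding zero of the Jacobian is cancelled by the pole coming from $\psi^{-1}(z_2=0)$. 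Concretely, $\psi=\bigl(a z_0+\tfrac{b}{z_1},\tfrac{z_1}{a}\bigr)$ with $ab\neq 0$ is quadratic, satisfies $\psi^*\eta=\eta$, and contracts the affine line $z_1=0$ onto $(1:0:0)$; it factors as $\mathfrak{g}\,\tau\,\mathfrak{h}$ with $\mathfrak{g},\mathfrak{h}$ of the homogeneous shape $(z_0:z_1:z_2)\mapsto(\ast z_0:\ast z_2:\ast z_1)$, neither of which fixes $z_2=0$. This is precisely the kind of pair (with $\gamma_7\neq 0$, $\beta_7\neq0$) allowed by $\Upsilon$ and used in the proof of Theorem \ref{Thm:quadexact}. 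So if you ran your plan literally ("contracted curves pushed to infinity", "$\mathfrak{g}$ and $\mathfrak{h}$ each preserve the line at infinity and the contracted fibration") you would impose $\gamma_7=0$ and $\beta_6=\beta_7=0$ in addition, obtaining a strictly smaller family than $\Upsilon$ and losing the "if and only if"; your claim that this criterion "yields exactly" the conditions $\gamma_6=0$, $\gamma_7\beta_3=\gamma_7\beta_4=0$ and the determinant relation is therefore not substantiated and in fact cannot hold.

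What does work, and what the paper does, is the direct identity: rewrite $\psi^*\eta=\eta$ as $\mu^*\mathfrak{g}^*\eta=(\mathfrak{h}^{-1})^*\eta$ and compare the explicit rational multiples of $\eta$. For $\mu=\sigma$ (and similarly $\rho$) the left-hand side forces $\gamma_6=\gamma_7=0$ and then has polar divisor supported on two distinct lines with multiplicity $2$, which can never equal the single-line-cubed polar divisor of the right-hand side; for $\mu=\tau$ one first gets $\gamma_6=0$ (otherwise a conic of poles appears) and then reads off the defining equations of $\Upsilon$ from the requirement that the remaining denominator be constant. If you want a divisor-theoretic shortcut for the exclusion step, the correct statement is that the zero divisor of the homogeneous Jacobian of $\psi$ (the contracted lines of $\mu$ pulled back by $\mathfrak{h}$, counted with multiplicity) must be the cube of a single line --- not that this line be $z_2=0$ --- and this pattern occurs only for $\tau$ ($2z_2^3$), not for $\sigma$ ($2z_0z_1z_2$) or $\rho$ ($2z_0z_2^2$). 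Even with that fix, the identification of $\Upsilon$ in the $\tau$-case still requires the explicit pullback computation; as written, your proposal both defers it and predicts the wrong outcome for it.
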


\begin{proof}
Let $\psi$ be in 
$\mathrm{Bir}(\mathbb{C}^2)_\eta\cap\mathrm{\mathring{B}ir}_2(\mathbb{P}^2)$; 
it is sufficient to prove that $\psi\not\in\mathcal{O}(\sigma)\cup\mathcal{O}(\rho)$.

Assume by contradiction that $\psi$ belongs to $\mathcal{O}(\sigma)$, {\it i.e.} 
$\psi=\mathfrak{g}\sigma\mathfrak{h}$ with
$\mathfrak{g}=\mathfrak{g}(\gamma)$, $\mathfrak{h}^{-1}=\mathfrak{g}(\beta)$.
One can rewrite $\psi^*\eta=\eta$ as $\sigma^*\mathfrak{g}^*\eta=\mathfrak{h}^*\eta$; 
this last one relation is equivalent in the affine chart $z_3=1$ to
\begin{equation}\label{eq:quad}
\frac{(\det\mathfrak{g})\, z_0z_1}{\big(\gamma_6z_1+\gamma_7z_0+\gamma_8z_0z_1\big)^3}
\,\eta=\frac{\det\mathfrak{h}}{\big(\beta_6z_0+\beta_7z_1+\beta_8\big)^3}\,\eta
\end{equation}
the coefficients $\gamma_6$ and $\gamma_7$ have thus to be zero and $(\ref{eq:quad})$
is equivalent to 
\[
\frac{\det\mathfrak{g}}{\gamma_8^3z_0^2z_1^2}
\,\eta=\frac{\det\mathfrak{h}}{\big(\beta_6z_0+\beta_7z_1+\beta_8\big)^3}\,\eta
\]
and this equality never holds.

\medskip

A similar argument allows to exclude the case: $\psi\in\mathcal{O}(\rho)$. This proves 
the first assertion.

\medskip

Let us consider $\psi=\mathfrak{g}\,\tau\,\mathfrak{h}$ in 
$\mathrm{\mathring{B}ir}_2(\mathbb{P}^2)\cap\mathrm{Bir}(\mathbb{C}^2)_\eta$ 
with $\mathfrak{g}=\mathfrak{g}(\gamma)$ and $\mathfrak{h}=\mathfrak{g}(\beta)$.
The $1$-form $\eta$ has a line of poles of order $3$ at infinity so does $\psi^*\eta$ 
and so does $(z_0+z_1^2,z_1)^*\mathfrak{g}^*\eta$. But
\[
(z_0+z_1^2,z_1)^*\mathfrak{g}^*\eta=\frac{\det\mathfrak{g}}{\big(\gamma_6(z_0+z_1^2)+
\gamma_7z_1+\gamma_8\big)^3}\,\eta
\]
therefore $\gamma_6$ has to be $0$. This implies that
\[
\psi^*\eta=\frac{\det\mathfrak{g}\,\det\mathfrak{h}}{\big(\gamma_7(\beta_3z_0+\beta_4z_1
+\beta_5)+\gamma_8\big)^3}\,\eta
\]
as a consequence $\psi^*\eta=\eta$ if and only if
\[
\gamma_6=0,\quad\gamma_7\beta_3=0,\quad\gamma_7\beta_4=0,
\quad\det\mathfrak{g}\,\det\mathfrak{h}=\big(\gamma_7\beta_5+\gamma_8\big)^3.
\]
\end{proof}

\begin{thm}\label{Thm:quadexact}
A generic element of 
$\mathrm{\mathring{B}ir}_2(\mathbb{P}^2)\cap\mathrm{Bir}(\mathbb{C}^2)_\eta$ 
is not exact.

\smallskip

In fact there exists a non-empty Zariski open subset $\widetilde{\Upsilon}$ 
of $\Upsilon$ such that no element of
\[
\big\{\mathfrak{g}(\gamma)\,\tau\,\mathfrak{g}(\beta)\,\vert\,(\mathfrak{g}(\gamma),
\mathfrak{g}(\beta))\in\widetilde{\Upsilon}\big\}
\]
is exact.
\end{thm}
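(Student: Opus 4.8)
The plan is to use the explicit description of $\mathrm{\mathring{B}ir}_2(\mathbb{P}^2)\cap\mathrm{Bir}(\mathbb{C}^2)_\eta$ furnished by Proposition~\ref{Pro:quadpre}, together with the exactness criterion of Theorem~\ref{Thm:critere}, and to show that for a generic pair $(\mathfrak{g}(\gamma),\mathfrak{g}(\beta))\in\Upsilon$ the closed $1$-form $\phi_0\mathrm{d}\phi_1-z_0\mathrm{d}z_1$ attached to $\psi=\mathfrak{g}(\gamma)\,\tau\,\mathfrak{g}(\beta)$ has a nonzero residue along some component of its polar divisor. Since having trivial residues is a \emph{closed} condition (the residues are polynomial expressions in the parameters $\gamma_i,\beta_j$, up to the constraints cutting out $\Upsilon$), it suffices to exhibit a \emph{single} pair in $\Upsilon$ for which some residue is nonzero; the set $\widetilde{\Upsilon}$ is then the complement in $\Upsilon$ of the common zero locus of these residue functions, which is a proper Zariski-closed subset precisely because of the witness.

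First I would write down $\psi=\mathfrak{g}(\gamma)\,(z_0+z_1^2,z_1)\,\mathfrak{g}(\beta)$ explicitly as a pair of rational functions $(\psi_0,\psi_1)$ in $z_0,z_1$, using the normalizations $\gamma_6=0$, $\gamma_7\beta_3=\gamma_7\beta_4=0$ and $\det\mathfrak{g}\,\det\mathfrak{h}=(\gamma_7\beta_5+\gamma_8)^3$ that define $\Upsilon$. It is cleanest to treat the stratum $\gamma_7\neq 0$ (so $\beta_3=\beta_4=0$, meaning $\mathfrak{g}(\beta)$ sends $z_1$ to a function of $z_1$ alone) — this is the interesting case geometrically, since then $\tau$ is composed on the right with a genuine Jonquières map. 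In that stratum one computes the denominator of $\psi_0$ and $\psi_1$ and reads off the irreducible components of the polar divisor of $\psi_0\mathrm{d}\psi_1-z_0\mathrm{d}z_1$. Next I would compute the residue of this form along one such component — concretely, decompose $\psi_0\mathrm{d}\psi_1-z_0\mathrm{d}z_1$ as $\sum_i\lambda_i\,\mathrm{d}f_i/f_i+\mathrm{d}g$ in the sense of \cite{CerveauMattei}, and extract one $\lambda_i$ as a rational function of the parameters. The key formula is that along a simple pole $\{h=0\}$ the residue is $\mathrm{Res}_{h=0}(\psi_0\,\mathrm{d}\psi_1)$, which can be gotten by substituting a local coordinate; doing this for the simplest polar line yields a polynomial in $\gamma_0,\dots,\gamma_8,\beta_0,\dots,\beta_8$ which is manifestly not identically zero on $\Upsilon$ (a specialization to explicit numbers confirms it).

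Having produced one explicit witness $(\mathfrak{g}(\gamma_0),\mathfrak{g}(\beta_0))\in\Upsilon$ with nonzero residue, I would then argue the genericity statement. The assignment that sends a point of $\Upsilon$ to the vector of residues of the associated closed form is a morphism of varieties (each residue being, by the computation above, a rational — in fact polynomial after clearing the fixed denominators — function of the coordinates, regular on the locus where $\psi$ genuinely has degree $2$). Let $Z\subset\Upsilon$ be the vanishing locus of all these residue functions together with the locus where $\psi$ degenerates; by Theorem~\ref{Thm:critere} a map $\mathfrak{g}(\gamma)\,\tau\,\mathfrak{g}(\beta)$ is exact if and only if its point lies in $Z$. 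Since the witness shows $Z\neq\Upsilon$, and $\Upsilon$ is irreducible (it is cut out in $\mathrm{Aut}(\mathbb{P}^2)\times\mathrm{Aut}(\mathbb{P}^2)$ by the explicit equations $\gamma_6=0$, $\gamma_7\beta_3=\gamma_7\beta_4=0$, $\det\mathfrak{g}\,\det\mathfrak{h}=(\gamma_7\beta_5+\gamma_8)^3$, whose solution set is, after removing the lower-dimensional stratum $\gamma_7=0$, a rational variety), the set $\widetilde{\Upsilon}=\Upsilon\smallsetminus Z$ is a non-empty Zariski open subset, and no element of $\{\mathfrak{g}(\gamma)\,\tau\,\mathfrak{g}(\beta)\,\vert\,(\mathfrak{g}(\gamma),\mathfrak{g}(\beta))\in\widetilde{\Upsilon}\}$ is exact. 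This proves the theorem.

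The main obstacle will be the bookkeeping in the residue computation: $\psi_0\mathrm{d}\psi_1-z_0\mathrm{d}z_1$ is a ratio of polynomials of moderate degree in nine-plus-nine parameters, and one must correctly identify the irreducible factors of its denominator (they need not all be linear, and some may depend on the parameters in a way that forces case distinctions) and then extract a residue along one of them without drowning in algebra. The way to keep this tractable is to \emph{first} specialize enough parameters to pin down a clean sub-family — for instance take $\mathfrak{g}(\beta)=\mathrm{id}$ and $\mathfrak{g}(\gamma)$ a one-parameter family satisfying the $\Upsilon$-constraints — verify non-exactness there by a short explicit computation, and only afterwards invoke the upper-semicontinuity/irreducibility argument to propagate it to a Zariski-dense open set; openness and density of $\widetilde\Upsilon$ are then formal once one knows $\Upsilon$ is irreducible and the witness lies on it.
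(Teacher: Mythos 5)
Your skeleton is exactly the paper's: reduce to Theorem~\ref{Thm:critere}, observe that the trivial-residue condition is Zariski-closed in the parameters, use that $\Upsilon$ is rational hence irreducible, and then the whole content of the proof is the exhibition of a single witness $(\mathfrak{g}(\gamma),\mathfrak{g}(\beta))\in\Upsilon$ whose associated closed form $\phi_0\mathrm{d}\phi_1-z_0\mathrm{d}z_1$ has a nonzero residue. Up to that point there is nothing to object to.

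The one substantive problem is precisely the witness, which you do not actually produce, and the shortcut you propose for producing it would fail. If you take $\mathfrak{g}(\beta)=\mathrm{id}$, then $\beta_4\neq 0$ and the $\Upsilon$-constraint $\gamma_7\beta_4=0$ forces $\gamma_7=0$; together with $\gamma_6=0$ this makes $\mathfrak{g}(\gamma)$ affine in the chart, so $\psi=\mathfrak{g}(\gamma)\,(z_0+z_1^2,z_1)$ is a \emph{polynomial} map and $\psi_0\mathrm{d}\psi_1-z_0\mathrm{d}z_1$ is a closed polynomial $1$-form, hence exact with no residues at all: every member of that sub-family is exact, and the ``short explicit computation'' there verifies nothing. (This also contradicts your own earlier, correct, observation that the interesting stratum is $\gamma_7\neq 0$, where $\beta_3=\beta_4=0$; note that on that stratum nondegeneracy of $\mathfrak{g}(\beta)$ forces $\beta_7\neq 0$ or $\beta_6\neq0$, i.e.\ $\mathfrak{g}(\beta)$ non-affine, so poles genuinely appear.) The paper's witness lives there: take $\gamma_i=\beta_i=0$ for $i\in\{1,2,3,4,6,8\}$, $\gamma_5=\gamma_7$ and $\gamma_0=\gamma_7\beta_5^2/(\beta_0\beta_7)$; a direct computation gives
\[
z_0\mathrm{d}z_1-\phi_0\mathrm{d}\phi_1=-\frac{\beta_5^2}{\beta_0\beta_7}\,\frac{\mathrm{d}z_1}{z_1},
\]
a logarithmic form with nonzero residue along $z_1=0$ since $\beta_5\neq0$. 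With such a witness in hand, your closedness-plus-irreducibility argument goes through and coincides with the paper's conclusion; without it (or with the $\mathfrak{g}(\beta)=\mathrm{id}$ family in its place) the proof is incomplete, because the existence of a non-exact element is the only non-formal step.
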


\begin{proof}
It is sufficient to exhibit a non-exact element. Let us recall that the birational map 
$\phi=(\phi_0,\phi_1)$ belongs to 
$\mathrm{\mathring{B}ir}_2(\mathbb{P}^2)\cap\mathrm{Bir}(\mathbb{C}^2)_\eta$ 
if and only if it can be written as 
$\mathfrak{g}(\gamma)\,\tau\,\mathfrak{g}(\beta)$ with 
$(\mathfrak{g}(\gamma),\mathfrak{g}(\beta))$ in $\Upsilon$ (Proposition~\ref{Pro:quadpre}).

\medskip

If we consider the special case $\gamma_i=\beta_i=0$ for any $i\in\{1,\,2,\,3,\,4,\,6,\,8\}$,
$\gamma_5=\gamma_7$ and $\gamma_0=\frac{\gamma_7\beta_5^2}{\beta_0\beta_7}$
then
\[
z_0\mathrm{d}z_1-\phi_0\mathrm{d}\phi_1=-\frac{\beta_5^2\mathrm{d}z_1}{\beta_0\beta_7 z_1}
\]
But $\det\mathfrak{g}(\beta)\not=0$ so $\beta_5\not=0$ and $\phi$ can not be 
lifted to $\mathrm{Bir}(\mathbb{C}^3)_\omega$.

\medskip

The set $\Upsilon$ is rational hence irreducible, this yields the result.
\end{proof}

\smallskip

Let us end this section with examples of exact maps.

\begin{pro}
Let $\varphi$ be an automorphism of $\mathbb{P}^2$; the map $\varphi$ is 
exact if and only if $\varphi$ is affine in the affine chart $z_2=1$ and preserves $\eta$, 
that is
\[
\varphi=\big(\delta_0z_0+\beta_0z_1+\gamma_0,\delta_1z_0+\beta_1z_1+\gamma_1\big)
\]
with $\delta_i$, $\beta_i$, $\gamma_i$ in $\mathbb{C}$ such that 
$\delta_0\beta_1-\delta_1\beta_0=1$.
\end{pro}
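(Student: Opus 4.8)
The plan is to reduce everything to Theorem~\ref{Thm:critere} together with an explicit description of $\mathrm{Aut}(\mathbb{P}^2)\cap\mathrm{Bir}(\mathbb{C}^2)_\eta$. Recall that a birational map $\varphi=(\phi_0,\phi_1)$ is exact precisely when it lies in $\mathrm{im}\,\varsigma\subseteq\mathrm{Bir}(\mathbb{C}^2)_\eta$, and that for $\varphi\in\mathrm{Bir}(\mathbb{C}^2)_\eta$ exactness is equivalent to the closed form $\phi_0\mathrm{d}\phi_1-z_0\mathrm{d}z_1$ having trivial residues. So the first task is to determine which automorphisms of $\mathbb{P}^2$ preserve $\eta$, and then, for those, to verify the residue condition.

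For the description of $\mathrm{Aut}(\mathbb{P}^2)\cap\mathrm{Bir}(\mathbb{C}^2)_\eta$ I would write a general $\varphi\in\mathrm{Aut}(\mathbb{P}^2)$ in the chart $z_2=1$ as a ratio of two affine polynomials over a common affine denominator $L$, with associated matrix $M\in\mathrm{GL}(3;\mathbb{C})$, and use the pullback formula $\varphi^*\eta=(\det M)\,L^{-3}\,\eta$ (a short direct computation, in the same spirit as the identity $\varphi^*\eta=\det\mathrm{jac}\,\varphi\cdot\eta$ already used in the paper). Then $\varphi^*\eta=\eta$ forces the polynomial identity $\det M=L^3$; since $\det M$ is a nonzero constant this can hold only if $L$ is constant, so $\varphi$ is affine in the chart $z_2=1$, and after rescaling $M$ one is left exactly with $\varphi=\big(\delta_0z_0+\beta_0z_1+\gamma_0,\delta_1z_0+\beta_1z_1+\gamma_1\big)$ subject to $\delta_0\beta_1-\delta_1\beta_0=1$ (equivalently $\varphi^*\eta=\eta$, since for an affine map $\varphi^*\eta=(\delta_0\beta_1-\delta_1\beta_0)\,\eta$). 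This settles the implication ``exact $\Rightarrow$ affine of the stated form''.

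For the converse, given $\varphi$ of that affine form the computation just made shows $\varphi\in\mathrm{Bir}(\mathbb{C}^2)_\eta$, so Theorem~\ref{Thm:critere} applies. The $1$-form $\phi_0\mathrm{d}\phi_1-z_0\mathrm{d}z_1$ is then polynomial, and it is closed because $\mathrm{d}(\phi_0\mathrm{d}\phi_1)=\mathrm{d}\phi_0\wedge\mathrm{d}\phi_1=\varphi^*\eta=\eta=\mathrm{d}(z_0\mathrm{d}z_1)$; a closed polynomial $1$-form on $\mathbb{C}^2$ is exact, hence has trivial residues, so by Theorem~\ref{Thm:critere} the map $\varphi$ is exact, and $\big(\varphi,z_2+b(z_0,z_1)\big)$ with $-\mathrm{d}b=\phi_0\mathrm{d}\phi_1-z_0\mathrm{d}z_1$, $b\in\mathbb{C}[z_0,z_1]$, is a $\varsigma$-lift. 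The only step beyond bookkeeping is the pullback identity $\varphi^*\eta=(\det M)\,L^{-3}\,\eta$ and the resulting rigidity ``$\det M=L^3$ with $\det M$ a nonzero constant forces $L$ constant''; once that is in hand everything else is immediate, and I do not expect a genuine obstacle.
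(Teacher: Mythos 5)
Your proposal is correct and follows essentially the paper's route: exactness forces $\varphi^*\eta=\eta$, which in turn forces $\varphi$ to be affine with unimodular linear part, and conversely such a map lifts because the closed polynomial $1$-form $\phi_0\mathrm{d}\phi_1-z_0\mathrm{d}z_1$ is exact (trivial residues). The only cosmetic difference is that the paper gets affineness in one line from the invariance of the polar line of $\eta$ at infinity, whereas you carry out the explicit pullback computation $\varphi^*\eta=(\det M)\,L^{-3}\eta$ — the same fact in computational form, already used in the paper's treatment of quadratic maps.
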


\begin{proof}
The form $\eta$ has a pole at infinity so if $\varphi\in\mathrm{Aut}(\mathbb{P}^2)$ 
preserves $\eta$, it preserves the pole. Hence $\varphi$ belongs to $\mathrm{Aff}_2$, 
so in particular to $\mathrm{Aut}(\mathbb{C}^2)_\eta$ and then $\varphi$ is exact.
\end{proof}

We will now consider the subgroup of $\mathrm{Bir}(\mathbb{C}^2)_\eta$ 
that preserves the fibration $z_0z_1=$ cst fiberwise. The following statement 
says that this subgroup is not isomorphic to the subgroup of 
$\mathrm{Bir}(\mathbb{C}^2)_\eta$ that preserves $z_1=$ cst fiberwise.

\begin{pro}\label{Pro:abgr}
The set
\[
\Lambda=\left\{\left(z_0\,a(z_0z_1),\frac{z_1}{a(z_0z_1)}\right)\,\vert\, 
a\in\mathbb{C}(t)\right\}
\]
is a subgroup isomorphic to the uncountable abelian subgroup 
$\big\{(a(z_1)z_0,z_1)\,\vert\,a\in\mathbb{C}(z_1)^*\big\}$ and is contained 
in~$\mathrm{Bir}(\mathbb{C}^2)_\eta$.

Any birational map of the form 
$\left(z_0\,a(z_0,z_1),\frac{z_1}{a(z_0,z_1)}\right)$ that preserves $\eta$ 
belongs to~$\Lambda$.

A generic element of $\Lambda$ is in $\mathrm{Bir}(\mathbb{C}^2)_\eta$ 
but not in $\mathrm{im}\,\varsigma$. More precisely 
$\left(z_0\,a(z_0z_1),\frac{z_1}{a(z_0z_1)}\right)\in\Lambda$ is exact if and 
only if $a$ is a monomial.

If $a$ is a monomial, \emph{i.e.} $a(z_0z_1)=cz_0^\mu z_1^\mu$ with 
$c\in\mathbb{C}^*$ and $\mu\in\mathbb{Z}$, then the $\varsigma$-lifted maps 
are
\[
\left(z_0\,cz_0^\mu z_1^\mu,\frac{z_1}{cz_0^\mu z_1^\mu},z_2-\mu z_0z_1 + \beta
\right),\qquad\beta\in\mathbb{C}
\]
These maps form a subgroup of $\mathrm{Bir}(\mathbb{C}^3)_\omega$ 
isomorphic to $\mathbb{C}\times\mathbb{C}^*\times\mathbb{Z}$.
\end{pro}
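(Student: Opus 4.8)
The plan is to verify each of the five assertions in turn, reducing everything to elementary computations with the form $\eta = \mathrm{d}z_0\wedge\mathrm{d}z_1$ and the criterion of Theorem \ref{Thm:critere}. First I would establish that $\Lambda$ is a subgroup: given $\phi = \left(z_0\,a(z_0z_1),\frac{z_1}{a(z_0z_1)}\right)$, note that $\phi_0\phi_1 = z_0z_1$, so the product of the first two coordinates is invariant; composing two such maps $\phi,\psi$ with parameters $a,b$ then yields the map with parameter $t\mapsto a(t)b(t)$, and the inverse corresponds to $t\mapsto 1/a(t)$. This makes the assignment $a\mapsto\phi$ a group homomorphism from $(\mathbb{C}(t)^*,\cdot)$ onto $\Lambda$; injectivity is clear, so $\Lambda\cong\mathbb{C}(t)^*$, which is the same as $\big\{(a(z_1)z_0,z_1)\,\vert\,a\in\mathbb{C}(z_1)^*\big\}$ via $a\mapsto (a(z_1)z_0,z_1)$. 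The inclusion $\Lambda\subset\mathrm{Bir}(\mathbb{C}^2)_\eta$ follows from a direct computation: $\mathrm{d}\phi_0\wedge\mathrm{d}\phi_1 = \mathrm{d}\big(z_0 a(z_0z_1)\big)\wedge\mathrm{d}\big(z_1/a(z_0z_1)\big)$, and expanding using $\mathrm{d}(z_0z_1) = z_1\,\mathrm{d}z_0 + z_0\,\mathrm{d}z_1$ one checks the cross terms cancel and the coefficient of $\mathrm{d}z_0\wedge\mathrm{d}z_1$ is $1$.

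Next I would prove the rigidity statement: if $\left(z_0\,a(z_0,z_1),\frac{z_1}{a(z_0,z_1)}\right)$ preserves $\eta$ with $a\in\mathbb{C}(z_0,z_1)$ arbitrary, then $a$ depends only on $z_0z_1$. Writing out $\mathrm{d}(z_0 a)\wedge\mathrm{d}(z_1/a) = \mathrm{d}z_0\wedge\mathrm{d}z_1$ gives, after clearing, a single partial differential relation; the natural move is to substitute $u = z_0a$, $v = z_1/a$ so that $uv = z_0z_1$ and the constraint becomes $\mathrm{d}u\wedge\mathrm{d}v = \mathrm{d}z_0\wedge\mathrm{d}z_1$, i.e. the change of coordinates $(z_0,z_1)\mapsto(u,v)$ is symplectic while preserving the product $z_0z_1$. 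Expanding the Jacobian condition $z_0\,\partial_{z_0}a\cdot(\text{something}) + \ldots$ one extracts the equation $z_0\,\partial_{z_0}a = z_1\,\partial_{z_1}a$ (up to the computation), whose solutions are exactly the rational functions of $z_0z_1$; this pins down $\Lambda$ as claimed.

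For the exactness criterion I would apply Theorem \ref{Thm:critere} directly. For $\phi\in\Lambda$ with parameter $a$, compute
\[
\phi_0\,\mathrm{d}\phi_1 - z_0\,\mathrm{d}z_1 = z_0 a(z_0z_1)\,\mathrm{d}\!\left(\frac{z_1}{a(z_0z_1)}\right) - z_0\,\mathrm{d}z_1.
\]
Expanding $\mathrm{d}(z_1/a) = \frac{\mathrm{d}z_1}{a} - \frac{z_1\,a'(z_0z_1)}{a^2}\,(z_1\,\mathrm{d}z_0 + z_0\,\mathrm{d}z_1)$ and simplifying, the $\mathrm{d}z_1$ terms cancel and one is left with a form proportional to $\frac{a'(z_0z_1)}{a(z_0z_1)}\,(z_1\,\mathrm{d}z_0 - z_0\,\mathrm{d}z_1)$, which up to sign is $-\frac{a'}{a}(z_0z_1)\cdot z_0z_1\cdot\big(\frac{\mathrm{d}z_0}{z_0} - \frac{\mathrm{d}z_1}{z_1}\big)\cdot(\text{sign})$; writing $t = z_0z_1$ this is $-\,t\,\frac{a'(t)}{a(t)}\,\mathrm{d}\!\left(\log\frac{z_0}{z_1}\right)$ in the appropriate sense. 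By the Cerveau--Mattei decomposition used in the proof of Theorem \ref{Thm:critere}, this closed form is exact (has trivial residues) precisely when $t\,a'(t)/a(t)$ is a constant, i.e. when $a(t) = c\,t^\mu$ is a monomial. In that case $t\,a'/a = \mu$, and integrating gives $b = -\mu z_0z_1$ (up to a constant $\beta$), so the $\varsigma$-lift is $\left(z_0 c z_0^\mu z_1^\mu,\frac{z_1}{c z_0^\mu z_1^\mu},z_2 - \mu z_0z_1 + \beta\right)$ as stated.

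Finally, the group structure of these lifted maps: the first two coordinates give, as established above, a copy of the subgroup of $\mathbb{C}(t)^*$ consisting of monomials $c t^\mu$, which is $\mathbb{C}^*\times\mathbb{Z}$ via $(c,\mu)$; and the constant $\beta$ ranges freely over $\mathbb{C}$, contributing a central $\mathbb{C}$ factor (the kernel of $\varsigma$, by Proposition \ref{Lem:chpinv2}). Checking that composition multiplies the $(c,\mu)$ data and adds the $\beta$ data — with no cocycle obstruction, since $\ker\varsigma$ is central and the section $(c,\mu)\mapsto\big(\cdots,z_2-\mu z_0z_1\big)$ turns out to be a homomorphism by the explicit formula — yields the isomorphism with $\mathbb{C}\times\mathbb{C}^*\times\mathbb{Z}$. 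I expect the main obstacle to be the rigidity step (second assertion): extracting the clean PDE $z_0\partial_{z_0}a = z_1\partial_{z_1}a$ from the symplectic condition requires care, and one must argue that its only rational solutions are functions of $z_0z_1$, for instance by restricting to the invariant curves $z_0z_1 = \text{const}$ or by passing to coordinates $(z_0z_1, z_0/z_1)$. All the other steps are bookkeeping with logarithmic derivatives and the already-proved Theorem \ref{Thm:critere}.
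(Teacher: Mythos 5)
Your overall strategy is the paper's: prove the group statements directly, extract the PDE $z_0\frac{\partial a}{\partial z_0}=z_1\frac{\partial a}{\partial z_1}$ for the second assertion (this is exactly the equation in the paper, and your suggestion to solve it in the coordinates $(z_0z_1,z_0/z_1)$ is fine), and decide exactness by computing the closed form $\phi_0\mathrm{d}\phi_1-z_0\mathrm{d}z_1$ and invoking the residue criterion of Theorem \ref{Thm:critere}. Your subgroup argument via the invariance of $z_0z_1$ and multiplicativity of the parameter is a harmless variant of the paper's one-line conjugation $\left(z_0a(z_0z_1),\frac{z_1}{a(z_0z_1)}\right)=(z_0,z_0z_1)^{-1}\left(z_0a(z_1),z_1\right)(z_0,z_0z_1)$, and your final paragraph on the structure $\mathbb{C}\times\mathbb{C}^*\times\mathbb{Z}$ of the lifted maps is correct (the section is a homomorphism precisely because $\phi_0\phi_1=z_0z_1$).

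However, the central computation in the exactness step is wrong as written, and the claim does not follow from it. With $t=z_0z_1$ one finds
\[
\phi_0\,\mathrm{d}\phi_1-z_0\,\mathrm{d}z_1=-\,t\,\frac{a'(t)}{a(t)}\,\mathrm{d}t
=-\,t\,\frac{a'(t)}{a(t)}\,\bigl(z_1\mathrm{d}z_0+z_0\mathrm{d}z_1\bigr),
\]
i.e. the form is proportional to $\mathrm{d}(z_0z_1)$, not to $z_1\mathrm{d}z_0-z_0\mathrm{d}z_1=t\,\mathrm{d}\log(z_0/z_1)$ as you assert. Your expression is not even closed unless $t a'/a$ is constant (the true form must be closed since $\phi^*\eta=\eta$), and taken at face value it would contradict the statement: for $a=ct^\mu$ with $\mu\neq 0$ it becomes $-\mu\,\mathrm{d}\log(z_0/z_1)$, which has residues $\mp\mu\neq 0$ and hence is not exact. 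Moreover your criterion \og exact iff $t a'(t)/a(t)$ is constant\fg{} is asserted rather than derived; what is needed (and what the paper does) is the residue computation: writing $a(t)=c\prod_i(t-t_i)^{\mu_i}$ one gets $t\frac{a'}{a}\mathrm{d}t=\sum_i\mu_i\frac{t}{t-t_i}\mathrm{d}t$, whose residue at $t=t_i$ is $\mu_i t_i$, so all residues vanish iff every $t_i$ is $0$, i.e. iff $a$ is a monomial. Finally, integrating the correct form gives $b=+\mu z_0z_1$, and one checks directly that $\bigl(cz_0^{\mu+1}z_1^{\mu},c^{-1}z_0^{-\mu}z_1^{1-\mu},z_2+\mu z_0z_1+\beta\bigr)$ preserves $\omega$; the sign $-\mu z_0z_1$ you \og recover\fg{} matches the printed formula, but that formula (like the sign $+t\frac{a'}{a}\mathrm{d}t$ in the proof) carries a sign slip, so your agreement with it is an artefact of the erroneous intermediate form rather than a correct derivation. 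Redo the expansion to get the displayed identity above and the residue argument, and the proof is complete and identical in substance to the paper's.
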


\begin{proof}
The first assertion follows from
\[
\left(z_0\,a(z_0z_1),\frac{z_1}{a(z_0z_1)}\right)=(z_0,z_0z_1)^{-1}(z_0\,a(z_1),
z_1)(z_0,z_0z_1)
\]
A direct computation shows that 
$\Lambda\subset\mathrm{Bir}(\mathbb{C}^2)_\eta$.

\bigskip

A birational map $\left(z_0\,a(z_0,z_1),\frac{z_1}{a(z_0,z_1)}\right)$ preserves 
$\eta$ if and only if
\[
\left(z_0\,\frac{\partial}{\partial z_0}-z_1\,\frac{\partial}{\partial z_1}
\right)(a)=0
\]
that is, if and only if $a=a(z_0z_1)$.

\bigskip

Let us consider 
$\phi=(\phi_0,\phi_1)=\left(z_0\,a(z_0z_1),\frac{z_1}{a(z_0z_1)}\right)$ an 
element of $\Lambda$; then
\[
\phi_0\mathrm{d}\phi_1-z_0\mathrm{d}z_1=t\frac{a'(t)}{a(t)}\mathrm{d}t
\]
with $t=z_0z_1$. Let us write $a$ as follows:
\[
a(t)=\prod_{i=1}^n(t-t_i)^{\mu_i}
\]
then
\[
t\frac{a'(t)}{a(t)}\mathrm{d}t=t\,\sum_{i=1}^n\frac{\mu_i}{t-t_i}\mathrm{d}t
\]
and the residues of this $1$-form are trivial if and only if $a$ is monomial, 
{\it i.e.} $a(t)=c\,t^{\mu}$ where $c\in\mathbb{C}^*$ and $\mu\in\mathbb{Z}$.
\end{proof}

We can determine $\mathcal{J}\cap\mathrm{Bir}(\mathbb{C}^2)_\eta$ and 
the exact maps in $\mathcal{J}\cap\mathrm{Bir}(\mathbb{C}^2)_\eta$.

\begin{pro}
A Jonqui\`eres map of $\mathbb{P}^2$ preserves $\eta$ 
if and only if it can be written as follows
\[
\left(\frac{(\gamma z_1+\delta)^2}{\varepsilon\delta-\beta\gamma}\,z_0+r(z_1),
\frac{\varepsilon z_1+\beta}{\gamma z_1+\delta}\right)
\]
where $r$ belongs to $\mathbb{C}(z_1)$ and $\left[\begin{array}{cc}
\varepsilon & \beta \\
\gamma & \delta
\end{array}\right]$ to $\mathrm{PGL}(2;\mathbb{C})$.

\smallskip

Furthermore it is exact if it has the following form
\[
\left(\frac{(\gamma z_1+\delta)^2}{\varepsilon\delta-\beta\gamma}\,z_0+P(z_1)(\gamma 
z_1+\delta)^2,\frac{\varepsilon z_1+\beta}{\gamma z_1+\delta}\right)
\]
where $P$ denotes an element of $\mathbb{C}[z_1]$.
\end{pro}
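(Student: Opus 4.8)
The plan is to split the statement into two parts — first identifying which Jonquières maps preserve $\eta$, then among those determining which are exact via the residue criterion of Theorem \ref{Thm:critere}.

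For the first part, recall that an element of $\mathcal{J}$ written in the affine chart $z_2=1$ preserving the fibration $z_1=\text{cst}$ has the form $\varphi=\big(A(z_1)z_0+r(z_1),\,\nu(z_1)\big)$ where $A,r\in\mathbb{C}(z_1)$ and $\nu\in\mathrm{PGL}(2;\mathbb{C})$ is the induced automorphism of the base; here I would use the identification $\mathcal{J}\simeq\mathrm{PGL}(2;\mathbb{C}(z_1))\rtimes\mathrm{PGL}(2;\mathbb{C})$ recalled earlier, specialized to the fibration being fiberwise invariant so that the action on a generic fiber is an homography $z_0\mapsto A(z_1)z_0+r(z_1)$ (the absence of a denominator in $z_0$ comes from requiring $\varphi$ to be defined on a Zariski-dense set of the affine chart; alternatively one restricts attention to such maps, as the proposition statement implicitly does). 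Then I compute $\varphi^*\eta=\varphi^*(\mathrm{d}z_0\wedge\mathrm{d}z_1)$. Writing $\varphi^*\mathrm{d}z_1=\nu'(z_1)\,\mathrm{d}z_1$ and $\varphi^*\mathrm{d}z_0=\big(A'(z_1)z_0+r'(z_1)\big)\mathrm{d}z_1+A(z_1)\,\mathrm{d}z_0$, the wedge gives $\varphi^*\eta=A(z_1)\nu'(z_1)\,\mathrm{d}z_0\wedge\mathrm{d}z_1$. So $\varphi^*\eta=\eta$ forces $A(z_1)\nu'(z_1)=1$, i.e. $A(z_1)=1/\nu'(z_1)$. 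With $\nu(z_1)=\frac{\varepsilon z_1+\beta}{\gamma z_1+\delta}$ one has $\nu'(z_1)=\frac{\varepsilon\delta-\beta\gamma}{(\gamma z_1+\delta)^2}$, whence $A(z_1)=\frac{(\gamma z_1+\delta)^2}{\varepsilon\delta-\beta\gamma}$, which is exactly the claimed form with $r\in\mathbb{C}(z_1)$ arbitrary.

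For the exactness part, by Theorem \ref{Thm:critere} such a $\varphi$ lifts to $\mathrm{Bir}(\mathbb{C}^3)_\omega$ if and only if the closed rational $1$-form $\phi_0\,\mathrm{d}\phi_1-z_0\,\mathrm{d}z_1$ has trivial residues. I substitute $\phi_0=A(z_1)z_0+r(z_1)$, $\phi_1=\nu(z_1)$ and use $\mathrm{d}\phi_1=\nu'(z_1)\,\mathrm{d}z_1=\mathrm{d}z_1/A(z_1)$ from the computation above. The $z_0$-terms cancel: $\phi_0\,\mathrm{d}\phi_1-z_0\,\mathrm{d}z_1=\big(A(z_1)z_0+r(z_1)\big)\tfrac{\mathrm{d}z_1}{A(z_1)}-z_0\,\mathrm{d}z_1=\tfrac{r(z_1)}{A(z_1)}\,\mathrm{d}z_1$. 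Thus the obstruction is precisely the residues of $\frac{r(z_1)}{A(z_1)}\mathrm{d}z_1 = \frac{(\varepsilon\delta-\beta\gamma)\,r(z_1)}{(\gamma z_1+\delta)^2}\,\mathrm{d}z_1$. This is the differential of a rational function (equivalently has trivial residues) exactly when it has no simple poles, and a clean sufficient description is $r(z_1)=P(z_1)(\gamma z_1+\delta)^2$ with $P\in\mathbb{C}[z_1]$: then the form becomes $(\varepsilon\delta-\beta\gamma)P(z_1)\,\mathrm{d}z_1$, visibly exact with primitive $b(z_1)=-(\varepsilon\delta-\beta\gamma)\int P$. This yields the stated exact form, and the corresponding lift is $\varphi$ together with third coordinate $z_2+b(z_0,z_1)$.

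The only delicate point — and the one I would flag as the main obstacle — is the normal form for Jonquières maps preserving $\eta$: one must be careful about what "Jonquières map of $\mathbb{P}^2$" allows in the generic fiber (a full homography $z_0\mapsto\frac{a(z_1)z_0+b(z_1)}{c(z_1)z_0+d(z_1)}$ rather than merely an affine one). If $c(z_1)\not\equiv 0$ then $\varphi^*\mathrm{d}z_0$ acquires a pole along $c(z_1)z_0+d(z_1)=0$ that cannot be cancelled in $\varphi^*\eta$, so preserving $\eta$ already forces $c\equiv 0$; this reduction is what legitimately brings us to the affine-in-$z_0$ shape $A(z_1)z_0+r(z_1)$ used above. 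Everything else is the routine pullback computation and the residue bookkeeping already packaged in Theorem \ref{Thm:critere}, so once the normal form is pinned down the proposition follows by the two displayed calculations.
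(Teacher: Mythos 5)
Your proof is correct and follows the direct computation the paper leaves implicit (it states this proposition without proof): pulling back $\eta$ forces the fiber action to be affine in $z_0$ with slope $1/\nu'(z_1)$, and the residue criterion of Theorem \ref{Thm:critere} applied to $\phi_0\mathrm{d}\phi_1-z_0\mathrm{d}z_1=\frac{r}{A}\mathrm{d}z_1$ gives the exactness statement, with $r=P(z_1)(\gamma z_1+\delta)^2$, $P$ polynomial, as the (sufficient) form claimed. Your handling of the a priori full homography in the fibers, showing $c\equiv 0$ is forced since the pole along $c(z_1)z_0+d(z_1)=0$ cannot cancel against a function of $z_1$ alone, closes the only delicate point.
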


Let us now look at monomial maps that belong to 
$\mathrm{Bir}(\mathbb{C}^2)_\eta$ and those who are exact.

\begin{pro}
A monomial map belongs to $\mathrm{Bir}(\mathbb{C}^2)_\eta$ 
if and only if it can be written either
\begin{equation}\label{mon1}
\left(\gamma\,z_0^{p}z_1^{p-1},\frac{1}{\gamma}\,z_0^{1-p}z_1^{2-p}\right)
\end{equation}
or
\begin{equation}\label{mon2}
\left(\gamma\,z_0^{p}z_1^{p+1},-\frac{1}{\gamma}\,z_0^{1-p}z_1^{-p}\right)
\end{equation}
with $\gamma$ in $\mathbb{C}^*$ and $p$ in $\mathbb{Z}$.

Furthermore any monomial map of $\mathrm{Bir}(\mathbb{C}^2)_\eta$ 
is exact.

The $\varsigma$-lifts of a map of type $($\ref{mon1}$)$ are
\[
\Big(\gamma z_0^pz_1^{p-1},\frac{1}{\gamma}z_0^{1-p}z_1^{2-p},z_2+(p-1)z_0
z_1+\beta\Big)\qquad\qquad \beta\in\mathbb{C}
\]
similarly the $\varsigma$-lifts of a map of type $($\ref{mon2}$)$ are
\[
\Big(\gamma z_0^pz_1^{p+1},-\frac{1}{\gamma}z_0^{1-p}z_1^{-p},z_2+(1-p)z_0
z_1+\beta'\Big)\qquad\qquad\beta'\in\mathbb{C}
\]
\end{pro}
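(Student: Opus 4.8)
The proof has three parts: (i) characterize which monomial maps preserve $\eta=\mathrm{d}z_0\wedge\mathrm{d}z_1$; (ii) show every such map is exact by exhibiting a $\varsigma$-lift; (iii) record the explicit lifts. The first part is a direct computation. A monomial map $\varphi=(z_0^az_1^b,z_0^cz_1^d)$ with $\left[\begin{smallmatrix}a&b\\c&d\end{smallmatrix}\right]\in\mathrm{GL}(2;\mathbb{Z})$ satisfies $\varphi^*\eta=(\det\mathrm{jac}\,\varphi)\,\eta$, and one computes $\det\mathrm{jac}\,\varphi=(ad-bc)\,z_0^{a+c-1}z_1^{b+d-1}$. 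So $\varphi^*\eta=\eta$ forces $ad-bc=\pm1$ together with $a+c=1$ and $b+d=1$. With $ad-bc=1$ one sets $p=a$, gets $c=1-p$, and $b+d=1$ combined with $ad-bc=1$ gives $b=p-1$, $d=2-p$, which is exactly family~(\ref{mon1}); with $ad-bc=-1$ one similarly gets family~(\ref{mon2}) (the sign $-1/\gamma$ absorbing the determinant $-1$ so that the overall jacobian is $+1$; this is why a scalar $\gamma\in\mathbb{C}^*$ is still free). Including a nonzero scalar $\gamma$ in the first component is harmless since it only rescales $\eta$ by $\gamma\cdot\gamma^{-1}=1$.

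**Exactness.** By Theorem~\ref{Thm:critere}, $\varphi=(\phi_0,\phi_1)\in\mathrm{Bir}(\mathbb{C}^2)_\eta$ is exact iff the closed $1$-form $\phi_0\,\mathrm{d}\phi_1-z_0\,\mathrm{d}z_1$ has trivial residues, and then the lift is $(\phi_0,\phi_1,z_2+b)$ with $\mathrm{d}b=z_0\,\mathrm{d}z_1-\phi_0\,\mathrm{d}\phi_1$. For a map of type~(\ref{mon1}), I would compute $\phi_0\,\mathrm{d}\phi_1$ directly: with $\phi_0=\gamma z_0^pz_1^{p-1}$ and $\phi_1=\gamma^{-1}z_0^{1-p}z_1^{2-p}$ one has $\mathrm{d}\phi_1=\gamma^{-1}z_0^{-p}z_1^{1-p}\big((1-p)z_1\,\mathrm{d}z_0+(2-p)z_0\,\mathrm{d}z_1\big)/z_1$ — better to expand $\mathrm{d}\phi_1=\gamma^{-1}\big((1-p)z_0^{-p}z_1^{2-p}\,\mathrm{d}z_0+(2-p)z_0^{1-p}z_1^{1-p}\,\mathrm{d}z_1\big)$, so $\phi_0\,\mathrm{d}\phi_1=(1-p)z_1\,\mathrm{d}z_0+(2-p)z_0\,\mathrm{d}z_1$. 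Hence $z_0\,\mathrm{d}z_1-\phi_0\,\mathrm{d}\phi_1=(p-1)z_1\,\mathrm{d}z_0+(p-1)z_0\,\mathrm{d}z_1=(p-1)\,\mathrm{d}(z_0z_1)$, which is exact with primitive $b=(p-1)z_0z_1$. This gives the claimed lift $\big(\gamma z_0^pz_1^{p-1},\gamma^{-1}z_0^{1-p}z_1^{2-p},z_2+(p-1)z_0z_1+\beta\big)$, the constant $\beta$ coming from the kernel of $\varsigma$ (Proposition~\ref{Lem:chpinv2}). The computation for type~(\ref{mon2}) is identical in structure and yields $b=(1-p)z_0z_1$.

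**Remarks on the obstacle.** There is no real obstacle here — the statement is essentially a worked example illustrating Theorem~\ref{Thm:critere}, and the only thing to watch is bookkeeping of the exponents so that the $\mathrm{GL}(2;\mathbb{Z})$ condition and the jacobian condition are reconciled correctly, and the sign of the scalar in~(\ref{mon2}) is tracked so that the total jacobian equals $+1$ rather than $-1$. One small point worth stating explicitly in the write-up: a monomial $1$-form $\sum_i\lambda_i\frac{\mathrm{d}t_i}{t_i}$ in the variables $z_0,z_1$ has trivial residues iff it is a differential of a monomial, which is why only monomial $a$ survives in Proposition~\ref{Pro:abgr} and why here the residue computation collapses to a pure multiple of $\mathrm{d}(z_0z_1)$; since the form we obtain is visibly $(\text{integer})\cdot\mathrm{d}(z_0z_1)$, it is automatically exact and the residue criterion is met with nothing to check.
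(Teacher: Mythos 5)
Your proposal is correct and follows the same direct-computation route the paper leaves implicit: impose $\varphi^*\eta=\eta$ on a general monomial map to force $a+c=1$, $b+d=1$ and the coefficient--determinant condition $\gamma_1\gamma_2(ad-bc)=1$, which yields exactly the two families, and then observe that $z_0\,\mathrm{d}z_1-\phi_0\,\mathrm{d}\phi_1$ collapses to $(p-1)\,\mathrm{d}(z_0z_1)$ (resp. $(1-p)\,\mathrm{d}(z_0z_1)$), so exactness and the stated $\varsigma$-lifts follow from Theorem~\ref{Thm:critere} with the additive constant accounting for $\ker\varsigma$. The only blemish is the false-start expression for $\mathrm{d}\phi_1$ (with the spurious $/z_1$), which you immediately replace by the correct expansion, so the argument stands as written.
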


\begin{rems}
\begin{itemize}
\item Both maps of type $($\ref{mon1}$)$ and of type $($\ref{mon2}$)$ 
preserve $(z_0z_1)^2=$ cst.

\item Maps of type $($\ref{mon1}$)$ form a group $\mathrm{G}_1$. Note that 
the matrices 
$\left[
\begin{array}{cc}
p & p-1 \\
1-p & 2-p
\end{array}
\right]$ 
are in $\mathrm{SL}(2;\mathbb{Z})$; they are stochastic up to transposition 
and have trace equal to $2$. The group
\[
\left\{\left[
\begin{array}{cc}
p & p-1 \\
1-p & 2-p
\end{array}
\right]\,\Big\vert\, p\in\mathbb{Z}\right\}
\]
is isomophic to $\mathbb{Z}$. As a consequence $\mathrm{G}_1$ is isomorphic 
to $\mathbb{C}^*\times\mathbb{Z}$.

\smallskip

The maps of type $($\ref{mon2}$)$ don't form a group. The corresponding 
matrices 
$\left[
\begin{array}{cc}
p & p+1 \\
1-p & -p
\end{array}
\right]$ 
have determinant $-1$, trace $0$ and are stochastic up to transposition.

\smallskip

But the union of the maps of type $($\ref{mon1}$)$ or $($\ref{mon2}$)$ is a 
group which is a double extension of $\mathbb{C}^*\times\mathbb{Z}$.
\end{itemize}
\end{rems}

\medskip

\subsection{Indeterminacy and exceptional sets}

As we have seen if $\phi$ is a contact map, then $\mathcal{H}_\infty$ is either 
preserved by $\phi$, or blown down by $\phi$ (Proposition~\ref{pro:regular}). 
In case it is blown down, $\mathcal{H}_\infty$ can be blown down onto a point 
or onto a curve; in this last eventuality $\mathcal{H}_\infty$ can be contracted 
onto a curve contained in $\mathcal{H}_\infty$ (take for instance 
$\phi=\mathcal{K}(z_1,z_1z_2)$). Note also that $\mathcal{H}_\infty$ can be 
contracted onto a curve not contained in $\mathcal{H}_\infty$: the map 
$\mathcal{K}\left(\frac{z_1}{z_2},\frac{1}{z_2}\right)$ blows down 
$\mathcal{H}_\infty$ onto the legendrian curve $z_0=z_2=0$. We will see that 
this is a general case and for any contracted surface:

\begin{pro}\label{pro:contracourbe}
Let $\phi$ be a contact birational map of $\mathbb{P}^3$. Assume 
that $\phi$ blows down a surface $\mathcal{S}$ onto a curve $\mathcal{C}$. Then
\begin{itemize}
\smallskip
\item either $\mathcal{C}$ is contained in $\mathcal{H}_\infty$,
\smallskip
\item or $\mathcal{C}$ is an algebraic legendrian curve.
\end{itemize}
\end{pro}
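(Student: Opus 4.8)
The plan is to exploit the fact that $\phi$ is a contact map, so $\phi^*\omega = V(\phi)\omega$ for some rational function $V(\phi)$, and track what happens to $\omega$ near the contracted surface. Let $\mathcal{S}$ be the surface contracted by $\phi$ onto the curve $\mathcal{C}$, and suppose $\mathcal{C}$ is not contained in $\mathcal{H}_\infty$. I want to show $\mathcal{C}$ is legendrian, i.e. that $s_\mathcal{C}^*\omega = 0$ for a local parametrization $s_\mathcal{C}$ of $\mathcal{C}$. Since $\mathcal{C}$ meets the affine chart $\mathbb{C}^3 = \{z_3 = 1\}$ where $\omega = z_0\,\mathrm{d}z_1 + \mathrm{d}z_2$ is a genuine holomorphic contact form (with nonvanishing $\omega\wedge\mathrm{d}\omega$), it suffices to argue at a generic smooth point of $\mathcal{C}$ in that chart.

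\textbf{Key steps.} First I would pass to a resolution: blow up to make $\phi$ a morphism on a neighbourhood of (the strict transform of) $\mathcal{S}$, so that $\mathcal{S}$ maps to $\mathcal{C}$ by an honest holomorphic map $\pi$ with one-dimensional fibres. The restriction of the pulled-back form $\phi^*\omega = V(\phi)\omega$ to $\mathcal{S}$ then equals $\pi^*(\omega|_\mathcal{C})$ up to the factor $V(\phi)$, at least away from the indeterminacy locus and the poles/zeros of $V(\phi)$; more precisely $\omega$ restricted to $\mathcal S$ is the pullback under $\pi$ of a $1$-form on $\mathcal C$, so it has rank $\le 1$ and its kernel at a generic point of $\mathcal S$ contains the tangent to the fibre of $\pi$. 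Second — and this is the geometric heart — I would use that $\omega$ is a contact form: the contact hyperplane field $\ker\omega$ contains no germ of surface (there is no integral surface of $\omega$, as already noted in the paper before Proposition~\ref{pro:regular}), so $\omega|_\mathcal{S}$ cannot vanish identically; hence $\omega|_\mathcal{S}$ is a nonzero $1$-form of rank exactly $1$, whose foliation by its kernel is precisely the fibration of $\pi$ over $\mathcal C$. Third, because $\omega|_\mathcal S = \pi^*(\text{a 1-form }\theta\text{ on }\mathcal C)$, pulling back by a section of $\pi$ (equivalently, parametrizing $\mathcal C$ and lifting) recovers $\theta$, and the restriction of $\omega$ to $\mathcal C$ itself — via $s_\mathcal C$ — computes $s_\mathcal C^*\omega$. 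The remaining point is that $\theta$ must be zero: this follows because $\phi^*\omega = V(\phi)\,\omega$ together with $\phi$ contracting $\mathcal S$ forces $\overline{V(\phi)}$ to vanish on $\mathcal S$ (the image has lower dimension, so the Jacobian drops rank along $\mathcal S$, and by the relation between $V(\phi)$ and $\det\mathrm{jac}$ — compare Lemma~\ref{lem:reginf} and equation~\eqref{eq:reg} — $V(\phi)$ vanishes there), whence $\phi^*\omega|_\mathcal{S} = 0$, i.e. $\pi^*\theta = 0$, i.e. $\theta = 0$. Therefore $s_\mathcal C^*\omega = 0$ and $\mathcal C$ is legendrian; and it is algebraic since it is the image of an algebraic surface under a rational map.

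\textbf{Main obstacle.} The delicate step is the bookkeeping at the resolution: controlling where $V(\phi)$ (or its homogenization $\overline{V(\phi)}$) vanishes and where the exceptional divisors of the resolution sit, so as to legitimately conclude that $\phi^*\omega$ restricts to $0$ on $\mathcal S$ rather than merely to something of rank $\le 1$. One has to make sure the argument is carried out at a \emph{generic} point of $\mathcal S$, avoiding the indeterminacy locus of $\phi$, the intersection $\mathcal S \cap \mathcal H_\infty$, and the locus where $\mathcal C$ could secretly be singular or tangent to $\mathcal H_\infty$; the hypothesis $\mathcal C \not\subset \mathcal H_\infty$ is exactly what guarantees we can work in the affine chart where $\omega$ is a bona fide contact form, so the "no integral surface" fact applies. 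Once that genericity is arranged, the vanishing of $V(\phi)$ along a contracted surface is the clean input that kills $\theta$, and the rest is formal.
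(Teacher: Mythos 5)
Your argument for the case $\mathcal{S}\neq\mathcal{H}_\infty$ is essentially the paper's: at a generic point $p\in\mathcal{S}$ in the affine chart, $\phi$ is holomorphic, $\phi(p)\in\mathcal{C}\smallsetminus\mathcal{H}_\infty$, the identity $\phi^*(\omega\wedge\mathrm{d}\omega)=V(\phi)^2\,\omega\wedge\mathrm{d}\omega$ forces $V(\phi)$ to vanish along the contracted surface, hence $\phi^*\omega$ restricts to zero on $\mathcal{S}$ while it also computes the pullback of $\omega\vert_{\mathcal{C}}$ under the submersion $\phi\vert_{\mathcal{S}}$, so $\omega\vert_{\mathcal{C}}=0$. (The resolution step is unnecessary, and note a small slip: it is $\phi^*\omega\vert_{\mathcal{S}}=V(\phi)\vert_{\mathcal{S}}\,\omega\vert_{\mathcal{S}}$, not $\omega\vert_{\mathcal{S}}$ itself, that is a pullback from $\mathcal{C}$; your subsequent claim that the kernel of $\omega\vert_{\mathcal{S}}$ is the fibration is not needed and sits awkwardly with $V(\phi)\vert_{\mathcal{S}}\equiv 0$.)

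The genuine gap is the case $\mathcal{S}=\mathcal{H}_\infty$, which the statement allows and which is in fact the motivating example given just before Proposition \ref{pro:contracourbe} (the map $\mathcal{K}\left(\frac{z_1}{z_2},\frac{1}{z_2}\right)$ blows down $\mathcal{H}_\infty$ onto the legendrian curve $z_0=z_2=0$). Your genericity discussion ("avoid $\mathcal{S}\cap\mathcal{H}_\infty$") tacitly assumes $\mathcal{S}\neq\mathcal{H}_\infty$, and the key step breaks there: $\omega$ has a pole of order $3$ along $z_3=0$, so the clean relation $\det\mathrm{jac}\,\phi=V(\phi)^2$ is only valid in the affine chart; at points of $\mathcal{H}_\infty$ the correct relation is \eqref{eq:reg}, $\overline{\phi}_3^2\,\det\mathrm{jac}\,\overline{\phi}=\overline{V(\phi)}^2 z_3^2$, and the vanishing of the Jacobian along $z_3=0$ is absorbed by the factor $z_3^2$ and gives no vanishing of $\overline{V(\phi)}$ on $\mathcal{H}_\infty$; moreover the expression $V(\phi)\vert_{\mathcal{S}}\cdot\omega\vert_{\mathcal{S}}$ is then a meaningless $0\cdot\infty$ since $\omega$ is not holomorphic along $\mathcal{S}$. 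The paper treats this case by a different device: since $\phi\vert_{\mathcal{H}_\infty}$ has rank $\leq 1$, one chooses a curve $\mathcal{C}'\subset\mathcal{H}_\infty$ transverse to the contracted fibres with $\phi(\mathcal{C}')=\mathcal{C}$; any curve contained in $\mathcal{H}_\infty$ is legendrian (the restriction of the homogeneous form $\overline{\omega}$ to $z_3=0$ is $-z_0z_1\,\mathrm{d}z_3$, which dies on curves inside $z_3=0$), so for a parametrization $s$ of $\mathcal{C}'$ one gets $t^*\omega=(V(\phi)\circ s)\,s^*\omega=0$ for the parametrization $t=\phi\circ s$ of $\mathcal{C}$. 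You would need to add an argument of this kind to cover $\mathcal{S}=\mathcal{H}_\infty$.
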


\begin{cor}
Let $\phi$ be a contact birational map of $\mathbb{P}^3$. If 
$\mathcal{C}$ is a curve not contained in $\mathcal{H}_\infty$ and
blown-up by $\phi$ on a surface distinct from $\mathcal{H}_\infty$,
then $\mathcal{C}$ is a legendrian curve.
\end{cor}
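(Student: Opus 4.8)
The plan is to derive the Corollary directly from Proposition~\ref{pro:contracourbe} applied to the inverse map. Let $\phi$ be a contact birational map of $\mathbb{P}^3$ and let $\mathcal{C}$ be a curve not contained in $\mathcal{H}_\infty$ that is blown up by $\phi$ onto a surface $\mathcal{S}$ distinct from $\mathcal{H}_\infty$. The first observation is that $\phi^{-1}$ is again a contact birational map: indeed from $\phi^*\omega=V(\phi)\omega$ one gets, as in the proof of the theorem characterizing $V$ via the left translation action, that $(\phi^{-1})^*\omega=\big(V(\phi)\circ\phi^{-1}\big)^{-1}\omega$, so $\phi^{-1}\in\mathrm{Bir}(\mathbb{C}^3)_{\mathrm{c}(\omega)}$. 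The statement that $\phi$ blows up $\mathcal{C}$ onto $\mathcal{S}$ means precisely that $\phi^{-1}$ blows down the surface $\mathcal{S}$ onto the curve $\mathcal{C}$.

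Now I apply Proposition~\ref{pro:contracourbe} to the contact birational map $\phi^{-1}$ and the contracted surface $\mathcal{S}$: the image curve $\mathcal{C}$ is either contained in $\mathcal{H}_\infty$, or it is an algebraic legendrian curve. By hypothesis $\mathcal{C}$ is not contained in $\mathcal{H}_\infty$, so the first alternative is excluded and $\mathcal{C}$ must be legendrian, which is exactly the conclusion. The only point that needs a word of care is that Proposition~\ref{pro:contracourbe} is stated for ``a surface $\mathcal{S}$'' contracted by a contact map; here $\mathcal{S}$ may a priori be any irreducible component of the exceptional locus of $\phi^{-1}$, and the hypothesis $\mathcal{S}\neq\mathcal{H}_\infty$ guarantees we are not in the (already understood) situation where the contracted surface is the invariant hyperplane itself — but this hypothesis is not even needed for the argument, since the conclusion of Proposition~\ref{pro:contracourbe} holds for every contracted surface regardless.

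I do not expect a genuine obstacle here: the Corollary is a formal restatement of Proposition~\ref{pro:contracourbe} under the exchange $\phi\leftrightarrow\phi^{-1}$, together with the elementary remark that the class of contact birational maps is stable under inversion. The only thing to make fully rigorous is that ``blown up by $\phi$ on a surface'' is by definition ``contracted by $\phi^{-1}$ onto a curve'', i.e. that blowing up and blowing down are inverse operations between $\phi$ and $\phi^{-1}$; this is immediate from the definitions of $\mathrm{Ind}\,\phi$ and $\mathrm{Exc}\,\phi$ recalled at the beginning of the section.
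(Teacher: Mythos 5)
Your argument is correct and is exactly the intended one: the paper states the corollary without proof precisely because it follows from Proposition~\ref{pro:contracourbe} applied to $\phi^{-1}$ (which is again contact, since $(\phi^{-1})^*\omega=\big(V(\phi)\circ\phi^{-1}\big)^{-1}\omega$), the hypothesis $\mathcal{C}\not\subset\mathcal{H}_\infty$ ruling out the first alternative. Your side remark that the hypothesis $\mathcal{S}\neq\mathcal{H}_\infty$ is not needed for this deduction is also accurate, as the proposition covers the case of the contracted surface being $\mathcal{H}_\infty$ as well.
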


Let us now give an example of maps of finite order that illustrates 
Proposition \ref{pro:contrapoint}.

\begin{eg}\label{eg:ordrefini}
Start with the birational map 
$\varphi=\left(z_2,\frac{z_2+1}{z_1}\right)$ of order $5$. The map 
$\mathcal{K}(\varphi)=\left(-\frac{z_2+1+z_0z_1}{z_0z_1^2},z_2,\frac{z_2+1}{z_1}\right)$ 
blows down $z_2=-z_3$ onto the legendrian curve $(z_2=z_1+z_3=0)$;
\end{eg}

\begin{proof}[Proof of Proposition \ref{pro:contracourbe}]
We will distinguish the cases $\mathcal{S}=\mathcal{H}_\infty$ and  
$\mathcal{S}\not=\mathcal{H}_\infty$.

Let us start with the eventuality $\mathcal{S}=\mathcal{H}_\infty$. Suppose 
that $\mathcal{C}$ is not contained in $\mathcal{H}_\infty$. Note that 
$\phi_{\vert\mathcal{H}_\infty\smallsetminus\mathrm{Ind}\,\phi}$ is holomorphic of 
rank $\leq 1$. If $p$ belongs to $\mathcal{C}\smallsetminus\mathrm{Ind}\,\phi$, 
then $\phi^{-1}(p)$ is a curve contained in $\mathcal{H}_\infty$; there 
exists a curve $\mathcal{C}'$ transverse to
\[
\big\{\phi^{-1}(p) \,\vert\,p\in\mathcal{C}\smallsetminus\mathrm{Ind}\,\phi
\big\}
\]
contained in $\mathcal{H}_\infty$ and such that 
$\phi(\mathcal{C}')=\mathcal{C}$. 
Consider a parametrization $s$ of $\mathcal{C}'$; then $t=\phi\circ s$ is a 
parametrization of $\mathcal{C}$ and
\[
t^*\omega=(\phi\circ s)^*\omega=s^*\phi^*\omega=s^*V(\phi)\omega=V(\phi)\circ s\cdot 
s^*\omega=0.
\]

\medskip

Assume now that $\mathcal{S}\not=\mathcal{H}_\infty$ and 
$\mathcal{C}\not\subset \mathcal{H}_\infty$. Set $\mathcal{C}=\phi(\mathcal{S})$. 
Let us consider a generic point $p$ of $\mathcal{S}$. The germ $\phi_{,p}$ 
is holomorphic and $\phi(p)\in\mathcal{C}$ does not belong to 
$\mathcal{H}_\infty$. In particular the $3$-form 
$\phi^*\omega\wedge\mathrm{d}\omega$ is thus holomorphic at~$p$; 
in fact $V(\phi)_{,p}$ is holomorphic and as we have seen
\[
\phi^*\omega\wedge\mathrm{d}\omega=V(\phi)^2\omega\wedge\mathrm{d}\omega.
\]
Since $\mathcal{S}$ is blown down by  $\phi$, the jacobian determinant 
of $\phi$ is identically zero on $\mathcal{S}$ and then $V(\phi)$ vanishes 
on~$\mathcal{S}$.

Assume that $\mathcal{C}$ is not a legendrian curve, then the restriction 
of $\omega$ to $\mathcal{C}$ in a neighborhood of $\phi(p)$ defines a 
$1$-form $\Theta$ on $\mathcal{C}$ without zero (let us recall that $p$ 
is generic). As the restriction
\[
\phi_{,p\vert_{\mathcal{S},p}}\colon\mathcal{S}_{,p}\to\mathcal{C}_{,\phi(p)}
\]
is locally a submersion, $\phi_{,p\vert_{\mathcal{S},p}}^*\Theta$ is a 
nonzero $1$-form on $\mathcal{S}_{,p}$: contradiction with the fact 
that $\phi_{,p}^*\omega$ vanishes on $\mathcal{S}_{,p}$.
\end{proof}

There is no statement if 
$\phi\in\mathrm{Bir}(\mathbb{C}^3)_{\mathrm{c}(\omega)}$ blows 
down $\mathcal{H}_\infty$ onto a point. Indeed
\[
\mathcal{K}\left(\frac{z_1}{z_2^2},\frac{z_1}{z_2^3}\right)=\left(
\frac{z_2+3z_0z_1}{z_2(z_2-2z_0z_1)},\frac{z_1}{z_2^2},\frac{z_1}{z_2^3}
\right)
\]
contracts $\mathcal{H}_\infty$ onto $\mathbf{e}_3\not\in\mathcal{H}_\infty$ 
but $\mathcal{K}(z_1z_2,z_1z_2^2)$ contracts $\mathcal{H}_\infty$ onto 
$\mathbf{e}_2\in\mathcal{H}_\infty$. But we get some result when 
$\phi\in\mathrm{Bir}(\mathbb{C}^3)_{\mathrm{c}(\omega)}$ blows 
down a surface distinct from $\mathcal{H}_\infty$ onto a point.

\begin{defi}
Let $\phi$ be a contact birational map of $\mathbb{P}^3$.
Let $\mathcal{S}=(f=0)$ be an irreducible 
surface blown down by $\phi$, and let $p$ be a smooth point of 
$\mathcal{S}$ such that $\phi$ and $V(\phi)$ are holomorphic at $p$. The 
multiplicity of contraction of $\phi$ at $p$ is the greatest integer 
$n$ such that $f_{,p}^n$ divides $V(\phi)$. One can check that~$n$ is 
independent on $p$. The integer $n$ is the \textbf{\textit{multiplicity 
of contraction of $\phi$ on $\mathcal{S}$}}.
\end{defi}

\begin{rem}
Let $\phi$ be a contact birational map of $\mathbb{P}^3$. 
If $\phi$ is holomorphic at 
$p\in\mathbb{P}^3\smallsetminus\mathcal{H}_\infty$, then $V(\phi)$ 
is too.
\end{rem}

\begin{eg}\label{eg:utile}
Let us consider the birational map $\phi$ defined in the affine chart 
$z_1=1$ by
\[
\phi=\left(\frac{z_0z_3^2}{(z_2+z_3)^2},\frac{z_2z_3}{(z_2+z_3)},z_3
\right);
\]
in this chart 
$\omega=\mathrm{d}z_2-\frac{z_0+z_2z_3}{z_3^2}\,\mathrm{d}z_3$ and 
one can check that $V(\phi)=\frac{z_3^2}{{z_2+z_3}^2}$. 
Furthermore $\mathcal{H}_\infty$ is blown down by $\phi$ onto the point 
$(0,0,0)$ ; the multiplicity of contraction of $\phi$ on 
$\mathcal{H}_\infty$ is thus $2$.
\end{eg}

\begin{pro}\label{pro:contrapoint}
Let $\phi$ be a map of 
$\mathrm{Bir}(\mathbb{C}^3)_{\mathrm{c}(\omega)}$ and let 
$\mathcal{S}$ be an irreducible surface distinct from~$\mathcal{H}_\infty$ 
blown down by $\phi$ onto a point $p$. If the multiplicity of contraction 
of $\phi$ on $\mathcal{S}$ is~$1$, then $p$ belongs to $\mathcal{H}_\infty$.
\end{pro}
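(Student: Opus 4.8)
The plan is to argue by contradiction via a local analysis at a generic point of $\mathcal{S}$, using the relation $\phi^*\omega=V(\phi)\omega$ together with the contraction hypothesis. Suppose $\mathcal{S}=(f=0)$ is blown down onto a point $p\notin\mathcal{H}_\infty$, with multiplicity of contraction equal to $1$; I want to derive a contradiction. Since $p\notin\mathcal{H}_\infty$, the form $\omega$ and the $3$-form $\omega\wedge\mathrm{d}\omega$ are holomorphic and nonvanishing near $p$. Choose a generic smooth point $q$ of $\mathcal{S}$ at which $\phi$ and $V(\phi)$ are holomorphic; then $\phi(q)=p$. Because $\mathcal{S}$ is contracted to the single point $p$, every tangent vector of $\mathcal{S}$ at $q$ is in the kernel of $\mathrm{d}\phi_q$, i.e. $\mathrm{rk}\,\mathrm{d}\phi_q\le 1$; in particular $\det\mathrm{jac}\,\phi$ vanishes on $\mathcal{S}$, and from $\phi^*(\omega\wedge\mathrm{d}\omega)=V(\phi)^2\,\omega\wedge\mathrm{d}\omega$ one reads that $V(\phi)$ vanishes on $\mathcal{S}$; by hypothesis it vanishes there to order exactly $1$, so $V(\phi)=f\cdot W$ with $W$ holomorphic and nonvanishing at $q$ (after possibly shrinking and choosing $q$ generic).

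Next I would set up coordinates: since $\omega$ is a contact form near $p$, Darboux's theorem gives local coordinates $(w_0,w_1,w_2)$ centered at $p$ with $\omega=w_0\,\mathrm{d}w_1+\mathrm{d}w_2$. Write $\phi$ in these target coordinates as $(\phi_0,\phi_1,\phi_2)$, all holomorphic and vanishing at $q$ (since $\phi(q)=p=0$). The identity $\phi^*\omega=V(\phi)\omega$ becomes the system $(\star)$ from the excerpt, and in particular $(\star_2)$, $(\star_3)$ give
\[
\phi_0\,\partial_{z_i}\phi_1+\partial_{z_i}\phi_2=V(\phi)\cdot(\text{1-form coefficient}),
\]
so each partial derivative of $\phi_1$ and $\phi_2$ is divisible by $V(\phi)=fW$, hence by $f$. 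The crucial point is then to compare vanishing orders: on one hand $\phi_1,\phi_2$ each vanish along $\mathcal{S}=(f=0)$ to some order $k\ge 1$ (they vanish at the generic point $q$, and $\mathcal{S}$ is irreducible so $f\mid\phi_1$, $f\mid\phi_2$ — here I use that $\phi$ contracts $\mathcal{S}$ to a single point so all three components, suitably normalized so that $\phi(\mathcal{S})=0$, vanish on $\mathcal{S}$); on the other hand, differentiating a function that vanishes to order $k$ along a smooth hypersurface drops the order to $k-1$ in the conormal direction, while $V(\phi)=fW$ vanishes to order exactly $1$. Writing everything in a local coordinate where $f$ is one of the coordinate functions, $(\star_2)$ or $(\star_3)$ then forces $k-1\ge 1$ in that direction unless the leading terms conspire; carrying this through, the multiplicity-one hypothesis on $V(\phi)$ is incompatible with $\phi_1$ and $\phi_2$ both vanishing on $\mathcal{S}$ together with the first equation $(\star_1)$, $\phi_0\,\partial_{z_0}\phi_1+\partial_{z_0}\phi_2=0$, which links the $z_0$-derivatives. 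The contradiction comes from the fact that $\phi_0$ is holomorphic (as $p\notin\mathcal{H}_\infty$) and vanishes at $q$, so the two terms in $(\star_1)$ cannot both be units, forcing an extra order of vanishing of $V(\phi)$ beyond what multiplicity one allows.

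The main obstacle I expect is the bookkeeping of vanishing orders along $\mathcal{S}$: one must be careful that $\phi_1,\phi_2$ (and $\phi_0$) genuinely vanish on all of $\mathcal{S}$ — this uses irreducibility of $\mathcal{S}$ together with the fact that $\phi(\mathcal{S})$ is the single point $p$, so after translating the target so that $p=0$ each component $\phi_i$ vanishes on the generic, hence every, point of $\mathcal{S}$, giving $f\mid\phi_i$. Once that is established, the computation is a local one in Darboux coordinates comparing orders of vanishing on the two sides of $(\star_2)$, $(\star_3)$, and the key inequality is that differentiation lowers the vanishing order along $(f=0)$ by at most one, so $V(\phi)=f^{\,\ge 2}\cdot(\text{holomorphic})$ once $k\ge 2$, whereas $k=1$ together with $(\star_1)$ and holomorphy of $\phi_0$ at $p$ still forces $f^2\mid V(\phi)$; either way the multiplicity is at least $2$, contradicting the hypothesis. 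Hence $p\in\mathcal{H}_\infty$.
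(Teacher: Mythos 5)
Your outline has the right setup (work at a generic smooth point of $\mathcal{S}$ where $\phi$ and $V(\phi)$ are holomorphic, use that all components of $\phi-p$ are divisible by a local equation $f$ of $\mathcal{S}$, and feed this into $\phi^*\omega=V(\phi)\omega$), but the heart of the argument is missing: at no point do you use the contact-geometric fact that $\omega$ restricted to $\mathcal{S}$ cannot vanish identically (equivalently, that the contact structure has no integral surface, $\mathcal{H}_\infty$ being its only invariant surface). This fact is exactly what the paper's proof hinges on: writing $\phi=(p_0+fA,p_1+fB,p_2+fC)$, the relation $\phi^*\omega=fV_1\omega$ first yields $p_0B+C=fC_1$, and after substituting and dividing the identity by $f$ one restricts to $\mathcal{S}$ and obtains $V_1\vert_{\mathcal{S}}\cdot\omega\vert_{\mathcal{S}}=0$; only because $\omega\vert_{\mathcal{S}}\not\equiv 0$ does this give $f\mid V_1$, contradicting multiplicity one. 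Pure divisibility bookkeeping cannot replace this input: if you run your scheme with the integrable form $\mathrm{d}z_2$ in place of $\omega$, the germ $\big(z_0z_2,\,z_1z_2,\,z_2^2/2\big)$ contracts $z_2=0$ to the origin, satisfies the analogous pullback relation with $V=z_2$ of order exactly one, and meets all your divisibility constraints; so the formal constraints you list are consistent with multiplicity one, and the contradiction has to come from the non-integrability of $\omega$, which your plan never invokes.

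In addition, the concrete steps you do spell out are incorrect or do not deliver what you claim. From $(\star_2)$ and $(\star_3)$ you cannot conclude that each partial derivative of $\phi_1$ and $\phi_2$ is divisible by $V(\phi)$; only the combinations $\phi_0\,\partial_{z_i}\phi_1+\partial_{z_i}\phi_2$ are. The claim that a common vanishing order $k\geq 2$ of $\phi_1-p_1$, $\phi_2-p_2$ forces $f^2\mid V(\phi)$ does not follow from "differentiation lowers the order by at most one": in $(\star_3)$ the term $\partial_{z_2}\phi_2$ then has order at least $k-1\geq 1$, so you only recover $f\mid V(\phi)$. Finally, the contradiction you attribute to $(\star_1)$ cannot work as stated: $(\star_1)$ contains no $V(\phi)$ at all, and at a generic point of $\mathcal{S}$ it is compatible with the rest of the data (in centered notation it only gives, modulo $f$, that the last component's cofactor kills $\partial_{z_0}f$ along $\mathcal{S}$), so it cannot by itself "force an extra order of vanishing of $V(\phi)$". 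The Darboux change of coordinates at $p$ is harmless but unnecessary — translating by $p$, as the paper does, already suffices — and once the bookkeeping is done correctly you still need the invariant-surface argument to conclude.
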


\begin{rem}
As soon as the multiplicity of contraction of $\phi$ on $\mathcal{S}$ 
is $>1$, the point $p$ can be in 
$\mathbb{P}^3\smallsetminus\mathcal{H}_\infty$. Let us consider 
the map of $\mathrm{Bir}(\mathbb{C}^3)_{\mathrm{c}(\omega)}$ given 
in the affine chart $z_3=1$ by
\[
\left(\frac{z_2(nz_0z_1-z_2)}{z_2+(1-n)z_0z_1},z_1z_2^{n-1},z_1z_2^n\right)
\]
with $n\in\mathbb{Z}$.
The surface $z_2=0$ is blown down onto $\mathbf{e}_3\not\in\mathcal{H}_\infty$. 
One can check that $V(\phi)=\frac{z_1z_2^n}{z_2+(1-n)z_0z_1}$ 
so the multiplicity of contraction of $\phi$ on $z_2=0$ is $n$ if 
$n\geq 2$ and $0$ otherwise.
\end{rem}

\begin{proof}[Proof of Proposition \ref{pro:contrapoint}]
Assume by contradiction that $p=(p_0,p_1,p_2)$ does not belong to 
$\mathcal{H}_\infty$. Let $(f=0)$ be an equation of~$\mathcal{S}$; as the 
multiplicity of contraction of $\phi$ on $\mathcal{S}$ is $1$ one has 
$V(\phi)=fV_1$ with $V_{1\vert\mathcal{S}}$ generically regular. There exists 
a point $m\in\mathcal{S}$ such that $f_{,m}$ is a submersion and $\phi$ is 
holomorphic at $m$. One has 
$\phi_{,m}=(p_0+fA,p_1+fB,p_2+fC)$ with $A$, $B$, $C$ holomorphic and 
$\phi_{,m}^*\omega=V(\phi)\omega$ can be rewritten
\begin{equation}\label{eq:mat}
(fA+p_0)(f\mathrm{d}B+B\mathrm{d}f)+(f\mathrm{d}C+C\mathrm{d}f)
=fV_1(z_0\mathrm{d}z_1+\mathrm{d}z_2)
\end{equation}
This implies that there exists $C_1$ holomorphic such that $p_0B+C=fC_1$, 
{\it i.e.} $C=fC_1-p_0B$. Hence
\begin{equation}\label{eq:mat2}
(\ref{eq:mat})\Longleftrightarrow fA\mathrm{d}B+AB\mathrm{d}f+f
\mathrm{d}C_1+2C_1\mathrm{d}f=V_1(z_0\mathrm{d}z_1+\mathrm{d}z_2)
\end{equation}

The multiplicity of contraction of $\phi$ on $\mathcal{S}$ is $1$ 
hence $f$ does not divide $V_1$. Then $\mathcal{S}$ is invariant by 
$\omega$ and this gives a contradiction
 with the fact that $\mathcal{H}_\infty$ is the only invariant surface 
of~$\omega$.
\end{proof}

For elements in $\mathrm{Bir}(\mathbb{C}^3)_{\omega}$ we only 
have one statement that includes both cases of a surface contracted 
onto a point and onto a curve. Let us remark that in the case of a 
point, we don't need the assumption about the multiplicity of 
contraction; in the other one the statement shows that Proposition 
\ref{pro:contracourbe} applies to elements of 
$\mathrm{Bir}(\mathbb{C}^3)_{\mathrm{c}(\omega)}\smallsetminus\mathrm{Bir}(\mathbb{C}^3)_{\omega}$.

\begin{pro}\label{pro:subvar}
Let $\phi$ be a map of $\mathrm{Bir}(\mathbb{C}^3)_{\omega}$. 
If $\mathcal{S}$ is a surface distinct from $\mathcal{H}_\infty$ 
contracted by $\phi$, then $\phi(\mathcal{S})$ belongs to 
$\mathcal{H}_\infty$.
\end{pro}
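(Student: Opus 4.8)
The plan is to use that a birational self-map preserving $\omega$ also preserves the natural volume form of the affine chart, so it cannot contract a hypersurface onto a subvariety lying in that chart.

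First I would record that $\det\mathrm{jac}\,\phi\equiv 1$. By Proposition~\ref{Lem:chpinv2} one has $\phi=\big(\phi_0(z_0,z_1),\phi_1(z_0,z_1),z_2+b(z_0,z_1)\big)$ with $(\phi_0,\phi_1)\in\mathrm{Bir}(\mathbb{C}^2)_\eta$. Since $\eta=\mathrm{d}z_0\wedge\mathrm{d}z_1$, the equality $(\phi_0,\phi_1)^*\eta=\eta$ reads $\det\mathrm{jac}(\phi_0,\phi_1)\equiv 1$, and the block-triangular shape of $\mathrm{jac}\,\phi$ gives $\det\mathrm{jac}\,\phi=\det\mathrm{jac}(\phi_0,\phi_1)\equiv 1$. (Alternatively, $\phi^*\omega=\omega$ forces $\phi^*(\omega\wedge\mathrm{d}\omega)=\omega\wedge\mathrm{d}\omega$, and in the chart $z_3=1$ the form $\omega\wedge\mathrm{d}\omega$ equals $\pm\,\mathrm{d}z_0\wedge\mathrm{d}z_1\wedge\mathrm{d}z_2$, which yields the same conclusion.)

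Then I would argue by contradiction. Assume $\mathcal{S}\neq\mathcal{H}_\infty$ is contracted by $\phi$ while $\phi(\mathcal{S})\not\subseteq\mathcal{H}_\infty$. As $\mathcal{S}$ is contracted, $\phi(\mathcal{S})$ is irreducible of dimension $\leq 1$; as $\mathcal{S}\neq\mathcal{H}_\infty$ and $\phi(\mathcal{S})\not\subseteq\mathcal{H}_\infty$, both $\mathcal{S}\cap\{z_3=1\}$ and $\phi(\mathcal{S})\cap\{z_3=1\}$ are dense. Now choose a point $m$ of $\mathcal{S}$, smooth on $\mathcal{S}$, lying in $\{z_3=1\}$, outside $\mathrm{Ind}\,\phi$, such that $\phi(m)$ is a smooth point of $\phi(\mathcal{S})$ lying in $\{z_3=1\}$; each of these requirements excludes only a proper closed subset of $\mathcal{S}$, so such an $m$ exists. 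Near $m$, read in the chart $z_3=1$ on both source and target, $\phi$ is a holomorphic map and $\det\mathrm{jac}\,\phi$ is a holomorphic function there, equal to $1$ by the previous step. On the other hand $\mathrm{d}\phi_m$ sends $T_m\mathcal{S}$ into $T_{\phi(m)}\phi(\mathcal{S})$, of dimension $\leq 1<2=\dim T_m\mathcal{S}$, so $\mathrm{d}\phi_m$ has a nontrivial kernel and $\det\mathrm{jac}\,\phi(m)=0$, a contradiction. Hence $\phi(\mathcal{S})\subseteq\mathcal{H}_\infty$.

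The main obstacle is really just the genericity bookkeeping in the last paragraph: one must check that a single point $m$ can avoid all the listed bad loci at once, and that at such a point the affine-chart Jacobian is legitimately both equal to $1$ (coming from $\phi^*\omega=\omega$) and to $0$ (coming from the drop of rank along the contracted surface). Everything else is the elementary computation of $\det\mathrm{jac}\,\phi$ recalled in the first step.
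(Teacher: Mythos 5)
Your argument is correct and is essentially the paper's own proof: both deduce from $\phi^*\omega=\omega$ that $\phi$ preserves the volume form $\mathrm{d}z_0\wedge\mathrm{d}z_1\wedge\mathrm{d}z_2$ (so the affine Jacobian is a nonvanishing constant), use $\mathrm{codim}\,\mathrm{Ind}\,\phi\geq 2$ to get holomorphy at a generic point of $\mathcal{S}$, and conclude that the rank drop forced by the contraction contradicts the nonvanishing Jacobian unless the image lies in $\mathcal{H}_\infty$. You merely make the Jacobian computation and the genericity choices more explicit than the paper does.
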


\begin{proof}
From $\phi^*\omega=\omega$ one gets 
$\phi^*\big(\omega\wedge\mathrm{d}\omega\big)=\omega\wedge\mathrm{d}\omega=\mathrm{d}z_0\wedge\mathrm{d}z_1\wedge\mathrm{d}z_2$.
 Suppose that for $p\in\mathcal{S}$ generic $\phi(p)$ does not belong to 
$\mathcal{H}_\infty$. As $\mathrm{codim}\,\mathrm{Ind}\,\phi\geq 2$, 
the map $\phi$ is holomorphic at $p$. Since~$\phi$ preserves the 
volume form, $\phi$ is a diffeomorphism; hence $\phi$ cannot blow 
down a subvariety onto a curve or a point not contained in 
$\mathcal{H}_\infty$.
\end{proof}

\begin{eg}\label{egs:monomial}
If 
$\phi=(\phi_1,\phi_2)=\big(z_1^p z_2^q,z_1^r z_2^s\big)$, 
with $\left[\begin{array}{cc}
p & q \\
r & s
\end{array}\right]\in\mathrm{SL}(2;\mathbb{Z})$, then
\[
\mathcal{K}(\phi)=\left(z_1^{r-p}z_2^{s-q}\,
\frac{-r z_2+s z_0z_1}{p z_2-q z_0z_1},
z_1^p z_2^q,z_1^r z_2^s\right).
\]
Note that for any $\left[\begin{array}{cc}
p & q \\
r & s
\end{array}\right]\in\mathrm{SL}(2;\mathbb{Z})$ the map 
$\mathcal{K}(\phi)$ belongs to 
$\mathrm{Bir}(\mathbb{C}^3)_{\mathrm{c}(\omega)}\smallsetminus\mathrm{Bir}(\mathbb{C}^3)_\omega$.
\bigskip

For instance if $\left[\begin{array}{cc}
p & q \\
r & s
\end{array}\right]=\left[\begin{array}{cc}
-1 & 0 \\
0 & -1
\end{array}\right]$, \emph{i.e.} if 
$\sigma=\left(\frac{1}{z_0},\frac{1}{z_1}\right)$ is the Cremona 
involution, then
\[
\mathcal{K}(\sigma)=\mathcal{K}(\sigma^{-1})=\left(\frac{z_0z_1^2}{z_2^2},
\frac{1}{z_1},\frac{1}{z_2}\right)
\]
and 
$\mathrm{Ind}\,\mathcal{K}(\sigma)=\{z_0=z_2=0\}\cup\{z_0=z_3=0\}\cup\{z_1=z_2=0\}\cup\{z_1=z_3=0\}$; 
furthermore $z_2=0$ and $\mathcal{H}_\infty$ are blown down onto 
$\mathbf{e}_1$ and $z_1=0$ onto $\mathbf{e}_2$.
\end{eg}

\section{Some common properties}

\medskip

\subsection{Invariant curves and surfaces}

The following statement is a local statement of contact analytic 
geometry.

\begin{pro}\label{pro:per2}
Let $\phi$ be an element of $\mathrm{Aut}(\mathbb{C}^3)_\omega$ or 
$\mathrm{Bir}(\mathbb{C}^3)_\omega$. Suppose that $m$ is a 
periodic point of~$\phi$ and that there exists a germ of irreducible 
curve $\mathcal{C}$ invariant by $\phi$, passing through $m$. Then 
\smallskip
\begin{itemize}
\item either $\mathcal{C}$ is a curve of periodic points $($\emph{i.e.} 
$\phi^\ell_{\vert\mathcal{C}}=\mathrm{id}$ for some integer $\ell)$,
\smallskip
\item or $\mathcal{C}$ is a legendrian curve.
\end{itemize}
\end{pro}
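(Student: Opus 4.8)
The plan is to work locally and analytically near the periodic point $m$. After replacing $\phi$ by a suitable iterate $\phi^\ell$ we may assume $\phi(m)=m$ and that $\phi$ still lies in $\mathrm{Bir}(\mathbb{C}^3)_\omega$; note that $\phi^\ell$ still preserves $\omega$ and $\mathcal{C}$ is still invariant. Using the Darboux normalization (which is available since $\omega$ is a contact form near $m$) and the fact that a periodic biholomorphic germ is linearizable, I would bring $\phi$ to a linear normal form $D$ fixing the origin. The key point is to understand which linear maps preserve $\omega = z_0\,\mathrm{d}z_1 + \mathrm{d}z_2$: writing out the invariance condition, one finds that $D$ acts on the Reeb field $\frac{\partial}{\partial z_2}$ trivially (as in Proposition~\ref{Lem:chpinv}, $\phi_*\frac{\partial}{\partial z_2} = \frac{\partial}{\partial z_2}$ holds for $\mathrm{Bir}(\mathbb{C}^3)_\omega$ by Proposition~\ref{Lem:chpinv2}), so $D$ fixes the $z_2$-coordinate up to a shift — and since $D$ is linear and periodic, it genuinely preserves the splitting. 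Thus $D$ restricts to a linear symplectic-type map on the $(z_0,z_1)$-plane (preserving $\eta = \mathrm{d}z_0\wedge\mathrm{d}z_1$), hence has eigenvalues $\lambda,\lambda^{-1}$ on that plane and eigenvalue $1$ along $\frac{\partial}{\partial z_2}$; periodicity forces $\lambda$ to be a root of unity.

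Next I would use the invariance of the germ $\mathcal{C}$. Since $\mathcal{C}$ is an irreducible curve germ through the fixed point invariant under the finite-order linear map $D$, its tangent cone at $m$ must be a $D$-invariant line, hence an eigendirection of $D$. There are essentially two cases. If the eigendirection is the Reeb direction $\frac{\partial}{\partial z_2}$ (eigenvalue $1$), then $\mathcal{C}$ is tangent to $\frac{\partial}{\partial z_2}$; I would then argue that $\phi$ restricted to $\mathcal{C}$ is the identity — the restriction of $\phi$ to the integral curve of the Reeb field through $m$ is a translation $z_2\mapsto z_2+c$ composed with... actually since $\phi$ is periodic and fixes $m$, $\phi|_{\mathcal{C}}$ is a periodic germ of a biholomorphism of a disc fixing $0$ with derivative $1$, hence is the identity. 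This is the "curve of periodic points" alternative. If instead the eigendirection lies in the $(z_0,z_1)$-plane with eigenvalue $\lambda \ne 1$, then $\mathcal{C}$ is tangent to a line contained in $\{z_2 = \mathrm{const}\}$, on which $\omega$ restricts to $z_0\,\mathrm{d}z_1$ — the standard contact structure's horizontal directions. I would then show $s_{\mathcal{C}}^*\omega = 0$: parametrize $\mathcal{C}$ by $s(t)$, so that $\phi\circ s(t) = s(\mu t)$ for the appropriate eigenvalue-type scaling (after linearizing the action on $\mathcal C$), compute that $s^*\omega$ is an invariant $1$-form on the disc, and use the scaling together with the vanishing of the first-order term (the tangent line being in the kernel of $\omega$ at $m$) to conclude $s^*\omega \equiv 0$, i.e. $\mathcal{C}$ is legendrian.

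The main obstacle I anticipate is making the linearization step fully rigorous in the birational setting: a priori $\phi \in \mathrm{Bir}(\mathbb{C}^3)_\omega$ is only birational, so one must first check that $\phi$ is holomorphic in a neighborhood of the periodic point $m$ (or pass to the locus where it is), and that the Darboux chart can be chosen compatibly. This is where I would be most careful — $m$ must be chosen outside the indeterminacy locus of all relevant iterates, and one uses that $\mathrm{codim}\,\mathrm{Ind}\,\phi \geq 2$ together with properness of the curve germ to ensure a genuine holomorphic germ near $m$. A secondary subtlety is the case of resonant eigenvalues where the linearization of $\phi$ near $m$ might not be diagonalizable on the $(z_0,z_1)$-plane; but since $\phi$ is periodic, $D$ is automatically diagonalizable (finite order linear maps over $\mathbb{C}$ are semisimple), so this worry dissolves. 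Once the normal form is in hand, the dichotomy is a short computation with $\omega$ restricted to the two types of eigen-lines, as sketched above.
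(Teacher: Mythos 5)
Your argument hinges on reading the hypothesis as ``$\phi$ is periodic'', but the statement only assumes that the \emph{point} $m$ is periodic. After passing to the iterate $\phi^{\ell}$ fixing $m$, the germ of $\phi^{\ell}$ at $m$ is in general of infinite order: the paper's own examples ($\varsigma$-lifts of H\'enon automorphisms, Corollary \ref{cor:henonperiodic} and Proposition \ref{pro:per}) have hyperbolic periodic points, where the fixed germ is hyperbolic, not linearizable by a finite-order argument, and its eigenvalues are not roots of unity. So every step that rests on ``periodicity of $\phi$'' collapses: the germ need not be linearizable at all (resonances, small divisors), the multiplier on the curve need not be a root of unity, and in your first case the restriction $\phi\vert_{\mathcal{C}}$ is a germ with derivative $1$ which could a priori be parabolic rather than the identity --- you cannot invoke finite order to kill it. Moreover, even in the genuinely finite-order situation your dichotomy by tangent direction is wrong: take $\phi=(-z_0,-z_1,z_2)\in\mathrm{Aut}(\mathbb{C}^3)_\omega$ and $\mathcal{C}$ parametrized by $\gamma(s)=(s,0,s^2)$; the curve is invariant, tangent to a kernel direction of $\omega$ with multiplier $-1$, yet $\gamma^*\omega=2s\,\mathrm{d}s\neq 0$, so it is not legendrian --- it falls in the other alternative (curve of periodic points). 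Your scaling computation only forces $s^*\omega\equiv 0$ when the multiplier is not a root of unity, which is exactly the case your framework excludes.

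The paper's proof avoids normal forms entirely and works one-dimensionally: parametrize $\mathcal{C}$ by $\gamma$, write $\gamma^*\omega=\varepsilon(s)\,\mathrm{d}s$, and let $\varphi$ be the germ of $\phi$ restricted to $\mathcal{C}$ (via $\phi\circ\gamma=\gamma\circ\varphi$). Invariance of $\omega$ gives that the primitive $\widetilde{\varepsilon}(s)=\int_0^s\varepsilon$ satisfies $\widetilde{\varepsilon}\circ\varphi=\widetilde{\varepsilon}$. Then either $\widetilde{\varepsilon}\equiv 0$, i.e.\ $\mathcal{C}$ is legendrian, or $\widetilde{\varepsilon}$ has a zero of some finite order $\ell$ at $0$, and a one-variable germ preserving such a function is conjugate to multiplication by an $\ell$-th root of unity, whence $\phi^{\ell}\vert_{\mathcal{C}}=\mathrm{id}$. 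This is where the finite order comes \emph{out} of the argument (as a conclusion about $\phi\vert_{\mathcal{C}}$ only), rather than being assumed for $\phi$. Your worry about holomorphy of the birational $\phi$ near $m$ is not an issue, since the paper's definition of a periodic point already requires $\phi$ to be holomorphic on a neighborhood of the orbit.
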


Let us note that according to Proposition \ref{pro:per} we know that 
such a situation often occurs.

\begin{proof}
Assume that $\phi$ belongs to $\mathrm{Aut}(\mathbb{C}^3)_\omega$. Up 
to considering a well-chosen iterate of $\phi$ let us assume that~$m$ 
is a fixed point of $\phi$. Let $s\mapsto\gamma(s)$ be a local 
parametrization of $\mathcal{C}$ at $m$. Up to reparametrization one 
can suppose that $\gamma(0)=m$. Let $\varphi$ be the "restriction" 
to $\mathcal{C}$ of $\phi$, that is the local map 
$\varphi\colon \mathbb{C}_{,0}\to\mathbb{C}$ defined by $\varphi(0)=0$ and
\[
\forall\,s\in\mathbb{C}_{,0}\quad \phi(\gamma(s))=\gamma(\varphi(s)).
\]
On the one hand $\gamma^*\omega=\varepsilon(s)\mathrm{d}s$ and on 
the other hand 
$\gamma^*\omega=\gamma^*\phi^*\omega=(\phi\circ\gamma)^*\omega$ so
\[
\varepsilon(s)\mathrm{d}s=\varphi^*(\varepsilon(s)\mathrm{d}s)=
\varepsilon(\varphi)\varphi'\mathrm{d}s.
\]

Let us set 
$\widetilde{\varepsilon}(s)=\int_0^s\,\varepsilon(t)\mathrm{d}t$. 
One has 
$\left(\widetilde{\varepsilon}(\varphi)\right)'=\varepsilon(\varphi)\varphi'=\varepsilon(s)=\left(\widetilde{\varepsilon}(s)\right)'$
hence 
$\widetilde{\varepsilon}(\varphi)=\widetilde{\varepsilon}+\beta$ for 
some $\beta\in\mathbb{C}$. As $\varphi(0)=0$, one gets $\beta=0$ and 
$\widetilde{\varepsilon}(\varphi)=\widetilde{\varepsilon}$. Then:
\begin{itemize}
\smallskip
\item either $\widetilde{\varepsilon}=0$ therefore $\varepsilon=0$
and $\mathcal{C}$ is a legendrian curve.
\smallskip
\item or there exists some local coordinate for which 
$\widetilde{\varepsilon}=z^\ell$, $\varphi=e^{2\mathrm{i}\pi k/\ell}\,z$ 
and $\phi^\ell_{\vert\mathcal{C}}=\mathrm{id}$.
\end{itemize}
\end{proof}

If $\varphi$ is a polynomial automorphism of $\mathbb{C}^2$ that 
preserves a curve distinct from the line at infinity, then $\varphi$ 
is conjugate to a Jonqui\`eres polynomial automorphism 
(\cite{Brunella}); in particular $\varphi$ preserves a rational 
fibration. We have a similar statement in dimension $3$:

\begin{pro}\label{pro:surfinv}
If $\phi\in\mathrm{Aut}(\mathbb{C}^3)_{\omega}$ preserves a surface,
then
\[
\phi=\big(\varphi(z_0,z_1),z_2+b(z_0,z_1)\big)
\]
where $\varphi$ is $\mathrm{Aut}(\mathbb{C}^2)$-conjugate to a 
Jonqui\`eres polynomial automorphism.
\end{pro}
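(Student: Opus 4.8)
The plan is to reduce the statement to the known two‑dimensional result of Brunella. By Proposition \ref{Lem:chpinv} any $\phi\in\mathrm{Aut}(\mathbb{C}^3)_\omega$ has the form $\phi=\big(\varphi(z_0,z_1),z_2+b(z_0,z_1)\big)$ with $\varphi=(\phi_0,\phi_1)\in\mathrm{Aut}(\mathbb{C}^2)_\eta$, and $\varsigma$ is a morphism onto $\mathrm{Aut}(\mathbb{C}^2)_\eta$. So the only content to prove is that $\varphi$ is $\mathrm{Aut}(\mathbb{C}^2)$‑conjugate to a Jonqui\`eres automorphism; once this is known the displayed normal form is immediate. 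By Brunella's theorem (\cite{Brunella}) it suffices to exhibit a curve $\Gamma\subset\mathbb{C}^2$, distinct from the line at infinity, that is invariant by $\varphi$.

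First I would take the invariant surface $\mathcal{S}\subset\mathbb{C}^3$ given by the hypothesis, $\phi(\mathcal{S})=\mathcal{S}$, with equation $f(z_0,z_1,z_2)=0$. The key observation is that the Reeb field $\chi=\frac{\partial}{\partial z_2}$ is preserved by $\phi$ (Proposition \ref{Lem:chpinv}), hence its flow $(z_0,z_1,z_2+t)$ commutes with $\phi$; the image surface $\mathcal{S}_t=\mathcal{S}+t\chi$ is therefore again $\phi$‑invariant for every $t$. Two cases arise. If some $\mathcal{S}_t$ is contained in a fiber $\{(z_0,z_1)=\text{const}\}$ of the projection $\pi\colon(z_0,z_1,z_2)\mapsto(z_0,z_1)$ — equivalently $f$ can be chosen to not involve $z_2$ — then $f=g(z_0,z_1)$ and $\Gamma=\{g=0\}$ is a $\varphi$‑invariant plane curve, so we are done. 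Otherwise $\pi_{|\mathcal{S}}$ is dominant, and I would consider the image $\pi(\mathcal{S})$. Since $\varphi\circ\pi=\pi\circ\phi$ on the domain of $\phi$, the (closure of the) image $\pi(\mathcal{S})$ is $\varphi$‑invariant; if it is a curve $\Gamma$ we again invoke \cite{Brunella}. The remaining possibility is that $\pi(\mathcal{S})$ is all of $\mathbb{C}^2$, i.e. $\mathcal{S}$ is a (multi‑)section of $\pi$.

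In that last case I would argue that $\varphi$ nonetheless preserves a pencil. Write the irreducible $\phi$‑invariant surface as $\mathcal{S}=\{P(z_0,z_1,z_2)=0\}$ where $P$ has positive degree in $z_2$; invariance means $P\circ\phi=u\cdot P$ for a unit $u\in\mathbb{C}^*$. Differentiating along $\chi$ and using $\phi_*\chi=\chi$ shows that $\frac{\partial P}{\partial z_2}\circ\phi=u\cdot\frac{\partial P}{\partial z_2}$, so every surface $\{\partial^{k}P/\partial z_2^{k}=0\}$ is $\phi$‑invariant; taking $k$ largest with $\partial^{k}P/\partial z_2^{k}$ of positive degree in $z_2$ we may assume $P$ is of the form $z_2\,a(z_0,z_1)+b(z_0,z_1)$ (degree one in $z_2$). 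Then the $\phi$‑invariance equation, together with $\phi=(\varphi,z_2+b_0)$, forces $a\circ\varphi = u^{-1}a$ up to the jacobian contribution; hence $\{a=0\}$, if non‑empty, is a $\varphi$‑invariant curve, and if $a$ is a nonzero constant one reads off that $b\circ\varphi = u^{-1}(b + a\,b_0)$, which together with the exactness relation $\phi_0\,\mathrm{d}\phi_1 - z_0\,\mathrm{d}z_1 = \mathrm{d}(-b_0)$ pins down a level curve of $b_0$ invariant under $\varphi$. In every branch we produce a $\varphi$‑invariant algebraic curve other than the line at infinity, and \cite{Brunella} finishes the proof.

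**The main obstacle** I anticipate is the multi‑section case: keeping careful track of the unit $u$ and the extra translation term $b_0$ in $\phi$ so that the invariance of $\mathcal{S}$ genuinely descends to invariance of an honest curve in $\mathbb{C}^2$ rather than merely of the foliation $\mathrm{d}z_2 + (\dots)$. One has to rule out the degenerate scenario in which the only "invariant curve" produced this way is again the line at infinity; this is where the contact condition (via the closedness and exactness of $\phi_0\,\mathrm{d}\phi_1 - z_0\,\mathrm{d}z_1$ from Proposition \ref{Lem:descrrho}) must be used to guarantee that the relevant function $a$ or $b_0$ is non‑constant, so that its zero locus or a level set is a genuine affine curve.
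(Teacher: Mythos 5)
Up to the last case your argument is sound and essentially parallels the paper's: the normal form comes from Proposition \ref{Lem:chpinv}, the invariance relation $P\circ\phi=uP$ with $u\in\mathbb{C}^*$ is correct, differentiating in $z_2$ (instead of the paper's trick of passing to the monic equation and reading off the coefficient of $z_2^{\ell-1}$) legitimately reduces to $Q=a(z_0,z_1)z_2+c(z_0,z_1)$ with $a\circ\varphi=ua$, and when $a$ is non-constant the curve $\{a=0\}$ is a $\varphi$-invariant affine curve, so \cite{Brunella} applies. The gap is in the remaining case, $a$ constant (so $u=1$), which is precisely the crux of the proposition. There all you obtain is the cohomological equation $c\circ\varphi=c-b_0$, and your assertion that this, ``together with the exactness relation, pins down a level curve of $b_0$ invariant under $\varphi$'' is unsupported and in general false. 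Concretely, take $\varphi=(2z_0+z_1^2+z_1,\,z_1/2)\in\mathrm{Aut}(\mathbb{C}^2)_\eta$, whose $\varsigma$-lift is $\phi=\big(\varphi,\,z_2-\tfrac{z_1^3}{6}-\tfrac{z_1^2}{4}\big)$, i.e. $b_0=-\tfrac{z_1^3}{6}-\tfrac{z_1^2}{4}$; the polynomial $c=\tfrac{z_0z_1}{2}+\tfrac{2}{21}z_1^3$ satisfies $c\circ\varphi=c-b_0$, so $\phi$ preserves the surface $z_2+c=0$ and we are exactly in your last branch. Yet no level curve of $b_0$ is $\varphi$-invariant (each is a finite union of lines $z_1=\mathrm{const}$, and $\varphi$ sends $z_1=r$ onto $z_1=r/2$), and no level curve of $c$ is invariant either. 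So your mechanism produces no invariant curve, even though $\varphi$ is of course Jonqui\`eres here.

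What actually survives in this case is weaker and requires the paper's further work: conjugating by $(z_0,z_1,z_2+c)$ turns $\phi$ into $\psi=(\varphi,z_2)$, which preserves the transported contact form; its trace on the invariant planes $z_2=\mathrm{const}$ only gives a $\varphi$-invariant \emph{foliation} defined by $z_0\mathrm{d}z_1-\mathrm{d}c$ (whose leaves may be permuted, as in the example above), and the paper supplements this with the $\varphi$-invariance of the remaining coefficients $\widetilde{a}_i$, $i\leq\ell-2$, of the monic equation — data that your $(\ell-1)$-fold differentiation has discarded. One then still has to argue that a polynomial automorphism preserving such a fibration/foliation (equivalently, that no H\'enon-type map can occur here) is $\mathrm{Aut}(\mathbb{C}^2)$-conjugate to a Jonqui\`eres automorphism; your final sentence does not provide this, so the decisive case of the proposition is left unproved.
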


\begin{proof}
Let us write $\phi$ as 
$\big(\phi_0(z_0,z_1),\phi_1(z_0,z_1),z_2+b(z_0,z_1)\big)$ and set 
$\varphi=(\phi_0,\phi_1)$.

First note that if $b\equiv 0$ then 
$\phi_0\mathrm{d}\phi_1-z_0\mathrm{d}z_1=0$; as a result 
$\phi_1=\phi_1(z_1)$ and $\varphi$ is a Jonqui\`eres polynomial 
automorphism.

Let us now assume that the surface $\mathcal{S}$ preserved 
by $\phi$ is described by
\[
a_\ell(z_0,z_1)z_2^\ell+a_{\ell-1}(z_0,z_1)z_2^{\ell-1}+a_{\ell-2}(z_0,
z_1)z_2^{\ell-2}+\ldots=0
\]
where $a_i\in\mathbb{C}[z_0,z_1]$, or equivalently by
\[
z_2^\ell+\widetilde{a}_{\ell-1}(z_0,z_1)z_2^{\ell-1}+
\widetilde{a}_{\ell-2}(z_0,z_1)z_2^{\ell-2}+\ldots=0
\]
where $\widetilde{a}_{i}=a_i/a_\ell$. Writing that $\mathcal{S}$
 is invariant by $\phi$ one gets that
\begin{eqnarray*}
& & \big(z_2+b(z_0,z_1)\big)^\ell+\widetilde{a}_{\ell-1}\big(\varphi(
z_0,z_1)\big)\big(z_2+b(z_0,z_1)\big)^{\ell-1}+\widetilde{a}_{\ell-2}
\big(\varphi(z_0,z_1)\big)\big(z_2+b(z_0,z_1)\big)^{\ell-2}+\ldots\\
& & \hspace{1cm}=z_2^\ell+\widetilde{a}_{\ell-1}(z_0,z_1)z_2^{\ell-1}+
\widetilde{a}_{\ell-2}(z_0,z_1)z_2^{\ell-2}+\ldots
\end{eqnarray*}
Looking at terms in $z_2^{\ell-1}$ one gets that 
$\ell b(z_0,z_1)=\widetilde{a}_{\ell-1}(z_0,z_1)-\widetilde{a}_{\ell-1}\big(\varphi(z_0,z_1)\big)$.

\begin{itemize}
\smallskip
\item If $\widetilde{a}_{\ell-1}$ is constant, then $b\equiv 0$ 
and as we just see $\varphi$ is a Jonqui\`eres polynomial 
automorphism.
\smallskip
\item Otherwise $\phi$ is conjugate (in 
$\mathrm{Bir}(\mathbb{P}^3)$) via $\left(z_0,z_1,z_2+\frac{\widetilde{a}_{\ell-1}}{\ell}\right)$ 
to $\psi=(\varphi,z_2)$. The map $\psi$ preserves 
$\widetilde{\omega}=z_0\mathrm{d}z_1+\mathrm{d}\left(z_2+\frac{\widetilde{a}_{\ell-1}}{\ell}\right)$, 
the surface $\widetilde{\mathcal{S}}$ given by
\[
z_2^\ell+\widetilde{a}_{\ell-2}(z_0,z_1)z_2^{\ell-2}+\widetilde{a}_{\ell-3}
(z_0,z_1)z_2^{\ell-3}+\ldots=0
\]
\end{itemize}
\smallskip
and thus $\widetilde{a}_i(\varphi)=\widetilde{a}_i$. If one of the 
$\widetilde{a}_i$ is non-constant, then $\varphi$ is a 
Jonqui\`eres polynomial automorphism. 
Otherwise $\widetilde{\mathcal{S}}=\displaystyle\cup_j(z_2=c_j)$; 
up to take an iterate $\psi^k$ of $\psi$
 one can suppose that any $z_2=c_j$ is invariant. Consider $z_2=c_0$;
 up to a well-chosen translation (that belongs to 
$\mathrm{Bir}(\mathbb{C}^3)_\omega$) the hypersurface $z_2=0$ 
is invariant, that is $\psi^k$ is a Jonqui\`eres 
map and so does $\psi$.
\end{proof}

\begin{eg}
For any $n\geq 1$ consider 
$\phi=\left(z_0+z_1^n,z_1,z_2-\frac{z_1^{n+1}}{n+1}\right)$ in 
$\mathrm{Aut}(\mathbb{C}^3)_{\omega}$. The map $\varphi=(z_0+z_1^n,z_1)$ 
is a Jonqui\`eres polynomial automorphism. The surface 
$\mathcal{S}$ given by 
$z_2+\frac{z_0z_1}{n+1}=0$, is invariant by $\phi$. The foliation 
induced by $\omega$ on $\mathcal{S}$ is described by 
the linear differential equation $nz_0\mathrm{d}z_1-z_1\mathrm{d}z_0$.
In fact the functions $z_2+\frac{z_0z_1}{n+1}$ and $z_1$ are invariant 
by $\phi$ and the commutative Lie algebra generated by the vector
fields 
$\frac{\partial}{\partial z_0}+\frac{z_1}{n+1}\frac{\partial}{\partial z_2}$ 
and $\frac{\partial}{\partial z_2}$ are invariant by $\phi$.

In general an element of $\mathrm{Aut}(\mathbb{C}^3)_\omega$ has
no invariant surface. For instance there is no polynomial solution to
\[
-a\big(\varphi(z_0,z_1)\big)+a(z_0,z_1)=-\frac{z_1^{n+1}}{n+1}+\beta
\]
with $\varphi=(z_0+z_1^n,z_1)$ as soon as $\beta\not=0$.
\end{eg}

\begin{rem}\label{z2omega}
If $\phi\in\mathrm{Bir}(\mathbb{C}^3)_\omega$ preserves $z_2=0$,
then $\phi$ belongs to the Klein family; more precisely 
$\phi=\left(\frac{z_0}{\nu'(z_1)},\nu(z_1),z_2\right)$ with 
$\nu\in\mathrm{PGL}(2;\mathbb{C}(z_1))$. Indeed since $\phi$ belongs
to $\mathrm{Bir}(\mathbb{C}^3)_\omega$, 
\[
\phi=\big(\phi_0(z_0,z_1),\phi_1(z_0,z_1),z_2+b(z_0,z_1)\big).
\]
But $\phi$ preserves $z_2=0$ so $b\equiv 0$ and $\phi^*\omega=\omega$
implies that $\phi_1=\nu(z_1)$ with 
$\nu\in\mathrm{PGL}(2;\mathbb{C}(z_1))$ and $\phi_0=\frac{z_0}{\nu'(z_1)}$.

Of course there are more general contact maps that preserve $z_2=0$; 
let us give some examples:
\[
\mathcal{K}\left(z_1,\frac{z_2}{a(z_1)z_2+1}\right),
\qquad
\mathcal{K}\big(z_1+P(z_2),z_2\big)
\]
where $a\in\mathbb{C}(z_1)^*$ and $P\in\mathbb{C}[z_1]$.
\end{rem}

Let $\phi$ be an element of 
$\mathrm{Bir}(\mathbb{C}^3)_{\omega}$. Suppose that 
$\phi$ preserves a surface $\mathcal{S}$ distinct from 
$\mathcal{H}_\infty$. The contact form is non-zero on $\mathcal{S}$ 
so induces a foliation $\mathcal{F}$ on $\mathcal{S}$, necessarily 
invariant by $\phi$; let us describe 
$(\mathcal{S},\phi_{\vert\mathcal{S}},\mathcal{F})$:

\begin{pro}
Let $\phi$ be an element of 
$\mathrm{Bir}(\mathbb{C}^3)_\omega$ that preserves a surface 
distinct from $\mathcal{H}_\infty$. Then $\phi$ 
is~$\mathrm{Bir}(\mathbb{P}^3)$-conjugate to 
$(\varphi(z_0,z_1),z_2)$ with $\varphi$ in 
$\mathrm{Bir}(\mathbb{P}^2)$. The map $\varphi$ preserves 
a codimension $1$ foliation given by a closed $1$-form. As a 
consequence $\phi$ preserves a "vertical" foliation and a rational
function~$z_2+~a(z_0,z_1)$.
\end{pro}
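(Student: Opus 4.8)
The plan is to start from the normal form for elements of $\mathrm{Bir}(\mathbb{C}^3)_\omega$ given by Proposition \ref{Lem:chpinv2}, namely $\phi=\big(\phi_0(z_0,z_1),\phi_1(z_0,z_1),z_2+b(z_0,z_1)\big)$, and treat separately the two possibilities produced in the proof of Proposition \ref{pro:surfinv}. Writing the invariant surface $\mathcal{S}$ as $z_2^\ell+\widetilde{a}_{\ell-1}(z_0,z_1)z_2^{\ell-1}+\ldots=0$ and comparing the coefficient of $z_2^{\ell-1}$, one gets $\ell\, b=\widetilde{a}_{\ell-1}-\widetilde{a}_{\ell-1}\circ\varphi$, where $\varphi=(\phi_0,\phi_1)\in\mathrm{Bir}(\mathbb{C}^2)_\eta$. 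First I would conjugate $\phi$ by the vertical translation $\big(z_0,z_1,z_2+\tfrac{1}{\ell}\widetilde{a}_{\ell-1}(z_0,z_1)\big)$, which lies in $\mathrm{Bir}(\mathbb{P}^3)$, to reduce to the case $\psi=(\varphi,z_2)$; this $\psi$ preserves the form $\widetilde\omega=z_0\mathrm{d}z_1+\mathrm{d}\big(z_2+\tfrac1\ell\widetilde a_{\ell-1}\big)$ and an invariant surface not involving the top two coefficients. So without loss of generality $\phi$ is $\mathrm{Bir}(\mathbb{P}^3)$-conjugate to $(\varphi(z_0,z_1),z_2)$, which gives the first assertion.

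Next, from $\phi^*\omega=\omega$ with $\phi=(\varphi,z_2)$ — where now $b\equiv 0$ — one reads off $\phi_0\,\mathrm{d}\phi_1-z_0\,\mathrm{d}z_1=-\mathrm{d}(0)=0$ is too strong; instead I would use that the surface $\mathcal{S}$ (in its reduced form $z_2^\ell+\widetilde{a}_{\ell-2}z_2^{\ell-2}+\ldots=0$) being invariant under $(\varphi,z_2)$ forces each $\widetilde a_i\circ\varphi=\widetilde a_i$, so $\varphi$ has a nonconstant rational first integral $g=\widetilde a_i$ whenever some $\widetilde a_i$ is nonconstant. Then $\varphi$ preserves the level sets of $g$, hence preserves the codimension-one (on $\mathbb{C}^2$, hence ``vertical'' on $\mathbb{C}^3$) foliation $\mathrm{d}g=0$, which is visibly given by a closed $1$-form. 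If instead all the $\widetilde a_i$ are constant, then $\mathcal{S}$ is a finite union of horizontal planes $z_2=c_j$, and after passing to an iterate and translating one is reduced (exactly as in Proposition \ref{pro:surfinv}) to the case where $z_2=0$ is invariant; by Remark \ref{z2omega} such a $\phi$ is $\big(\tfrac{z_0}{\nu'(z_1)},\nu(z_1),z_2\big)$, which preserves the foliation $\mathrm{d}z_1=0$, again closed.

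The second point needs a uniform conclusion, so after establishing that $\varphi\in\mathrm{Bir}(\mathbb{P}^2)$ preserves a closed rational $1$-form defining a codimension-one foliation, I would lift this back to $\phi$: the pull-back of this foliation to $\mathbb{C}^3$ under the projection $(z_0,z_1,z_2)\mapsto(z_0,z_1)$ is the ``vertical'' foliation invariant by $\phi=(\varphi,z_2)$, and the function $z_2$ (or, undoing the first conjugation, $z_2+\tfrac1\ell\widetilde a_{\ell-1}(z_0,z_1)$, i.e.\ $z_2+a(z_0,z_1)$) is a rational invariant of $\phi$. The main obstacle I anticipate is the bookkeeping in the ``all $\widetilde a_i$ constant'' branch: one must be careful that passing to an iterate $\psi^k$ and translating to make $z_2=0$ invariant does not destroy membership in $\mathrm{Bir}(\mathbb{C}^3)_\omega$ (it does not, since vertical translations by functions of $(z_0,z_1)$ adjust $b$ compatibly, cf.\ Theorem \ref{Thm:critere}), and that the conclusion for $\psi^k$ — being in the Klein family and hence preserving $\mathrm{d}z_1=0$ — genuinely descends to $\psi$ and thus to $\phi$; this last descent uses that $\psi$ and $\psi^k$ have the same invariant foliations.
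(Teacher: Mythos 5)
Your treatment of the first and third assertions coincides with the paper's: the Tschirnhaus conjugation by $\psi=\big(z_0,z_1,z_2+\tfrac1\ell\widetilde a_{\ell-1}\big)$, the relation $\ell b=\widetilde a_{\ell-1}-\widetilde a_{\ell-1}\circ\varphi$, and the invariant rational function $z_2+\tfrac1\ell\widetilde a_{\ell-1}$ are exactly the paper's steps. For the second assertion your route differs: when some reduced coefficient $\widetilde a_i$ ($i\le\ell-2$) is non-constant, the identity $\widetilde a_i\circ\varphi=\widetilde a_i$ gives an invariant foliation defined by the (genuinely closed) form $\mathrm{d}\widetilde a_i$; this branch is correct and in fact delivers the closedness more directly than the paper, whose invariant form is $z_0\mathrm{d}z_1-\mathrm{d}a_{\ell-1}$.

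The gap is in the branch where all the $\widetilde a_i$, $i\le\ell-2$, are constant. There you invoke Remark \ref{z2omega}, which applies to elements of $\mathrm{Bir}(\mathbb{C}^3)_\omega$ preserving $z_2=0$; but after conjugating by $\psi$ the reduced map $(\varphi,z_2)$ is in general no longer in $\mathrm{Bir}(\mathbb{C}^3)_\omega$ — as you yourself note, it preserves $\widetilde\omega=z_0\mathrm{d}z_1+\mathrm{d}\big(z_2\pm\tfrac1\ell\widetilde a_{\ell-1}\big)$, not $\omega$. Your parenthetical justification is false: conjugating $(\varphi,z_2+b)$ by $(z_0,z_1,z_2+c(z_0,z_1))$ produces $(\varphi,z_2+b-c+c\circ\varphi)$, which lies in $\mathrm{Bir}(\mathbb{C}^3)_\omega$ if and only if $c\circ\varphi-c$ is constant, i.e. (for $c=\tfrac1\ell\widetilde a_{\ell-1}$) if and only if $b$ is constant; such a vertical shear is not even a contact map when $c$ is non-constant, since it changes $\omega$ by $\mathrm{d}c$. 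The branch is not marginal: it contains every case $\ell=1$, i.e. every invariant graph $z_2=-a(z_0,z_1)$. For instance $\phi=(z_0+1,z_1,z_2-z_1)\in\mathrm{Bir}(\mathbb{C}^3)_\omega$ preserves $z_2+z_0z_1=0$, and its conjugate $(z_0+1,z_1,z_2)$ preserves $z_2=0$ but is neither in $\mathrm{Bir}(\mathbb{C}^3)_\omega$ nor of the normal form $\big(\tfrac{z_0}{\nu'(z_1)},\nu(z_1),z_2\big)$ — so the reduction via Remark \ref{z2omega} fails (here the final conclusion happens to hold, but not by your argument). The paper avoids any case split: since $h=z_2+\tfrac1\ell a_{\ell-1}$ is $\phi$-invariant, $\phi$ preserves $\omega-\mathrm{d}h=z_0\mathrm{d}z_1-\tfrac1\ell\mathrm{d}a_{\ell-1}$, a form pulled back from the $(z_0,z_1)$-plane, so $\varphi$ preserves the corresponding foliation uniformly; replacing your constant branch by this argument also removes the need for the unjustified claim that $\psi$ and $\psi^k$ have the same invariant foliations.
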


\begin{proof}
Let us denote by $\mathcal{S}$ the surface invariant by 
$\phi=\big(\varphi(z_0,z_1),z_2+b(z_0,z_1)\big)$ with 
$\varphi\in\mathrm{Bir}(\mathbb{P}^2)$. One can assume 
that $\mathcal{S}$ is given by
\[
z_2^\ell+a_{\ell-1}(z_0,z_1)z_2^{\ell-1}+\ldots=0
\]
The fact that $\mathcal{S}$ is invariant by $\phi$ implies that 
$a_{\ell-1}(z_0,z_1)-a_{\ell-1}\big(\varphi(z_0,z_1)\big)=\ell\, b(z_0,z_1)$. 
Let us consider the map 
$\psi=\left(z_0,z_1,z_2+\frac{a_{\ell-1}(z_0,z_1)}{\ell}\right)$. One has
\[
\widetilde{\phi}=\psi\phi\psi^{-1}=\left(\varphi(z_0,z_1),z_2+b(z_0,z_1)
-\frac{a_{\ell-1}(z_0,z_1)}{\ell}+\frac{a_{\ell-1}\big(\varphi(z_0,z_1)
\big)}{\ell}\right)=\big(\varphi(z_0,z_1),z_2\big)
\]
As $\mathcal{S}$ and $\omega$ are invariant by $\phi$, the restriction 
$\phi_{\vert\mathcal{S}}$ preserves the foliation induced by $\omega$ on 
$\mathcal{S}$, and $\widetilde{\phi}$ preserves the "vertical" 
foliation given by $z_0\mathrm{d}z_1-\mathrm{d}a_{\ell-1}(z_0,z_1)$. 
Therefore $\varphi$ preserves a codimension $1$ foliation given by 
a closed $1$-form.
\end{proof}

\begin{eg}
If $\phi=(z_2,z_1z_2^n)$, then 
$\mathcal{K}(\phi)=\left(-\frac{z_2^n}{z_0}+nz_1,z_1z_2^n,z_2\right)$ 
belongs to 
$\mathrm{Bir}(\mathbb{C}^3)_{\mathrm{c}(\omega)}\smallsetminus\mathrm{Bir}(\mathbb{C}^3)_{\omega}$
preserves the surface $z_1=0$ and also $z_2=$ cst.
\end{eg}

\medskip

\subsection{Dynamical properties}

\smallskip

Let us first focus on periodic points.

Let $\phi$ be a birational map of $\mathbb{P}^n$; a point 
$p$ is a \textbf{\textit{periodic point}} of $\phi$ of period $\ell$
 if $\phi$ is holomorphic on a neighborhood of any point of 
$\{\phi^j(q)\,\vert\, j=0,\,\ldots,\ell-1\}$ and if $\phi^\ell(q)=q$ 
and $\phi^j(q)\not=q$ for $1\leq j\leq \ell-1$.

Recall that a polynomial automorphism of $\mathbb{C}^2$ of 
H\'enon type (\emph{see} \cite{FriedlandMilnor}) has an 
infinite number of hyperbolic periodic points. For any of these points 
$p$ of period $\ell_p$ there exists a stable manifold $\mathrm{W}^s(p)$ 
defined as the set of points that move towards the orbit of $p$ by 
positive iteration of~$\varphi^{\ell_p}$; such a $\mathrm{W}^s(p)$ is 
an immersion from $\mathbb{C}$ to $\mathbb{C}^2$. 
Remark that even if $\mathrm{W}^s(m)\not=\mathrm{W}^s(p)$ are 
different as soon as $p$ and~$m$ have distinct orbits one has 
$\overline{\mathrm{W}^s(m)}=\overline{\mathrm{W}^s(p)}$. The 
Julia set of $\varphi$ is the topological boundary of the 
set of points with bounded positive orbits. One can prove that the 
Julia set of $\varphi$ is equal to the closure of any of 
the stable manifold. Hence its topology is very complicated: this 
set contains an infinite number of immersions of $\mathbb{C}$ and
pairwise distinct (\cite{FriedlandMilnor}).

\begin{eg}
Let us consider a polynomial automorphism $\varphi$ of H\'enon 
type given by $\varphi=(\beta z_1+z_0^2,-\gamma z_0)$. A $\varsigma$-lift 
of $\varphi$ to $\mathrm{Aut}(\mathbb{C}^3)_{\mathrm{c}(\omega)}$ is
\[
\phi=\Big(\beta z_1+z_0^2,-\gamma z_0,\gamma\beta z_2+\gamma\beta 
z_0z_1+\frac{\gamma}{3}z_0^3\Big)
\]
Take a periodic point $(p_0,p_1)$ of $\varphi$ of period $k$; 
then as 
$\phi^k=\big(\varphi^k(z_0,z_1),(\gamma\beta)^kz_2+f(z_0,z_1)\big)$ 
one gets, as soon as $\gamma\beta$ is not a root of unity, that 
there exists $p_2$ such that $\phi^k(p_0,p_1,p_2)=(p_0,p_1,p_2)$.
\end{eg}

More generally, one can state: 

\begin{pro}
Let $\phi$ the element of 
$\mathrm{Bir}(\mathbb{C}^3)_{\mathrm{c}(\omega)}$ of the following 
type
\[
\phi=\big(\varphi,\det\mathrm{jac}\varphi z_2+b(z_0,z_1)\big)
\]
with $\varphi$ in $\mathrm{Bir}(\mathbb{P}^2)$ and $b$ in 
$\mathbb{C}(z_0,z_1)$.

If $\det\mathrm{jac}\varphi$ is not a root of unity, then any 
periodic point of $\varphi$ can be lifted into a periodic point 
of $\phi$.
\end{pro}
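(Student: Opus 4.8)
The plan is to exploit the triangular (skew-product) structure of $\phi$ over $\varphi$ in the last coordinate. Writing $\varphi=(\varphi_0,\varphi_1)$ and $J=\det\mathrm{jac}\,\varphi$, one has $\phi(z_0,z_1,z_2)=\big(\varphi(z_0,z_1),\,J(z_0,z_1)z_2+b(z_0,z_1)\big)$, so an immediate induction gives
\[
\phi^k=\Big(\varphi^k,\ A_k\,z_2+B_k\Big),\qquad A_k=\prod_{j=0}^{k-1}J\circ\varphi^{\,j},
\]
where $B_k\in\mathbb{C}(z_0,z_1)$ is an explicit rational function built from $J$ and $b$. By the chain rule $A_k=\det\mathrm{jac}(\varphi^k)$, so that for any point $p=(p_0,p_1)$ the number $A_k(p)$ is the product of the values of $\det\mathrm{jac}\,\varphi$ along the $\varphi$-orbit of $p$.

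Next I would fix a periodic point $p=(p_0,p_1)$ of $\varphi$ of period $k$, so that $\varphi$ is holomorphic on a neighbourhood of each point of the orbit $\{p,\varphi(p),\dots,\varphi^{k-1}(p)\}$ and $\varphi^k(p)=p$. Since $J=\det\mathrm{jac}\,\varphi$ is then holomorphic along the orbit as well, the only possible obstruction to $\phi$ being holomorphic along the lifted orbit is the polar locus of $b$; one records that $p$ has to be chosen off this proper subvariety, which is the case for the periodic points arising in the examples of interest. Restricting $\phi^k$ to the vertical line $\{z_0=p_0,\,z_1=p_1\}$ then yields an affine self-map $z_2\mapsto \lambda z_2+\mu$ of $\mathbb{C}$, with $\lambda=A_k(p)=\det\mathrm{jac}(\varphi^k)(p)$ and $\mu=B_k(p)\in\mathbb{C}$.

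The key point is that the hypothesis forces $\lambda\neq 1$: when $\det\mathrm{jac}\,\varphi$ equals a constant $c$ (the situation of the example preceding the statement) one has $\lambda=c^k$, which is $\neq 1$ precisely because $c$ is not a root of unity, and in general $\lambda$ is a product of values of $\det\mathrm{jac}\,\varphi$ and the same conclusion is granted by the assumption. Hence $z_2\mapsto\lambda z_2+\mu$ has a unique fixed point $p_2=\mu/(1-\lambda)$, and the triple $(p_0,p_1,p_2)$ satisfies $\phi^k(p_0,p_1,p_2)=(p_0,p_1,p_2)$. Finally I would check that this lifted point has period exactly $k$: if $\phi^m$ fixes it then $\varphi^m(p)=p$, so $k\mid m$, while $\phi^k$ already fixes it, so the period divides $k$; thus it equals $k$. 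As $\phi$ is holomorphic at each $(\varphi^j(p),\ast)$ for every value of $\ast$, the triple is a genuine periodic point of $\phi$.

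As for the main obstacle: there is no serious difficulty, the argument being essentially the one-dimensional fact that a non-unipotent affine self-map of the line has a fixed point. The only points that need care are the identification $A_k=\det\mathrm{jac}(\varphi^k)$ via the chain rule, the extraction of $\lambda\neq 1$ from the root-of-unity hypothesis, and — for the statement to be literally correct with respect to the definition of periodic point — the remark that one must stay off the polar locus of $b$ along the orbit.
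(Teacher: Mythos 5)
Your argument is correct and is essentially the paper's own: the paper states this proposition immediately after the H\'enon example, whose computation $\phi^k=\big(\varphi^k,(\gamma\beta)^kz_2+f(z_0,z_1)\big)$ is exactly your skew-product formula $\phi^k=(\varphi^k,A_kz_2+B_k)$ followed by solving the affine fixed-point equation in $z_2$ with multiplier $\neq 1$. Your additional remarks (the identification $A_k=\det\mathrm{jac}(\varphi^k)$, the exact-period check, and the need to avoid the polar locus of $b$ along the orbit, the hypothesis implicitly treating $\det\mathrm{jac}\,\varphi$ as a constant) are sensible refinements of the same proof rather than a different route.
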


\begin{cor}\label{cor:henonperiodic}
Let $\varphi$ be a polynomial automorphism of $\mathbb{C}^2$ of 
H\'enon type. A $\varsigma$-lift of $\varphi$ has an infinite 
number of periodic points that lift the hyperbolic periodic points 
of $\varphi$.
\end{cor}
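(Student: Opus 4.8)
The plan is to derive Corollary~\ref{cor:henonperiodic} as an immediate consequence of the preceding proposition together with the standard dynamical properties of H\'enon automorphisms recalled just above it. First I would recall that a polynomial automorphism $\varphi$ of H\'enon type has $\det\mathrm{jac}\,\varphi=\lambda$ a nonzero constant, and that by a generic choice (indeed, by the normal form $\varphi=(\beta z_1+P(z_0),-\gamma z_0)$ with $\lambda=\gamma$ up to sign, or more generally for a composition of H\'enon maps) one may assume $\lambda$ is not a root of unity; alternatively, if $\lambda$ happens to be a root of unity one passes to a suitable iterate $\varphi^N$, which is again of H\'enon type and now has $\lambda^N$ not a root of unity, and whose periodic points are among those of $\varphi$. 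In either case the hypothesis of the proposition is met.

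Next I would invoke the fact (stated in the paragraph before Example~\ref{eg:ordrefini}, following \cite{FriedlandMilnor}) that $\varphi$ has infinitely many hyperbolic periodic points: for each period $\ell$ occurring, the set of $\ell$-periodic points of $\varphi$ is finite, but there are infinitely many periods, so the total number of periodic points is infinite. Fix a $\varsigma$-lift $\phi=(\varphi,\lambda z_2+b(z_0,z_1))$ of $\varphi$ to $\mathrm{Aut}(\mathbb{C}^3)_{\mathrm{c}(\omega)}$; such a lift exists since any element of $\mathrm{Aut}(\mathbb{C}^2)$ lifts via $\varsigma$ (as noted after Proposition~\ref{pro:desccont}). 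Applying the previous proposition to $\phi$, every periodic point $(p_0,p_1)$ of $\varphi$ lifts to a periodic point $(p_0,p_1,p_2)$ of $\phi$; concretely, if $(p_0,p_1)$ has period $k$ then $\phi^k=(\varphi^k,\lambda^k z_2+f_k(z_0,z_1))$ for some rational $f_k$, and the affine equation $\lambda^k p_2+f_k(p_0,p_1)=p_2$ in the single unknown $p_2$ has the unique solution $p_2=f_k(p_0,p_1)/(1-\lambda^k)$, which is well-defined precisely because $\lambda^k\neq 1$. Distinct orbits of $\varphi$ give distinct orbits of $\phi$ since $\varsigma\circ\phi=\varphi\circ\varsigma$, so the lifted periodic points are genuinely infinite in number, and they project onto the hyperbolic periodic points of $\varphi$.

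Finally I would record that this argument is uniform over all $\varsigma$-lifts: two lifts of $\varphi$ differ by an element of $\ker\varsigma\simeq\mathbb{C}$, i.e.\ by a translation $(z_0,z_1,z_2+c)$, which changes $b$ by the constant $c$ and hence $f_k$ by $c(1+\lambda+\cdots+\lambda^{k-1})=c(1-\lambda^k)/(1-\lambda)$, so the solution $p_2$ merely shifts; in particular \emph{every} $\varsigma$-lift inherits infinitely many periodic points above the hyperbolic periodic points of $\varphi$, as asserted. The only real point requiring care is the root-of-unity issue for $\lambda$, handled by passing to an iterate as above; everything else is a direct application of the cited proposition and of the structure of $\ker\varsigma$, so there is no substantial obstacle.
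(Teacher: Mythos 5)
The core of your argument is exactly the paper's: the corollary is intended as an immediate application of the proposition that precedes it, and you carry this out correctly --- writing $\phi^k=\big(\varphi^k,\lambda^kz_2+f_k(z_0,z_1)\big)$ with $\lambda=\det\mathrm{jac}\,\varphi$, solving the affine equation $\lambda^kp_2+f_k(p_0,p_1)=p_2$ above each of the infinitely many hyperbolic periodic points of $\varphi$, and noting that distinct orbits downstairs give distinct orbits upstairs and that a change of $\varsigma$-lift only shifts $f_k$ by a constant multiple of $1-\lambda^k$. The genuine gap is your treatment of the case where $\lambda$ is a root of unity. The proposed repair by passing to an iterate $\varphi^N$ is simply false: if $\lambda$ is a root of unity then $\lambda^N$ is again a root of unity for every $N$, so the hypothesis of the proposition is never restored. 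Nor can you argue by genericity, since the corollary concerns an arbitrary H\'enon automorphism, and conservative H\'enon maps (for instance $\varphi=(z_1,-z_0+P(z_1))$, with Jacobian $1$) certainly exist.

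This excluded case is not a removable technicality either. When $\lambda=1$ every $\varsigma$-lift is of the form $\big(\varphi,z_2+b(z_0,z_1)+t\big)$, the condition for a period-$k$ point $(p_0,p_1)$ of $\varphi$ to lift is $\sum_{j=0}^{k-1}b\big(\varphi^j(p_0,p_1)\big)+kt=0$, and the remark later in this same subsection of the paper observes precisely that for generic $t$ such a map has no periodic points at all. So the conclusion of the corollary genuinely requires $\det\mathrm{jac}\,\varphi$ not to be a root of unity: the statement must be read as inheriting that hypothesis from the proposition it follows (the paper itself leaves this implicit), rather than being extended to all H\'enon maps by the iterate trick. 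With that hypothesis in place, the remainder of your proof is correct and coincides with the paper's intended argument.
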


\begin{question}\label{quest:question}
Let $\varphi$ be a H\'enon automorphism and let $\phi$
be a $\varsigma$-lift of $\varphi$.
The  closure of the hyperbolic periodic points of $\varphi$ 
is the Julia set of $\varphi$; in particular it is
 a Cantor set. Is the closure of the set of periodic points of 
$\phi$ a Cantor set~?
\end{question}

Let us consider a H\'enon automorphism 
$\varphi=(\varphi_1,\varphi_2)$ and let $m$ be an hyperbolic 
periodic point of $\varphi$; then the matrix
\[
\left[
\begin{array}{cc}
-\frac{\partial\varphi_2}{\partial z_1} &\frac{\partial\varphi_2}
{\partial z_2}\\
\frac{\partial\varphi_1}{\partial z_1} & -\frac{\partial 
\varphi_1}{\partial z_2}
\end{array}
\right]
\]
is a non-parabolic one and so 
$z_0\mapsto \frac{-\frac{\partial\varphi_2}{\partial z_1}+\frac{\partial\varphi_2}{\partial z_2}\,z_0}{\frac{\partial\varphi_1}{\partial z_1}-\frac{\partial \varphi_1}{\partial z_2}\,z_0}$ 
has two fixed points. We can thus state the follo\-wing:

\begin{pro}\label{pro:per}
Let $\varphi$ be an automorphism of $\mathbb{C}^2$ of H\'enon 
type; to any periodic point of period~$\ell$ of $\varphi$ 
corresponds two periodic points of period $\ell$ of 
$\mathcal{K}(\varphi)\in\mathrm{Bir}(\mathbb{C}^3)_{\mathrm{c}(\omega)}$. 
\end{pro}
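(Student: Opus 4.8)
The plan is to exploit that the Klein lift $\Phi:=\mathcal{K}(\varphi)$ fibres over $\varphi$. Directly from the formula defining $\mathcal{K}$ one has, in the chart $z_3=1$,
\[
\Phi(z_0,z_1,z_2)=\big(M_{(z_1,z_2)}(z_0),\,\varphi_1(z_1,z_2),\,\varphi_2(z_1,z_2)\big),
\]
where $M_{(z_1,z_2)}$ is the homography of the projective $z_0$-line whose $2\times 2$ matrix is the matrix of partial derivatives of $\varphi$ occurring in that formula; its determinant equals $\det\mathrm{jac}\,\varphi$, a nonzero constant since $\varphi$ is a polynomial automorphism of $\mathbb{C}^2$. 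Hence $M_{(z_1,z_2)}$ is an automorphism of the projective $z_0$-line for every $(z_1,z_2)\in\mathbb{C}^2$, and therefore $\Phi$ is holomorphic at every point of $\mathbb{P}^3$ whose projection to the $(z_1,z_2)$-plane lies in $\mathbb{C}^2$. Finally recall that $\mathcal{K}$ is a group embedding, so $\Phi^\ell=\mathcal{K}(\varphi^\ell)$ for all $\ell$.

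Now let $m$ be a periodic point of $\varphi$ of period $\ell$; then $m$ is a fixed point of $\varphi^\ell$, at which $\varphi^\ell$ is a local biholomorphism. Restricting $\Phi^\ell=\mathcal{K}(\varphi^\ell)$ to the fibre over $m$ gives, by the displayed formula applied to $\varphi^\ell$, the homography $N_m$ of the projective $z_0$-line attached to $\varphi^\ell$ at the fixed point $m$; equivalently, by the chain rule, $N_m=M_{\varphi^{\ell-1}(m)}\circ\cdots\circ M_{\varphi(m)}\circ M_m$, a non-degenerate homography whose associated matrix is, up to a nonzero scalar, the inverse transpose of $\mathrm{d}(\varphi^\ell)_m$, and hence has the same eigenvalues (up to order) as $\mathrm{d}(\varphi^\ell)_m$. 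Since $m$ is a hyperbolic periodic point of $\varphi$, these eigenvalues have distinct moduli; in particular $N_m$ is neither the identity nor parabolic, so, exactly as in the computation made immediately before the statement, it has two distinct fixed points $\zeta_1\neq\zeta_2$ on the projective $z_0$-line — the images of the two eigendirections of $\mathrm{d}(\varphi^\ell)_m$.

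It then remains to lift these fixed points. For $i=1,2$ put $p_i=(\zeta_i,m)\in\mathbb{P}^3$. Then $\Phi^\ell(p_i)=(N_m(\zeta_i),m)=p_i$, while the $\Phi$-orbit of $p_i$ projects onto the $\varphi$-orbit of $m$, which consists of exactly $\ell$ distinct points; therefore the orbit of $p_i$ has cardinality a multiple of $\ell$ dividing $\ell$, that is exactly $\ell$, so $p_i$ is a periodic point of $\Phi$ of period precisely $\ell$. Every point of this orbit projects into $\mathbb{C}^2$, so by the first paragraph $\Phi$ is holomorphic in a neighbourhood of each of them, and $p_i$ is a periodic point of the birational self-map $\mathcal{K}(\varphi)$ of $\mathbb{P}^3$ in the sense recalled above; moreover $p_1\neq p_2$, with disjoint orbits, since $\zeta_1\neq\zeta_2$ whereas their base orbits coincide. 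Thus $p_1$ and $p_2$ are the two required periodic points.

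The step deserving the most care is the identification $\Phi^\ell|_{\text{fibre over }m}=N_m$, together with the role of hyperbolicity: it is precisely hyperbolicity of $m$ that ensures $\mathrm{d}(\varphi^\ell)_m$ has two distinct eigendirections, hence that $N_m$ is a non-parabolic homography with \emph{two} fixed points rather than one. This non-parabolicity at periodic points of a H\'enon automorphism is exactly the fact established just before the statement, which the argument quotes; everything else is the bookkeeping of holomorphy along the lifted orbit and of the exactness of the period, both handled by projecting to the base.
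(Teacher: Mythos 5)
Your argument is correct and is essentially the paper's: the paper's justification is precisely the paragraph preceding the statement, namely that at a hyperbolic periodic point the matrix of partial derivatives defining the $z_0$-component of $\mathcal{K}(\varphi)$ is non-parabolic, so the induced homography of the $z_0$-line has two fixed points, which lift the base periodic point to two periodic points of $\mathcal{K}(\varphi)$. Your additional bookkeeping (using $\mathcal{K}(\varphi^\ell)=\mathcal{K}(\varphi)^\ell$, identifying the return homography with the scaled inverse transpose of $\mathrm{d}(\varphi^\ell)_m$, and checking exactness of the period and holomorphy along the lifted orbit) only makes explicit what the paper leaves implicit.
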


A similar question as Question \ref{quest:question} is the 
following:

\begin{question}
Let $\varphi$ be a polynomial automorphism of $\mathbb{C}^2$ of 
H\'enon type; what is the topology of the distribution 
of periodic points of $\mathcal{K}(\varphi)$ ? Is it a discrete 
set ? Is its closure a Cantor set ?
\end{question}

\begin{rem}
Let us consider an element 
$\big(\phi_0(z_0,z_1),\phi_1(z_0,z_1),z_2+b(z_0,z_1)\big)$ of 
$\mathrm{Bir}(\mathbb{C}^3)_\omega$. Then 
$\phi_t=\big(\phi_0(z_0,z_1),\phi_1(z_0,z_1),z_2+b(z_0,z_1)+t\big)$ 
belongs to $\mathrm{Bir}(\mathbb{C}^3)_\omega$. If 
$p=(p_0,p_1,p_2)$ is a fixed point of $\phi_t$, then $(p_0,p_1)$ is 
a fixed point of $\varphi=(\phi_0,\phi_1)$ and $b(p_0,p_1)+t=0$. In
 particular if $\varphi$ only has isolated fixed points (that is 
$\varphi$ has no curve of fixed points, which is the case in 
general), then $\phi_t$ has no fixed points for $t$ generic.

Similarly, if $\varphi$ has a countable number of periodic points, 
then for $t$ generic $\phi_t$ has no periodic points.
\end{rem}

\smallskip

We will look at degree and degree growths of some contact 
birational maps.

In the $2$-dimensional case, that is if $\varphi$ belongs 
to $\mathrm{Aut}(\mathbb{C}^2)$, or $\mathrm{Bir}(\mathbb{P}^2)$, 
then $\deg\varphi=\deg\varphi^{-1}$. This equality is not true in
higher dimension; for instance if 
\[
\phi=\big(z_0^2+z_2^2+z_1,z_2^2+z_0,z_2\big),
\] 
then $\phi^{-1}=\big(z_1-z_2^2,z_0-(z_1-z_2^2)^2-z_2^2,z_2\big)$).
What happens in our context ? The equality $\deg\varphi=\deg\varphi^{-1}$
still does not hold; indeed if $(\phi_0,\phi_1,z_2+b(z_0,z_1))$ 
belongs to $\mathrm{Aut}(\mathbb{C}^3)_\omega$, then 
$-\mathrm{d}b=\phi_0\mathrm{d}\phi_1-z_0\mathrm{d}z_1$ and
$\deg b=\deg\phi_0+\deg \phi_1$. For instance if 
$\varphi=\big(z_0+(z_1^3-z_0)^2,z_1^3-z_0\big)$, then 
\[
\varphi^{-1}=\big((z_0-z_1^2)^3-z_1,z_0-z_1^2\big).
\]
Hence the degree of the $\varsigma$-lifts of $\varphi$ (resp. 
$\varphi^{-1}$) is $9$ (resp. $8$).
\medskip

Let $\phi$ and $\psi$ be two birational self-maps of 
$\mathbb{P}^3$. We will say that \textbf{\textit{the 
degree growths of $\phi$ and $\psi$ are of the same order}} 
if one of the following holds
\smallskip
\begin{itemize}
\item $(\deg\phi^n)_n$ and $(\deg\psi^n)_n$ are bounded,
 \smallskip
\item there exist an integer $k$ such that 
$\displaystyle\lim_{n\to +\infty}\frac{\deg\phi^n}{n^k}$ and 
$\displaystyle\lim_{n\to +\infty}\frac{\deg\psi^n}{n^k}$ are 
finite and nonzero,
\smallskip
\item $(\deg\phi^n)_n$ and $(\deg\psi^n)_n$ grow exponentially.
\end{itemize} 
\smallskip

Let $\varphi$ be a polynomial automorphism of $\mathbb{C}^2$; 
let us recall that $\varphi$ has either a bounded growth or 
an exponential one (\cite{FriedlandMilnor}). Denote by $\phi$ 
a $\varsigma$-lift of $\varphi$ to 
$\mathrm{Aut}(\mathbb{C}^3)_{\mathrm{c}(\omega)}$
\[
\phi=\big(\varphi,\det\,\mathrm{jac}\,\varphi\, z_2+b(z_0,z_1)
\big)
\]
Note that $b$ belongs to $\mathbb{C}[z_0,z_1]$ and so 
$\deg b(\varphi^j(z_0,z_1))\leq\deg b\deg \varphi^j$ for any $j$. 
Hence 
\[
\deg\varphi^n\leq\deg \phi^n\leq\max(\deg \varphi^n,\deg b\deg \varphi^{n-1})
\]
and 
\smallskip
\begin{itemize}
\item if $(\deg \varphi^n)_n$ is bounded, then $(\deg \phi^n)_n$ 
is bounded,
\smallskip
\item if $(\deg \varphi^n)_n$ grows exponentially, then 
$(\deg \phi^n)_n$ grows exponentially.
\end{itemize}
\smallskip
Remark that if $\psi$ is a polynomial automorphism of 
$\mathbb{C}^3$ linear growth is also possible 
(\cite{BonifantFornaess}) and this even\-tuality does not appear
when we look at elements of 
$\mathrm{Aut}(\mathbb{C}^3)_{\mathrm{c}(\omega)}$.

In the case of the $\varsigma$-lift of an exact element of 
$\mathrm{Bir}(\mathbb{C}^2)_\eta$ we cannot give formula 
because we are not dealing with polynomials. But the degree 
growth of a $\varsigma$-lift $\phi$ of an exact element $\varphi$ 
of $\mathrm{Bir}(\mathbb{C}^2)_\eta$ and the degree 
growth of $\varphi$ are the same. Indeed set 
$\varphi^n=(\varphi_{0,n},\varphi_{1,n})$ for any $n\geq 1$. On 
the one hand
\[
\phi^n=\big(\varphi_{0,n},\varphi_{1,n},z_2+b(z_0,z_1)+b(
\varphi_{0,1},\varphi_{1,1})+b(\varphi_{0,2},\varphi_{1,2})+\ldots
+b(\varphi_{0,n-1},\varphi_{1,n-1})\big)
\]
with $\mathrm{d}b=z_0\mathrm{d}z_1-\varphi_0\mathrm{d}\varphi_1$, 
but on the other hand 
$\phi^n=\big(\varphi_{0,n},\varphi_{1,n},z_2+\widetilde{b}(z_0,z_1)\big)$ 
with 
$\mathrm{d}\widetilde{b}=z_0\mathrm{d}z_1-\varphi_{0,n}\mathrm{d}\varphi_{1,n}$. 
Using this last writing one gets the statement.

\medskip

Let $\phi$ be a birational self-map of $\mathbb{P}^2$. For any 
$n\geq 1$ set 
$\phi^n=(\phi_{1,n},\phi_{2,n})=\Big(\frac{P_{1,n}}{Q_{1,n}},\frac{P_{2,n}}{Q_{2,n}}\Big)$ 
with $P_{i,n}$, $Q_{i,n}\in\mathbb{C}[z_0,z_1]$ without common factor; 
denote by $p_{i,q}$ (resp. $q_{i,n}$) the degree of $P_{i,n}$ (resp. 
$Q_{i,n}$). Of course 
$\deg\phi^n=\max(p_{1,n}+q_{2,n},p_{2,n}+q_{1,n},q_{1,n}+q_{2,n})$ and since 
\[
\mathcal{K}(\phi)^n=\mathcal{K}(\phi^n)=\left(\frac{Q_{2,n}^2}{Q_{1,n}^2}\,
 \frac{P_{2,n}\frac{\partial Q_{2,n}}{\partial z_1}-Q_{2,n}\frac{\partial 
P_{2,n}}{\partial z_1}+\left(Q_{2,n}\frac{\partial P_{2,n}}{\partial z_2}-
P_{2,n}\frac{\partial Q_{2,n}}{\partial z_2}\right)z_0}{Q_{1,n}\frac{\partial
 P_{1,n}}{\partial z_1}-P_{1,n}\frac{\partial Q_{1,n}}{\partial z_1}-\left(
Q_{1,n}\frac{\partial P_{1,n}}{\partial z_2}-P_{1,n}\frac{\partial Q_{1,n}}
{\partial z_2}\right)z_0}
,\frac{P_{1,n}}{Q_{1,n}},\frac{P_{2,n}}{Q_{2,n}}\right)
\]
one gets 
$\deg\phi^n\leq\deg\mathcal{K}(\phi)^n\leq\max(4q_{2,n}+p_{2,n}+1,2p_{1,n}+2q_{1,n}+q_{2,n}+1,p_{2,n}+3q_{1,n}+p_{1,n}+1)$.

\begin{pro}
\begin{itemize}
\item Assume that $\mathrm{G}=\mathrm{Aut}(\mathbb{C}^2)$ or 
$\mathrm{G}=\mathrm{Bir}(\mathbb{C}^2)_\eta$. Let $\varphi$ 
be an element of $\mathrm{G}$, and let~$\phi$ be a $\varsigma$-lift 
of $\varphi$. The degree growths of $\varphi$ and $\phi$ are of the 
same order.
\smallskip
\item Let $\varphi$ be a birational self-map of the complex projective 
plane, and let us consider $\mathcal{K}(\varphi)$ the image of $\varphi$ 
by $\mathcal{K}$. The degree growths of $\varphi$ and 
$\mathcal{K}(\varphi)$ are of the same order.
\end{itemize}
\end{pro}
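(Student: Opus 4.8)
The plan is to assemble the statement from the two-sided degree estimates already obtained in the discussion preceding the proposition, using the elementary observation that each of the three regimes in the definition of \emph{same order of degree growth} is stable under a comparison of the shape $a_n \le b_n \le C a_n + C$ with $C$ a fixed constant: if $(a_n)$ is bounded so is $(b_n)$; if $(a_n)$ grows exponentially then $b_n^{1/n}$ and $a_n^{1/n}$ have the same limit $\lambda > 1$, so $(b_n)$ grows exponentially as well; and if $a_n \sim c\, n^k$ then $b_n = \Theta(n^k)$, so the two sequences have the same polynomial exponent. Thus in each of the three cases it suffices to produce such a comparison between $\deg\varphi^n$ and the degree of the $n$-th iterate of the relevant lift.

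First I would handle $\mathrm{G} = \mathrm{Aut}(\mathbb{C}^2)$. Here a $\varsigma$-lift of $\varphi$ has the form $\phi = (\varphi, \det\mathrm{jac}\,\varphi\, z_2 + b)$ with $b \in \mathbb{C}[z_0,z_1]$, and the chain $\deg\varphi^n \le \deg\phi^n \le \max(\deg\varphi^n, (\deg b)\deg\varphi^{n-1})$ was already established above. By Friedland--Milnor the sequence $(\deg\varphi^n)_n$ is either bounded or grows exponentially. In the bounded case the right-hand side of the chain is bounded, hence so is $(\deg\phi^n)_n$; in the exponential case the left-hand side forces exponential growth of $(\deg\phi^n)_n$, and the right-hand side shows it has the same rate. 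The intermediate polynomial regime does not arise for $\mathrm{Aut}(\mathbb{C}^2)$, so this settles the first bullet in that case.

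Next I would treat $\mathrm{G} = \mathrm{Bir}(\mathbb{C}^2)_\eta$; since $\phi$ is assumed to be a $\varsigma$-lift of $\varphi$, the map $\varphi$ is exact. Writing $\varphi^n = (\varphi_{0,n}, \varphi_{1,n})$, I would use the two expressions for $\phi^n$ recalled above, namely $\phi^n = (\varphi^n, z_2 + \widetilde{b}_n)$ with $\widetilde{b}_n = \sum_{j=0}^{n-1} b\circ\varphi^j$ and $\mathrm{d}\widetilde{b}_n = z_0\mathrm{d}z_1 - \varphi_{0,n}\mathrm{d}\varphi_{1,n}$; from the second formula $\deg\widetilde{b}_n$ is bounded by a fixed multiple of $\deg\varphi^n$, and therefore $\deg\varphi^n \le \deg\phi^n \le C\deg\varphi^n$ for a constant $C$ depending only on $\varphi$. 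Combined with the standard classification (Diller--Favre) of degree sequences of plane birational maps --- bounded, linear, quadratic, or exponential --- this comparison transports the growth type of $\varphi$, and with it the exponent $k$, to $\phi$.

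For the second bullet I would use the explicit formula for $\mathcal{K}(\varphi)^n = \mathcal{K}(\varphi^n)$ written above together with the resulting estimate $\deg\varphi^n \le \deg\mathcal{K}(\varphi)^n \le \max(4q_{2,n}+p_{2,n}+1,\ 2p_{1,n}+2q_{1,n}+q_{2,n}+1,\ p_{2,n}+3q_{1,n}+p_{1,n}+1)$, in which each of $p_{1,n}, p_{2,n}, q_{1,n}, q_{2,n}$ is at most $\deg\varphi^n$; hence $\deg\varphi^n \le \deg\mathcal{K}(\varphi)^n \le 5\deg\varphi^n + 1$, and the same linear-comparison argument concludes. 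The one point that needs care is precisely the intermediate polynomial regime: a bare comparison $a_n \le b_n \le Ca_n + C$ only gives $\deg\phi^n = \Theta(n^k)$, whereas the definition demands the exact asymptotics $\deg\phi^n \sim c' n^k$; to obtain it one must invoke the finer structure in that regime --- the underlying $\varphi$ then preserves a pencil of rational curves, so it is, up to conjugacy, a fibered map whose iterates and whose lift can be described explicitly --- and deduce that $\lim_n \deg\phi^n / n^k$ exists and is positive. This is the main (and essentially only) delicate step; everything else is bookkeeping of inequalities already proved.
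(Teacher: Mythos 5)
Your argument is correct and follows essentially the same route as the paper: the proposition is justified there precisely by the three comparisons you assemble, namely the chain $\deg\varphi^n\le\deg\phi^n\le\max(\deg\varphi^n,\deg b\cdot\deg\varphi^{n-1})$ together with Friedland--Milnor for $\mathrm{Aut}(\mathbb{C}^2)$, the second writing $\phi^n=(\varphi^n,z_2+\widetilde{b})$ with $\mathrm{d}\widetilde{b}=z_0\mathrm{d}z_1-\varphi_{0,n}\mathrm{d}\varphi_{1,n}$ for exact elements of $\mathrm{Bir}(\mathbb{C}^2)_\eta$, and the explicit formula for $\mathcal{K}(\varphi)^n=\mathcal{K}(\varphi^n)$ yielding the linear bound on $\deg\mathcal{K}(\varphi)^n$. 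Your closing caveat about the intermediate polynomial regime (that a two-sided linear comparison only gives $\Theta(n^k)$, not the existence of the limits required by the definition as literally stated) is a refinement the paper silently glosses over rather than a different approach; in effect the paper uses the definition as a $\Theta$-comparison, which your inequalities already establish.
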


\smallskip

Let us end this section by some considerations about centralisers of 
contact birational maps. 

If $\mathrm{G}$ is a group and $f$ an element of $\mathrm{G}$, we denote 
by $\mathrm{Cent}(f,\mathrm{G})$ the centraliser of $f$ in~$\mathrm{G}$, 
that is
\[
\mathrm{Cent}(f,\mathrm{G})=\big\{g\in\mathrm{G}\,\vert\, fg=gf\big\}
\]

Let $\varphi$ be a polynomial automorphism of $\mathbb{C}^2$, then 
(\cite{FriedlandMilnor, Lamy})
\smallskip
\begin{itemize}
\item either $\varphi$ is conjugate to an element of $\mathrm{J}_2$ and 
$\mathrm{Cent}(\varphi,\mathrm{Aut}(\mathbb{C}^2))$ is uncountable;
\smallskip
\item or $\varphi$ is of H\'enon type and the centraliser of 
$\varphi$ is isomorphic to $\mathbb{Z}\rtimes\mathbb{Z}/p\mathbb{Z}$
for some $p$.
\end{itemize}
\smallskip
Let $\mathcal{H}$ be the set of polynomial automorphisms of $\mathbb{C}^2$
of H\'enon type. 

\begin{pro}
Let $\varphi$ be a polynomial automorphism of $\mathbb{C}^2$ and 
let $\phi$ be one of its $\varsigma$-lift. 
\smallskip
\begin{itemize}
\item If $\det\mathrm{jac}\,\varphi=1$, then 
$\mathrm{Cent}(\phi,\mathrm{Aut}(\mathbb{C}^3)_{\omega})$ is uncountable
and isomorphic to $\mathrm{Cent}(\phi)\rtimes\mathbb{C}$.
\smallskip
\item If $\det\mathrm{jac}\varphi\,\not=1$ and $\varphi$ belongs to 
$\mathcal{H}$, then 
$\mathrm{Cent}(\phi,\mathrm{Aut}(\mathbb{C}^3)_{\mathrm{c}(\omega)})$ is 
countable and isomorphic to $\mathrm{Cent}(\varphi)$.
\end{itemize}
\end{pro}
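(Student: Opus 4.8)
The plan is to push both computations down to $\mathrm{Aut}(\mathbb{C}^2)$, resp. $\mathrm{Aut}(\mathbb{C}^2)_\eta$, through the projection $\varsigma$, using the exact sequences of Propositions \ref{Lem:descrrho} and \ref{pro:desccont}. Consider first the case $\det\mathrm{jac}\,\varphi=1$, so that $\varphi\in\mathrm{Aut}(\mathbb{C}^2)_\eta$ and $\phi\in\mathrm{Aut}(\mathbb{C}^3)_\omega$. I would start from the splitting of $0\to\mathbb{C}\to\mathrm{Aut}(\mathbb{C}^3)_\omega\stackrel{\varsigma}{\longrightarrow}\mathrm{Aut}(\mathbb{C}^2)_\eta\to1$: by Theorem \ref{thm:perf11} the derived subgroup $D=[\mathrm{Aut}(\mathbb{C}^3)_\omega,\mathrm{Aut}(\mathbb{C}^3)_\omega]$ is carried isomorphically onto $\mathrm{Aut}(\mathbb{C}^2)_\eta$ by $\varsigma$ and meets $\ker\varsigma$ trivially, while $\ker\varsigma=\{(z_0,z_1,z_2+\beta)\}\simeq\mathbb{C}$ is the centre of $\mathrm{Aut}(\mathbb{C}^3)_\omega$ (Proposition \ref{Pro:centrea}); hence $\mathrm{Aut}(\mathbb{C}^3)_\omega=D\times\ker\varsigma$ is an internal direct product. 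Writing $\phi=\phi_D\,\phi_Z$ with $\phi_D\in D$, $\phi_Z\in\ker\varsigma$ (so $\varsigma(\phi_D)=\varphi$), an element $(g,u)\in D\times\ker\varsigma$ commutes with $\phi$ if and only if $g$ commutes with $\phi_D$, the $\ker\varsigma$-component being central; therefore $\mathrm{Cent}(\phi,\mathrm{Aut}(\mathbb{C}^3)_\omega)=\mathrm{Cent}(\phi_D,D)\times\ker\varsigma$, and $\varsigma$ takes $\mathrm{Cent}(\phi_D,D)$ isomorphically onto $\mathrm{Cent}(\varphi,\mathrm{Aut}(\mathbb{C}^2)_\eta)$. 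This yields $\mathrm{Cent}(\phi,\mathrm{Aut}(\mathbb{C}^3)_\omega)\simeq\mathrm{Cent}(\varphi,\mathrm{Aut}(\mathbb{C}^2)_\eta)\times\mathbb{C}$ (the semidirect product in the statement being in fact direct, since $\mathbb{C}$ is central), which contains a copy of $\mathbb{C}$ and is therefore uncountable.

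For the second assertion, put $\mu:=\det\mathrm{jac}\,\varphi\neq1$ with $\varphi$ of Hénon type; then $\phi=\big(\varphi,\mu\,z_2+b(z_0,z_1)\big)\in\mathrm{Aut}(\mathbb{C}^3)_{\mathrm{c}(\omega)}$ and I would use the sequence $0\to\ker\varsigma\to\mathrm{Aut}(\mathbb{C}^3)_{\mathrm{c}(\omega)}\stackrel{\varsigma}{\longrightarrow}\mathrm{Aut}(\mathbb{C}^2)\to1$ of $(\ref{eq:2dsuiteex})$, where $\ker\varsigma=\{\tau_\beta:=(z_0,z_1,z_2+\beta)\,\vert\,\beta\in\mathbb{C}\}$; a one-line computation gives $\phi\,\tau_\beta\,\phi^{-1}=\tau_{\mu\beta}$. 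First, $\varsigma$ is injective on $\mathrm{Cent}(\phi,\mathrm{Aut}(\mathbb{C}^3)_{\mathrm{c}(\omega)})$: if $\tau_\beta$ commutes with $\phi$ then $\tau_\beta=\tau_{\mu\beta}$, whence $\beta=0$ since $\mu\neq1$. Second, $\varsigma$ restricted to this centraliser is onto $\mathrm{Cent}(\varphi,\mathrm{Aut}(\mathbb{C}^2))$: given $\psi$ commuting with $\varphi$, lift it to $\Psi\in\mathrm{Aut}(\mathbb{C}^3)_{\mathrm{c}(\omega)}$ ($\varsigma$ being onto); then $\Psi\phi\Psi^{-1}$ is again a $\varsigma$-lift of $\psi\varphi\psi^{-1}=\varphi$, hence $\Psi\phi\Psi^{-1}=\tau_d\,\phi$ for a unique $d\in\mathbb{C}$, and replacing $\Psi$ by $\tau_c\Psi$ changes $d$ into $d+c(1-\mu)$; choosing $c=d/(\mu-1)$ produces a $\varsigma$-lift of $\psi$ lying in $\mathrm{Cent}(\phi,\mathrm{Aut}(\mathbb{C}^3)_{\mathrm{c}(\omega)})$. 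Thus $\varsigma$ induces an isomorphism $\mathrm{Cent}(\phi,\mathrm{Aut}(\mathbb{C}^3)_{\mathrm{c}(\omega)})\simeq\mathrm{Cent}(\varphi,\mathrm{Aut}(\mathbb{C}^2))$; as $\varphi$ is of Hénon type the latter is isomorphic to $\mathbb{Z}\rtimes\mathbb{Z}/p\mathbb{Z}$ (recalled above, after \cite{FriedlandMilnor,Lamy}), in particular countable.

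The only step needing genuine care is the surjectivity in the second part. Because $\ker\varsigma$ is normal but \emph{not} central in $\mathrm{Aut}(\mathbb{C}^3)_{\mathrm{c}(\omega)}$, one must track left versus right translations by $\tau_\beta$ when conjugating $\phi$, and the correction $c\mapsto c(1-\mu)$ used to cancel the obstruction $d$ is invertible \emph{exactly because} $\mu=\det\mathrm{jac}\,\varphi\neq1$. When $\mu=1$ this mechanism is unavailable, consistently with the fact that in the first case $\mathrm{Cent}(\phi,\mathrm{Aut}(\mathbb{C}^3)_\omega)$ genuinely gains an extra central $\mathbb{C}$ and becomes uncountable. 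A minor point to check along the way is that elements of $\mathrm{Aut}(\mathbb{C}^3)_\omega$ (resp. $\mathrm{Aut}(\mathbb{C}^3)_{\mathrm{c}(\omega)}$) lying in the centraliser project into $\mathrm{Cent}(\varphi,\mathrm{Aut}(\mathbb{C}^2)_\eta)$ (resp. $\mathrm{Cent}(\varphi,\mathrm{Aut}(\mathbb{C}^2))$), which is immediate since $\varsigma$ is a homomorphism.
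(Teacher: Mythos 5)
Your argument is correct, and its skeleton is the same as the paper's: restrict $\varsigma$ to the centraliser and analyse the kernel, which is exactly how the paper distinguishes the two cases ($\ker\varsigma_{\vert\mathrm{Cent}}$ equal to the full central $\mathbb{C}$ when $\det\mathrm{jac}\,\varphi=1$, trivial when $\det\mathrm{jac}\,\varphi\neq 1$, giving uncountability resp.\ an injection into $\mathrm{Cent}(\varphi,\mathrm{Aut}(\mathbb{C}^2))$, hence countability for H\'enon maps). Where you go further is precisely on the isomorphism claims, which the paper's proof asserts but does not detail: in the first case you invoke the splitting $\mathrm{Aut}(\mathbb{C}^3)_\omega=[\mathrm{Aut}(\mathbb{C}^3)_\omega,\mathrm{Aut}(\mathbb{C}^3)_\omega]\times\ker\varsigma$ (Theorem \ref{thm:perf11} plus Proposition \ref{Pro:centrea}) to identify the centraliser with $\mathrm{Cent}(\varphi,\mathrm{Aut}(\mathbb{C}^2)_\eta)\times\mathbb{C}$ (correctly observing that the product is direct, and that the relevant base group is $\mathrm{Aut}(\mathbb{C}^2)_\eta$ since $\varsigma$ lands there); in the second case you prove surjectivity of $\varsigma_{\vert\mathrm{Cent}}$ onto $\mathrm{Cent}(\varphi,\mathrm{Aut}(\mathbb{C}^2))$ by correcting an arbitrary lift $\Psi$ with $\tau_c$, using the relation $\phi\tau_\beta\phi^{-1}=\tau_{\mu\beta}$ and the invertibility of $c\mapsto c(1-\mu)$ for $\mu\neq 1$ — a computation absent from the paper, whose proof only yields the embedding. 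So your proposal is a completion of the published argument rather than a different route; its only external inputs are results already established earlier in the paper, so the reliance is legitimate.
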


\begin{proof}
One can look at the restriction of $\varsigma$ to 
$\mathrm{Cent}(\phi,\mathrm{Aut}(\mathbb{C}^3)_{\mathrm{c}(\omega)})$:
\[
\varsigma_{\vert\mathrm{Cent}(\phi,\mathrm{Aut}(\mathbb{C}^3)_{\mathrm{c}(\omega)})}\colon
\mathrm{Cent}(\phi,\mathrm{Aut}(\mathbb{C}^3)_{\mathrm{c}(\omega)})\to
\mathrm{Cent}(\varphi,\mathrm{Aut}(\mathbb{C}^2))
\]
Of course 
\[
\ker\varsigma_{\vert\mathrm{Cent}(\phi,\mathrm{Aut}(\mathbb{C}^3)_{\mathrm{c}(\omega)})}
\subset\big\{(z_0,z_1,z_2+\beta)\,\vert\,\beta\in\mathbb{C}\big\}.
\]

If $\det\mathrm{jac}\,\varphi=1$, {\it i.e.} $\varphi$ belongs to 
$\mathrm{Aut}(\mathbb{C}^2)_\eta$, then 
\[
\ker\varsigma_{\vert\mathrm{Cent}(\phi,\mathrm{Aut}(\mathbb{C}^3)_{\mathrm{c}(\omega)})}
=\big\{(z_0,z_1,z_2+\beta)\,\vert\,\beta\in\mathbb{C}\big\}
\]
and the centraliser of a $\varsigma$-lift of $\varphi$ is always 
uncountable even if $\mathrm{Cent}(\varphi,\mathrm{Aut}(\mathbb{C}^2))$ is 
countable. 

If $\det\mathrm{jac}\,\varphi\not=1$, {\it i.e.} $\varphi$ belongs to 
$\mathrm{Aut}(\mathbb{C}^2)\smallsetminus\mathrm{Aut}(\mathbb{C}^2)_\eta$, 
then 
$\ker\varsigma_{\vert\mathrm{Cent}(\phi,\mathrm{Aut}(\mathbb{C}^3)_{\mathrm{c}(\omega)})}=\{\mathrm{id}\}$ 
and 
\[
\mathrm{Cent}(\phi,\mathrm{Aut}(\mathbb{C}^3)_{\mathrm{c}(\omega)})
\hookrightarrow\mathrm{Cent}(\varphi,\mathrm{Aut}(\mathbb{C}^2))
\]
In particular if $\varphi$ belongs to 
$(\mathrm{Aut}(\mathbb{C}^2)\smallsetminus\mathrm{Aut}(\mathbb{C}^2)_\eta)\cap\mathcal{H}$, 
then $\mathrm{Cent}(\phi,\mathrm{Aut}(\mathbb{C}^3)_{\mathrm{c}(\omega)})$ is 
countable.
\end{proof}

\begin{rem}
Contrary to the $2$-dimensional case there exist some $\varphi$ in 
$\mathrm{Aut}(\mathbb{C}^3)_\omega$ such that
\smallskip
\begin{itemize}
\item $\mathrm{Cent}(\phi,\mathrm{Aut}(\mathbb{C}^3)_\omega)$ is uncountable,
\smallskip
\item and $(\deg\phi^n)_{n\in\mathbb{N}}$ grows exponentially.
\end{itemize}
\end{rem}

A similar reasoning leads to:

\begin{pro}
Let $\varphi\in\mathrm{Bir}(\mathbb{C}^2)_\eta$ be an exact map, 
and let $\phi$ be one of its $\varsigma$-lifts. Then 
$\mathrm{Cent}(\phi,\mathrm{Bir}(\mathbb{C}^3)_\omega)$ is uncountable.
\end{pro}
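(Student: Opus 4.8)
The plan is to produce, inside $\mathrm{Cent}(\phi,\mathrm{Bir}(\mathbb{C}^3)_\omega)$, an explicit one-parameter family indexed by the uncountable set $\mathbb{C}$. First I would record the shape of $\phi$: since $\varphi=(\phi_0,\phi_1)$ is exact, Theorem \ref{Thm:critere} furnishes $b\in\mathbb{C}(z_0,z_1)$ with $\phi_0\,\mathrm{d}\phi_1-z_0\,\mathrm{d}z_1=-\mathrm{d}b$, so that $\phi=\big(\phi_0(z_0,z_1),\phi_1(z_0,z_1),z_2+b(z_0,z_1)\big)$ lies in $\mathrm{Bir}(\mathbb{C}^3)_\omega$ and is (up to an element of $\ker\varsigma$) the given $\varsigma$-lift of $\varphi$. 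The only use of the hypothesis is exactly this: it guarantees that $\phi$ genuinely belongs to $\mathrm{Bir}(\mathbb{C}^3)_\omega$.

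Next I would introduce, for $t\in\mathbb{C}$, the translation $\tau_t=(z_0,z_1,z_2+t)$. By Proposition \ref{Lem:chpinv2} every element of $\mathrm{Bir}(\mathbb{C}^3)_\omega$ has the form $\big(\psi_0(z_0,z_1),\psi_1(z_0,z_1),z_2+c(z_0,z_1)\big)$ and $\ker\varsigma=\{\tau_t\,\vert\,t\in\mathbb{C}\}$; in particular each $\tau_t$ lies in $\mathrm{Bir}(\mathbb{C}^3)_\omega$. A one-line computation then shows that $\tau_t$ is central in this group: for $\psi=\big(\psi_0(z_0,z_1),\psi_1(z_0,z_1),z_2+c(z_0,z_1)\big)$,
\[
\tau_t\circ\psi=\big(\psi_0,\psi_1,z_2+c(z_0,z_1)+t\big)=\psi\circ\tau_t .
\]
In particular $\tau_t\in\mathrm{Cent}(\phi,\mathrm{Bir}(\mathbb{C}^3)_\omega)$ for every $t$, and since $t\mapsto\tau_t$ is injective and $\mathbb{C}$ is uncountable, the centraliser is uncountable.

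There is no real obstacle here: the argument is forced by the structure already established, and the dynamical complexity of $\varphi$ plays no role (contrast with the birational surface case, where centralisers can be virtually cyclic). If one wishes, I would add the sharper statement, proved exactly as in the automorphism case: the restriction of $\varsigma$ to $\mathrm{Cent}(\phi,\mathrm{Bir}(\mathbb{C}^3)_\omega)$ has kernel $\ker\varsigma\simeq\mathbb{C}$ and image the group of exact lifts of $\mathrm{Cent}(\varphi,\mathrm{Bir}(\mathbb{C}^2)_\eta)$, so that $\mathrm{Cent}(\phi,\mathrm{Bir}(\mathbb{C}^3)_\omega)$ is an extension of this image by $\mathbb{C}$; but this refinement is not needed for the stated conclusion.
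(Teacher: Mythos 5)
Your argument is correct and is essentially the paper's: the proposition there is proved by "a similar reasoning" to the automorphism case, namely restricting $\varsigma$ to the centraliser and observing that its kernel, the translations $(z_0,z_1,z_2+\beta)$, $\beta\in\mathbb{C}$, lies in $\mathrm{Cent}(\phi,\mathrm{Bir}(\mathbb{C}^3)_\omega)$ precisely because the exactness of $\varphi$ puts $\phi$ in $\mathrm{Bir}(\mathbb{C}^3)_\omega$, hence of the form $\big(\phi_0(z_0,z_1),\phi_1(z_0,z_1),z_2+b(z_0,z_1)\big)$ --- which is exactly your family $\tau_t$. Your optional closing refinement (that the image of $\varsigma$ restricted to the centraliser consists of all exact elements of $\mathrm{Cent}(\varphi,\mathrm{Bir}(\mathbb{C}^2)_\eta)$) would require checking a constant obstruction, since the two lifts $\phi\circ\Psi$ and $\Psi\circ\phi$ of $\varphi\psi=\psi\varphi$ may differ by a nonzero constant in the last coordinate independently of the choice of lift $\Psi$ of $\psi$, but as you say this is not needed for the stated conclusion.
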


Let $\mathrm{G}=\mathrm{Aut}(\mathbb{C}^2)$ or 
$\mathrm{G}=\mathrm{Bir}(\mathbb{C}^2)_\eta$. Let $\varphi$ be an
 element of $\mathrm{G}$, and let $\phi$ be one of its $\varsigma$-lift. 
In the following examples we look at the links between 
the $\varsigma$-lift of $\mathrm{Cent}(\varphi,\mathrm{G})$ and 
$\mathrm{Cent}(\phi,\mathrm{G}')$ 
where $\mathrm{G}'=\mathrm{Aut}(\mathbb{C}^3)_{\mathrm{c}(\omega)}$ 
or~$\mathrm{Bir}(\mathbb{C}^3)_{\mathrm{c}(\omega)}$.

\begin{eg}
In this example we give a polynomial automorphism $\varphi$ and maps 
in $\mathrm{Cent}\big(\varphi,\mathrm{Aut}(\mathbb{C}^2)\big)$
whose only one $\varsigma$-lift belongs to 
$\mathrm{Aut}(\phi,\mathrm{Aut}(\mathbb{C}^3)_{\mathrm{c}(\omega)})$
where $\phi$ denotes a $\varsigma$-lift of $\varphi$. 

\smallskip

Let us now consider the H\'enon automorphism $\varphi$ given by
\[
\varphi=(\delta z_1,\beta z_1^k-\gamma z_0)
\] 
where $\delta$, $\beta$, $\gamma$ are complex numbers such that 
$\delta\beta\not=0$, $\delta\beta\not=1$ and $k\geq 4$. The map 
\[
\phi=\Big(\delta z_1,\beta z_1^k-\gamma z_0,\delta\gamma z_2+\delta\gamma
 z_0z_1-\frac{\delta\beta}{k+1}z_1^{k+1}\Big)
\]
is a $\varsigma$-lift of $\varphi$. One can check that $(\zeta z_0,\zeta z_1)$, 
where $\zeta\in\mathbb{C}^*$ such that $\zeta^k=\zeta$, commutes with $\varphi$. 
Among the $\varsigma$-lifts $(\zeta z_0,\zeta z_1,\zeta^2z_2+\beta)$, 
$\beta\in\mathbb{C}$, only one commutes with $\phi$.  
\end{eg}

\begin{eg} 
We consider a polynomial automorphism $\varphi$, a subgroup $\mathrm{G}$ of
$\mathrm{Cent}(\varphi,\mathrm{Aut}(\mathbb{C}^2))$ and $\mathrm{G}_\varsigma$
its~$\varsigma$-lift. In the first example the inclusion 
$\mathrm{G}_\varsigma\subset\mathrm{Cent}(\phi,\mathrm{Aut}(\mathbb{C}^3)_{\mathrm{c}(\omega)})$
holds whereas in the second example it doesn't.

\medskip

Let us consider the polynomial automorphism 
$\varphi=(\beta^dz_0+\beta^dz_1^dQ(z_1^r),\beta z_1)$ with $\beta\in\mathbb{C}^*$, 
$Q\in\mathbb{C}[z_1]$ and $d$, $r\in\mathbb{N}$. One can check that
\[
\mathrm{G}=\big\{(z_0+\gamma z_1^d,z_1)\,\vert\,\gamma\in\mathbb{C}\big\}\subset
\mathrm{Cent}(\varphi,\mathrm{Aut}(\mathbb{C}^2))
\]
The map 
$\phi=\big(\beta^dz_0+\beta^dz_1^dQ(z_1^r),\beta z_1,\beta^{d+1}z_2-\beta P(z_1)\big)$ 
with $P'(z_1)=z_1^qQ(z_1^r)$ is a $\varsigma$-lift of $\varphi$. Let 
$\mathrm{G}_\varsigma$ be the $\varsigma$-lift of $\mathrm{G}$; the group
\[
\mathrm{G}_\varsigma=\left\{\left(z_0+\gamma z_1^d,z_1,z_2-\frac{\gamma z_1^{d+1}}{d+1}
\right)\,\vert\,\gamma\in\mathbb{C}\right\}
\]
is here contained in 
$\mathrm{Cent}(\phi,\mathrm{Aut}(\mathbb{C}^3)_{\mathrm{c}(\omega)})$.

\medskip

Let $\varphi$ be the polynomial automorphism given by 
$\varphi=(z_0+z_1^2,\lambda z_1)$ with $\lambda\in\mathbb{C}^*$ and $\lambda^2\not=1$. 
A $\varsigma$-lift of $\varphi$ to $\mathrm{Aut}(\mathbb{C}^3)_{\mathrm{c}(\omega)}$ 
is 
\[
\phi=\Big(z_0+z_1^2,\lambda z_1,\lambda z_2-\frac{z_1^3}{3}+\mu\Big)
\]
for some $\mu\in\mathbb{C}$. Note that 
\[
\mathrm{G}=\Big\{\Big(\delta z_0+\frac{\gamma^2-\delta}{\lambda^2-1}\,z_1+
\varepsilon,\gamma z_1\Big)\,\vert\,\delta,\,\gamma\in\mathbb{C}^*,\,
\varepsilon\in\mathbb{C}\Big\}
\]
is contained in $\mathrm{Cent}(\varphi,\mathrm{Aut}(\mathbb{C}^2))$. Let us denote 
by $\mathrm{G}_\varsigma$ the $\varsigma$-lift of $\mathrm{G}$; a direct computation 
shows that 
\[
\mathrm{G}_\varsigma=\Big\{\Big(\delta z_0+\frac{\gamma^2-\delta}{\lambda^2-1}\,z_1+
\varepsilon,\gamma z_1,\delta\gamma z_2-\frac{\gamma(\gamma^2-\delta)}{3
(\lambda^2-1)}\,z_1^3-\gamma\varepsilon z_1+\beta\Big)\,\vert\,\delta,\,\gamma
\in\mathbb{C}^*,\,\beta,\,\varepsilon\in\mathbb{C}\Big\}
\]
The inclusion 
$\mathrm{G}_\varsigma\cap\mathrm{Cent}(\phi,\mathrm{Aut}(\mathbb{C}^3)_{\mathrm{c}(\omega)})\subsetneq\mathrm{G}_\varsigma$ 
is strict; indeed
\[
\mathrm{G}_\varsigma\cap\mathrm{Cent}(\phi,\mathrm{Aut}(\mathbb{C}^3)_{\mathrm{c}(\omega)})
=\Big\{\Big(\gamma^2 z_0+\varepsilon,\gamma z_1,\gamma^3 z_2-\gamma\varepsilon z_1+
\frac{\gamma^3-1}{\lambda-1}\delta\Big)\,\vert\,\gamma\in\mathbb{C}^*,\,
\varepsilon\in\mathbb{C}\Big\}.
\]
\end{eg}

\medskip

\subsection{Non-simplicity, Tits alternative}

\smallskip

Let us recall that a \textbf{\textit{simple group}} is a non-trivial group 
$\mathrm{G}$ whose only normal subgroups are $\{\mathrm{id}\}$ and $\mathrm{G}$.

Danilov proved that $\mathrm{Aut}(\mathbb{C}^2)_\eta$ is not simple 
(\cite{Danilov}). More recently Cantat and Lamy showed 
that~$\mathrm{Bir}(\mathbb{P}^2)$ is not simple (\cite{CantatLamy}). 
As a consequence one has:

\begin{pro}\label{Pro:centre}
The groups
\[
\mathrm{Aut}(\mathbb{C}^3)_\omega,\quad\mathrm{Bir}(\mathbb{C}^3)_\omega,
\quad\mathrm{Aut}(\mathbb{C}^3)_{\mathrm{c}(\omega)},\quad[\mathrm{Aut}(\mathbb{C}^3)_{\mathrm{c}(\omega)},
\mathrm{Aut}(\mathbb{C}^3)_{\mathrm{c}(\omega)}],\quad[\mathrm{Aut}(\mathbb{C}^3)_\omega,
\mathrm{Aut}(\mathbb{C}^3)_\omega]
\]
are not simple.
\end{pro}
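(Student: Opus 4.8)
The plan is to exhibit, in each of the five groups, a proper non‑trivial normal subgroup; no deep new idea is needed beyond the structural results already proved and Danilov's theorem. Two mechanisms will do all the work: the kernel of a group homomorphism is automatically normal, and a group isomorphic to a non‑simple group is itself non‑simple. So I would organise the proof in two blocks.

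First I would treat the three ``ambient'' groups. For $\mathrm{Aut}(\mathbb{C}^3)_\omega$ and $\mathrm{Bir}(\mathbb{C}^3)_\omega$, Proposition~\ref{Lem:descrrho} (resp.\ Proposition~\ref{Lem:chpinv2}) exhibits $\varsigma$ as a group homomorphism whose kernel is $\{(z_0,z_1,z_2+\beta)\,\vert\,\beta\in\mathbb{C}\}\simeq\mathbb{C}$; this kernel is non‑trivial and it is proper, since both groups contain elements whose first two components are non‑constant (for instance the $\varsigma$‑lifts of the maps in Examples~\ref{egs:trivialcomp}). Hence neither is simple — and for $\mathrm{Aut}(\mathbb{C}^3)_\omega$ one may phrase this as: its center is the proper non‑trivial subgroup $\mathbb{C}$ of Proposition~\ref{Pro:centrea}. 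For $\mathrm{Aut}(\mathbb{C}^3)_{\mathrm{c}(\omega)}$ I would use Proposition~\ref{pro:desccont}: the cocycle $\phi\mapsto V(\phi)$ takes values in $\mathbb{C}^*$ there and is then a surjective group homomorphism onto $\mathbb{C}^*$ with kernel exactly $\mathrm{Aut}(\mathbb{C}^3)_\omega$, which is proper (because $\mathbb{C}^*\neq\{1\}$) and non‑trivial; equivalently one invokes the exact sequence \eqref{eq:2dsuiteex}, whose kernel is again $\mathbb{C}$.

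For the two derived groups I would simply invoke the isomorphisms already established: by Theorem~\ref{thm:perf11} one has $[\mathrm{Aut}(\mathbb{C}^3)_\omega,\mathrm{Aut}(\mathbb{C}^3)_\omega]\simeq\mathrm{Aut}(\mathbb{C}^2)_\eta$, and by Theorem~\ref{thm:perf13} one has $[\mathrm{Aut}(\mathbb{C}^3)_{\mathrm{c}(\omega)},\mathrm{Aut}(\mathbb{C}^3)_{\mathrm{c}(\omega)}]\simeq\mathrm{Aut}(\mathbb{C}^2)_\eta$ as well. Since Danilov proved that $\mathrm{Aut}(\mathbb{C}^2)_\eta$ is not simple (\cite{Danilov}), neither of these two groups is simple, which finishes the list.

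Honestly I do not expect a real obstacle here: the only non‑elementary input is Danilov's theorem (Cantat and Lamy's non‑simplicity of $\mathrm{Bir}(\mathbb{P}^2)$ plays the analogous role of ``moral justification'' and could be used for an alternative argument, but is not needed for this precise statement), and everything else reduces to checking that the displayed normal subgroups are proper — which is immediate from the explicit forms of the elements recalled above. The one point to state carefully is precisely that properness, so that the argument does not degenerate into the trivial group being ``not simple'' for the wrong reason.
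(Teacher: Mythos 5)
Your argument is correct and is essentially the paper's own proof: the non-simplicity of $\mathrm{Aut}(\mathbb{C}^3)_\omega$, $\mathrm{Bir}(\mathbb{C}^3)_\omega$ and $\mathrm{Aut}(\mathbb{C}^3)_{\mathrm{c}(\omega)}$ comes from the non-trivial proper kernel of $\varsigma$ (the exact sequences \eqref{eq:exactsequence} and \eqref{eq:2dsuiteex}), and the two derived groups are handled via their isomorphisms with $\mathrm{Aut}(\mathbb{C}^2)_\eta$ together with Danilov's theorem. Your extra care about properness of the kernels, and the variant using $V\colon\mathrm{Aut}(\mathbb{C}^3)_{\mathrm{c}(\omega)}\to\mathbb{C}^*$, are only minor refinements of the same route.
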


\begin{proof}
Since 
$[\mathrm{Aut}(\mathbb{C}^3)_{\mathrm{c}(\omega)},\mathrm{Aut}(\mathbb{C}^3)_{\mathrm{c}(\omega)}]\simeq \mathrm{Aut}(\mathbb{C}^2)_\eta$ 
and 
$[\mathrm{Aut}(\mathbb{C}^3)_\omega,\mathrm{Aut}(\mathbb{C}^3)_\omega]\simeq \mathrm{Aut}(\mathbb{C}^2)_\eta$ 
the first assertion follows from \cite{Danilov}.

\smallskip
The exact sequence ($\ref{eq:exactsequence}$) implies in particular that there 
exists a morphism with a non-trivial kernel from $\mathrm{Aut}(\mathbb{C}^3)_\omega$ 
into $\mathrm{Aut}(\mathbb{C}^2)_\eta$, hence $\mathrm{Aut}(\mathbb{C}^3)_\omega$ 
is not simple. A similar argument holds for $\mathrm{Bir}(\mathbb{C}^3)_\omega$ 
and~$\mathrm{Aut}(\mathbb{C}^3)_{\mathrm{c}(\omega)}$.
\end{proof}

The morphism
\[
\mathrm{Bir}(\mathbb{C}^3)^{\mathrm{reg}}_{\omega}\longrightarrow\mathrm{Bir}
(\mathbb{P}^2)
\]
that consists to take the restriction of 
$\phi\in\mathrm{Bir}(\mathbb{C}^3)^{\mathrm{reg}}_{\omega}$ to 
$\mathcal{H}_\infty$ has a non-trivial kernel; indeed
\[
\phi=\left(z_0-\left(\frac{P(z_1)}{Q(z_1)}\right)',z_1,z_2+\frac{P(z_1)}{Q(z_1)}
\right)
\]
with $P$, $Q$ two polynomials of degree $p$, $q$ such that $p<q+1$, is regular 
and induces the identity on $\mathcal{H}_\infty$. In particular one gets the 
following statement:

\begin{pro}\label{pro:regnotsimple}
The group $\mathrm{Bir}(\mathbb{C}^3)^{\mathrm{reg}}_{\omega}$ is not simple.
\end{pro}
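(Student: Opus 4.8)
The plan is to exhibit $\mathrm{Bir}(\mathbb{C}^3)^{\mathrm{reg}}_{\omega}$ as the domain of a group homomorphism possessing a proper nontrivial kernel, namely the restriction-to-$\mathcal{H}_\infty$ morphism already introduced just above the statement. First I would check that $\mathrm{Bir}(\mathbb{C}^3)^{\mathrm{reg}}_{\omega}$ really is a subgroup of $\mathrm{Bir}(\mathbb{C}^3)_{\omega}$: if $\phi,\psi\in\mathrm{Bir}(\mathbb{C}^3)_{\omega}$ both preserve $\mathcal{H}_\infty$ and restrict there to birational self-maps of $\mathcal{H}_\infty\simeq\mathbb{P}^2$, then so do $\phi^{-1}$ and $\phi\circ\psi$, since passing to the restriction on $\mathcal{H}_\infty$ commutes with composition and with inversion for maps that dominantly preserve $\mathcal{H}_\infty$; in the $\omega$-case the form is genuinely invariant so no $V$-bookkeeping is needed (for the analogous $\mathrm{c}(\omega)$-statement one would additionally invoke Lemma \ref{lem:reginf} and the cocycle relation for $V$). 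Consequently
\[
r\colon\mathrm{Bir}(\mathbb{C}^3)^{\mathrm{reg}}_{\omega}\longrightarrow\mathrm{Bir}(\mathbb{P}^2),\qquad\phi\longmapsto\phi_{\vert\mathcal{H}_\infty},
\]
is a well-defined group homomorphism, so $\ker r$ is a normal subgroup of $\mathrm{Bir}(\mathbb{C}^3)^{\mathrm{reg}}_{\omega}$.

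Next I would show $\ker r\neq\{\mathrm{id}\}$, which is exactly the computation recorded before the statement: for polynomials $P,Q$ with $\deg P<\deg Q+1$ and $P/Q\notin\mathbb{C}$, the map
\[
\phi=\left(z_0-\left(\frac{P(z_1)}{Q(z_1)}\right)',\,z_1,\,z_2+\frac{P(z_1)}{Q(z_1)}\right)
\]
satisfies $\phi^*\omega=\omega$ (a one-line verification), is regular at infinity (homogenize and apply Lemma \ref{lem:reginf}; the hypothesis $\deg P<\deg Q+1$ is precisely what makes $\mathcal{H}_\infty$ preserved and the restriction birational), and induces the identity on $\mathcal{H}_\infty$, while $\phi\neq\mathrm{id}$ since $P/Q$ is non-constant. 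One may also note more cheaply that every nontrivial translation $(z_0,z_1,z_2+\beta)$, i.e. the center of $\mathrm{Aut}(\mathbb{C}^3)_\omega$, already sits in $\ker r$. Finally I would show $\ker r\neq\mathrm{Bir}(\mathbb{C}^3)^{\mathrm{reg}}_{\omega}$ by producing a regular element acting nontrivially on $\mathcal{H}_\infty$: the elements of $\mathrm{Aut}(\mathbb{P}^3)_{\mathrm{c}(\omega)}$ are regular at infinity (Examples \ref{eg:autcont} and \ref{eg:aut}), and among them those with $V\equiv1$ belong to $\mathrm{Bir}(\mathbb{C}^3)^{\mathrm{reg}}_{\omega}$; for instance $\phi=(z_0+1,z_1,z_2-z_1)\in\mathrm{Aut}(\mathbb{P}^3)_\omega$ restricts to the nontrivial map $(z_0,z_1,z_2-z_1)$ of $\mathcal{H}_\infty\simeq\mathbb{P}^2$. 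Hence $\ker r$ is a normal subgroup that is neither $\{\mathrm{id}\}$ nor the whole group, so $\mathrm{Bir}(\mathbb{C}^3)^{\mathrm{reg}}_{\omega}$ is not simple.

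The argument is essentially formal once the morphism $r$ is in place, so there is no serious obstacle; the only points demanding honest (but routine) checking are that $\mathrm{Bir}(\mathbb{C}^3)^{\mathrm{reg}}_{\omega}$ is closed under composition and inversion — so that $r$ is defined and multiplicative — and that the displayed family genuinely lies in $\ker r$, i.e. that $\deg P<\deg Q+1$ forces regularity at infinity and triviality of the induced map on $\mathcal{H}_\infty$. This last verification is where Lemma \ref{lem:reginf} is used.
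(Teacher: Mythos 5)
Your proposal is correct and follows essentially the same route as the paper: the paper proves the statement precisely by observing that the restriction morphism $\mathrm{Bir}(\mathbb{C}^3)^{\mathrm{reg}}_{\omega}\to\mathrm{Bir}(\mathbb{P}^2)$ has a non-trivial kernel, exhibited by the very same family $\bigl(z_0-(P/Q)'(z_1),\,z_1,\,z_2+P(z_1)/Q(z_1)\bigr)$ with $\deg P<\deg Q+1$. The extra verifications you spell out --- that $\mathrm{Bir}(\mathbb{C}^3)^{\mathrm{reg}}_{\omega}$ is a subgroup so that the restriction is a homomorphism, that the kernel already contains the translations $(z_0,z_1,z_2+\beta)$, and that it is proper because a contact automorphism such as $(z_0+1,z_1,z_2-z_1)$ acts non-trivially on $\mathcal{H}_\infty$ --- are routine checks the paper leaves implicit, so there is no substantive difference in approach.
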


Let us consider the maps 
$\psi=\Big(\gamma z_0^2z_1,\frac{1}{\gamma z_0},z_2+z_0z_1\Big)$ and 
$\phi=\left(z_0+\frac{1}{z_1^3},z_1,z_2+\frac{1}{2z_1^2}\right)$. One can check 
that~$\psi$ belongs to 
$\mathrm{Bir}(\mathbb{C}^3)_{\omega}\smallsetminus\mathrm{Bir}(\mathbb{C}^3)^{\mathrm{reg}}_{\omega}$ 
whereas $\phi$ is in $\mathrm{Bir}(\mathbb{C}^3)^{\mathrm{reg}}_{\omega}$. 
A direct computation shows that $\psi^{-1}\phi\psi$ blows down $\mathcal{H}_\infty$ 
onto $\mathbf{e}_3$. Hence one can state:

\begin{pro}\label{pro:regnotnormal}
The subgroup $\mathrm{Bir}(\mathbb{C}^3)^{\mathrm{reg}}_{\omega}$ of 
$\mathrm{Bir}(\mathbb{C}^3)_{\omega}$ is not normal.
\end{pro}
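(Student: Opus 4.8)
The statement to prove is Proposition \ref{pro:regnotnormal}: the subgroup $\mathrm{Bir}(\mathbb{C}^3)^{\mathrm{reg}}_{\omega}$ is not normal in $\mathrm{Bir}(\mathbb{C}^3)_{\omega}$. The plan is to exhibit one regular element $\phi$ and one conjugator $\psi\in\mathrm{Bir}(\mathbb{C}^3)_{\omega}$ so that $\psi^{-1}\phi\psi$ fails to be regular at infinity, i.e. it blows down the hyperplane $\mathcal{H}_\infty=\{z_3=0\}$ onto a point. The paper has already suggested the concrete pair: take $\psi=\bigl(\gamma z_0^2 z_1,\tfrac{1}{\gamma z_0},z_2+z_0z_1\bigr)$ and $\phi=\bigl(z_0+\tfrac{1}{z_1^3},z_1,z_2+\tfrac{1}{2z_1^2}\bigr)$. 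So the first step is merely to check that both of these maps do lie where claimed: for $\phi$, verify $\phi^*\omega=\omega$ (immediate from $\mathrm{d}\bigl(\tfrac{1}{2z_1^2}\bigr)=-z_0\,\mathrm{d}z_1\big|_{z_0\mapsto?}$—more precisely one checks $z_0\,\mathrm{d}z_1 - \phi_0\,\mathrm{d}\phi_1 = -\mathrm{d}\bigl(\tfrac{1}{2z_1^2}\bigr)$ via Theorem \ref{Thm:critere} / Proposition \ref{Lem:chpinv2}) and that it extends regularly to $\mathcal{H}_\infty$ where it induces the identity (by the explicit kernel computation already recorded in the excerpt, with $P=1$, $Q=z_1^3$, indeed $p<q+1$); for $\psi$, verify $\psi^*\omega=\omega$, which is the same residue-free closed-form check applied to $(\gamma z_0^2 z_1,\tfrac{1}{\gamma z_0})\in\mathrm{Bir}(\mathbb{C}^2)_\eta$, and note $\psi$ is \emph{not} regular at infinity (this is the kind of example already present in Examples \ref{egs:monomial} / \ref{egs:trivialcomp}).

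The second and main step is the computation of $\psi^{-1}\phi\psi$ and the verification that it contracts $\mathcal{H}_\infty$. First I would compute $\psi^{-1}$ explicitly—since $\psi$ is of Jonquières/monomial type in the first two coordinates, its inverse is again of that shape—then form the composition $\psi^{-1}\circ\phi\circ\psi$ as a rational self-map of $\mathbb{C}^3$, homogenize it with respect to $z_3$ as in \S\ref{subsec:autaut}'s conventions, and read off what happens on $\{z_3=0\}$. The cleanest way to see the contraction without grinding through the full composite is to use Lemma \ref{lem:reginf}: $\psi^{-1}\phi\psi$ is regular at infinity if and only if the homogenized multiplier $\overline{V(\psi^{-1}\phi\psi)}$ does not vanish identically on $\mathcal{H}_\infty$; but $V$ is multiplicative-cocyclic ($V(\phi\circ\psi)=(V(\phi)\circ\psi)\cdot V(\psi)$) and $V(\phi)=V(\psi)=V(\psi^{-1})=1$ since all three lie in $\mathrm{Bir}(\mathbb{C}^3)_{\omega}$, so $V(\psi^{-1}\phi\psi)\equiv 1$—hence that criterion alone only tells us the map \emph{could} be regular, and one genuinely has to look at the geometry of the indeterminacy/exceptional locus rather than at $V$. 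So the honest route is the direct one: track the image of a generic point of $\mathcal{H}_\infty$ under the composite and show all its coordinates collapse to those of $\mathbf{e}_3=(0:0:0:1)$.

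The third step is just the conclusion: $\psi^{-1}\phi\psi$ is an element of $\mathrm{Bir}(\mathbb{C}^3)_{\omega}$ that is not regular at infinity (it blows down $\mathcal{H}_\infty$ onto $\mathbf{e}_3$, which in particular is not an injective image, cf. Proposition \ref{pro:regular} and the definition of regularity), while $\phi\in\mathrm{Bir}(\mathbb{C}^3)^{\mathrm{reg}}_{\omega}$; therefore conjugation by $\psi$ moves $\mathrm{Bir}(\mathbb{C}^3)^{\mathrm{reg}}_{\omega}$ outside itself, so the subgroup is not normal. The main obstacle is purely bookkeeping: carrying out the composition $\psi^{-1}\phi\psi$ and its homogenization carefully enough to be sure the contraction really occurs and is onto a point (rather than onto a curve, or with $\mathcal{H}_\infty$ preserved but with $\phi|_{\mathcal{H}_\infty}$ non-birational)—the poles coming from the $\tfrac{1}{z_1^3}$ and $\tfrac{1}{\gamma z_0}$ terms interact, and one must make sure no miraculous cancellation restores regularity. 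Once the explicit composite is written down this is a routine but slightly delicate verification; I would double-check it by also computing the indeterminacy set of $\psi^{-1}\phi\psi$ directly and confirming $\mathcal{H}_\infty$ meets it in codimension zero (inside $\mathcal{H}_\infty$), which forces the contraction.
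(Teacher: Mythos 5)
Your proposal is the paper's own argument: the paper's proof consists precisely of the pair $\psi=\bigl(\gamma z_0^2z_1,\tfrac{1}{\gamma z_0},z_2+z_0z_1\bigr)$, $\phi=\bigl(z_0+\tfrac{1}{z_1^3},z_1,z_2+\tfrac{1}{2z_1^2}\bigr)$ together with the (asserted) direct computation that $\psi^{-1}\phi\psi$ blows down $\mathcal{H}_\infty$ onto $\mathbf{e}_3$, hence leaves $\mathrm{Bir}(\mathbb{C}^3)^{\mathrm{reg}}_{\omega}$, which gives non-normality. Your verifications of where $\phi$ and $\psi$ live and the final group-theoretic conclusion are correct, so this is essentially the same proof (the aside about Lemma~\ref{lem:reginf} is inessential and rightly discarded).
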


\smallskip

We will end this section by establishing Tits Alternative for 
$\mathrm{Aut}(\mathbb{C}^3)_\omega$, 
$\mathrm{Aut}(\mathbb{C}^3)_{\mathrm{c}(\omega)}$ and 
$\mathrm{Bir}(\mathbb{C}^3)_\omega$.

The derived series of a group $\mathrm{G}$ is defined as follows
\[
D_0(\mathrm{G})=\mathrm{G},\quad D_1(\mathrm{G})=[\mathrm{G},\mathrm{G}],\quad
\ldots,\quad D_{n+1}(\mathrm{G})=[\mathrm{G},D_n(\mathrm{G})]
\]
The group $\mathrm{G}$ is \textbf{\textit{solvable}} if there exists an integer 
$k$ such that $D_k(\mathrm{G})=\{\mathrm{id}\}$. The least $\ell$ such that 
$D_\ell=\{\mathrm{id}\}$ is called the \textbf{\textit{derived length}} of 
$\mathrm{G}$.

\smallskip

A group $\mathrm{G}$ satisfies the \textbf{\textit{Tits alternative}} 
if any finitely generated subgroup of $\mathrm{G}$ contains either a non-abelian 
free group, or a solvable subgroup of finite index. This alternative has been 
established by Tits for linear groups $\mathrm{GL}(n;\Bbbk)$ for any 
field $\Bbbk$~(\cite{Tits}). Lamy proves that the group of 
polynomial automorphisms of $\mathrm{Aut}(\mathbb{C}^2)$ satisfies the 
Tits alternative (\cite{Lamy}), so does Cantat for the group 
of birational maps of a complex, compact, k\"{a}hler surface (\emph{see} 
\cite{Cantat:tits}). Note that the automorphisms groups of complex, compact, 
k\"{a}hler manifolds of any dimension also satisfies Tits alternative 
(\cite{Cantat:tits, Oguiso}).

\begin{thm}\label{thm:tits}
The groups $\mathrm{Aut}(\mathbb{C}^3)_\omega$, 
$\mathrm{Aut}(\mathbb{C}^3)_{\mathrm{c}(\omega)}$ and 
$\mathrm{Bir}(\mathbb{C}^3)_\omega$ satisfy the Tits 
alternative.
\end{thm}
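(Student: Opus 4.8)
The plan is to reduce the statement for each of the three groups to the already known two--dimensional cases --- Lamy's theorem that $\mathrm{Aut}(\mathbb{C}^2)$ satisfies the Tits alternative (\cite{Lamy}) and Cantat's theorem that $\mathrm{Bir}(\mathbb{P}^2)$ does (\cite{Cantat:tits}) --- by way of two formal stability properties of the class $\mathcal{T}$ of groups satisfying the Tits alternative. First, $\mathcal{T}$ is closed under passing to subgroups: a finitely generated subgroup of a subgroup $\mathrm{H}\le\mathrm{G}$ is a finitely generated subgroup of $\mathrm{G}$, hence inherits the dichotomy; in particular $\mathrm{Aut}(\mathbb{C}^2)_\eta$ and $\mathrm{Bir}(\mathbb{C}^2)_\eta$ both lie in $\mathcal{T}$. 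Second, $\mathcal{T}$ is closed under extensions $1\to A\to G\to Q\to 1$ with $A$ abelian (or merely virtually solvable) and $Q\in\mathcal{T}$.

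For this second point I would argue as follows. Let $\Gamma\le G$ be finitely generated and let $\overline{\Gamma}$ be its (finitely generated) image in $Q$. If $\overline{\Gamma}$ contains a non-abelian free subgroup $F$, let $\Gamma_0\le\Gamma$ be the preimage of $F$; since $F$ is free, the surjection $\Gamma_0\twoheadrightarrow F$ splits, and the image of a section is a non-abelian free subgroup of $\Gamma$. If instead $\overline{\Gamma}$ has a solvable subgroup $\overline{\Delta}$ of finite index, let $\Delta\le\Gamma$ be its preimage; then $[\Gamma:\Delta]<\infty$ and $\Delta$ is an extension of the solvable group $\overline{\Delta}$ by the abelian group $\Gamma\cap A$, hence is itself solvable, so $\Gamma$ is virtually solvable. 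I expect the splitting step to be the only genuinely substantial point --- one must resist claiming that $\Gamma\cap A$ is finitely generated, which it need not be --- but projectivity of free groups makes it harmless.

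It then remains to plug in the structural descriptions obtained earlier in the paper. For $\mathrm{Aut}(\mathbb{C}^3)_{\mathrm{c}(\omega)}$, the exact sequence $\eqref{eq:2dsuiteex}$ presents it as an extension $1\to\mathbb{C}\to\mathrm{Aut}(\mathbb{C}^3)_{\mathrm{c}(\omega)}\to\mathrm{Aut}(\mathbb{C}^2)\to 1$, so the extension lemma together with Lamy's theorem gives the Tits alternative; the subgroup $\mathrm{Aut}(\mathbb{C}^3)_\omega$ then inherits it (alternatively, apply the extension lemma directly to $\mathrm{Aut}(\mathbb{C}^3)_\omega\simeq\mathrm{Aut}(\mathbb{C}^2)_\eta\ltimes\mathbb{C}$ of Proposition \ref{Lem:descrrho}). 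For $\mathrm{Bir}(\mathbb{C}^3)_\omega$, Proposition \ref{Lem:chpinv2} supplies the morphism $\varsigma\colon\mathrm{Bir}(\mathbb{C}^3)_\omega\to\mathrm{Bir}(\mathbb{C}^2)_\eta$ whose kernel is $\{(z_0,z_1,z_2+\beta)\mid\beta\in\mathbb{C}\}\simeq\mathbb{C}$ --- indeed $\phi^*\omega=\omega$ for $\phi=(z_0,z_1,z_2+b)$ forces $\mathrm{d}b=0$, hence $b$ constant; its image is a subgroup of $\mathrm{Bir}(\mathbb{C}^2)_\eta$, which lies in $\mathcal{T}$ by Cantat's theorem and the subgroup property, so one last application of the extension lemma with $A=\mathbb{C}$ concludes.
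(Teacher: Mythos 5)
Your proposal is correct and follows essentially the same route as the paper: push a finitely generated subgroup through $\varsigma$ into $\mathrm{Aut}(\mathbb{C}^2)$ or $\mathrm{Bir}(\mathbb{C}^2)_\eta\subset\mathrm{Bir}(\mathbb{P}^2)$, invoke Lamy and Cantat, lift a free subgroup (the paper argues that any relation among lifts would project to a relation, which is the same content as your splitting via projectivity), and in the solvable case use that the kernel $\ker\varsigma\simeq\mathbb{C}$ is abelian so the derived length grows by at most one. Your write-up is if anything slightly more careful on the finite-index bookkeeping, but the argument is the paper's.
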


\begin{proof}
Let $\mathrm{G}$ be a finitely generated subgroup of 
$\mathrm{Bir}(\mathbb{C}^3)_\omega$. Set
\[
\mathrm{G}_0=\varsigma(\mathrm{G})\subset\mathrm{Bir}(\mathbb{C}^2)_\eta
\]
Since $\mathrm{Bir}(\mathbb{C}^2)_\eta$ is a subgroup of 
$\mathrm{Bir}(\mathbb{P}^2)$ that satisfies the Tits 
alternative, either $\mathrm{G}_0$ contains a non-abelian free group, or a 
solvable subgroup of finite index.

\smallskip

Assume first that $\mathrm{G}_0$ contains two elements $f$ and $h$ such that 
$\langle f,\, h\rangle\simeq \mathbb{Z}\ast\mathbb{Z}$. Let us denote by $F$, 
resp.~$H$ a lift of $f$, resp. $h$ in $\mathrm{Bir}(\mathbb{P}^3)$. 
Suppose that there exists a non-trivial word $M$ such that $M(F,H)=\{\mathrm{id}\}$. 
As~$\varsigma$ is a morphism, one gets that $M(f,h)=\{\mathrm{id}\}$: contradiction.

\smallskip

Suppose now that up to finite index $\mathrm{G}_0$ is solvable, and let $\ell$ 
be its derived length; in particular $D_\ell(\mathrm{G}_0)=~\{\mathrm{id}\}$ 
and $D_\ell(\mathrm{G})$ belongs to $\ker\varsigma$. Since
\[
\ker\varsigma=\{(z_0,z_1,z_2+\beta)\,\vert\, \beta\in\mathbb{C}\}
\]
one gets $D_{\ell+1}(\mathrm{G})=\{\mathrm{id}\}$.
\end{proof}

\medskip

\subsection{Non-conjugate isomorphic groups}

Let us denote by $\upsilon_1$ the trivial 
embedding from $(\mathrm{Aut}(\mathbb{C}^2)_\eta\vert 0)$ into $\mathrm{Aut}(\mathbb{C}^3)$
\[
\upsilon_1\colon(\mathrm{Aut}(\mathbb{C}^2)_\eta\vert 0)\hookrightarrow  \mathrm{Aut}
(\mathbb{C}^3),\qquad (\phi_0,\phi_1)\mapsto(\phi_0,\phi_1,z_2)
\]
and by $\upsilon_2$ the trivial embedding from $\mathrm{Bir}(\mathbb{P}^2)$ 
into $\mathrm{Bir}(\mathbb{P}^3)$
\[
\upsilon_2\colon\mathrm{Bir}(\mathbb{P}^2)\hookrightarrow \mathrm{Bir}
(\mathbb{P}^3),\qquad (\phi_1,\phi_2)\mapsto(z_0,\phi_1,\phi_2).
\]

Despite $\mathrm{im}\,\upsilon_1$ (resp. $\mathrm{im}\,\upsilon_2$) is isomorphic 
to $\mathrm{im}\,\varsigma$ (resp. $\mathrm{im}\,\mathcal{K}$) one has the 
following statement:

\begin{pro}
The image of $\upsilon_1$ $($resp. $\upsilon_2)$ is not $\mathrm{Aut}(\mathbb{C}^3)$-conjugate 
$($resp. $\mathrm{Bir}(\mathbb{P}^3)$-conjugate$)$ to a subgroup of 
$\mathrm{Aut}(\mathbb{C}^3)_{\mathrm{c}(\omega)}$ $($resp. 
$\mathrm{Bir}(\mathbb{C}^3)_{\mathrm{c}(\omega)})$.
\end{pro}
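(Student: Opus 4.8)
The plan is to reduce both cases to a single geometric assertion: neither $\mathrm{im}\,\upsilon_1$ nor $\mathrm{im}\,\upsilon_2$ preserves any \emph{rational contact structure} on $\mathbb{P}^3$, where I mean a rational $1$-form $\beta$ with $\beta\wedge\mathrm{d}\beta\not\equiv 0$ such that $g^*\beta\wedge\beta=0$ for every $g$ in the group. First I would set up the reduction. Suppose $\psi\in\mathrm{Aut}(\mathbb{C}^3)$ (resp. $\psi\in\mathrm{Bir}(\mathbb{P}^3)$) conjugates $\mathrm{im}\,\upsilon_1$ into $\mathrm{Aut}(\mathbb{C}^3)_{\mathrm{c}(\omega)}$ (resp. $\mathrm{im}\,\upsilon_2$ into $\mathrm{Bir}(\mathbb{C}^3)_{\mathrm{c}(\omega)}$), and set $\beta=\psi^*\omega$. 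Since $\beta\wedge\mathrm{d}\beta=\psi^*(\omega\wedge\mathrm{d}\omega)$ and $\omega\wedge\mathrm{d}\omega$ is, up to sign, the volume form, $\beta$ is a rational contact form. Writing $g'=\psi g\psi^{-1}$ with $g'^*\omega=V(g')\omega$, one gets $g^*\beta=\psi^*(V(g')\omega)=(V(g')\circ\psi)\,\beta$, so $g^*\beta\wedge\beta=0$: the group $\mathrm{im}\,\upsilon_i$ preserves $\ker\beta$. Hence it suffices to rule out any such invariant $\beta$, and the abstract isomorphisms $\mathrm{im}\,\upsilon_1\simeq\mathrm{im}\,\varsigma$, $\mathrm{im}\,\upsilon_2\simeq\mathrm{im}\,\mathcal{K}$ become irrelevant.

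Next I would extract a normal form for $\beta=A\,\mathrm{d}z_0+B\,\mathrm{d}z_1+C\,\mathrm{d}z_2$ from the translation subgroups. For $\upsilon_1$ every element fixes $z_2$ and $\mathrm{Aut}(\mathbb{C}^2)_\eta$ contains the translations $(z_0+s,z_1,z_2)$ and $(z_0,z_1+s,z_2)$; differentiating $g^*\beta=\lambda_g\beta$ along their generators gives $L_{\partial/\partial z_0}\beta=\mu_0\beta$ and $L_{\partial/\partial z_1}\beta=\mu_1\beta$, i.e. $\tfrac{\partial}{\partial z_i}(A,B,C)=\mu_i(A,B,C)$, which forces the projective class $[A:B:C]$ to be independent of $z_0$ and $z_1$. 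After rescaling by a nonzero rational factor $h$ (harmless, since $\ker(h\beta)=\ker\beta$ and $(h\beta)\wedge\mathrm{d}(h\beta)=h^2\,\beta\wedge\mathrm{d}\beta$, so nondegeneracy is preserved), I may assume $\beta=a(z_2)\,\mathrm{d}z_0+b(z_2)\,\mathrm{d}z_1+c(z_2)\,\mathrm{d}z_2$. The case $\upsilon_2$ is identical with $z_0$ playing the role of $z_2$: every element fixes $z_0$, translations act on $z_1,z_2$, and $\beta=a(z_0)\,\mathrm{d}z_0+b(z_0)\,\mathrm{d}z_1+c(z_0)\,\mathrm{d}z_2$.

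Then I would feed in the unipotent part. For $\upsilon_1$, $\mathrm{Aut}(\mathbb{C}^2)_\eta$ contains the shears $(z_0+z_1,z_1)$ and $(z_0,z_0+z_1)$; for $g=(\phi_0,\phi_1,z_2)$ the relation $g^*\beta=\lambda_g\beta$ restricted to the $\mathrm{d}z_0,\mathrm{d}z_1$ part reads $(a,b)\,\mathrm{jac}(\phi_0,\phi_1)=\lambda_g\,(a,b)$. Fixing a generic $z_2$ with $(a,b)\neq 0$, this constant vector would be a common left eigenvector of $\left[\begin{smallmatrix}1&1\\0&1\end{smallmatrix}\right]$ and $\left[\begin{smallmatrix}1&0\\1&1\end{smallmatrix}\right]$; the first forces $a=0$ and the second then forces $b=0$, so $(a,b)\equiv 0$. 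Thus $\beta=c(z_2)\,\mathrm{d}z_2$, whence $\mathrm{d}\beta=0$ on $\ker\beta$ and $\beta\wedge\mathrm{d}\beta=0$, contradicting nondegeneracy. For $\upsilon_2$ the same computation with $(\phi_1,\phi_2)$ running over the shears $(z_1+z_2,z_2)$ and $(z_1,z_1+z_2)$ of $\mathrm{PGL}(3;\mathbb{C})\subset\mathrm{Bir}(\mathbb{P}^2)$ yields $(b,c)\equiv 0$, hence $\beta=a(z_0)\,\mathrm{d}z_0$ and again $\beta\wedge\mathrm{d}\beta=0$.

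The main obstacle, and the step I would be most careful about, is the passage from ``preserves a contact structure'' — a condition stated only up to the rational multipliers $\lambda_g$ — to the rigid normal form: I must verify that the scaling freedom is genuinely absorbed, that the infinitesimal relations $L_X\beta=\mu\beta$ truly force projective constancy of $[A:B:C]$, and that the surviving form is forced to be a multiple of $\mathrm{d}z_2$ (resp. $\mathrm{d}z_0$), which is visibly not contact. The conceptual content is that each trivial embedding fixes one coordinate, so any invariant direction field can only be ``horizontal'' in that coordinate and is therefore degenerate, whereas the genuine contact embeddings $\varsigma$ and $\mathcal{K}$ twist precisely that coordinate to satisfy $\beta\wedge\mathrm{d}\beta\neq 0$; this is exactly what obstructs conjugacy.
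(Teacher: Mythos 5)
Your proof is correct, and its decisive step differs from the paper's. Both arguments begin the same way: assuming a conjugating map $\psi$ exists, one pulls back $\omega$ to a rational $1$-form $\beta=\psi^*\omega$ with $\beta\wedge\mathrm{d}\beta\not\equiv 0$ whose kernel is invariant, up to rational multipliers, under $\mathrm{im}\,\upsilon_1$ (resp. $\mathrm{im}\,\upsilon_2$). The paper then concludes in one line: restricting $\beta$ to a generic invariant hyperplane $z_2=\lambda$ (resp. $z_0=\lambda$), every element of the planar group would preserve the induced foliation of that plane, ``contradiction'' --- which tacitly uses both that this restriction is not identically zero and the nontrivial fact that $\mathrm{Aut}(\mathbb{C}^2)_\eta$ (resp. $\mathrm{Bir}(\mathbb{P}^2)$) preserves no algebraic foliation of the plane; neither point is justified in the text. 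You replace that appeal by an explicit computation with finitely many group elements: the translations force the projective class $[A:B:C]$ to depend only on the fixed coordinate, and the two unipotent shears admit no nonzero common left eigenvector, so the invariant form collapses to $c(z_2)\,\mathrm{d}z_2$ (resp. $a(z_0)\,\mathrm{d}z_0$), which visibly fails $\beta\wedge\mathrm{d}\beta\not\equiv 0$. Your route is longer but elementary and self-contained, and it automatically handles the degenerate case the paper glosses over; the paper's route is shorter at the cost of an unproved rigidity statement about foliation-preserving subgroups. One small simplification available to you: the Lie-derivative step is unnecessary, since the finite relations $A(z_0+s,z_1,z_2)=\lambda_s A$, $B(z_0+s,z_1,z_2)=\lambda_s B$, $C(z_0+s,z_1,z_2)=\lambda_s C$ already show that the ratios $A/B$ and $A/C$ are translation-invariant, so no differentiation in the group parameter is needed.
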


\begin{proof}
Let us assume that there exists $\psi$ in $\mathrm{Aut}(\mathbb{C}^3)$ (resp. 
$\mathrm{Bir}(\mathbb{P}^3)$) such that for any $\phi=(\phi_0,\phi_1)$ (resp. 
$\phi=(\phi_1,\phi_2)$) in $\mathrm{Aut}(\mathbb{C}^2)$ (resp. 
$\mathrm{Bir}(\mathbb{P}^2)$) the map $\psi\upsilon_1(\phi)\psi^{-1}$ (resp. 
$\psi\upsilon_2(\phi)\psi^{-1}$) is a contact polynomial automorphism (resp. 
contact birational map); as a result $\upsilon_1(\phi)$ (resp. 
$\upsilon_2(\phi)$) preserves a polynomial form 
$\Theta=A\mathrm{d}z_0+B\mathrm{d}z_1+C\mathrm{d}z_2$.
Looking at the restriction to any hyperplane $z_2=\lambda$ (resp. $z_0=\lambda$) 
for $\lambda$ generic one gets that all the $\phi$ preserve the foliation given by 
$\Theta_{\vert z_2=\lambda}$ (resp. $\Theta_{\vert z_0=\lambda}$): contradiction.
\end{proof}

\section{Appendix: Automorphisms group of $\mathrm{Aut}(\mathbb{C}^2)_\eta$}\label{part:appendix}

\medskip

As we recalled $\mathrm{Aut}(\mathbb{C}^2)$ is generated by $\mathrm{J}_2$
and $\mathrm{Aff}_2$. 
More precisely $\mathrm{Aut}(\mathbb{C}^2)$ has a structure of amalgamated product 
(\cite{Jung})
\[
\mathrm{Aut}(\mathbb{C}^2)=\mathrm{J}_2\ast_{\mathrm{J}_2\cap\mathrm{Aff}_2}\mathrm{Aff}_2;
\]
this is also the case for $\mathrm{Aut}(\mathbb{C}^2)_\eta$ 
(\cite[Proposition 9]{FurterLamy})
\[
\mathrm{Aut}(\mathbb{C}^2)_\eta=(\mathrm{J}_2)_\eta\ast_{(\mathrm{J}_2)_\eta\cap
(\mathrm{Aff}_2)_\eta}(\mathrm{Aff}_2)_\eta
\]

Following \cite{Deserti:autaut} we prove that:

\begin{thm}\label{thm:autauteta}
The group $\mathrm{Aut}(\mathrm{Aut}(\mathbb{C}^2)_\eta)$ is generated by 
the automorphisms of the field $\mathbb{C}$ and the group of 
$\mathrm{Aut}(\mathbb{C}^2)$-inner automorphisms.
\end{thm}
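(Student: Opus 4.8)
The plan is to mimic the strategy of \cite{Deserti:autaut}, exploiting the amalgamated product structure
\[
\mathrm{Aut}(\mathbb{C}^2)_\eta=(\mathrm{J}_2)_\eta\ast_{(\mathrm{J}_2)_\eta\cap(\mathrm{Aff}_2)_\eta}(\mathrm{Aff}_2)_\eta
\]
recalled just above (from \cite[Proposition 9]{FurterLamy}). Let $\psi$ be an automorphism of $\mathrm{G}:=\mathrm{Aut}(\mathbb{C}^2)_\eta$. The first step is to pin down the image $\psi((\mathrm{Aff}_2)_\eta)$. The group $(\mathrm{Aff}_2)_\eta$ is (up to the translation part) the special linear group $\mathrm{SL}(2;\mathbb{C})$, a connected algebraic group generated by its unipotent one-parameter subgroups; one characterizes it intrinsically inside $\mathrm{G}$ — e.g. as a maximal subgroup with a fixed point that is, up to conjugacy and field automorphism, algebraic and divisible in the appropriate sense, and whose every element is conjugate (in $\mathrm{G}$) to an element of $(\mathrm{J}_2)_\eta$. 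The upshot should be that $\psi$ sends a maximal bounded/algebraic subgroup of $\mathrm{G}$ to another such, and since all of these are $\mathrm{Aut}(\mathbb{C}^2)$-conjugate, after composing $\psi$ with an $\mathrm{Aut}(\mathbb{C}^2)$-inner automorphism we may assume $\psi$ fixes $(\mathrm{Aff}_2)_\eta$ setwise. Then, because $\mathrm{Aut}((\mathrm{Aff}_2)_\eta)$ is itself described by Tits-type rigidity (inner automorphisms of $\mathrm{SL}(2;\mathbb{C})$ composed with field automorphisms), after a further inner adjustment and the action of a field automorphism $\xi$ of $\mathbb{C}$ we may assume $\psi_{|(\mathrm{Aff}_2)_\eta}=\mathrm{id}$.

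Next I would turn to the Jonqui\`eres side. The normalizer of a chosen maximal torus (or of a chosen one-parameter unipotent subgroup) in $\mathrm{G}$ controls which pencil is preserved; since $\psi$ now fixes $(\mathrm{Aff}_2)_\eta$ pointwise, it must send $(\mathrm{J}_2)_\eta$ to a conjugate preserving a compatible fibration, and a bookkeeping argument on how the amalgam is glued forces $\psi((\mathrm{J}_2)_\eta)=(\mathrm{J}_2)_\eta$. One then analyzes $\psi_{|(\mathrm{J}_2)_\eta}$ directly: elements of $(\mathrm{J}_2)_\eta$ have the normal form $(z_0+P(z_1),\,z_1)$ together with the diagonal/triangular linear part, and $\psi$ must respect the commutator subgroup $[(\mathrm{J}_2)_\eta,(\mathrm{J}_2)_\eta]=\{(z_0+P(z_1),z_1)\mid P\in\mathbb{C}[z_1]\}$, which is an abelian group carrying a natural $\mathbb{C}[z_1]$-module structure visible through conjugation by the torus. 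Any automorphism fixing $(\mathrm{Aff}_2)_\eta$ and preserving this module structure is the identity, because the torus action already rigidifies the grading by degree in $z_1$ and the affine action rigidifies the coefficients. Hence $\psi$ is the identity on a generating set of $\mathrm{G}$, so $\psi=\mathrm{id}$.

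Assembling the reductions: an arbitrary $\psi\in\mathrm{Aut}(\mathrm{G})$ becomes trivial after composition with an $\mathrm{Aut}(\mathbb{C}^2)$-inner automorphism and the action of a field automorphism $\xi$; equivalently $\psi(f)={}^{\xi}(\theta f\theta^{-1})$ for some $\theta\in\mathrm{Aut}(\mathbb{C}^2)$ and field automorphism $\xi$, which is exactly the assertion that $\mathrm{Aut}(\mathrm{Aut}(\mathbb{C}^2)_\eta)$ is generated by the field automorphisms and the $\mathrm{Aut}(\mathbb{C}^2)$-inner automorphisms. Finally one checks that these two families genuinely land in $\mathrm{Aut}(\mathrm{G})$: conjugation by $\theta\in\mathrm{Aut}(\mathbb{C}^2)$ preserves $\mathrm{Aut}(\mathbb{C}^2)_\eta$ since $\theta^*\eta=(\det\mathrm{jac}\,\theta)\,\eta$ with $\det\mathrm{jac}\,\theta$ a nonzero constant, so the Jacobian condition $\det\mathrm{jac}\,f=1$ is preserved; and a field automorphism $\xi$ of $\mathbb{C}$ clearly preserves the condition of having Jacobian $1$.

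\emph{The main obstacle} I expect is the first step: extracting $(\mathrm{Aff}_2)_\eta$ (or $\mathrm{SL}(2;\mathbb{C})$) intrinsically as a subgroup of $\mathrm{Aut}(\mathbb{C}^2)_\eta$ in a way that is automatically preserved by $\psi$. In \cite{Deserti:autaut} this kind of step relies on a careful study of which subgroups are, up to conjugacy, algebraic — here one must redo that analysis inside the $\eta$-preserving subgroup, using the amalgam from \cite[Proposition 9]{FurterLamy} and the classification of finite (and bounded) subgroups, e.g. along the lines of the linearization results quoted earlier. The rest — propagating rigidity through the amalgam and the explicit computation on $(\mathrm{J}_2)_\eta$ — is essentially routine once that anchor is in place.
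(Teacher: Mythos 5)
Your plan anchors on the wrong factor, and the step you yourself flag as ``the main obstacle'' is a genuine gap, not a routine one. The reduction ``$\psi$ sends a maximal bounded/algebraic subgroup to another such, and since all of these are $\mathrm{Aut}(\mathbb{C}^2)$-conjugate we may assume $\psi$ fixes $(\mathrm{Aff}_2)_\eta$'' rests on two unsupported claims. First, an abstract automorphism of $\mathrm{Aut}(\mathbb{C}^2)_\eta$ is not known a priori to preserve algebraicity or boundedness of subgroups -- that is precisely the hard point one must get around. Second, even granting that, the conjugacy claim is false: up to conjugacy the maximal algebraic subgroups of $\mathrm{Aut}(\mathbb{C}^2)$ (and of its $\eta$-preserving subgroup) are not only the affine group but also the triangular groups of bounded degree $\big\{(az_0+p(z_1),bz_1+c)\,\vert\,\deg p\le d\big\}$, which are not conjugate to $(\mathrm{Aff}_2)_\eta$; and your proposed intrinsic test (``every element conjugate into $(\mathrm{J}_2)_\eta$'') does not separate $(\mathrm{Aff}_2)_\eta$ from these, since affine maps and bounded-degree triangular maps are all conjugate into the Jonqui\`eres factor. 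So the crucial normalization ``after an inner automorphism, $\psi_{\vert(\mathrm{Aff}_2)_\eta}=\mathrm{id}$'' is not established, and the subsequent ``bookkeeping'' forcing $\psi((\mathrm{J}_2)_\eta)=(\mathrm{J}_2)_\eta$ and the rigidity of the action on $\{(z_0+P(z_1),z_1)\}$ are also asserted rather than proved (the latter can indeed be pushed through with torus and translation relations, but it must be written out).

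The paper proceeds the other way around, and this is what makes the argument work: following \cite{Deserti:autaut} and the amalgam $\mathrm{Aut}(\mathbb{C}^2)_\eta=(\mathrm{J}_2)_\eta\ast_{(\mathrm{J}_2)_\eta\cap(\mathrm{Aff}_2)_\eta}(\mathrm{Aff}_2)_\eta$ of \cite[Proposition 9]{FurterLamy}, it first gets $\varphi((\mathrm{J}_2)_\eta)=(\mathrm{J}_2)_\eta$ up to $\mathrm{Aut}(\mathbb{C}^2)$-conjugacy from \cite[Proposition 4.4]{Deserti:autaut}, then successively normalizes the finite diagonal subgroups, the torus $\mathrm{D}_\eta=\{(\beta z_0,z_1/\beta)\}$ and the translation subgroups $\mathrm{T}_1$, $\mathrm{T}_2$, $\mathrm{T}$, using explicit relations such as $\left(\frac{z_0}{n},nz_1\right)(z_0+\beta,z_1)^n\left(nz_0,\frac{z_1}{n}\right)=(z_0+\beta,z_1)$, and finally reduces $\varphi$ on these subgroups to two additive morphisms $\xi_1,\xi_2$ and a multiplicative one $\zeta$, concluding by \cite[Proposition 1.4]{Deserti:autaut} that they are induced by a single field automorphism. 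If you want to salvage your route, you would have to supply an intrinsic, automorphism-invariant characterization of $(\mathrm{Aff}_2)_\eta$ inside $\mathrm{Aut}(\mathbb{C}^2)_\eta$ that excludes the bounded-degree triangular groups; no such characterization is offered, and nothing in the remaining steps substitutes for it.
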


\begin{proof}[Idea of the Proof]
Let us set $\mathcal{G}=\mathrm{Aut}(\mathbb{C}^2)_\eta$. One can follow 
\cite{Deserti:autaut} and prove that if $\varphi$ is an automorphism of 
$\mathcal{G}$, then
\smallskip
\begin{itemize}
\item $\varphi((\mathrm{J}_2)_\eta)=(\mathrm{J}_2)_\eta$ up to conjugacy by an element 
of $\mathrm{Aut}(\mathbb{C}^2)$ (\cite[Proposition 4.4]{Deserti:autaut});
\smallskip
\item for any integer $k$ if 
$\mathcal{R}=\cup_{n\leq k}\langle \left(\beta z_0,\frac{z_1}{\beta}\right)\,\vert\,\beta \text{ $n$-th root of unity}\rangle$, 
then there exists $\psi$ in $(\mathrm{J}_2)_\eta$ such 
that~$\varphi(\mathcal{R})=\psi\mathcal{R}\psi^{-1}$. So one can suppose that 
$\varphi((\mathrm{J}_2)_\eta)=(\mathrm{J}_2)_\eta$ and $\varphi(\mathcal{R})=\mathcal{R}$ 
(\emph{see} \cite[Proposition~4.4]{Deserti:autaut});
\smallskip
\item set 
$\mathrm{D}_\eta=\big\{(\beta z_0,z_1/\beta)\,\vert\,\beta\in\mathbb{C}^*\big\}$ 
one can show that conjugating $\phi$ by an element of $(\mathrm{J}_2)_\eta$ one has 
$\varphi((\mathrm{J}_2)_\eta)=(\mathrm{J}_2)_\eta$ and $\varphi(\mathrm{D}_\eta)=\mathrm{D}_\eta$.
\smallskip
\item set
\[
\mathrm{T}_1=\big\{(z_0+\beta,z_1)\,\vert\,\beta\in\mathbb{C}\big\},\qquad 
\mathrm{T}_2=\big\{(z_0,z_1+\beta)\,\vert\,\beta\in\mathbb{C}\big\}
\]
and
\[
\mathrm{T}=\big\{(z_0+\gamma,z_1+\beta)\,\vert\,\gamma,\,\beta\in\mathbb{C}\big\}
\]
Since $\mathrm{T}_1\subset[[(\mathrm{J}_2)_\eta,(\mathrm{J}_2)_\eta],[(\mathrm{J}_2)_\eta,(\mathrm{J}_2)_\eta]]$, then 
$\mathrm{T}_1\subset\{(z_0+P(z_1),z_1)\,\vert\, P\in\mathbb{C}[z_1]\}$. 
As
\[
\forall\,n\in\mathbb{N},\,\forall\,\beta\in\mathbb{C}\qquad\left(\frac{z_0}{n},
nz_1\right)(z_0+\beta,z_1)^n\left(nz_0,\frac{z_1}{n}\right)=(z_0+\beta,z_1)
\]
and $\varphi(\mathrm{D}_\eta)=\mathrm{D}_\eta$, one gets
\[
\forall\,n\in\mathbb{N},\,\forall\,\beta\in\mathbb{C}\qquad\varphi\left(
\frac{z_0}{n},nz_1\right)\varphi(z_0+\beta,z_1)^n\varphi\left(nz_0,\frac{z_1}{n}
\right)=\varphi(z_0+\beta,z_1)
\]
that is
\[
\forall\,n\in\mathbb{N}\qquad\left(\frac{z_0}{\delta},\delta z_1\right)
(z_0+nP(z_1),z_1)^n\left(\delta z_0,\frac{z_1}{\delta}\right)=(z_0+P(z),z_1)
\]
so $P(z_1)=\frac{n}{\delta}P\left(\frac{z_1}{\delta}\right)$. The polynomial $P$ 
is non-zero hence $n=\delta$ and $P$ is a constant. Therefore 
$\varphi(\mathrm{T}_1)\subset~\mathrm{T}_1$.

\smallskip

The groups $\mathrm{T}_1$ and $\mathrm{T}_2$ commute, that's why
\[
\varphi(\mathrm{T}_2)\subset\big\{(z_0+P(z_1),z_1+\beta)\,\vert\, P\in
\mathbb{C}[z_1],\,\beta\in\mathbb{C}\big\}
\]
The relation
\[
\left(\frac{z_0}{n},nz_1\right)(z_0,z_1+\beta)\left(nz_0,\frac{z_1}{n}
\right)=(z_0,z_1+\beta)^n
\]
true for any integer $n$ and for any $\beta$ in $\mathbb{C}$ implies that 
$\varphi(\mathrm{T}_2)\subset\mathrm{T}_2$. The group $\mathrm{T}$ being a 
maximal abelian subgroup of $\mathcal{G}$, one has 
$\varphi(\mathrm{T})=\mathrm{T}$ and $\varphi(\mathrm{T}_i)=\mathrm{T}_i$.
\smallskip
\item There exist $\xi_1$, $\xi_2$ two additive morphisms and $\zeta$ a 
multiplicative one such that
\[
\varphi(z_0+\gamma,z_1+\beta)=(z_0+\xi_1(\gamma),z_1+\xi_2(\beta))\qquad\&
\qquad \varphi\left(\gamma z_0,\frac{z_1}{\gamma}\right)=\left(\zeta(\gamma)
z_0,\frac{z_1}{\zeta(\gamma)}\right)
\]
The statement follows from \cite[Proposition 1.4]{Deserti:autaut}.
\end{itemize}
\end{proof}

\vspace{8mm}

\end{document}